\documentclass[12pt,reqno]{amsart}

\usepackage{amsmath,amsfonts,amsthm,mathrsfs,amssymb,amscd,enumerate,url,tikz-cd}
\usepackage{mathtools}
\usepackage{comment}
\usepackage{enumitem}
\usepackage{appendix}

\usepackage{xcolor}
\colorlet{mdtRed}{red!50!black}
\definecolor{dblue}{rgb}{0,0,.6}
\usepackage[colorlinks]{hyperref}
\hypersetup{linkcolor=blue,citecolor=dblue,filecolor=dullmagenta,urlcolor=mdtRed}

\newtheorem{theorem}{Theorem}[section]
\newtheorem{proposition}[theorem]{Proposition}
\newtheorem{lemma}[theorem]{Lemma}
\newtheorem{corollary}[theorem]{Corollary}
\numberwithin{equation}{section}

\theoremstyle{definition}
\newtheorem{definition}[theorem]{Definition}
\newtheorem{remark}[theorem]{Remark}

\newtheorem{claim}[theorem]{Claim}

\newcommand{\Z}{\mathbb{Z}}
\newcommand{\C}{\mathbb{C}}

\newcommand{\Q}{\mathbb{Q}}
\newcommand{\R}{\mathbb{R}}

\newcommand{\ch}{{\rm ch}}

\newcommand{\Spec}{\operatorname{Spec}}

\newcommand{\mb}[1]{\mathbb{#1}}
\newcommand{\mc}[1]{\mathcal{#1}}
\renewcommand{\t}[1]{\widetilde{#1}}

\DeclareMathOperator{\Pic}{Pic}
\DeclareMathOperator{\Cl}{Cl}
\DeclareMathOperator{\Nef}{Nef}
\DeclareMathOperator{\Eff}{Eff}
\DeclareMathOperator{\Mov}{Mov}
\DeclareMathOperator{\id}{id}

\DeclareMathOperator{\Quot}{Quot}

\DeclareMathOperator{\Hom}{Hom}
\DeclareMathOperator{\Ext}{Ext}
\DeclareMathOperator{\ext}{ext}
\DeclareMathOperator{\rank}{rank}
\DeclareMathOperator{\td}{td}
\DeclareMathOperator{\Proj}{Proj}
\DeclareMathOperator{\Sym}{Sym}
\DeclareMathOperator{\im}{im}
\DeclareMathOperator{\coker}{coker}

\DeclareMathOperator{\Tor}{Tor}
\DeclareMathOperator{\codim}{codim}
\DeclareMathOperator{\Supp}{Supp}

\def\P{\mathbb{P}}
\def\ep{\varepsilon}

\newcommand{\cale}{\mathcal {E}}

\newcommand{\calg}{\mathcal {G}}
\newcommand{\calh}{\mathcal {H}}

\newcommand{\calo}{\mathcal {O}}

\newcommand{\calq}{\mathcal {Q}}

\usepackage{marginnote}
\usetikzlibrary{calc}
\title[Birational geometry of Quot schemes on curves via stable pairs]{Birational geometry of Quot schemes on smooth projective curves via stable pairs}

\begin{document}

\author[C. Gangopadhyay]{Chandranandan Gangopadhyay} 
	
	\address{Department of Mathematics, Shiv Nadar University, NH91, Tehsil Dadri, Greater Noida, Uttar
Pradesh 201314, India} 
	
	\email{chandranandan.g@snu.edu.in} 

 \author[A. Ito]{Atsushi Ito}

\address{Department of Mathematics, Institute of Pure and Applied Sciences, University of Tsukuba, Tsukuba, Ibaraki 305-8571, Japan}

\email{ito-atsushi@math.tsukuba.ac.jp}

\subjclass[2010]{14J60}
	
	\keywords{Quot Scheme, stable pair}  

\begin{abstract}
Let $C$ be a smooth projective curve of genus $g \geq 2$ over $\C$, 
and let $E^0$ be a vector bundle on $C$. 
We investigate the birational geometry of the Quot scheme 
$\Quot_C(E^0, k, n)$, which parametrizes quotients of $E^0$ of rank $k$ 
and degree $n$, and its fiber $\mc Q_L$ over $\Pic^n(C)$ for $n \gg 0$. 
Our main tool is the moduli space of stable pairs, which yields small $\Q$-factorial modifications (SQMs) 
of $\Quot_C(E^0, k, n)$ and $\mc Q_L$. 
We explicitly describe the nef, movable, and effective cones of each SQM. 
Consequently, we prove that $\mc Q_L$ is a Mori dream space and that 
the determinant morphism $\Quot_C(E^0, k, n) \to \Pic^n(C)$ is a Mori dream morphism.
\end{abstract}
\maketitle

\tableofcontents

\section{Introduction}

Let $C$ be a smooth projective curve over the field of complex numbers $\C$. Let $E^0$ be a vector bundle on $C$ of rank $r$ and degree $e$. 
For integers $0 \leq k \leq r-1$ and $n$,
let ${\Quot}_C(E^0,k,n)$ be the Quot scheme parametrizing quotients of $E^0$ of rank $k$ and degree $n$. 
The Quot scheme has played a fundamental role in the construction and study of moduli spaces of vector bundles (see e.g.\ \cite{Drezet-Narasimhan}, \cite{PR}, \cite{BGL}, \cite{Marian_Oprea_strange_duality}).  
Since the Quot scheme for $E^0=\calo_C^{r}$ can be regarded as a compactification of the space of morphisms from $C$ to the Grassmannian,
it has been extremely useful in Gromov--Witten theory (see e.g.\  \cite{BDW}, \cite{Bertram}, \cite{Marian_Oprea}, \cite{Oprea_tautological}).

The geometry of these Quot schemes has been studied extensively, in particular the case 
of $\mc Q_{\P^1}\coloneqq{\Quot}_{\P^1}(\calo^r_{\P^1},k,n)$.
If $k=r-1$ or $n=0$,  $\mc Q_{\P^1}$ is a projective space or a Grassmannian, respectively.
If $0\leq k \leq r-2$ and $n>0$,
Str\o mme in \cite{Str} showed that $\mc Q_{\P^1}$ is a smooth projective variety with  $\Pic (\mc Q_{\P^1}) = \Z^2$
and the nef cone $\Nef (\mc Q_{\P^1})$ is generated by two globally generated line bundles. 
In \cite{Jow} and \cite{Venkatram}, the effective cone $\Eff(\mc Q_{\P^1})$ and the movable cone $\Mov(\mc Q_{\P^1})$ were computed respectively. 
In \cite{Ito_MDS}, the birational geometry of $\mc Q_{\P^1}$ was studied and it was shown that $\mc Q_{\P^1}$ is a Mori dream space. 
Recall that a normal projective variety $X$ is called a \emph{Mori dream space} if
\begin{enumerate}[label=\arabic*)]
\item $X$ is $\Q$-factorial and $\Pic(X) \otimes \Q =N^1(X)_{\Q}$,
\item $\Nef(X )$ is spanned by finitely many semiample line bundles,
\item  there exist finitely many small $\Q$-factorial modifications (SQMs) $ f_i : X \dashrightarrow X_i$ such that
each $X_i$ satisfies 2) and $\Mov(X) = \bigcup_i f_i^*(\Nef(X_i))$.
\end{enumerate}
Mori dream spaces were introduced by Hu and Keel in \cite{Hu_keel} and they have nice properties from the perspective of the minimal model program.
In \cite{Ito_MDS}, it was further shown that all the small $\Q$-factorial modifications of $\mc Q_{\P^1}$ have moduli-theoretic descriptions.

For $C$ with genus $g \geq 2$, the Quot scheme $\mc Q \coloneqq {\Quot}_C(E^0,k,n)$ can be reducible and non-reduced. 
However, for $n \gg 0$, $\mc Q$ is known to be irreducible and generically smooth for $E^0=\calo_C^r$ by \cite{BDW}, and for arbitrary $E^0$ by \cite{PR}.
Furthermore, $\mc Q$ is locally factorial and its Picard group was computed for $n \gg 0$ by \cite{GS-Picard}.

Note that there is a natural morphism  
\begin{align}\label{eq_intro_det}
\det:\mc Q\to {\Pic}^n(C), \quad [E^0 \twoheadrightarrow B]\mapsto \det(B).
\end{align}
For $[L]\in \Pic^n(C)$,
we set  $\mc Q_L={\Quot}_C(E^0,k,n)_L\coloneqq\det^{-1}([L])$.
If $ k=r-1$ and $n \gg 0$, 
\eqref{eq_intro_det} is a projective bundle and hence $\mc Q_L$ is a projective space.
For $0 \leq  k \leq r-2$ and $n\gg 0$, it was shown in \cite{GS-Picard} that \eqref{eq_intro_det} is flat,
and $\mc Q_L$ is also locally factorial with $\Pic(\mc Q_L)=\Z^2$. 
In \cite{GS-eff}, the nef and effective cones of $\mc Q_L$ were computed  for $E^0=\mathcal{O}_C^r$ and $n\gg 0$. 
In particular, the nef cone is generated by two globally generated line bundles.

Hence, $\mc Q_L$ for $n \gg 0$ shares several geometric properties with $\mc Q_{\P^1}$.
The purpose of this article is to study the birational geometry of $\mc Q_L$ for arbitrary $E^0$ and sufficiently large $n$.
In particular, we show that, similarly to $\mc Q_{\P^1}$, the space $\mc Q_L$ is a Mori dream space.

To produce all the SQMs of $\mc Q_L$, we work with the moduli space 
$M_{C,\delta}(E_0,s,d)$ for $E_0\coloneqq(E^{0})^\vee$ and a real number $ \delta>0$,
which parametrizes equivalence classes of  $\delta$-semistable pairs $(E,\alpha)$ with $E$ a vector bundle of rank $s$ and degree $d$ and $\alpha : E_0 \to  E$ a non-zero homomorphism. 
We refer to \S \ref{section_preliminaries} for the definition of $\delta$-(semi)stability.
This semistability condition was first introduced by Bradlow in \cite{Bradlow_special_metric} for $E_0 =\calo_C$.
In \cite{Bradlow_Daskalopoulos_Moduli}, the moduli space of semistable pairs $M_{C,\delta}(\mc O_C,s,d)$ was constructed as a compact K\"ahler manifold for certain values of $\delta$. In \cite{Thaddeus}, in the case $s=2$, it was shown that the moduli spaces for different values of $\delta$ are related by a sequence of flips. 
 In \cite{BDW}, $M_{C,\delta}(\mc O^r_C,s,d)$ was introduced for a certain range of $\delta$ and it was shown that for  $s=2$, they are again related by a sequence of flips. The moduli of semistable pairs $M_{C,\delta}(E_0,s,d)$ for any $E_0$ (in fact on any smooth projective variety) were constructed in \cite{Wan} and \cite{Lin}. 
As in the case of Quot schemes, there is a morphism
\[
 \det: M_{C,\delta}(E_0,s,d)\to \Pic^d(C), \quad [E_0 \xrightarrow{\alpha} E]\mapsto [\det(E)].
\]
For $[L]\in \Pic^d(C)$, we define $M_{C,\delta}(E_0,s,d)_L\coloneqq\det^{-1}([L])$.

 The relevance of the moduli space of stable pairs to our study of Quot schemes is that, for $\delta\gg 0$, there exists an isomorphism
\begin{align}\label{eq_intro1}
\Quot(E^0,k,n) \xrightarrow{\sim} M_{C,\delta}(E_0,r-k,n-e) , \quad  [E^0 \stackrel{q}{\twoheadrightarrow} B] \mapsto [E_0 \to (\ker q)^\vee].
\end{align}
Since $\det (\ker q)^\vee = \det B \otimes \det E_0$,
\eqref{eq_intro1} induces an isomorphism 
\begin{align}\label{eq_intro2}
\Quot(E^0,k,n)_L \simeq  M_{C,\delta}(E_0,r-k,n-e)_{L \otimes \det E_0}.
\end{align}

In the following Theorems \ref{thm_intro_SQM}, \ref{intro_thm_cones}, we work under the setting below.
\begin{itemize}
\item $C$ is a smooth projective curve of genus $g \geq 2$.
\item $E^0$ is a vector bundle on $C$ of rank $r$ and degree $e$.
\item $ 0 \leq k \leq r-2$ is an integer.
\item $n  $ is a sufficiently large integer depending on $C, E^0,k$.
\item $\mc Q_L  $ is the fiber of the morphism $\det :  \Quot(E^0,k,n) \to {\Pic}^n(C)$ over  $[L] \in \Pic^n(C) $.
\item Set $E_0 \coloneqq (E^0)^\vee, s \coloneqq r-k, d\coloneqq n-e$ and $ \tilde{L}\coloneqq L \otimes \det E_0 \in \Pic^d(C)$.
\end{itemize}

Under the above setting,
we see that there exists a sequence of rational numbers 
\begin{align*}
0 =\delta_0 < \delta_1 < \delta_2 < \cdots < \delta_N 
\end{align*}
which depends on $E_0,s,d$, 
such that for each $1 \leq i \leq N+1$, $M_{C,\delta}(E_0, s, d)$ and $M_{C,\delta}(E_0, s, d)_{\tilde{L}}$ do not depend on $\delta \in (\delta_{i-1}, \delta_i)$,
where $\delta_{N+1} =\infty$.
In particular, $M_{C,\delta}(E_0, s, d)_{\tilde{L}}$ is isomorphic to $\mc Q_L $ for $\delta > \delta_N$ by \eqref{eq_intro2}.

\begin{theorem}\label{thm_intro_SQM}
For $1 \leq i \leq N+1$ and $\delta \in (\delta_{i-1},\delta_i)$,
the following hold.
\begin{enumerate}
\setlength{\itemsep}{0mm}
\item $M_{C,\delta}(E_0, s, d)_{\tilde{L}}$ is irreducible, reduced, normal, locally complete intersection, and locally factorial.
\item The rational map defined by 
\begin{align*}
 \mc Q_L \dashrightarrow M_{C,\delta}(E_0, s, d)_{\tilde{L}}  : [E^0 \stackrel{q}{\twoheadrightarrow} B] \mapsto [E_0 \to (\ker q)^\vee],
\end{align*}
is a SQM of $ \mc Q_L$.
\end{enumerate}
\end{theorem}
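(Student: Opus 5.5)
We argue by descending induction on the chamber index $i$, starting from $i=N+1$, where $M_{C,\delta}(E_0,s,d)_{\tilde L}\simeq\mc Q_L$ by \eqref{eq_intro2}. For $i=N+1$, part (1) follows from the known properties of $\mc Q_L$ for $n\gg0$: irreducibility and generic smoothness \cite{PR}, flatness of $\det$ and local factoriality \cite{GS-Picard}. Part (2) is trivial there. The common thread is to compare each $M_\delta\coloneqq M_{C,\delta}(E_0,s,d)_{\tilde L}$ with the open locus $U\subset\mc Q_L$ of quotients $E^0\twoheadrightarrow B$ whose dual kernel $(\ker q)^\vee$ is a stable bundle and for which the pair $E_0\to(\ker q)^\vee$ is $\delta$-stable for all $\delta>0$. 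For such points, stability of the pair is governed by stability of the bundle once $\delta$ is small relative to everything. Then $U$ embeds as an open subset of every $M_\delta$, which gives the birational map in (2).

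\textbf{Step 1 (local structure).} At a $\delta$-stable pair $(E,\alpha)$ the deformation theory is governed by the hypercohomology of $C^\bullet\colon \mathcal{E}nd(E)\to \mathcal{H}om(E_0,E)$, with the trace part removed for fixed determinant. The tangent space is $\mathbb H^1$ and obstructions lie in $\mathbb H^2$. For $\delta$ not a critical value, $\delta$-semistability equals $\delta$-stability, so $M_\delta$ is a fine projective moduli space (via \cite{Wan},\cite{Lin}). Locally it is cut out in a smooth space of dimension $\dim\mathbb H^1$ by $\dim \mathbb H^2$ equations. Hence $M_\delta$ is lci, provided every component has the expected dimension $\chi$. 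This reduces (1) to dimension estimates.

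\textbf{Step 2 (dimension estimates; main obstacle).} I would stratify $M_\delta$ by Harder--Narasimhan type of $E$, or by the maximal destabilizing subbundle $F\subset E$ containing $\operatorname{im}\alpha$. For each non-generic stratum, bound the dimension by counting extensions $0\to F\to E\to E/F\to 0$ together with the maps $\alpha$. Using $n\gg0$ (so $d\gg0$), as in the argument for the Quot scheme in \cite{PR}, every such stratum should have codimension at least $2$ in the expected dimension, uniformly in the finitely many chambers, in fact growing with $d$. This gives: $M_\delta$ is of expected dimension, hence lci and Cohen--Macaulay; it contains $U$ as a dense open subset, so it is irreducible; and it is generically smooth. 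By Serre's criterion, codimension of the singular locus at least $2$ gives reducedness and normality. For local factoriality, use Grothendieck's theorem: an lci local ring that is factorial in codimension $\le3$ is factorial. So it suffices that the singular locus has codimension $\ge4$, which again follows from the stratum estimates for $d$ large. Controlling all strata uniformly for every $\delta$ is the expected hard part. I would first try to reduce it to bounding $h^0(\mathcal{H}om(E,E_0\otimes K_C))$-type terms (equivalently $\mathbb H^2$) via semistability bounds on $E$.

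\textbf{Step 3 (SQM).} $\mc Q_L$ and $M_\delta$ share the open set $U$. The complements $\mc Q_L\setminus U$ and $M_\delta\setminus U$ are unions of the strata from Step 2, so both have codimension $\ge2$. Thus the rational map is an isomorphism in codimension one between normal, $\Q$-factorial projective varieties, which is an SQM. Alternatively, across each wall $\delta_i$ one can compare $M_{\delta}$ on both sides through the semistable space at $\delta_i$. The flip loci are parametrized by pairs with a destabilizing sub-pair, and the dimension count of Step 2 shows that they have codimension $\ge2$ on both sides.
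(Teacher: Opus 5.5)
There is a genuine gap. It sits in Step 2 and carries over into Step 3: you stratify by the wrong loci.

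\textbf{Why the strata of Step 2 are the wrong ones.} Strata defined by the Harder--Narasimhan type or the (semi)stability of $E=(\ker q)^\vee$ do not have codimension growing with $d$. The stratum with Harder--Narasimhan factors of ranks $s_1,s_2$ and degrees $d_1,d_2$ has (expected) codimension $s_1s_2(g-1)+(s_2d_1-s_1d_2)$, and the strictly semistable locus can be a divisor. Concretely, take $g=2$, $s=2$ (i.e.\ $k=r-2$) and $d$ even. Then the pairs with $E$ strictly semistable form the prime divisor $f_L^{-1}(D_L)\subset M_{1,\tilde L}$ of Appendix~A. Its general point has $E$ semistable and $\alpha$ generically surjective, so it is $\delta$-stable for every $\delta>0$. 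Hence this divisor survives in every $M_{C,\delta}(E_0,s,d)_{\tilde L}$ and in $\mc Q_L$, and it lies outside your $U$. This has two consequences:
\begin{itemize}
\item the complements in Step 3 are not of codimension $\ge 2$;
\item the bound $\codim \operatorname{Sing}\ge 4$ needed for Grothendieck's theorem cannot be read off from these strata. For $s=2$, $g\le 3$, the most balanced unstable stratum already has codimension $g<4$.
\end{itemize}
You could enlarge $U$ to pairs with $E$ semistable and $\alpha$ generically surjective. These are still $\delta$-stable for all $\delta>0$, and they are smooth points since $\Hom(E,\omega_C\otimes\ker\alpha)=0$ when $\mu_{\min}(E)\gg 0$. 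But you would still need codimension estimates for the unstable locus and the non-generically-surjective locus on both sides, plus a separate argument locating the singular locus. None of this is supplied.

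\textbf{What the paper does instead.} It never stratifies by the type of $E$; it controls only the wall-crossing loci $W^-_{\delta'}$. These are $\P(\Ext^1(F,E'))$-bundles over pieces of $M^s_{C,\delta'}(E_0,s-s',d-d')\times M_C(s',d')$. Their codimension is $\ge c$ for $d\ge d_c$, uniformly in $\delta'$ (Lemmas~\ref{lemma-upper bound on dim of W} and~\ref{lem_similar_to_M_L}). The argument then runs:
\begin{itemize}
\item Descending induction from $\mc Q_L$ bounds the dimension by the expected one, with at most one top-dimensional component.
\item The obstruction-theoretic lower bound then gives lci and irreducibility.
\item A closed $Z$ of codimension $\ge 4$ with $M_{C,\delta}(E_0,s,d)_{\tilde L}\setminus Z\hookrightarrow \mc Q_L$ transports reducedness, normality and codimension-$\ge 4$ singularities from \cite{GS-Picard}, whence local factoriality.
\end{itemize}

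\textbf{The missing half of the SQM argument.} These estimates only show that $M_{1,\tilde L}\dashrightarrow M_{C,\delta}(E_0,s,d)_{\tilde L}\dashrightarrow \mc Q_L$ contract no divisors. The reverse direction comes from the sandwich $2\le \rho(M_{1,\tilde L})\le \rho(M_{C,\delta}(E_0,s,d)_{\tilde L})\le \rho(\mc Q_L)=2$. This uses $\Pic(\mc Q_L)\simeq\Z^2$, from \cite{GS-Picard}, and from Appendix~A when $(g,k)=(2,2)$.

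Your alternative of estimating flip loci on both sides lacks exactly this half. On the $+$ side the flipped locus has codimension $\dim \Ext^1(F,E')=s'\delta'+s'(s-s')(g-1)$. This is bounded independently of $d$ at the first walls; for $s=2$ at $\delta_1$ it is precisely the most balanced unstable Harder--Narasimhan stratum. So it needs its own argument, not the Step 2 count.
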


If $k \geq 2$,
we see that 
\begin{align*}
 \mc Q_L={\Quot}_C(E^0,k,n)_L  \dashrightarrow  \mc Q'_{\tilde{L}} \coloneqq \Quot_C(E_0,s,d)_{\tilde{L}} : [E^0 \stackrel{q}{\twoheadrightarrow} B] \mapsto [E_0 \twoheadrightarrow (\ker q)^\vee]
\end{align*}
is a SQM as well.
Hence there exists a sequence of rational numbers 
\begin{align*}
0 =\delta'_0 < \delta'_1 < \delta'_2 < \cdots < \delta'_{N'} 
\end{align*}
such that for $1 \leq j \leq N'+1$ and $\delta' \in (\delta'_{j-1},\delta'_j)$,
the composite 
\begin{align*}
 \mc Q_L  \dashrightarrow  \mc Q'_{\tilde{L}} \dashrightarrow M_{C,\delta'}(E^0, k, n)_L : [E^0 \stackrel{q}{\twoheadrightarrow} B] \mapsto [E^0 \stackrel{q}{\twoheadrightarrow} B]
\end{align*}
is a SQM by  applying Theorem \ref{thm_intro_SQM} to $\mc Q'_{\tilde{L}}$.

In the following theorem, $M_{C,\delta}(E_0, s, d)_{\tilde{L}}$ is denoted by $M_{i,\tilde{L}} $ for $\delta \in (\delta_{i-1},\delta_i)$ for simplicity.
Similarly,
$M_{C,\delta'}(E^0, k, n)_{L}$ is denoted by $M'_{j,L} $ for $\delta' \in (\delta'_{j-1},\delta'_j)$.
By \eqref{eq_intro2}, we identify $ \mc Q_L$ (resp.\ $\mc Q'_{\tilde{L}}$) with $M_{N+1,\tilde{L}}$ (resp.\ $M'_{N'+1,L}$).

\begin{theorem}\label{intro_thm_cones}
There exists a basis $\lambda,\mu $ of {$N^1(\mc Q_L)_\R \simeq \R^2$}
such that  the following hold.

Let $\theta_i$ be classes in $ N^1(\mc Q_L)_\R=\R \lambda \oplus \R \mu$ defined by
\begin{align*}
\theta_i &= \left(\frac{d+\delta_{i}}{s}  -(g-1) \right)\lambda -\mu \quad \text{for} \quad 0 \leq i \leq N.
\end{align*}
If $k \geq 2$, we further define $\theta'_j \in  N^1(\mc Q_L)_\R$ by
\begin{align*}
\theta'_j &= \left(\frac{n+\delta'_{j}}{k}  +(g-1) \right)\lambda +\mu \quad  \text{for} \quad  0 \leq j \leq N'.
\end{align*}
If $k=1$, we define $\theta'_0$ in the same way by setting $\delta'_0=0$, that is, $\theta'_0 = \left(n +(g-1) \right)\lambda +\mu $.
Then
\begin{enumerate}
\setlength{\itemsep}{0mm}
\item It holds that
\begin{align*}
\Nef(M_{i,\tilde{L}}) &=\R_{\geq 0} \theta_{i-1}+ \R_{\geq 0} \theta_{i}  \quad \text{for} \quad 1\leq i \leq N, \\
\Nef(\mc Q_L)  =\Nef(M_{N+1,\tilde{L}}) &=\R_{\geq 0} \theta_{N}+ \R_{\geq 0} \lambda .
\end{align*}
Furthermore, if $k \geq 2$, it holds that 
\begin{align*}
\Nef(M'_{j,L}) &= \R_{\geq 0} \theta'_{j-1}+ \R_{\geq 0} \theta'_{j} \quad \text{for} \quad 1\leq j \leq N', \\
\Nef(\mc Q'_{\tilde{L}}) =\Nef(M'_{N'+1,L}) &= \R_{\geq 0} \theta'_{N'}+ \R_{\geq 0} \lambda .
\end{align*}
All edges of these nef cones are spanned by semiample line bundles.
\item If $k =0$ or $1$, 
\begin{align*}
 \Mov (\mc Q_L) = \bigcup_{i=1}^{N+1} \Nef(M_{i,\tilde{L}})  =\R_{\geq 0} \theta_0  + \R_{\geq 0} \lambda.
\end{align*}
If $k \geq 2$,
\begin{align*}
 \Mov (\mc Q_L) = \bigcup_{i=1}^{N+1} \Nef(M_{i,\tilde{L}})  \cup  \bigcup_{j=1}^{N'+1} \Nef(M'_{j,L}) =\R_{\geq 0} \theta_0  + \R_{\geq 0} \theta'_0.
\end{align*}
\item It holds that
\begin{align*}
\Eff (M_{N+1,\tilde{L}})  =  
   \begin{cases}
     \R_{\geq 0} \theta_0  + \R_{\geq 0} \lambda  &  \text{ if } \  k=0 \\
     \R_{\geq 0} \theta_0  + \R_{\geq 0} \theta'_0   & \text{ if } \ 1 \leq k \leq r-2
   \end{cases}
\end{align*}
\end{enumerate}
In particular,
$\mc Q_L $ is a Mori dream space.
\end{theorem}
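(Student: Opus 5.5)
We propose to prove Theorem \ref{intro_thm_cones} by combining variation of GIT for the moduli of $\delta$-semistable pairs with the description of the SQMs in Theorem \ref{thm_intro_SQM}. The first step is to fix the basis. Let $(\mathcal{E},\alpha)$ be the universal pair on $C \times M_{i,\tilde L}$, and let $p$ be a point of $C$. Set $\lambda$ to be the class of $\det(\mathcal{E}|_{\{p\}\times M})$; on $\mc Q_L$ this is the pullback of the Plücker-type class, which is globally generated. Set $\mu$ to be a second tautological class, for instance $\pi_*(c_2(\mathcal{E}) - \tfrac{s-1}{2s}c_1(\mathcal{E})^2)$, or equivalently the determinant-of-cohomology class, normalized so that the GIT linearization for stability parameter $\delta$ becomes $\left(\frac{d+\delta}{s}-(g-1)\right)\lambda-\mu$. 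Since each rational map in Theorem \ref{thm_intro_SQM}(2) is an isomorphism in codimension one and $\Pic=\Z^2$, these classes transport to every $M_{i,\tilde L}$ and every $M'_{j,L}$.

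The second step is to prove (1). Via the GIT construction of Wan and Lin, each $M_{i,\tilde L}$ is a GIT quotient of a fixed parameter space (a Quot scheme of $E_0\otimes\calo(-m)$-type) with linearization depending linearly on $\delta$. Hence $\theta_\delta$ descends to an ample class on $M_{C,\delta}$ for $\delta\in(\delta_{i-1},\delta_i)$. At the walls, Thaddeus-type VGIT provides morphisms $M_{i}\to M_{\delta_i}$ and $M_{i+1}\to M_{\delta_i}$, and $\theta_{\delta_i}$ is the pullback of an ample class. These contractions are small, because we can compute the dimension of the locus of pairs destabilized at $\delta_i$ (via the extension data of subpairs and quotients) and show it has codimension at least two for $n\gg0$. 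Small contractions are non-trivial, so $\theta_{i-1}$ and $\theta_i$ are boundary rays of the nef cone, which gives $\Nef(M_i)=\R_{\ge0}\theta_{i-1}+\R_{\ge0}\theta_i$, with semiampleness coming from the VGIT models. For $i=N+1$ we have $\mc Q_L$, whose other ray $\lambda$ is semiample via the Plücker map to a Grassmannian bundle, or via the map forgetting $\alpha$. That map contracts curves along which only the quotient moves at a point, so $\lambda$ is not ample. The primed statements follow by applying the same argument to $\mc Q'_{\tilde L}$ with the roles of $(E^0,k,n)$ and $(E_0,s,d)$ exchanged, and then translating back. Using $\det$ compatibility, this should yield exactly the signs in $\theta'_j$.

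The third step handles (2) and (3). The chambers $\Nef(M_i)$ glue along the shared rays into $\R_{\ge0}\theta_0+\R_{\ge0}\lambda$, and, when $k\ge2$, the primed chambers fill $\R_{\ge0}\lambda+\R_{\ge0}\theta'_0$. It remains to show the outer rays are boundaries of $\Eff$ and $\Mov$. For $\theta_0$: at $\delta\to0^+$ there is a morphism $M_1\to$ a moduli space of semistable bundles $SU_C(s,\tilde L)$, which is of lower dimension. Hence $\theta_0$ is semiample but not big, so it bounds $\Eff$ and therefore $\Mov$. For $k=0$, $\lambda$ itself is not big: it is pulled back from $\mc Q_L\to$ a lower-dimensional space, since $B$ is torsion and $\lambda$ factors through the support map to $\Sym^n C$ restricted to $L$. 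For $k=1$, $\theta'_0$ should be the class of an effective divisor, namely the locus where the rank-one quotient $B$ has torsion. This divisor is exceptional for the map $\mc Q_L\to$ (line bundles), so it spans an extremal ray of $\Eff$ outside $\Mov$. For $k\ge2$, $\theta'_0$ similarly comes from $\mc Q'_{\tilde L}$ via $M'_1\to SU_C(k,L)$, and is not big. A Mori dream space then follows from the Hu–Keel criterion, using finitely many chambers, semiampleness, and local factoriality from Theorem \ref{thm_intro_SQM}(1).

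The main obstacle is step two: identifying the GIT-descended polarization with the explicit $\theta_\delta$, including the normalization of $\mu$, and proving that wall-crossing loci have codimension at least two (so every map is an SQM rather than a divisorial contraction). I would first attack the codimension estimate by a Riemann–Roch dimension count of the flip loci, parametrizing them as extensions of subpairs of rank $s'$, and using $n\gg0$ together with $g\ge2$.
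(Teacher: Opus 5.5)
\textbf{Verdict: genuine gap.} Your architecture matches the paper's: chambers cut out by the walls $\delta_i$, the ray $\theta_0$ from $M_{1,\tilde L}\to M_C(s,\tilde L)$, the ray $\lambda$ from the Plücker-type map (or the Quot-to-Chow map when $k=0$), the primed chambers via $\mc Q'_{\tilde L}$, and Hu--Keel. What is missing is the actual content of the theorem: which classes span the walls in one fixed basis.

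\emph{The basis.} You define $\mu$ by the answer you want, and your concrete candidate is not what you say it is. On $M_{\tilde L}\times C$ one has $c_1(\cale)=p^*\lambda+d\,[{\rm pt}]$, and GRR gives $c_1(p_!\cale)=(d+1-g)\lambda-p_*c_2(\cale)$. Hence $p_*\bigl(c_2-\tfrac{s-1}{2s}c_1^2\bigr)=\bigl(\tfrac ds-(g-1)\bigr)\lambda-c_1(p_!\cale)$, which is $\theta_0$ itself, not something equivalent to the determinant of cohomology.

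\emph{The wall classes.} You leave the VGIT descent, which you call the main obstacle, without a method. The codimension estimate you would attack first is already Theorem \ref{thm_intro_SQM}. The missing idea is that no GIT descent is needed. Put $\lambda=\lambda_\cale([{\rm pt}])$ and $\mu=\lambda_\cale([\calo_C])$, and compute by GRR the restriction of $\lambda_\cale\bigl((a[\calo_C]+b[{\rm pt}])\cdot\td(C)^{-1}\bigr)$ to the projective spaces that the relevant morphisms contract. It is $\calo(ad+bs)$ on a fiber $\P(\Hom(E_0,E))$ of $M_{1,\tilde L}\to M_C(s,\tilde L)$. It is $\calo(-ad'-bs')$ on a fiber $\P(\Ext^1(F,E'))$ of $M_{i,\tilde L}\to M_{C,\delta_i}(E_0,s,d)_{\tilde L}$ over $[(E'\oplus F,\alpha'\oplus0)]$, where $\frac{d+\delta_i}{s}=\frac{d'}{s'}$. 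Since $N^1$ is two-dimensional, the semiample pullback from the wall moduli space is proportional to the unique class killing that fiber, namely $\theta_i$. The sign then follows inductively from positivity on the neighbouring chamber. This needs each $\delta_i$ to be a genuine wall, i.e.\ such a fiber exists and is positive-dimensional; ``small contractions are non-trivial'' is not an argument for this.

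\emph{Part (3) for $k=1$.} You only assert that the torsion divisor has class $\theta'_0$. The paper gets this in two steps.
\begin{enumerate}
\item It computes $K_{\mc Q_L}=(2s(g-1)-e-2d)\lambda+(2s-r)\mu$ from two test families. One is the fiber $\P(\Hom(E_0,E))$ on $M_{1,\tilde L}$, where $K$ restricts to $\calo(-\dim\P-1)$. The other is a line $\P(V)\subset\mc Q_L$ of quotients varying at one point, which the Plücker map contracts; there $-K$ is computed from $p_!\,\calh om(\cale^\vee,p_C^*E^0/\cale^\vee)$.
\item It checks $K_{\mc Q_L}-\varphi_L^*K=(r-2)\theta'_0$ for the divisorial contraction $\varphi_L\colon\mc Q_L\to\Quot_C(E_0,r-1,d)_{\tilde L}$ onto a projective space. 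Since $r-2\ge1$ and the exceptional divisor lies beyond $\R_{\ge0}\lambda$, outside $\Nef(\mc Q_L)$, it is a positive multiple of $\theta'_0$.
\end{enumerate}

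\emph{The primed side for $k\ge2$.} Compatibility of determinants gives only $\lambda_{\cale'}([{\rm pt}])=\lambda$. The stated $\theta'_j$ also needs $\lambda_{\cale'}([\calo_C])=-\mu-2(g-1)\lambda$. This follows from $\cale'\simeq p_C^*E^0/\cale^\vee$, from $\lambda_{p_C^*E^0}([\calo_C])=0$, and from Grothendieck duality $\lambda_{\cale^\vee}([\calo_C])=\lambda_\cale([\omega_C])$.

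\emph{A smaller error.} There is no ``map forgetting $\alpha$'' on $\mc Q_L$, because $(\ker q)^\vee$ is not semistable in general. Only the Plücker-type map gives semiampleness of $\lambda$.
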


The following figures are the slices of the movable cone $\Mov(\mc Q_L)$.
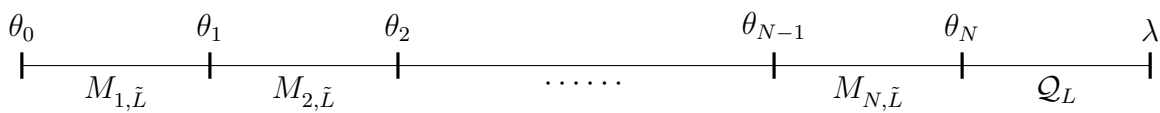
\begin{figure}[htbp]
\centering
\begin{tikzpicture}
\draw(0,0)--(15,0);

\foreach \x in {0,2.5,5,10,12.5,15}
  \draw[very thick] (15-\x,5pt)--(15-\x,-5pt);

\foreach \y/\ytext in {0/$\lambda$,2.5/$\theta_N$,5/$\theta_{N-1}$,10/$\theta_2$,12.5/$\theta_1$,15/$\theta_0$}
  \draw (15-\y,0) node[above=1ex] {\ytext};

\foreach \z/\ztext in {1.25/$\mc Q_L$,3.75/$M_{N,\tilde{L}}$,7.5/$ \cdots\cdots$,11.25/$M_{2,\tilde{L}}$,13.75/$M_{1,\tilde{L}}$}
  \draw (15-\z,0) node[below] {\ztext};
\end{tikzpicture}
\caption{The slice of the movable cone $\Mov(\mc Q_L)$ for $k=0$ or $1$} \label{mov_0,1}
\end{figure}

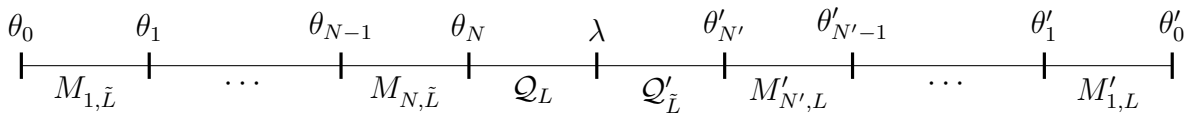
\begin{figure}[htbp]
\centering
\begin{tikzpicture}[x=0.85cm]
\draw(0,0)--(18,0); 

\foreach \x in {0,2,5,7,9,11,13,16,18} 
  \draw[very thick] (\x,5pt)--(\x,-5pt);

\foreach \y/\ytext in {
  0/$\theta'_0$,
  2/$\theta'_1$,
  5/$\theta'_{N'-1}$,
  7/$\theta'_{N'}$,
  9/$\lambda$,
  11/$\theta_N$,
  13/$\theta_{N-1}$,
  16/$\theta_1$,
  18/$\theta_0$}
  \draw (18-\y,0) node[above=1ex] {\ytext};

\foreach \z/\ztext in {
  1/$M'_{1,L}$,
  3.5/$\cdots$,
  6/$M'_{N',L}$,
  8/$\mc Q'_{\tilde{L}}$,
  10/$\mc Q_L$,
  12/$M_{N,\tilde{L}}$,
  14.5/$\cdots$,
  17/$M_{1,\tilde{L}}$}
  \draw (18-\z,0) node[below] {\ztext};
\end{tikzpicture}

\caption{The slice of the movable cone $\Mov(\mc Q_L)$ for $k\geq 2$} \label{mov_2}
\end{figure}

We further show that $\det:\Quot(E^0,k,n) \to \Pic^n(C)$ is a Mori dream morphism, which is the relative version of Mori dream spaces as introduced in \cite{Ohta}.
We also study $M_{C,\delta}(E_0,s,d)$ and $M_{C,\delta}(E_0,s,d)_L$ for $s >r$ and obtain similar results.

This paper is organized as follows. 
In \S \ref{section_preliminaries}, we recall the notion of $\delta$-stability.
In \S \ref{sec_small_or_large_delta}, we consider $M_{C,\delta}(E_0,s,d)$ for sufficiently large or small $\delta>0$.
In  \S \ref{sec_estimate_of_dimension},
we give some estimates of the dimensions of the moduli spaces. In 
\S \ref{sec_SQM}, we prove Theorem \ref{thm_intro_SQM}.
{ In \S \ref{sec_Gamma}, we determine $\delta_i$ in Theorem \ref{thm_intro_SQM}.}
In \S \ref{section_nef cone}, we determine the nef cones of  $M_{C,\delta}(E_0,s,d)_L$. In 
\S \ref{sec_canonical_div}, we determine the canonical divisor of $M_{C,\delta}(E_0,s,d)_L$. In 
\S \ref{sec_proof_of_main_result}, we give a proof of Theorem \ref{intro_thm_cones}.
In Appendices \ref{appendix_s=2}, \ref{sec_appendix_s-r=2},
we describe the Picard group of  $M_{C,\delta}(E_0,s,d)_L$ for $ s=2$ or $s-r=2$,
which is used in \S \ref{sec_SQM}.

Throughout this paper, we work over the field of complex numbers $\C$.
We use the notation $\P(V) =\Proj \Sym V^{\vee}$  for a vector space $V$ over $\C$.

\subsection*{Acknowledgments}
A.~Ito was supported by JSPS KAKENHI Grant Number 21K03201.

\section{Preliminaries}\label{section_preliminaries}

Throughout this paper,
$C$ is a smooth projective curve of genus $g \geq 2$
and $E^0$ is a vector bundle on $C$ of rank $r$ and degree $e$.
Let $E_0\coloneqq(E^0)^{\vee}$.

\subsection{$\delta$-stability}
Let $\delta>0$. 
In this section, we recall some notions on $\delta$-stability of pairs.
See \cite{Wan}, \cite{Lin} for details.

\begin{definition}
\begin{enumerate}
\item A \emph{pair} $(E,\alpha)$ consists of a coherent sheaf $E$ and a morphism $\alpha:E_0\to E$. 
If $\alpha \neq 0$, $(E,\alpha)$  is called \emph{nondegenerate}.
\item A \emph{subpair} of a pair $(E,\alpha)$ is a pair $(E',\alpha')$ where $\iota: E'\subset E$ 
is a subsheaf and
\[
\iota \circ \alpha' = \alpha \quad \text{if } \im  \alpha \subset E', \qquad \alpha' = 0 \quad \text{otherwise}.
\]
\item 
A \emph{quotient pair}  of $(E, \alpha)$ is a pair $(E'', \alpha'') $ where $q : E \to E''$ is a quotient sheaf  and $ \alpha'' = q \circ \alpha : E_0 \to E'' $. 
\item The \emph{$ \delta$-slope} of  a pair $ (E, \alpha) $ is
\[
\mu_\delta(E, \alpha) = \mu(E, \alpha) \coloneqq \frac{\deg E + \varepsilon(\alpha)\delta}{\rank  E},
\]
where $\varepsilon(\alpha) \coloneqq 0$ if $\alpha=0$ and $\varepsilon(\alpha) \coloneqq 1$ if $\alpha\neq 0$.
\item A \emph{morphism} of  pairs $\varphi : (E, \alpha) \to (F,\beta)$ is a morphism of sheaves $\varphi : E \to F$ such that there exists a constant $ c \in \C$ with $\varphi \circ \alpha =c \beta$.
\item A pair $ (E, \alpha) $ is $ \delta $-\emph{stable} if $ E $ is locally free and
\begin{equation}
\mu(E', \alpha') < \mu(E, \alpha) \tag{1.1}
\end{equation}
for any proper subpair $(E',\alpha')$. 
Equivalently,
 $(E, \alpha)$ is $\delta$-stable if  $E$ is locally free and
\begin{equation}
\mu(E, \alpha) < \mu(E'', \alpha'') \tag{1.2}
\end{equation}
for every proper quotient pair $ (E'', \alpha'')$. 
$\delta$-semistability is defined by replacing the strict inequality  $ <$ with $ \leq $.
\item For a subpair $(E',\alpha')$  of a pair $(E,\alpha)$,
the quotient pair $(E,\alpha)/(E',\alpha') =(E/E',\alpha'')$ is defined by 
$\alpha''=0$ if $\alpha' \neq 0$, and $\alpha''=q \circ \alpha$ if $\alpha' =0$,
where $q : E \to E/E'$ is the quotient map.
\item For integers $s \geq 1$ and $d$,
let  $M_{C,\delta}(E_0, s, d) $  be the coarse moduli space of $S$-equivalence classes of (nondegenerate, that is, $ \alpha \neq 0$) $ \delta$-semistable pairs $ (E, \alpha) $ with $\rank E=s$, 
$ \deg E = d $.
\item 
The fiber of 
\[
\det : M_{C, \delta}(E_0, s, d) \to \operatorname{Pic}^d(C) :  \quad [(E, \alpha)] \mapsto [\det E]
\]
over $[L] \in \Pic^d(C)$ is denoted by $M_{C, \delta}(E_0, s, d)_L$.
\end{enumerate}
\end{definition}

We note that the moduli space $M_{C,\delta}(E_0, s, d) $ exists by \cite[Theorem 3.8]{Wan}, \cite[Theorem 1.1]{Lin}.
Furthermore, the locus $M^s_{C,\delta}(E_0, s, d) $ of $ \delta $-stable pairs is the fine moduli space.

\begin{lemma}\label{lem_preliminary}
\begin{enumerate}
\item For a locally free sheaf $E$, $(E,0)$ is $\delta$-(semi)stable if and only if $E$ is (semi)stable in the usual sense.
\item If $f: (E,\alpha) \to (F,\beta)$ is a non-zero morphism between $\delta$-stable pairs with $\mu(E,\alpha) =\mu(F,\beta)$, 
then $f$ is an isomorphism. 
\item If $F$ is semistable and  $(E, \alpha)$ is nondegenerate and $\delta$-stable with $\mu(F) =\mu(E,\alpha)$, then $\Hom(F,E) =0$.
\item Let $0 \to (E',\alpha') \to (E,\alpha) \to (E'',\alpha'') \to 0$ be exact, that is, $0 \to E' \to E \to E'' \to 0$ is exact and $ (E',\alpha') $ is a subpair of $(E,\alpha)$, and $(E'',\alpha'')$ is the corresponding quotient pair.
If $(E',\alpha')$ and $(E'',\alpha'')$ are $\delta$-semistable and $\mu(E',\alpha') =\mu(E'',\alpha'')$,
then $(E,\alpha) $ is $\delta$-semistable.
\item Let $0 \to (E',\alpha') \to (E,\alpha) \to (E'',0) \to 0$ be exact with $\alpha \neq 0$.
Assume $(E',\alpha')$ is $\delta$-stable and $E''$ is semistable with $\mu_\delta(E',\alpha') =\mu(E'')$.
If $(E,\alpha)$ is not $(\delta-\ep)$-stable for sufficiently small $\ep >0$,
there exists a subsheaf $G$ with $G \cap E' =0$ and $\mu(G)=\mu(E'')$.
\end{enumerate}
\end{lemma}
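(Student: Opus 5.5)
I will prove the five items in order. Each is a short consequence of the slope inequalities in the definition, applied to suitable subpairs or quotient pairs. Throughout, I use that $\mu_\delta$ is additive on short exact sequences of pairs, in the sense that the numerator $\deg+\varepsilon\delta$ and the rank are both additive. This holds because $\varepsilon(\alpha)=\varepsilon(\alpha')+\varepsilon(\alpha'')$ by the convention in (7).

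For (1), the subpairs of $(E,0)$ are exactly the pairs $(E',0)$ with $E'\subset E$. Their $\delta$-slopes are the ordinary slopes, so the two conditions coincide.

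For (2), let $f:(E,\alpha)\to(F,\beta)$ be nonzero, and set $I=\im f$ and $K=\ker f$.
\begin{itemize}
\item $K$ with the induced map is a subpair of $(E,\alpha)$, and $I$ with the induced map is simultaneously a quotient pair of $(E,\alpha)$ and a subpair of $(F,\beta)$. If $c\neq 0$, the induced map on $I$ is $c\beta$; if $c=0$, it is compatible in the sense that $\varepsilon$ can only drop on the subpair side.
\item Stability of $E$ gives $\mu(E,\alpha)\le\mu(I)$, strictly unless $K=0$.
\item Stability of $F$ gives $\mu(I)\le\mu(F,\beta)$, strictly unless $I=F$.
\item Since $\mu(E,\alpha)=\mu(F,\beta)$, equality holds throughout. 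Hence $K=0$ and $I=F$, and since $E$ is locally free of the same rank, $f$ is an isomorphism.
\end{itemize}
One subtlety needs care. If $c=0$ but $\varepsilon$ of the image pair computed as a quotient is $1$, then the image viewed as a subpair of $F$ has a possibly different $\varepsilon$. I handle this by noting that the subpair convention always gives $\varepsilon_{\rm sub}\le\varepsilon_{\rm quot}$, which is the inequality in the needed direction.

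For (3), suppose $f:F\to E$ is nonzero. The image $I$ is a quotient of the semistable $F$, so $\mu(I)\ge\mu(F)$. On the other hand, $(I,\alpha')$ is a subpair of $(E,\alpha)$ with $\deg I+\varepsilon\delta\ge\deg I$. If $I\neq E$, stability gives $\mu(F)\le\mu(I)\le\mu(I,\alpha')<\mu(E,\alpha)=\mu(F)$, a contradiction. If $I=E$, then $\mu(E)\ge\mu(F)=\mu(E,\alpha)=\mu(E)+\delta/\rank E>\mu(E)$, since $\alpha\neq 0$ and $\delta>0$; again a contradiction. So $\Hom(F,E)=0$.

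For (4), local freeness of $E$ follows as an extension of locally free sheaves on a curve. Take a subpair $G\subset E$, and set $G'=G\cap E'$ and $G''=\im(G\to E'')$. The exact sequence $0\to G'\to G\to G''\to 0$ becomes a sequence of pairs. The $\varepsilon$-values satisfy $\varepsilon(G)\le\varepsilon(G')+\varepsilon(G'')$ for the induced subpair structures, by case analysis on whether $\im\alpha\subset E'$. Then $\deg G+\varepsilon\delta$ is at most the sum of the two numerators. Each of these is bounded by $\mu\cdot\rank$ with the common slope $\mu$, by semistability of $E'$ and $E''$. This gives $\mu(G)\le\mu$, and additivity gives $\mu(E,\alpha)=\mu$.

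For (5), which I expect to be the main obstacle, I use the fact that for $\delta-\ep$, with $\ep$ small, destabilizing subpairs are exactly the $\delta$-semistable ones of equal $\delta$-slope with $\varepsilon=0$, since $\varepsilon=1$ subpairs gain slack. So there is $G\subset E$ with $\alpha$ not factoring through $G$, $\mu(G)=\mu(E,\alpha)=\mu(E'')$, and $G$ $\delta$-semistable, i.e.\ semistable. Consider $G\cap E'$: it is a subsheaf of the stable $(E',\alpha')$ with $\varepsilon=0$. By (3)-type reasoning (stability of $E'$ plus semistability of $G$), it must be $0$, because $G\cap E'\subset G$ forces slope $\ge$ by a quotient/saturation argument. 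Concretely, the image of $G$ in $E''$ has slope $\le\mu(E'')$ by semistability of $E''$. Additivity then forces $\mu(G\cap E')\ge\mu$ if it is nonzero, contradicting strict stability of $(E',\alpha')$ for a subpair with $\varepsilon=0$. The hard part is organizing this slope bookkeeping and ruling out $G\cap E'=E'$, which would force $\alpha$ into $G$. If needed, I will replace $G$ by a minimal-rank such subsheaf.
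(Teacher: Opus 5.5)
For (1), (3), (4) and (5) you follow the paper's route. In (3), your two cases $I\neq E$ and $I=E$ are equivalent to the paper's single chain, which ends in the contradiction between $\alpha'=\alpha\neq0$ and $\alpha'=0$. In (4), your inequality $\varepsilon(G)\le\varepsilon(G')+\varepsilon(G'')$ packages the paper's two cases $\beta=0$ and $\beta\neq0$; your remark on local freeness of $E$ is a point the paper leaves implicit. The ``Concretely'' part of (5) is exactly the paper's additivity argument on $0\to G\cap E'\to G\to G/(G\cap E')\to0$.

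For (2) the paper simply cites \cite[Lemma 2.11]{Lin}. Your direct kernel/image argument is the standard one, but the step you flag as the subtlety is stated backwards and, as written, fails. You bound $\mu(E,\alpha)\le\mu(I,f\circ\alpha)$ using $I$ as a quotient pair of $(E,\alpha)$, and $\mu(I,\beta_I)\le\mu(F,\beta)$ using $I$ as a subpair of $(F,\beta)$. To splice these you need $\varepsilon_{\mathrm{quot}}\le\varepsilon_{\mathrm{sub}}$. You instead claim $\varepsilon_{\mathrm{sub}}\le\varepsilon_{\mathrm{quot}}$, which is not what the chain needs and is false in general. If $c=0$ then $f\circ\alpha=0$, so $\varepsilon_{\mathrm{quot}}=0$, while $\varepsilon_{\mathrm{sub}}=1$ as soon as $0\neq\im\beta\subset I$ (e.g.\ $f$ surjective). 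The case you describe, $c=0$ with $\varepsilon_{\mathrm{quot}}=1$, cannot occur. The repair is one line: if $f\circ\alpha\neq0$ then $c\neq0$, so $\beta=c^{-1}f\circ\alpha$ is nonzero with image in $I$, giving $\varepsilon_{\mathrm{sub}}=1$. Thus $\varepsilon_{\mathrm{quot}}\le\varepsilon_{\mathrm{sub}}$ always holds, and your chain closes.

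In (5), state explicitly that $(E,\alpha)$ is $\delta$-semistable by (4), using (1) for $(E'',0)$, as the paper does. Without this, the assertion that a $(\delta-\ep)$-destabilizing subpair has $\alpha_G=0$ and $\mu(G)=\mu_\delta(E,\alpha)$ is unjustified.

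Since the quotient pair is $(E'',0)$ and $\alpha\neq0$, you have $\im\alpha\subset E'$. So $G\cap E'=E'$ would put $\im\alpha$ inside $G$, contradicting $\alpha_G=0$. This is immediate rather than a hard part, and it is also what makes $(G\cap E',0)$ a proper subpair of $(E',\alpha')$.

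The semistability of $G$ is never used. Your parenthetical ``stability of $E'$ plus semistability of $G$'' cannot give the contradiction, since semistability of $G$ only yields $\mu(G\cap E')\le\mu$. The bound that does the work is $\mu(G/(G\cap E'))\le\mu(E'')$ from semistability of $E''$, as in your ``Concretely'' sentence. The minimal-rank replacement is also unnecessary.
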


\begin{proof}
(1) follows from the definition.
(2) is a special case of  \cite[Lemma 2.11]{Lin}.\\
(3) Assume $f : F \to E$ is non-zero and let $(f(F), \alpha') $ be the induced subpair of $(E,\alpha)$.
Then $\mu(F) \leq \mu(f(F)) \leq \mu(f(F), \alpha')  \leq \mu(E,\alpha) =\mu(F)$, where $\mu  =\deg /\rank $ is the usual slope,
 and hence the equality holds in each inequality.
By $ \mu(f(F), \alpha')  =\mu(E,\alpha) $ and the $\delta$-stability of $(E,\alpha)$, 
we have $(f(F), \alpha') =(E,\alpha) $ and hence $\alpha'=\alpha \neq 0$.
On the other hand, we have $\alpha'=0$ by $ \mu(f(F)) = \mu(f(F), \alpha')$, which is a contradiction.\\
(4) Note that we have $\mu(E,\alpha) =\mu(E',\alpha') =\mu(E'',\alpha'') \eqqcolon \mu$.

Let $(F,\beta)$ be a subpair of $(E,\alpha)$.
If $\beta=0$, then we have  $\mu(F,\beta) =\mu(F) \leq \mu$ by
 $\mu(F\cap E') \leq \mu(E',\alpha')=\mu $ and $\mu(F/(F\cap E'))  \leq \mu(E'',\alpha'') =\mu$.
If $\beta \neq 0$, 
let $(F\cap E',\beta') \subset (E',\alpha') $ and $ (F/(F\cap E'),\beta'') \subset (E'',\alpha'')$ be the induced subpairs.
Hence we can check that $\mu(F,\beta) \leq \mu$ by using
 $\mu(F\cap E',\beta') \leq \mu(E',\alpha')=\mu $ and $\mu(F/(F\cap E'),\beta'')  \leq \mu(E'',\alpha'') =\mu$.\\
(5) By (4), $(E,\alpha)$ is $\delta$-semistable.
Since  $(E,\alpha)$ is not $(\delta-\ep)$-stable,
there exists a subpair $(G,\alpha_G) \subsetneq (E,\alpha)$ such that $\mu_{\delta-\ep} (G,\alpha_G)\geq \mu_{\delta-\ep} (E,\alpha)$.
Hence we have $\alpha_G=0$ and $\mu(G)= \mu_{\delta} (E,\alpha)$ by $\mu_{\delta} (G,\alpha_G)\leq \mu_{\delta} (E,\alpha)$. 

Then $(G\cap E',0)  \subsetneq (E',\alpha')$ is a subpair  and $ G/(G\cap E') \hookrightarrow E''$ is a subsheaf.
For $\mu \coloneqq \mu_\delta(E,\alpha) =\mu_\delta(E',\alpha') =\mu(E'') $,
we have 
\begin{itemize}
\item $G\cap E' =0$ or $\mu(G\cap E') <\mu $, and
\item $G/(G\cap E') =0$ or $\mu(G/(G\cap E') ) \leq \mu$.
\end{itemize}
Since $0 \to G\cap E' \to G \to G/(G\cap E') \to 0$ is exact and  $\mu(G)=\mu$, 
we have $G\cap E' =0$ and $\mu(G)=\mu(G/(G\cap E') ) \leq \mu$.
\end{proof}

\subsection{Mori dream morphisms}

\emph{Mori dream morphisms}, which are a relative version of Mori dream spaces,  were introduced in \cite{MR3158045}, \cite{Ohta}.
We refer the reader to  \cite{Ohta} for the details.

For a morphism $\pi : X \to U$ between quasi-projective varieties,
\[
\Pic(X/U)\coloneqq \Pic(X)/\pi^* \Pic(U)
\]
denotes the relative Picard group.
We set $ \Pic(X/U)_{\Q} \coloneqq \Pic(X/U) \otimes \Q$ and
$N^1(X/U) = \Pic(X/U)_{\Q}/\equiv$,
where $D \equiv D'$ if $(D.C)=(D'.C)$ for any complete curve $C\subset X$ contracted by $\pi$.
For a Cartier divisor $D$ on $X$, let 
\[
\alpha_{D} : \pi^*\pi_* \calo(D) \to \calo(D)
\]
be the natural map.
A $\Q$-Cartier $\Q$-divisor $D$ is called $\pi$-effective, $\pi$-movable, $\pi$-semiample
if $\codim \Supp (\coker (\alpha_{mD})) \geq 1$, $\codim \Supp (\coker (\alpha_{mD})) \geq 2$, $\coker (\alpha_{mD}) =0$ for some $m >0$, respectively.

\begin{definition}[{\cite[Definition 3.1]{Ohta}}]
A projective morphism $\pi : X \to U$ between quasi-projective varieties is called a \emph{Mori dream morphism} if it satisfies the following conditions.
\begin{enumerate}[label=\arabic*)]
\item $X$ is $\Q$-factorial and $\Pic(X/U)_{\Q} =N^1(X/U)_{\Q}$,
\item $\Nef(X/U)$ is spanned by finitely many $\pi$-semiample line bundles,
\item  there exist finitely many SQMs $ f_i : X \dashrightarrow X_i$ over $U$ such that
each $X_i$ satisfies 2) and $\Mov(X/U) = \bigcup_i f_i^*(\Nef(X_i/U))$.
\end{enumerate}

\end{definition}

\section{$M_{C,\delta}(E_0, s, d)$ for sufficiently large or small $\delta>0$}\label{sec_small_or_large_delta}

Throughout this section, fix integers $s >0$ and $d$.
For a locally free sheaf $F$ on $C$, the (usual) slope is defined by $\mu(F) =\deg F/\rank F.$

First, we consider the case when $\delta$ is sufficiently large.
For each integer $1 \leq i \leq r$, set
\begin{align*}
 e_i \coloneqq \min\{ \deg F \mid E_0 \twoheadrightarrow F, \rank F=i\} 
\end{align*}
and 
\begin{align}\label{eq_dagger}
 \delta_\sharp \coloneqq \inf \left\{\delta \geq 0 \mid \frac{d+\delta}{s} < \frac{e_i+\delta}{i} \text{ for any } 1 \leq i \leq \min\{s-1,r\}\right\} .
\end{align}
More explicitly, we can define $\delta_\sharp \coloneqq 0$ if $s=1$ and 
\begin{align*}
\delta_\sharp \coloneqq \max\limits_{1 \leq i \leq \min\{s-1,r\}} \frac{id-s e_i }{s-i}
\end{align*}
if $s \geq 2$.

\begin{lemma}\label{lem_suff_large_delta}
Let $(E, \alpha)$ be a locally free pair with $\rank E = s$, $\deg E = d$, $\alpha \neq 0$.
Assume $\delta > \delta_\sharp$.
Then $(E, \alpha)$ is $\delta$-semistable if and only if $(E, \alpha)$ is $\delta$-stable if and only if $\alpha$ is generically surjective.

In particular,
\begin{enumerate}
    \item If $s >r$,  $M_{C, \delta}(E_0, s, d)  =\emptyset$.
    \item If $1 \leq s \leq r$,
\[
M_{C, \delta}(E_0, s, d) \to \operatorname{Quot}_C(E^0, r - s, d+e), \quad [(E, \alpha)] \mapsto [E^0 \to E^0 / \alpha^\vee(E^\vee)]
\]
is an isomorphism, where $\alpha^\vee : E^\vee \to E_0^\vee= E^0$ is the dual of $\alpha$.
\end{enumerate}
\end{lemma}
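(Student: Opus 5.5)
The plan is to prove the chain ``semistable $\Rightarrow$ generically surjective $\Rightarrow$ stable'' and to close it with the trivial implication ``stable $\Rightarrow$ semistable''.

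\emph{Semistable implies generically surjective.} Let $(E,\alpha)$ be $\delta$-semistable and put $I\coloneqq \im\alpha\subset E$. Let $F$ be the saturation of $I$ in $E$, of rank $i\geq 1$. Since $I$ is a torsion-free quotient of $E_0$ of rank $i$, we have $\deg I\geq e_i$, so $\deg F\geq \deg I\geq e_i$. Also $i\leq \min\{s,r\}$. If $i<s$, then $(F,\alpha)$ is a proper subpair with nonzero map, and semistability gives
\[
\frac{e_i+\delta}{i}\leq \frac{\deg F+\delta}{i}\leq \frac{d+\delta}{s}.
\]
This contradicts $\delta>\delta_\sharp$, because the defining condition \eqref{eq_dagger} is an open condition in $\delta$: for $s\geq 2$ the function $\delta\mapsto \frac{e_i+\delta}{i}-\frac{d+\delta}{s}$ is strictly increasing, as $1/i>1/s$. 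Hence $i=s$, i.e.\ $\alpha$ is generically surjective. In particular this forces $s\leq r$, which gives (1).

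\emph{Generically surjective implies stable.} Let $(E',\alpha')$ be a proper subpair with $E'\neq 0$, of rank $j$.

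If $\alpha'=0$, then $\im\alpha\not\subset E'$. Since $\alpha$ is generically surjective, the composite $E_0\to E/E'$ is generically surjective. The relevant object is the saturation quotient, of rank $s-j$ when $j<s$. Its degree is at least $e_{s-j}$, because it contains the image of $E_0$, which is a rank $s-j$ quotient of $E_0$. It is cleaner to compare quotient pairs, so I will use the quotient-pair formulation (1.2) throughout. For a proper quotient pair $(E'',\alpha'')$ of rank $i<s$ with $\alpha''\neq0$, the image of $E_0$ in $E''$ has degree $\geq e_i$ and $i\leq\min\{s-1,r\}$. Hence
\[
\mu(E'',\alpha'')\geq \frac{e_i+\delta}{i}>\frac{d+\delta}{s}.
\]
Here we may reduce to $E''$ locally free, since modding out by torsion only lowers the degree. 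By generic surjectivity, every nonzero quotient of $E$ receives a nonzero $\alpha''$.

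The remaining case is a quotient of rank $0$, i.e.\ a torsion quotient. Then its slope is $+\infty$, and the inequality holds in the convention used by \cite{Lin}.

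\emph{The isomorphism in (2).} Generically surjective pairs $\alpha:E_0\to E$ with $E$ locally free correspond, by dualizing, to injections $\alpha^\vee:E^\vee\hookrightarrow E^0$. These have torsion-free image and cokernel of rank $r-s$ and degree $-e+d$, with $\deg E^\vee=-d$, which gives exactly $\Quot_C(E^0,r-s,d+e)$. Conversely, $\ker q$ is locally free and recovers $E=(\ker q)^\vee$.

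\emph{Making it an isomorphism of schemes.} Do the above in families. A flat family of quotients gives a family of locally free kernels, hence a family of stable pairs, which gives a morphism $\Quot\to M^s=M$. Conversely, the universal family on the fine moduli space $M^s$ gives the inverse morphism.

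I expect the main obstacle to be the family version: checking that the cokernel of $\alpha^\vee$ is flat over the base. This holds because $\alpha^\vee$ is fibrewise injective; over a curve, a map of vector bundles that is injective on each fibre as sheaves gives a flat cokernel, by the local criterion. The second point to handle is the infimum/openness detail for $\delta_\sharp$.
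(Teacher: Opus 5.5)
Your proof is correct and is essentially the paper's argument. Both use the same chain: semistable $\Rightarrow$ generically surjective, via the subpair containing $\im\alpha$ and the bound $\deg\geq e_i$; then generically surjective $\Rightarrow$ stable, via positive-rank quotient pairs. Items (1) and (2) then follow by dualizing, and you spell out the family and flatness details that the paper simply calls clear. One small slip: the cokernel of $\alpha^\vee$ has degree $e+d$, not $-e+d$, which is what you need to land in $\Quot_C(E^0,r-s,d+e)$.
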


\begin{proof} 
Assume $(E, \alpha)$ is $\delta$-semistable. If $\alpha$ is not generically surjective, then $s' \coloneqq\rank \im \alpha 
\leq \min\{s-1,r\}$. 
For the induced morphism $\alpha ' : E_0 \to \im \alpha$, 
$(\im \alpha, \alpha') $ is a subpair of $(E, \alpha)$ and hence we have
\[
\frac{d + \delta}{s} =\mu(E,\alpha) \geq \mu(\im \alpha, \alpha') = \frac{\deg (\im \alpha) + \delta}{s'} \geq \frac{e_{s'} +\delta}{s'} ,
\]
which contradicts $\delta > \delta_\sharp$.
Hence $\alpha$ is generically surjective.

If $\alpha$ is generically surjective, so is $\alpha'' : E_0 \to E''$ for any proper quotient $(E'', \alpha'')$ with locally free $E''$. Hence
\[
\mu(E'', \alpha'') = \frac{\deg E'' + \delta}{s''} \geq \frac{e_{s''} + \delta}{s''} > \frac{d + \delta}{s} = \mu(E, \alpha).
\]
Hence $(E, \alpha)$ is $\delta$-stable. 

Since $\delta$-stability implies $\delta$-semistability, we obtain the equivalence in the statement of this lemma.
The last statements (1), (2) are clear.
\end{proof}

Next we consider the case when $\delta >0$ is sufficiently small.
Define $\delta_\flat > 0$ by
\[
\frac{d + \delta_\flat}{s} = \min \left\{ \frac{d'}{s'} \, \middle| \,  s', d' \in \mathbb{Z}, 1 \leq s' \leq s - 1, \frac{d'}{s'} > \frac{d}{s} \right\}
\]
if $s\geq 2$.
Define $\delta_\flat \coloneqq \infty$ if $s=1$.

\begin{lemma}\label{lemma-small delta}
Assume $0 < \delta < \delta_\flat$.
If $(E, \alpha)$ is a $\delta$-semistable pair with $\rank E = s$, $\deg E = d$, $\alpha \neq 0$,
then $E$ is semistable in the usual sense. 
In particular, there exists a morphism
\[
f : M_{C, \delta}(E_0, s, d) \to M_C(s, d), \quad [(E, \alpha)] \mapsto [E]
\]
over $\Pic^d(C)$,
where $M_C(s, d)$ is the coarse moduli space of $S$-equivalence classes of semistable vector bundles of rank $s$ and degree $d$ on $C$.
\end{lemma}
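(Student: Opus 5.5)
We argue by contradiction, directly from the definition of $\delta_\flat$. If $s=1$, every rank one bundle is stable and there is nothing to prove. So assume $s \geq 2$, and suppose $E$ is not semistable. Then $E$ has a saturated subbundle $F \subset E$ with $1 \leq s' \coloneqq \rank F \leq s-1$ and
\[
\frac{d'}{s'} > \frac{d}{s}, \qquad d' \coloneqq \deg F.
\]
We take, for instance, the maximal destabilizing subbundle. By the definition of $\delta_\flat$ this gives
\[
\frac{d'}{s'} \geq \frac{d+\delta_\flat}{s} > \frac{d+\delta}{s} = \mu_\delta(E,\alpha).
\]

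Now consider the subpair $(F,\alpha')$ of $(E,\alpha)$, where $\alpha'$ is either $0$ or the factorization of $\alpha$ through $F$. In both cases $\varepsilon(\alpha') \geq 0$, so
\[
\mu_\delta(F,\alpha') \geq \frac{d'}{s'} > \mu_\delta(E,\alpha).
\]
This contradicts the $\delta$-semistability of $(E,\alpha)$. Hence $E$ is semistable. The only point needing care is that the minimum defining $\delta_\flat$ is attained; it is, because slopes $d'/s'$ with $s' \leq s-1$ lie in the discrete set $\frac{1}{(s-1)!}\Z$. This also makes $\delta_\flat > 0$.

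For the morphism $f$, we use the construction of $M_{C,\delta}(E_0,s,d)$ as a GIT quotient, as in \cite{Wan} and \cite{Lin}. The forgetful assignment $(E,\alpha) \mapsto E$ from the parameter scheme of $\delta$-semistable pairs lands in the semistable locus. By the first part, any family of $\delta$-semistable pairs gives a family of semistable bundles, hence a classifying morphism to $M_C(s,d)$. This morphism is invariant under the group action, so it descends to the good quotient $M_{C,\delta}(E_0,s,d)$.

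It remains to check that $f$ is well defined on $S$-equivalence classes, and this is the step I expect to be the main obstacle. Let $0 \subsetneq (E_1,\alpha_1) \subsetneq \cdots$ be a Jordan--Hölder filtration of the pair $(E,\alpha)$. Every graded piece has $\delta$-slope $\mu_\delta(E,\alpha)$. The unique piece carrying a nonzero map has underlying bundle of slope $< d/s + \delta/s$, and this forces equalities that make the underlying bundles of the pieces refine to a Jordan--Hölder filtration of $E$ of slope $d/s$. Concretely, a piece with zero map has slope $\frac{d+\delta}{s}$. That value is not of the form $d'/s'$ with $s' \leq s-1$ unless $\delta$ is a wall, and when this happens the argument above still shows $\operatorname{gr}E$ is determined. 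So $[\operatorname{gr}E]$ depends only on the class of $(E,\alpha)$.

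Finally, $f$ is compatible with $\det$, since $\det E$ is preserved by these constructions. Hence $f$ is a morphism over $\Pic^d(C)$.
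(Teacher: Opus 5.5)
Your semistability argument is the paper's own argument, phrased contrapositively through a destabilizing subbundle and the definition of $\delta_\flat$. Your GIT-descent justification of $f$ spells out what the paper simply calls clear, and the $S$-equivalence step you expect to be the main obstacle is actually vacuous: no $d'/s'$ with $1\le s'\le s-1$ lies in $\bigl(\frac{d}{s},\frac{d+\delta_\flat}{s}\bigr)$, so $(0,\delta_\flat)$ contains no walls and every $\delta$-semistable pair there is $\delta$-stable (descent through the categorical quotient would settle it in any case), which makes your hedging about walls unnecessary.
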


\begin{proof}
Since any line bundle is stable, the case $s=1$ is clear.
Assume $s\geq 2$.

Let $(E, \alpha)$ be a $\delta$-semistable  pair with $\rank E = s$, $\deg E = d$.  
Let $E' \subset E$ be a subsheaf with  $1 \leq \rank E' \leq s-1$.
For the induced subpair $(E',\alpha')$ of $(E, \alpha)$, we have
\[
\mu(E') = \frac{\deg E'}{\rank E'} \leq \frac{\deg E'+\varepsilon(\alpha')\delta}{\rank E'} = \mu(E', \alpha')\leq  \mu(E, \alpha) =\frac{d + \delta}{s} < \frac{d + \delta_\flat}{s},
\]
which implies
\[
\mu(E') = \frac{\deg E'}{\rank E'} \leq \frac{d}{s} = \mu(E)
\]
by the  definition of $\delta_\flat$. Hence $E$ is semistable. 
The last statement is clear.
\end{proof}

\begin{remark}\label{rem_small_delta}
Let $E$ be a stable locally free sheaf with $\rank E=s, \deg E=d$.
Then any $(E, \alpha)$ with non-zero $\alpha : E_0 \to E$ is $\delta$-stable for $0 < \delta < \delta_\flat$. In fact, for any quotient bundle $q : E \to E''$ with $1 \leq\rank E'' \leq s - 1$, we have
\[
\mu(E'') > \mu(E) = \frac{d}{s}
\]
which implies
\[
\mu(E'', \alpha'') \geq \mu(E'') \geq \frac{d+\delta_\flat}{s} > \frac{d+\delta}{s} =\mu(E,\alpha).
\]
Hence the fiber of $f: M_{C, \delta}(E_0, s, d) \to M_C(s, d)$ over $[E]$ is the projective space $\mathbb{P}({\rm Hom}(E_0, E))$.

Furthermore,
if $\mu(E)=\frac{d}s > \mu_{\rm max}(E_0) +2g - 2$, where $\mu_{\rm max}(E_0) \coloneqq \max \{ \mu(F) \mid F \subset E_0\}$, then ${\rm ext}^1(E_0,E) = {\rm hom}(E, E_0\otimes \omega_C) = 0$ by the stability of $E$. Hence we have  
\[
\mathbb{P}({\rm Hom}(E_0, E))=\mathbb{P}^{se+ r(d -s(g-1)) - 1}
\]
and $f$ is a $\mathbb{P}^{se+ r(d - s(g-1)) - 1}$-bundle over the stable locus $M^s_C(s, d) \subset M_C(s, d)$.

Let $M_C(s,L)$ be the fiber of $ M_C(s, d) \to \Pic^d(C) $ over $[L]$.
By restriction,
we obtain $f_L : M_{C, \delta}(E_0, s, d)_L \to M_C(s, L)$, which is a projective bundle over $M^s_C(s, L) \coloneqq M_C(s, L) \cap M_C^s(s,d)$.

\end{remark}

\section{Estimate of dimensions}\label{sec_estimate_of_dimension}

Throughout this section, fix integers $s >0$ and $d$.

\subsection{Expected dimensions}

The following lemma is a consequence of \cite[Theorem 1.2]{Lin}.

\begin{lemma}\label{lem_dim}\label{subsec_tangent_space}
Let $\delta>0$ and  $[(E, \alpha)] \in M^s_{C, \delta}(E_0, s, d)$ be a point in the $\delta$-stable locus.
If  $\operatorname{Hom}(E, \omega_C \otimes \ker \alpha) =0$,
then
$M_{C, \delta}(E_0, s, d)$ is smooth of dimension $dr +se -s(r-s)(g-1)$ at $[(E,\alpha)]$.
\end{lemma}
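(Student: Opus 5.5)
The plan is to read off the deformation theory of stable pairs from \cite[Theorem 1.2]{Lin} and compute the relevant Euler characteristic. For a pair $(E,\alpha)$ with $\alpha : E_0 \to E$, the deformation complex is the two-term complex
\[
C^\bullet : \quad \mathcal{H}om(E,E) \longrightarrow \mathcal{H}om(E_0,E),\qquad \phi \mapsto \phi\circ\alpha ,
\]
in degrees $0,1$. Equivalently, it is $R\mathcal{H}om$ from the complex $[E_0 \to E]$ to $E$, shifted. By \cite[Theorem 1.2]{Lin}, the Zariski tangent space at a stable point is $\mathbb{H}^1(C^\bullet)$ and obstructions lie in $\mathbb{H}^2(C^\bullet)$. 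Stability makes the automorphisms of the pair scalars, so $\mathbb{H}^0(C^\bullet)=\C$, and we must mod out by this $\C$ when comparing with $\mathbb{H}^1$. (If instead Lin's complex already quotients out the scalars, the same count applies after adjusting by one.)

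First, I will show that $\mathbb{H}^2(C^\bullet)=0$ under the hypothesis. The hypercohomology long exact sequence gives
\[
H^1(\mathcal{H}om(E,E)) \to H^1(\mathcal{H}om(E_0,E)) \to \mathbb{H}^2 \to 0,
\]
so $\mathbb{H}^2$ is the cokernel of $\Ext^1(E,E)\to\Ext^1(E_0,E)$, $\phi\mapsto\phi\circ\alpha$. By Serre duality this cokernel is dual to the kernel of the map
\[
\Hom(E,E_0\otimes\omega_C) \to \Hom(E,E\otimes\omega_C),\qquad \psi \mapsto (\alpha\otimes 1)\circ\psi .
\]
That kernel consists of those $\psi$ whose image lies in $\ker\alpha\otimes\omega_C$, so it equals $\Hom(E,\omega_C\otimes\ker\alpha)$, which vanishes by assumption. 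Checking that the Serre-duality identification of this cokernel really matches this kernel, including the compatibility of the maps, is the only delicate step. Since the pair is stable with $\mathbb{H}^2=0$, the obstruction vanishes, and $M^s$, being a fine moduli space there, is smooth at $[(E,\alpha)]$.

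Second, I will compute the dimension as
\[
\dim \mathbb{H}^1 - \dim\mathbb{H}^0 = -\chi(C^\bullet) = \chi(E_0^\vee\otimes E) - \chi(E^\vee\otimes E).
\]
This gives
\[
\bigl(\deg(E^0\otimes E) + rs(1-g)\bigr) - s^2(1-g) = (se + rd) + rs(1-g) - s^2(1-g),
\]
which equals $dr+se-s(r-s)(g-1)$. Because $\mathbb{H}^0=\C$ and $\mathbb{H}^2=0$, this difference is exactly the dimension of the tangent space modulo scalars, so the local dimension is $dr+se-s(r-s)(g-1)$, as stated.
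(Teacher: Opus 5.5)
Your treatment of the obstruction space is correct and matches the paper. With $I^\bullet=\{E_0\xrightarrow{\alpha}E\}$ you have $\mathbb{H}^2(C^\bullet)=\Ext^1(I^\bullet,E)$, and Serre duality identifies it with $\Hom(E,\omega_C\otimes\ker\alpha)^\vee$. Your Euler characteristic $\chi(E_0,E)-\chi(E,E)=dr+se-s(r-s)(g-1)$ is also right.

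The gap is in degree zero. With $C^\bullet$ as you defined it,
\[
\mathbb{H}^0(C^\bullet)=\Ext^{-1}(I^\bullet,E)=\{\phi\in\operatorname{End}(E)\mid \phi\circ\alpha=0\},
\]
and this is $0$, not $\C$. The scalars $c\,\mathrm{id}_E$ are automorphisms of the pair, but their image under $\Hom(E,E)\to\Hom(E_0,E)$ is $c\alpha\neq 0$, so they do not lie in the kernel. The missing argument, which is exactly what the paper supplies, runs as follows. If $\phi\circ\alpha=0$, then $\phi$ is a morphism of $\delta$-stable pairs $(E,\alpha)\to(E,\alpha)$ of equal slope. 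By Lemma \ref{lem_preliminary}(2) (i.e.\ \cite[Lemma 2.11]{Lin}) it is either $0$ or an isomorphism. An isomorphism cannot annihilate $\alpha\neq 0$, so $\phi=0$.

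Because of this, your final step does not follow as written. By \cite[Theorem 1.2]{Lin}, the Zariski tangent space is $\mathbb{H}^1(C^\bullet)=\Hom(I^\bullet,E)$ itself, not a quotient of it by scalars. If $\mathbb{H}^0$ really were $\C$, your own identity $\dim\mathbb{H}^1-\dim\mathbb{H}^0=-\chi(C^\bullet)$ would give a tangent space of dimension $dr+se-s(r-s)(g-1)+1$.

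The extra step of passing to the tangent space ``modulo scalars'' is unjustified. It is what brings you back to the right number, so two errors cancel. The scaling direction is already killed inside $\mathbb{H}^1$: the element $\alpha=\mathrm{id}_E\circ\alpha$ lies in the image of $\Hom(E,E)\to\Hom(E_0,E)$, so it maps to $0$ in $\Hom(I^\bullet,E)$.

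To fix the proof, replace your claim by $\mathbb{H}^0(C^\bullet)=0$. Then $\dim T_{[(E,\alpha)]}M=\dim\mathbb{H}^1=-\chi(C^\bullet)$. Combine this either with $\mathbb{H}^2=0$, or, as the paper does, with Lin's inequalities
\[
\hom(I^\bullet,E)=\dim T_{[(E,\alpha)]}M\geq\dim_{[(E,\alpha)]}M\geq\hom(I^\bullet,E)-\ext^1(I^\bullet,E).
\]
The latter route also needs $\Ext^{-1}(I^\bullet,E)=0$, in order to identify $\hom-\ext^1$ with $\chi(I^\bullet,E)$. Either way you get smoothness of the stated dimension.
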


\begin{proof}
Let $I^\bullet = \{ E_0 \xrightarrow{\alpha} E \}$ be a complex concentrated in degrees 0 and 1. 
By \cite[Theorem 1.2]{Lin}, we have
\begin{align}\label{eq_lem_dim_tangent_space}
\begin{split}
\hom (I^\bullet, E) &= \dim T_{[(E, \alpha)]} M_{C, \delta}(E_0, s, d)\\
&\geq \dim_{[(E, \alpha)]} M_{C, \delta}(E_0, s, d) \geq  \hom (I^\bullet, E) - \ext^1(I^\bullet, E).
\end{split}
\end{align}

Since there is a natural triangle 
\begin{align*}
  I^\bullet \to E_0 \xrightarrow{\alpha} E \xrightarrow{\beta} I^\bullet[1] , 
\end{align*}
we have an exact sequence
\begin{align}\label{eq_lem_dim}
\begin{aligned}
0 \to \operatorname{Ext}^{-1}(I^\bullet, E) &\to \operatorname{Hom}(E, E) \to \operatorname{Hom}(E_0, E) \to \operatorname{Hom}(I^\bullet, E)\\
&\to \operatorname{Ext}^1(E, E) \to \operatorname{Ext}^1(E_0, E) \to \operatorname{Ext}^1(I^\bullet, E) \to 0
\end{aligned}
\end{align}
and $\operatorname{Ext}^i(I^\bullet, E) = 0$ for $i \ne -1, 0, 1$. In fact, we have $\operatorname{Ext}^{-1}(I^\bullet, E) = 0$ as follows.

Take $\varphi$ in the kernel of $\operatorname{Hom}(E, E) \to \operatorname{Hom}(E_0, E)$, that is, $\varphi : E \to E$ such that $\varphi \circ \alpha = 0$. Then $\varphi : (E, \alpha) \to (E, \alpha)$ is a morphism of stable pairs, hence $\varphi = 0$ or an isomorphism by \cite[Lemma 2.11]{Lin}. Since $\alpha \ne 0$, we have $\varphi = 0$. 
Thus $\operatorname{Hom}(E, E) \to \operatorname{Hom}(E_0, E)$ is injective, and hence $\operatorname{Ext}^{-1}(I^\bullet, E) = 0$.

Then it holds that 
\[
\hom(I^\bullet, E) - \ext^1(I^\bullet, E) = \chi(I^\bullet, E) = \chi(E_0, E) - \chi(E, E) = dr +se- s(r - s)(g-1).
\]
Furthermore, by \eqref{eq_lem_dim} and Serre duality,
$\operatorname{Ext}^1(I^\bullet, E)$ is the dual of the kernel of
\[
\operatorname{Hom}(E, \omega_C \otimes E_0) \to \operatorname{Hom}(E, \omega_C \otimes E), \quad f \mapsto ({\rm id} \otimes \alpha) \circ f,
\]
and hence $\operatorname{Ext}^1(I^\bullet, E) = \operatorname{Hom}(E, \omega_C \otimes \ker \alpha)^\vee$.
Then this lemma follows from \eqref{eq_lem_dim_tangent_space}.
\end{proof}

\begin{remark}
For example, the condition $\operatorname{Hom}(E, \omega_C \otimes \ker \alpha) = 0$ in Lemma \ref{lem_dim} holds if
\begin{enumerate}
\item $r=1$, i.e., a Bradlow pair, or
\item $s = r$ and $ \delta >\delta_\sharp$, i.e., $M_{C,\delta}(E_0, s, d) = \operatorname{Quot}_C(E^0,0,d+e)$, or
\item  $s=1$ and $d > 2g-2 + \mu_{\max} (E_0)$, or $s \geq 2$ and $0 < \delta < \frac{d - 2s(g - 1)-s\mu_{\rm max}(E_0)}{s - 1}$.
\end{enumerate}

In fact, $\ker \alpha=0$ in the case (1) since $E_0$ is a line bundle and $\alpha \neq 0$.
For (2), $\ker \alpha=0$ also holds by Lemma \ref{lem_suff_large_delta}.
For (3),  assume $f : E \to \omega_C \otimes E_0$ is non-zero.
Then $(\im f, f \circ \alpha) $ is a quotient pair of $ (E,\alpha)$ and hence
\begin{align*}
\frac{d+\delta}{s} =\mu(E,\alpha) \leq \mu(\im f, f \circ \alpha)  &\leq \frac{\deg (\im f) +\delta}{\rank (\im f)} \\
&\leq \mu_{\max} (\omega_C \otimes E_0) + \delta =2g-2 +\mu_{\max}(E_0) +\delta
\end{align*}
since $(E,\alpha)$ is $\delta$-stable.
This contradicts the condition of $\delta$ in (3) and hence $\operatorname{Hom}(E, \omega_C \otimes E_0) = 0$.
In particular, $\operatorname{Hom}(E, \omega_C \otimes \ker \alpha) = 0$.

Hence $M^s_{C,\delta}(E_0, s, d)$ is smooth of dimension $dr+se - s(r - s)(g - 1)$ in these cases.
We note that  this statement for $\delta$ in (3) is proved in \cite[Theorem 3.20]{BDW} for $E_0=\calo_C^{\oplus r}$.
\end{remark}

\subsection{Upper bounds of the dimension of $M_{C, \delta}(E_0, s, d)$}

The purpose of this subsection is to show Lemma \ref{lemma-upper bound on dim}, which states that
\[
\dim M_{C, \delta}(E_0, s, d) \leq r(d-s \mu_{\min}(E_0)) + s \max \{s,r\}  (g - 1)
\]
for $s \geq 1$ and $\delta>0$,
where $\mu_{\min}(E_0) =\min \{ \mu(F) \mid E_0 \twoheadrightarrow F\}$.

\begin{lemma}\label{lemma-JH filtration}
Assume a pair $(E,\alpha)$ with $\alpha \neq 0$ is $\delta$-semistable with $\rank E=s, \deg E=d$. 
Let
\[
0 = (F_0,\alpha_0) \subset (F_1,\alpha_1) \subset (F_2,\alpha_2) \subset \cdots \subset (F_\ell,\alpha_\ell) = (E,\alpha)
\]
be a Jordan-H\"older filtration as in \cite[Proposition 2.13]{Lin}.
If $(E,\alpha)$ is $(\delta - \varepsilon)$-semistable for $0 < \varepsilon \ll 1$, then 
$\im \alpha \subset F_1$. Furthermore, if $\ell \geq 2$, then $F_1$ is the unique minimal bundle among those with $\im \alpha \subset F$ and $\mu(E/F) = \frac{d + \delta}{s}$.
\end{lemma}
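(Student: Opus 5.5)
We would prove the two assertions separately, using only the defining properties of a Jordan--H\"older filtration from \cite[Proposition 2.13]{Lin}. These are: each subquotient pair $(F_j/F_{j-1}, \bar\alpha_j)$, with the induced map as in the definition of quotient pairs, is $\delta$-stable (in particular locally free), and all of them have $\delta$-slope $\mu \coloneqq \frac{d+\delta}{s}$. Since $\varepsilon(\cdot)$ is additive along the filtration and $\alpha\neq 0$, exactly one subquotient carries a nonzero map. Namely, if $j_0$ is the smallest index with $\im\alpha\subset F_{j_0}$, then the $j_0$-th subquotient is nondegenerate and all others are of the form $(F_j/F_{j-1},0)$ with usual slope $\mu$.

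\emph{Step 1: $\im\alpha\subset F_1$.} Suppose $j_0\geq 2$. Then $(F_1,\alpha_1)=(F_1,0)$ is a subpair of $(E,\alpha)$ with $\mu(F_1)=\mu=\frac{d+\delta}{s}$. For $0<\varepsilon\ll 1$ we then have
\[
\mu_{\delta-\varepsilon}(F_1,0)=\frac{d+\delta}{s}>\frac{d+\delta-\varepsilon}{s}=\mu_{\delta-\varepsilon}(E,\alpha),
\]
which contradicts $(\delta-\varepsilon)$-semistability. Hence $j_0=1$.

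\emph{Step 2: $F_1$ satisfies the condition when $\ell\geq 2$.} By Step 1, $E/F_1$ is filtered by the subquotients $F_j/F_{j-1}$ for $j\geq 2$. These are locally free, carry zero maps, and have usual slope $\mu$. Hence $E/F_1$ is a vector bundle with $\mu(E/F_1)=\mu$, and $F_1$ is a subbundle containing $\im\alpha$.

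\emph{Step 3: minimality and uniqueness.} Let $F\subset E$ be any subbundle with $\im\alpha\subset F$ and $\mu(E/F)=\mu$. The quotient pair of $(E,\alpha)$ on $E/F$ is $(E/F,0)$. Any quotient $G$ of $E/F$ gives a quotient pair $(G,0)$ of $(E,\alpha)$, so $\delta$-semistability yields $\mu(G)\geq\mu$. Therefore $E/F$ is semistable of slope $\mu$.

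Now suppose the composite $f:F_1\to E/F$ is nonzero, and let $I=\im f$. Since $\im\alpha\subset F$, we have $f\circ\alpha_1=0$, so $(I,0)$ is a quotient pair of the $\delta$-stable pair $(F_1,\alpha_1)$.
\begin{itemize}
\item If $I$ were a proper quotient, stability would give $\mu(I)>\mu$. This contradicts $I\subset E/F$ together with semistability of $E/F$.
\item If $I=F_1$, then $f$ is injective, and $f\circ\alpha_1=0$ forces $\alpha_1=0$. This is impossible, since $\alpha_1=\alpha\neq 0$.
\end{itemize}
Hence $f=0$, that is, $F_1\subset F$. So $F_1$ is contained in every such $F$, and by Step 2 it is itself one of them. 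It is therefore the unique minimal one.

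The main point needing care is Step 3, specifically that $E/F$ with the zero map is semistable, which follows from the pair semistability of $(E,\alpha)$ as shown. The rest is bookkeeping of $\varepsilon(\alpha)$ along the filtration.
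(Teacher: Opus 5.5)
Your proof is correct and follows the paper's argument. Step 1 is identical. For minimality, both proofs set the strict inequality $\mu(F_1/(F_1\cap F))>\frac{d+\delta}{s}$, which comes from $\delta$-stability of $(F_1,\alpha_1)$ together with $\im\alpha\subset F_1\cap F$, against the fact that $F_1/(F_1\cap F)$ sits inside $E/F$, which has slope $\frac{d+\delta}{s}$, with cokernel $E/(F+F_1)$ of slope at least $\frac{d+\delta}{s}$.

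The only difference is where that last bound comes from. The paper takes it from the semistability of $E/F_1$, an extension of the later Jordan--H\"older factors. You take it directly from the $\delta$-semistability of $(E,\alpha)$, which makes $E/F$ semistable because $\im\alpha\subset F$. You also check explicitly that $F_1$ itself satisfies the condition, which the paper leaves implicit.
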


\begin{proof}
By definition,
$(F_i,\alpha_i)/(F_{i-1},\alpha_{i-1})$ is $\delta$-stable with $\mu_\delta((F_i,\alpha_i)/(F_{i-1},\alpha_{i-1}))= \frac{d+\delta}{s}$.
If $\ell = 1$, the claim is trivial, so assume $\ell \geq 2$.

If $\im \alpha \not\subset F_1$, then
\[
\mu_{\delta-\ep}(F_1,\alpha_1) =\mu(F_1)=\mu_\delta(F_1,\alpha_1) = \frac{d+\delta}{s} > \frac{d+\delta-\ep}{s} =\mu_{\delta-\ep}(E,\alpha),
\]
and hence $(E, \alpha)$ is not $(\delta - \varepsilon)$-semistable.

For the second statement, suppose $F\subset E$ satisfies $\im  \alpha \subset F$ and $\mu(E/F)=\frac{d+\delta}{s}$. Suppose $F_1\not\subset F$. 
Then we have
\begin{align}\label{eq_lemma-JH filtration}
0 \to F_1 / (F_1 \cap F) \to E/F \to E/(F + F_1) \to 0.
\end{align}
Since 
$(F_1,\alpha_1)$ is $\delta$-stable with $\alpha_1 \neq 0$ and $ (F_1/(F_1\cap F),0 )$ is a proper quotient of $(F_1,\alpha_1)$ , we have
\begin{align}\label{eq_lemma-JH filtration2}
\mu(F_1 / (F_1 \cap F)) =\mu_\delta(F_1 / (F_1 \cap F),0) > \mu_\delta(F_1,\alpha_1) = \frac{d + \delta}{s}.
\end{align}
On the other hand, $E/F_1$ is semistable since it is obtained as successive extensions by $F_i/F_{i-1}$ for $2 \leq i \leq \ell$ and
$(F_i/F_{i-1},0)=(F_i,\alpha_i)/(F_{i-1},\alpha_{i-1}) $'s are $\delta$-stable with the same slope.
Since $E/F_1 \to E/(F + F_1)$ is a quotient,
\[
\mu(E/(F + F_1)) \geq \mu(E/F_1) =\mu_\delta(E/F_1,0) = \frac{d + \delta}{s},
\]
contradicting $\mu(E/F) = \frac{d + \delta}{s}$ by \eqref{eq_lemma-JH filtration} and \eqref{eq_lemma-JH filtration2}. Hence we get that $F_1\subset F$.
\end{proof}

We note that $(F_i, \alpha_i)/(F_{i-1}, \alpha_{i-1})=(F_i/F_{i-1}, 0) $ for all but exactly one $1 \leq i \leq \ell$ for any $\delta$-semistable pair $(E,\alpha)$ with $\alpha \neq 0$ in the above Jordan-H\"older filtration.
Furthermore, $\bigoplus_{i=1}^\ell (F_i, \alpha_i)/(F_{i-1}, \alpha_{i-1})$ does not depend on the filtration \cite[Proposition 2.13]{Lin}.
Hence we obtain the following decomposition of $M_{C,\delta}(E_0, s, d)$.

\begin{lemma}\label{lem_disjoint_union}
Fix $\delta >0$. Let $1 \leq s' \leq s - 1$ and $d'$ be integers  such that $\frac{d + \delta}{s} = \frac{d'}{s'}$. Then there exists an embedding
\[
M^s_{C,\delta}(E_0, s - s', d - d') \times M_C(s', d') \hookrightarrow M_{C,\delta}(E_0, s, d),
\]
given by
\[
([(E', \alpha')], [F]) \mapsto [(E' \oplus F, \alpha' \oplus 0)].
\]
Furthermore, $M_{C,\delta}(E_0, s, d)$ is the disjoint union of these images and $M^s_{C,\delta}(E_0, s, d)$.
\end{lemma}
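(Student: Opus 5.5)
The plan is to argue pointwise, at the level of $S$-equivalence classes, using the Jordan--H\"older filtration of \cite[Proposition 2.13]{Lin}. Then I promote the resulting set-theoretic statement to a morphism via the universal families.

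\textbf{Constructing the map.} Let $(E',\alpha')$ be $\delta$-stable of rank $s-s'$ and degree $d-d'$ with $\alpha' \neq 0$, so $\mu_\delta(E',\alpha') = \frac{d-d'+\delta}{s-s'}$. Let $F$ be semistable of rank $s'$ and degree $d'$. The condition $\frac{d+\delta}{s} = \frac{d'}{s'}$ gives
\[
\frac{d-d'+\delta}{s-s'} = \frac{d+\delta}{s} = \mu(F).
\]
Indeed, $(s-s')\frac{d+\delta}{s} = d+\delta - d'$. By Lemma \ref{lem_preliminary}(1), $(F,0)$ is $\delta$-semistable. Applying Lemma \ref{lem_preliminary}(4) to the split sequence $0 \to (E',\alpha') \to (E'\oplus F,\alpha'\oplus 0) \to (F,0) \to 0$, the direct sum is $\delta$-semistable of rank $s$ and degree $d$ with nonzero map. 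So the assignment lands in $M_{C,\delta}(E_0,s,d)$.

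To see that it is a morphism, I take an étale/local universal family of stable pairs on $M^s_{C,\delta}(E_0,s-s',d-d')$ and a local family of semistable bundles for $M_C(s',d')$ (via the GIT quotient presentation). The direct sum family induces a morphism on the parameter spaces that is invariant under the groups, so it descends by the universal property of the coarse (categorical) quotients.

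\textbf{Injectivity and disjointness.} Let $\ell$ be the length of the Jordan--H\"older filtration of $(E'\oplus F,\alpha'\oplus 0)$. The associated graded is $(E',\alpha') \oplus \mathrm{gr}(F)$, where $\mathrm{gr}(F)$ is the usual JH graded of $F$ with zero maps. Since by \cite[Proposition 2.13]{Lin} the graded $\bigoplus_i (F_i,\alpha_i)/(F_{i-1},\alpha_{i-1})$ is independent of the filtration, the $S$-equivalence class determines the unique summand with nonzero map, namely $(E',\alpha')$ up to isomorphism. It also determines $\mathrm{gr}(F)$, i.e.\ $[F]\in M_C(s',d')$. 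This gives injectivity. It also gives disjointness of images for different $(s',d')$, because $s'$ is the total rank of the zero-map summands. Finally, the images are disjoint from the stable locus, since stable pairs have $\ell=1$.

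\textbf{Covering.} Conversely, let $[(E,\alpha)]$ be strictly semistable, so $\ell\ge 2$. As noted before the lemma, exactly one graded piece has nonzero map. Call it $(G,\beta)$; it is $\delta$-stable. All other pieces are $(G_j,0)$ with $G_j$ stable of slope $\frac{d+\delta}{s}$. Set $s'=\sum \rank G_j$ and $d'=\sum\deg G_j$. Then $\frac{d'}{s'}=\frac{d+\delta}{s}$ and $1\le s'\le s-1$, and $(E,\alpha)$ is $S$-equivalent to $(G,\beta)\oplus(\bigoplus G_j,0)$, which lies in the image.

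\textbf{Main obstacle.} I expect the main obstacle to be upgrading the injective morphism to an embedding, i.e.\ checking that it is a closed immersion and not merely injective on points. For this I would first try properness: the source is proper over its image closure, and the image is closed since it is the locus of strictly semistable pairs of type $(s',d')$. I would then compare the infinitesimal deformations on both sides, or else interpret ``embedding'' as an injective morphism onto a locally closed stratum, which suffices for the dimension estimates that follow.
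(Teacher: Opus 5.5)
Your proposal is correct and is essentially the paper's argument: the slope identity $\frac{d-d'+\delta}{s-s'}=\frac{d+\delta}{s}$ together with Lemma~\ref{lem_preliminary}(4) gives $\delta$-semistability of $(E'\oplus F,\alpha'\oplus 0)$, and uniqueness of the Jordan--H\"older graded object gives injectivity, disjointness, and surjectivity onto the strictly semistable locus. You spell out these steps, and the descent of the morphism, in more detail than the paper does. The paper does not prove a scheme-theoretic closed immersion either, but your proposed properness route would fail as stated: $M^s_{C,\delta}(E_0,s-s',d-d')$ is in general not proper, and the type-$(s',d')$ image need not be closed, since a stable $(E',\alpha')$ can degenerate to a strictly semistable pair that lands in a stratum with larger $s'$; your fallback, an injective morphism onto a stratum, is what the paper actually uses.
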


\begin{proof}
We can check that $\frac{d-d' + \delta}{s-s'}$ coincides with $\frac{d + \delta}{s} = \frac{d'}{s'}$.
Thus $(E' \oplus F, \alpha' \oplus 0)$ in the statement of this lemma is $\delta$-semistable and hence we obtain the embedding.
The last statement follows from the uniqueness of the associated graded objects $\oplus (F_i, \alpha_i)/(F_{i-1}, \alpha_{i-1})$ in the Jordan-H\"older filtration.
\end{proof}

Recall that we fix $s,d$ in this section.
Now fix also $\delta$.

We note that $\delta'$-stability coincides with $\delta'$-semistability  if $\frac{d+\delta'}{s} \not \in \bigcup_{s'=1}^{s-1} \frac{1}{s'} \Z$ .
Hence, for $0 < \varepsilon \ll \delta, s,d$, if $(E, \alpha)$ is $(\delta + \varepsilon)$-semistable (resp.\ $(\delta - \varepsilon)$-semistable), then it is both $(\delta + \varepsilon)$-stable (resp.\ $(\delta - \varepsilon)$-stable) and $\delta$-semistable.
In particular,
there are natural morphisms
\[
\pi_{\delta}^{\pm}: M_{C, \delta \pm \varepsilon}(E_0, s, d) \to M_{C, \delta}(E_0, s, d).
\]

\begin{lemma}\label{lemma-fiber of contraction}
Let $s',d'$ be as in Lemma \ref{lem_disjoint_union}.
Let 
\[
([(E', \alpha')], [F]) \in M^s_{C,\delta}(E_0, s - s', d - d') \times M_C(s', d') \subset M_{C,\delta}(E_0, s, d)
\] 
and assume that $F$ is stable.
Then the fiber of $\pi_{\delta}^{-} : M_{C, \delta - \varepsilon}(E_0, s, d) \to M_{C,\delta}(E_0, s, d)$ over $([(E', \alpha')], [F])$ 
is the projective space
\[
\mathbb{P}({\rm Ext}^1(F, E')) = \mathbb{P}^{s'(\delta + (s - s')(g - 1)) - 1}.
\]
\end{lemma}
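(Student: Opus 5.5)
The plan is to identify the points of the fiber with nontrivial extensions of $(F,0)$ by $(E',\alpha')$ up to scaling, and then compute the dimension by Riemann--Roch.

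\emph{Step 1: every point of the fiber is such an extension.} Let $(E,\alpha)$ be $(\delta-\varepsilon)$-stable with $\pi^-_\delta[(E,\alpha)]=([(E',\alpha')],[F])$. Then $(E,\alpha)$ is $\delta$-semistable, and its Jordan--H\"older factors are $(E',\alpha')$ and $(F,0)$, where $F$ is stable. By Lemma \ref{lemma-JH filtration}, $\im\alpha\subset F_1$, so the first factor carries the map; hence $F_1=E'$ and $\ell=2$. We therefore get an exact sequence
\[
0\to (E',\alpha')\to (E,\alpha)\to (F,0)\to 0,
\]
so $(E,\alpha)$ determines a class in $\Ext^1(F,E')$. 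This class is nonzero. Indeed, if it split, then $F\subset E$ would be a subpair with $\mu_{\delta-\varepsilon}(F,0)=\mu(F)>\mu_{\delta-\varepsilon}(E,\alpha)$, contradicting stability. Conversely, the pair structure is just the extension of sheaves with $\alpha$ factoring through $E'$. So morphisms of such pairs respecting the sequence correspond to automorphisms of $E'$ and $F$, which are scalars because both are stable; note that $\operatorname{Aut}(E',\alpha')$ is $\C^*$ by Lemma \ref{lem_preliminary}(2). Isomorphism classes of the resulting pairs are therefore classes in $\mathbb{P}(\Ext^1(F,E'))$. This also holds in families: the universal extension over $\mathbb{P}(\Ext^1(F,E'))$ gives a morphism into $M_{C,\delta-\varepsilon}$, and we show it is a bijection onto the fiber. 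We also need it to be an isomorphism, for instance by checking injectivity on tangent spaces, or by using the reducedness of the fiber as in the standard flip analysis of Thaddeus.

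\emph{Step 2: every nonsplit extension is $(\delta-\varepsilon)$-stable.} This is the main obstacle. Suppose the nonsplit extension $(E,\alpha)$ fails to be $(\delta-\varepsilon)$-stable. It is $\delta$-semistable by Lemma \ref{lem_preliminary}(4), so Lemma \ref{lem_preliminary}(5) gives a subsheaf $G$ with $G\cap E'=0$ and $\mu(G)=\mu(F)$. Then $G\hookrightarrow F$ with equal slope. Since $F$ is stable, $G\to F$ is generically an isomorphism, and in fact $G$ has the same rank as $F$ and the same degree, so $G\to F$ is an isomorphism. That gives a splitting of the sequence, which is a contradiction. We also need to know that these extensions have the right $S$-equivalence class; this holds because the associated graded is $(E',\alpha')\oplus(F,0)$.

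\emph{Step 3: the dimension.} We have $\Hom(F,E')=0$ by Lemma \ref{lem_preliminary}(3), applied to the semistable $F$ and the stable nondegenerate pair $(E',\alpha')$ of the same slope. Hence
\[
\ext^1(F,E')=-\chi(F,E')=s'(s-s')(g-1)-\bigl(s'(d-d')-(s-s')d'\bigr).
\]
Using $d'/s'=(d-d'+\delta)/(s-s')$, we get $s'(d-d')-(s-s')d'=-s'\delta$. The dimension is therefore $s'(\delta+(s-s')(g-1))$, and the projective space has dimension one less, as claimed.
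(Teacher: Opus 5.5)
Your proposal is correct and follows essentially the same route as the paper. The Jordan--H\"older filtration gives the extension $0\to E'\to E\to F\to 0$, and non-splitness follows from $\mu_{\delta-\varepsilon}(F,0)>\mu_{\delta-\varepsilon}(E,\alpha)$. The converse uses Lemma \ref{lem_preliminary}~(5) together with the stability of $F$, and the dimension comes from $\Hom(F,E')=0$ plus Riemann--Roch. The minor differences are that you get $\Hom(F,E')=0$ from Lemma \ref{lem_preliminary}~(3) rather than by arguing directly with morphisms of $\delta$-stable pairs of equal slope. You also raise a scheme-theoretic identification of the fiber that you do not carry out, and which goes beyond the pointwise identification the paper gives.
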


\begin{proof}
Let $[(E, \alpha)] \in M_{C, \delta - \varepsilon}(E_0, s, d)$ be a point in the fiber over $([(E', \alpha')], [F])$. Then there exists an exact sequence
\begin{align}\label{eq_lemma-fiber of contraction}
    0 \to E' \xrightarrow{\iota} E \to F \to 0
\end{align}
with $\alpha=\iota \circ \alpha'$
by Lemma \ref{lemma-JH filtration} and the stability of $F$. 
Note that this exact sequence is unique up to scalar
since $E'$ is the unique minimal subsheaf with $\im  \alpha\subset E'$,
$\mu(E/E')=\frac{d+\delta}{s}$, and $(E',\alpha'), (F,0)$ are $\delta$-stable.
If \eqref{eq_lemma-fiber of contraction} splits, we have an injection $f : F \to E$.
Then $f$ is a non-zero morphism $ (F,0 ) \to (E,\alpha)$ between $(\delta-\varepsilon)$-stable pairs with 
\[
\mu_{\delta-\ep}(F,0) = \mu(F)=\frac{d'}{s'} =\frac{d+\delta}{s} > \frac{d+\delta-\ep}{s} =\mu_{\delta-\ep}(E,\alpha),
\]
a contradiction.
Hence \eqref{eq_lemma-fiber of contraction} does not split. 
Thus, we have a point in $\P({\rm Ext}^1(F, E'))$.

Conversely, we have the following claim.

\begin{claim}\label{claim_lemma-fiber of contraction}
Let $0 \to E' \to E \to F \to 0$ be a non-split exact sequence.
For $\alpha : E_0 \xrightarrow{\alpha'} E' \to E$,
$(E, \alpha)$  is $(\delta-\ep)$-stable.
\end{claim}

\begin{proof}[Proof of Claim \ref{claim_lemma-fiber of contraction}]
If $(E, \alpha)$  is not $(\delta-\ep)$-stable, there exists a subsheaf $G \subset E$ with $G \cap E'=0$ and $\mu(G)=\mu(F)$
by Lemma \ref{lem_preliminary} (5).
Since $G \simeq G/(G\cap E') \subset F$ and $F$ is stable,
we have $G \simeq G/(G\cap E') = F$,
which  contradicts the non-splitness of  $0 \to E' \to E \to F \to 0$.
Hence $(E, \alpha)$ is $(\delta-\ep)$-stable.
\end{proof}

Thus the fiber over $([(E', \alpha')], [F])$ is $\mathbb{P}({\rm Ext}^1(F, E'))$.

Any $f \in {\rm Hom}(F, E')$ induces a morphism of pairs $f : (F, 0) \to (E', \alpha')$. Since both $(F, 0)$ and $(E', \alpha')$ are $\delta$-stable with the same slope, $f$ is either an isomorphism or zero. Since $\alpha' \ne 0$, we have $f = 0$ and hence ${\rm Hom}(F, E') = 0$.

Since $F^\vee \otimes E'$ has rank $s'(s - s')$ and degree $s'd - sd'$, we have
\[
\ext^1(F, E') = -s'd + sd' + s'(s - s')(g - 1) = s'\delta + s'(s - s')(g - 1).\qedhere
\]
\end{proof}

\begin{lemma}\label{lem_closure_of_stable_F}
In the setting of Lemma \ref{lemma-fiber of contraction},
the dimension of the inverse image $W$ of $ M^s_{C,\delta}(E_0, s - s', d - d') \times M_C(s', d') $ by $\pi_\delta^-$ is 
\begin{align*}
\dim M^s_{C,\delta}(E_0, s - s', d - d') + { s'(\delta + s(g-1))} 
\end{align*}
\end{lemma}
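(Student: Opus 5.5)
The plan is to split $W$ according to whether the bundle $F$ is stable, and compute dimensions separately. Write $W = W^{\rm st} \sqcup W^{\rm sss}$, where $W^{\rm st}$ is the inverse image of $M^s_{C,\delta}(E_0, s-s', d-d') \times M^s_C(s',d')$ and $W^{\rm sss}$ is the inverse image of the product with the strictly semistable locus. The claimed dimension should be exactly $\dim W^{\rm st}$. The remaining work is to show that $W^{\rm sss}$ has dimension at most that of $W^{\rm st}$, so it does not raise $\dim W$.

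\emph{The stable part.} Since $g \geq 2$, the stable locus $M^s_C(s',d')$ is nonempty, smooth and irreducible of dimension $s'^2(g-1)+1$. By Lemma \ref{lemma-fiber of contraction}, every fiber of $\pi^-_\delta$ over a point of $M^s_{C,\delta}(E_0,s-s',d-d') \times M^s_C(s',d')$ is $\P^{s'(\delta+(s-s')(g-1))-1}$. The fiber dimension is constant because $\Hom(F,E')=0$, as shown in that proof, so $\ext^1(F,E')$ is given by Riemann--Roch. Hence $W^{\rm st}$ is (\'etale locally) a projective bundle, coming from the (possibly twisted) bundle of $\Ext^1$'s, over this product. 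Its dimension is
\begin{align*}
\dim M^s_{C,\delta}(E_0,s-s',d-d') + s'^2(g-1)+1 + s'\delta + s'(s-s')(g-1) - 1,
\end{align*}
which equals $\dim M^s_{C,\delta}(E_0,s-s',d-d') + s'(\delta + s(g-1))$, as claimed.

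\emph{The strictly semistable part.} This is the step I expect to be the main obstacle. Let $[(E,\alpha)] \in W^{\rm sss}$. The argument of Lemma \ref{lemma-JH filtration} still gives an exact sequence $0 \to E' \to E \to \tilde F \to 0$ with $\alpha$ factoring through $E'$, where $(E',\alpha')$ is the stable part and $\tilde F = E/F_1$ is a semistable bundle $S$-equivalent to $F$. By Lemma \ref{lem_preliminary}(3), $\Hom(\tilde F, E') = 0$, so $\ext^1(\tilde F,E') = s'\delta + s'(s-s')(g-1)$ regardless of $\tilde F$. Therefore $W^{\rm sss}$ is dominated by a stack that fibers over $M^s_{C,\delta}(E_0,s-s',d-d') \times \mathcal{B}$ with fiber $\P(\Ext^1(\tilde F,E'))$, where $\mathcal{B}$ is the stack of strictly semistable bundles of rank $s'$ and degree $d'$. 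This gives
\begin{align*}
\dim W^{\rm sss} \leq \dim M^s_{C,\delta}(E_0,s-s',d-d') + \dim \mathcal{B} + s'\delta + s'(s-s')(g-1) - 1 .
\end{align*}
It then suffices to show $\dim \mathcal{B} \leq s'^2(g-1)$, i.e.\ that the strictly semistable stack is strictly smaller than the stack of all semistable bundles, which has dimension $s'^2(g-1)$.

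To bound $\dim\mathcal{B}$, stratify by Harder--Narasimhan-type data: a strictly semistable $\tilde F$ is an extension $0 \to A \to \tilde F \to B \to 0$ of semistable bundles of equal slope, of ranks $a$ and $b$ with $a+b=s'$. The stack of such extensions has dimension at most $a^2(g-1) + b^2(g-1) + \chi$-type correction $= (a^2+b^2)(g-1) + ab(g-1) + \hom(B,A)$. The $\hom(B,A)$ contribution is absorbed by automorphisms, and one proceeds by induction on $s'$ for the pieces $A$ and $B$. Since $ab(g-1) < 2ab(g-1)$ when $g \geq 2$, this yields $\dim\mathcal{B} \leq s'^2(g-1) - ab(g-1) < s'^2(g-1)$. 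I would first check this carefully for $s'=2$ (extensions of line bundles of equal degree) and then do the general induction. Combining this with the stable part, $\dim W = \dim W^{\rm st}$, which gives the formula.
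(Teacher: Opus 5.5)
Your route differs from the paper's and can be made to work, but the step from your stack back to $W^{\mathrm{sss}}$ is miscounted as written.

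\textbf{Comparison with the paper.} The paper does not stratify. It takes any $[(E,\alpha)]\in W$ and writes $0\to E'\to E\to E/E'\to 0$ via Lemma~\ref{lemma-JH filtration}. It then picks a family $G_t$ of stable bundles specializing to $G_0=E/E'$. Since $\Hom(G_t,E')=0$ (Lemma~\ref{lem_preliminary}(3)), the $\Ext^1(G_t,E')$ form a vector bundle, so the extension class deforms to $\eta_t\neq 0$. Claim~\ref{claim_lemma-fiber of contraction} then makes $(E_t,\alpha_t)$ $(\delta-\varepsilon)$-stable for $t\neq 0$.

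This proves $W\subset\overline{W^o}$. That is stronger than the dimension formula, and it is exactly what the proof of Lemma~\ref{lemma-M_delta^dagger_s>r} later invokes to get irreducibility. Your count gives instead $\dim W^{\mathrm{sss}}<\dim W^{\mathrm{st}}$ with an explicit codimension. It does not give density of $W^{\mathrm{st}}$ in $W$; for that you would also need pure-dimensionality of $W$.

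Both arguments rest on the same input, density of stable bundles among semistable ones. In fact $\dim\mathcal B<s'^2(g-1)$ is immediate: $\mathcal B$ is a proper closed substack of the smooth irreducible stack of semistable bundles, which has dimension $s'^2(g-1)$. So your induction is not needed.

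\textbf{The step to repair.} Your displayed bound for $\dim W^{\mathrm{sss}}$ is off by one, and is in fact false. Scalars in $\operatorname{Aut}(\tilde F)$ act trivially on $\P(\Ext^1(\tilde F,E'))$. Hence the map from your stack to $W^{\mathrm{sss}}$ has fibers $B\mathbb{G}_m$, and the image has dimension one \emph{more} than the stack.

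Two sanity checks confirm this.
\begin{itemize}
\item The same count applied to the stable stack, of dimension $s'^2(g-1)$, undershoots your correct direct computation of $\dim W^{\mathrm{st}}$ by one.
\item For $s'=2$, take the locus where $\tilde F$ is a non-split extension of two non-isomorphic line bundles. Its dimension is $\dim M^s_{C,\delta}(E_0,s-2,d-d')+(3g-2)+\dim\P(\Ext^1(\tilde F,E'))$, which exceeds your bound by one.
\end{itemize}

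More fundamentally, ``dominated by a stack'' gives no upper bound unless stabilizers are controlled, and a priori $\operatorname{Aut}(\tilde F)$ can be large. Three things need to be supplied.
\begin{itemize}
\item Show the stabilizer of $[\eta]$ is exactly the scalars. If $\eta\circ g=c\,\eta$, then $g$ lifts to an endomorphism of $E$ acting as $c$ on $E'$ and as $g$ on $\tilde F$. This is an endomorphism of the $(\delta-\varepsilon)$-stable pair $(E,\alpha)$, hence a scalar.
\item Use the uniqueness of $E'=F_1$ (second half of Lemma~\ref{lemma-JH filtration}), so that each point of $W^{\mathrm{sss}}$ has a single preimage.
\item Restrict to the open substack where the extension is $(\delta-\varepsilon)$-stable. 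For strictly semistable $\tilde F$, not every $\eta\neq 0$ qualifies.
\end{itemize}
The corrected bound is
\[
\dim W^{\mathrm{sss}}\le\dim M^s_{C,\delta}(E_0,s-s',d-d')+\dim\mathcal B+s'\delta+s'(s-s')(g-1).
\]
With $\dim\mathcal B\le s'^2(g-1)$ this still gives $\dim W^{\mathrm{sss}}\le\dim W^{\mathrm{st}}$, so your conclusion survives.
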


\begin{proof}
Let $W^o \subset W $ 
be the  inverse image of $ M^s_{C,\delta}(E_0, s - s', d - d') \times M^s_C(s', d') $ 
by $\pi_\delta^-$. 
Since $\pi_\delta^-|_{W^o} : W^o \to M^s_{C,\delta}(E_0, s - s', d - d') \times M^s_C(s', d') $ is a $\P^{s'(\delta+(s-s')(g-1))-1}$-bundle by Lemma \ref{lemma-fiber of contraction}
and $\dim M_C^s(s', d') =  s'^2(g-1)  +1$,
it suffices to show that $W$  is  contained in the closure of $W^o$.

Let $[(E, \alpha)] \in W$ and let $F_i$ be the subsheaves of $E$ which appear in a Jordan-H\"older filtration in Lemma \ref{lemma-JH filtration}. Then $\im \alpha \subset F_1 \eqqcolon E'$, and $E/E'$ is semistable with
\[
\mu(E/E') = \frac{d + \delta}{s} = \mu(E', \alpha').
\]

Let $\{G_t\}_{t \in T}$ be a flat family over a curve $T$ such that $G_0 = E/E'$ and $G_t$ is stable with $\mu(G_t) = \mu(G_0)$ for $t \ne 0$. 
Let $\eta \in {\rm Ext}^1(E/E', E')$ be the class corresponding to $0 \to E' \to E \to E/E' \to 0$. 
By Lemma \ref{lem_preliminary} (3), $\Hom (G_t, E') = 0$ for any $t$
since $G_t$ is semistable and $(E',\alpha')$ is $\delta$-stable with $\mu(G_t)=\mu(E',\alpha')$.
Thus $\{{\rm Ext}^1(G_t, E')\}_t$ is a vector bundle on $T$ and hence
we can deform $\eta \in {\rm Ext}^1(E/E', E')$ to $\eta_t \neq 0 \in {\rm Ext}^1(G_t, E')$ near $t = 0$.

For the exact sequence $0 \to E' \to E_t \to G_t \to 0$ corresponding to $\eta_t$, 
set $\alpha_t : E_0 \xrightarrow{\alpha'} E' \subset E_t$.
For $t \neq 0$, $G_t $ is stable and hence  $(E_t, \alpha_t)$ is $(\delta-\ep)$-stable by Claim \ref{claim_lemma-fiber of contraction}.
Since $0 \subset (E' , \alpha') \subset (E_t,\alpha_t)$ is a Jordan-H\"older filtration of $(E_t,\alpha_t)$,
we have $[(E_t,\alpha_t)] \in W^o$ for $t \neq 0$.
Hence $[(E,\alpha)]$ is contained in the closure of $W^o$.
\end{proof}

\begin{lemma}\label{lemma-upper bound on dim of Quot}
If $1 \leq s \leq r$,
it holds that
\[
\dim \Quot_C(E^0, r - s, d+e)  \leq r(d-s \mu_{\min}(E_0)) + s(r-s),
\]
where we set $\dim \emptyset =-\infty$.
\end{lemma}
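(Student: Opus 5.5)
The plan is to bound the dimension of $\Quot_C(E^0,r-s,d+e)$ at each point by the dimension of its Zariski tangent space, $\hom(K,B)=h^0(K^\vee\otimes B)$. Here $0\to K\to E^0\to B\to 0$ is the universal sequence at the point, $K$ is locally free of rank $s$ and degree $-d$, and $B$ has rank $r-s$ and degree $d+e$. Since $\dim \Quot \le \max_{[q]} \hom(K,B)$, it suffices to show
\[
h^0(K^\vee\otimes B)\le r(d-s\mu_{\min}(E_0))+s(r-s)
\]
for every quotient. If no quotient exists there is nothing to prove, by the convention $\dim\emptyset=-\infty$.

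\emph{Normalisation.} Both sides are invariant under replacing $E^0$ by $E^0\otimes M$ for a line bundle $M$ of degree $m$. Indeed, $\Quot_C(E^0,r-s,d+e)\simeq\Quot_C(E^0\otimes M,r-s,d+e+(r-s)m)$, and under this twist $e$ becomes $e+rm$, $d$ becomes $d-sm$ and $\mu_{\min}(E_0)$ becomes $\mu_{\min}(E_0)-m$, so $r(d-s\mu_{\min}(E_0))$ is unchanged. Passing to a finite cover if needed to get fractional degree, or simply carrying $\mu_{\min}(E_0)$ through the computation, we may therefore normalise slopes conveniently, e.g.\ so that $\mu_{\min}(E_0)$ is close to $0$.

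\emph{Slope bounds.} Since $\alpha=(K\hookrightarrow E^0)^\vee:E_0\to K^\vee$ is generically surjective, every quotient bundle of $K^\vee$ receives a generically surjective map from $E_0$. Hence $\mu_{\min}(K^\vee)\ge\mu_{\min}(E_0)$, because a generically surjective image only gains degree. Next we split $B$ as $0\to T\to B\to B'\to 0$ with $T$ torsion of length $\ell$ and $B'$ locally free. Then
\[
h^0(K^\vee\otimes B)\le s\ell+h^0(K^\vee\otimes B').
\]
For the locally free part we use the Clifford-type estimate along the Harder--Narasimhan filtration: a semistable $W$ with $\mu(W)\ge0$ has $h^0(W)\le\deg W+\rank W$, and one with $\mu(W)<0$ has $h^0(W)=0$. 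This gives
\[
h^0(V)\le \deg V_{\ge 0}+\rank V,
\]
where $V_{\ge0}$ is the HN piece of slope $\ge0$ and $\rank V\le s(r-s)$. This is where the term $s(r-s)$ comes from.

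\emph{Main obstacle.} The main obstacle is bounding $\deg(K^\vee\otimes B')_{\ge0}+s\ell$ by $r(d-s\mu_{\min}(E_0))$. First I would write
\[
\deg (V_{\ge0})=\deg V-\deg(V/V_{\ge0}),
\]
and bound the quotient $V/V_{\ge0}$ from below using $\mu_{\min}(K^\vee\otimes B')\ge\mu_{\min}(K^\vee)+\mu_{\min}(B')$. The naive bound $\mu_{\min}(B')\ge\mu_{\min}(E^0)$ produces $\mu_{\max}(E_0)$ rather than $\mu_{\min}(E_0)$, so it is too weak. Instead I would estimate $h^0(K^\vee\otimes B)$ by comparing it with $h^0(K^\vee\otimes E^0)$ modulo $\Ext$-terms, or exploit that $B$ is a quotient of $E^0$ and $K^\vee$ is dominated by $E_0$ simultaneously. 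A cleaner route is to filter $E_0$ by its HN filtration, and correspondingly $K^\vee$ and $B$. Then $\deg(K^\vee\otimes B)=rd+se$ is corrected by at most $sr(\mu(E_0)-\mu_{\min}(E_0))$, which is exactly the difference between $rd+se$ and $r(d-s\mu_{\min}(E_0))$. The torsion contribution $s\ell$ is absorbed because $\deg B'=d+e-\ell$, so it cancels against the degree loss in the locally free part.
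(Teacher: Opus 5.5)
\textbf{There is a genuine gap.} Your framework is sound, and if completed it gives a different proof from the paper's. Its ingredients are: $\dim_{[q]}\Quot\le\hom(K,B)$; the torsion splitting costs at most $s\ell$; the Clifford-type bound gives $h^0(V)\le\deg V_{\ge0}+\rank V$ with $\rank V= s(r-s)$; and $\deg V+s\ell=rd+se$. Together these reduce everything to the single inequality
\[
-\deg\bigl(V/V_{\ge 0}\bigr)\;\le\; s\bigl(r\mu_{\max}(E^0)-e\bigr)\;=\;rs\bigl(\mu(E_0)-\mu_{\min}(E_0)\bigr).
\]
This inequality is the whole content of the lemma, and your proposal does not prove it. The paper never needs a pointwise version: it stratifies by torsion length, embeds the locally free stratum $Z_0$ into $\Quot_C(E_0,s,d)$, imports the bound from \cite[Theorem 4.1]{PR}, and handles the torsion strata by fibres of dimension $si$ plus induction on $d$.

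Neither of your suggested routes closes the inequality. As you note, the slope bound $\mu_{\min}(V)\ge\mu_{\min}(E_0)+\mu_{\min}(E^0)$ only gives $s(r-s)\bigl(\mu_{\max}(E^0)-\mu_{\min}(E^0)\bigr)$. That is already too large for $E^0=M_1\oplus M_2^{\oplus(r-1)}$ with $\deg M_1<\deg M_2$ and $r-s\ge 2$. Filtering $E_0$ by its Harder--Narasimhan filtration ``and correspondingly $K^\vee$ and $B$'' is only an assertion. That filtration induces nothing on the HN filtrations of $K^\vee$ or $B'$, and you give no mechanism producing the claimed correction term.

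The missing idea is to apply the ``quotient of $E^0$'' property not to $B'$ as a whole but to its low-slope part. Set $\mu_1\coloneqq\mu_{\max}(E^0)$. Let $\kappa_i$ (ranks $k_i$) and $\beta_j$ (ranks $b_j$) be the HN slopes of $K$ and $B'$.

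\begin{enumerate}
\item Over $\C$ the pieces $K_i^\vee\otimes B'_j$ are semistable of slope $\beta_j-\kappa_i$. Hence $-\deg(V/V_{\ge0})=\sum_{\beta_j<\kappa_i}k_ib_j(\kappa_i-\beta_j)$.
\item Since $K\subset E^0$, every $\kappa_i\le\mu_1$. So the sum is at most $s\bigl(\rank(B'')\mu_1-\deg B''\bigr)$, where $B''$ is the quotient of $B'$ by its HN piece of slopes $\ge\mu_1$.
\item $B''$ is itself a quotient of $E^0$, and its kernel in $E^0$ has slope at most $\mu_1$. Therefore $\rank(B'')\mu_1-\deg B''\le r\mu_1-e$.
\end{enumerate}

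This gives exactly $s(r\mu_1-e)$. With this step added, your argument closes and yields a self-contained bound on every Zariski tangent space, without appealing to \cite{PR}. As written, the proposal is incomplete at its essential point.
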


\begin{proof}
Recall that $\mu_{\min}(E_0) =\min \{ \mu(F) \mid E_0 \twoheadrightarrow F\}$.
If $d  \ll 0$, this lemma is trivial since $\Quot_C(E^0, r - s, d+e) =\emptyset$.

Let $Z_i = \{ [E^0 \to F] \in \Quot_C(E^0, r - s, d+e) \mid \deg \Tor F=i  \}$ for $i \geq 0$.
Then we have a morphism 
\begin{align*}
Z_i \to \Quot_C(E^0, r - s, d+e-i) : [E^0 \to F] \mapsto [E^0 \to F/\Tor F].
\end{align*}
The fiber of this morphism over $[E^0 \xrightarrow{q} F'] \in \Quot_C(E^0, r - s, d+e-i)$ with locally free $F'$ is
$ \Quot_C(\ker q, 0, i)$, whose dimension is $si$.

Since there exists a natural embedding 
\[
Z_0 \hookrightarrow \Quot_C(E_0,  s, d) : [E^0 \to F] \mapsto [E_0 \to E_0/F^\vee],
\]
it holds that
\begin{align*}
\dim Z_0 \leq \dim  \Quot_C(E_0,  s, d) \leq r(d- s \mu_{\min}(E_0))+ s(r-s)
\end{align*}
by \cite[Theorem 4.1]{PR}.
For $i \geq 1$,
by the induction hypothesis on $d$, we have 
\begin{align*}
\dim Z_i &\leq \dim  \Quot_C(E^0, r - s, d+e-i) +si \\
&\leq r(d-i-s \mu_{\min}(E_0)) + s(r-s) + si \\
&\leq r(d- s \mu_{\min}(E_0))+ s(r-s).
\end{align*}
Hence this lemma holds.
\end{proof}

\begin{lemma}\label{lemma-upper bound on dim of M_dagger-ep, s >r_pre}
If $s >r$, then for $\delta_\sharp$ in \eqref{eq_dagger},
$ M_{C,\delta_\sharp}(E_0, s,d) $ is the disjoint union of 
\[
M^s_{C,\delta_\sharp}(E_0, k, e_k) \times M_C(s-k,d-e_k) 
\]
for $1 \leq k \leq r$
such that $\delta_\sharp= \frac{kd-s e_k }{s-k}$.
Furthermore, for such $k$ and $[(E',\alpha')] \in M^s_{C,\delta_\sharp}(E_0, k, e_k)$,
$\alpha'$ is surjective.
\end{lemma}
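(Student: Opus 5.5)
The plan is to show first that, for $s>r$, no $\delta_\sharp$-semistable pair with $\alpha\neq 0$ is $\delta_\sharp$-stable. The decomposition then follows from Lemma \ref{lem_disjoint_union}, once we identify which strata can be nonempty. Here $\min\{s-1,r\}=r$, so
\[
\delta_\sharp=\max_{1\le k\le r}\frac{kd-se_k}{s-k}.
\]
By definition of $\delta_\sharp$, at $\delta=\delta_\sharp$ we have $\frac{d+\delta_\sharp}{s}\le \frac{e_k+\delta_\sharp}{k}$ for all $1\le k\le r$. Equality holds exactly for those $k$ with $\delta_\sharp=\frac{kd-se_k}{s-k}$.

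\textbf{Step 1 (every semistable pair has a specific destabilizing subpair).} Let $(E,\alpha)$ be $\delta_\sharp$-semistable with $\alpha\ne0$, and set $k\coloneqq\rank\im\alpha$. Since $\im\alpha$ is a quotient of $E_0$, we have $k\le r<s$. Let $\tilde F$ be the saturation of $\im\alpha$ in $E$. Semistability applied to the subpair $(\tilde F,\alpha)$ gives
\[
\frac{e_k+\delta_\sharp}{k}\le\frac{\deg\im\alpha+\delta_\sharp}{k}\le\frac{\deg\tilde F+\delta_\sharp}{k}\le\frac{d+\delta_\sharp}{s}\le\frac{e_k+\delta_\sharp}{k}.
\]
Hence all of these are equalities. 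Consequently $\im\alpha=\tilde F$ is a subbundle, $\deg\im\alpha=e_k$, and $k$ attains the maximum defining $\delta_\sharp$. Since $\tilde F\ne E$ (as $k<s$), the pair $(E,\alpha)$ is not stable. Therefore $M^s_{C,\delta_\sharp}(E_0,s,d)=\emptyset$.

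\textbf{Step 2 (identifying the strata).} By Lemma \ref{lem_disjoint_union}, $M_{C,\delta_\sharp}(E_0,s,d)$ is the disjoint union of the pieces $M^s_{C,\delta_\sharp}(E_0,s-s',d-d')\times M_C(s',d')$ with $\frac{d'}{s'}=\frac{d+\delta_\sharp}{s}$. Write $k\coloneqq s-s'$. The first factor consists of stable pairs $(E',\alpha')$ of rank $k$ with $\alpha'\ne0$ and $\mu_{\delta_\sharp}(E',\alpha')=\frac{d+\delta_\sharp}{s}$.

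Apply the Step 1 argument to $(E',\alpha')$. Its image $\im\alpha'$ has some rank $k'\le\min\{k,r\}$. The saturation of $\im\alpha'$ with $\alpha'$ is a subpair of $(E',\alpha')$, so its $\delta_\sharp$-slope is at most $\frac{d+\delta_\sharp}{s}$. On the other hand, it is at least $\frac{e_{k'}+\delta_\sharp}{k'}\ge\frac{d+\delta_\sharp}{s}$. So this subpair has the same slope as $(E',\alpha')$, and stability forces it to equal $E'$. Hence $\im\alpha'=E'$, i.e.\ $\alpha'$ is surjective, and $k=k'\le r$. Moreover $\deg E'=e_k$ and $\frac{e_k+\delta_\sharp}{k}=\frac{d+\delta_\sharp}{s}$, which is exactly $\delta_\sharp=\frac{kd-se_k}{s-k}$. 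Finally $d-d'=e_k$, so the piece is $M^s_{C,\delta_\sharp}(E_0,k,e_k)\times M_C(s-k,d-e_k)$. The index $k$ ranges over those with $1\le k\le r$ attaining the maximum. Together with Step 1, which shows the stable locus is empty, this gives the claimed decomposition and the surjectivity statement.

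\textbf{Main obstacle.} The only delicate point is the bookkeeping in the chain of inequalities: one must use $e_{k'}\le\deg\im\alpha'$ and the fact that $k'\le\min\{s-1,r\}$ in order to invoke the defining property of $\delta_\sharp$. When $k=s$ the inequality for rank $k$ is not part of that definition. This case cannot occur, however, because $k\le r<s$.
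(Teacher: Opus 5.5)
Your proof is correct and takes the same route as the paper. You first rule out the stable locus by comparing the $\delta_\sharp$-slope of the image subpair with $\frac{e_k+\delta_\sharp}{k}$ and the defining inequality of $\delta_\sharp$. You then run the same comparison on the stable factor $(E',\alpha')$ of each stratum in Lemma \ref{lem_disjoint_union}, which forces $\alpha'$ to be surjective, $\deg E'=e_k$ and $\delta_\sharp=\frac{kd-se_k}{s-k}$.

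The differences are cosmetic. You pass to the saturation of $\im\alpha$, whereas the paper uses the subsheaf $\im\alpha$ directly, which suffices since subpairs need not be saturated. You also get non-stability from an equality chain, whereas the paper gets it as a contradiction with the strict stability inequality.
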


\begin{proof}
Recall that $M_{C,\delta_\sharp}(E_0,s,d)$ is the disjoint union of $M^s_{C,\delta_\sharp}(E_0,s,d) $ and
\[
M^s_{C,\delta_\sharp}(E_0, s-s',d-d') \times M_C(s',d') 
\]
for $1 \leq s' \leq s-1, d' \in \Z$ with $\frac{d+\delta_\sharp}{s} =\frac{d'}{s'} =\frac{d-d'+\delta_\sharp}{s-s'}$ as in Lemma \ref{lem_disjoint_union}.
Let  $[(E,\alpha)] \in M^s_{C,\delta_\sharp}(E_0,s,d) $ and set $k =\rank (\im \alpha)$.
Since $\im \alpha \subsetneq E$ by $s >r$, it holds that 
\begin{align*}
\frac{d +\delta_\sharp }{s} =\mu_{\delta_\sharp}(E,\alpha)> \mu_{\delta_\sharp}(\im \alpha,\alpha) =\frac{\deg (\im \alpha) +\delta_\sharp }{k} \geq \frac{e_k +\delta_\sharp }{k}
\end{align*}
for $e_k=\min\{ \deg F \mid E_0 \twoheadrightarrow F, \rank F=k \}$
which contradicts the definition of $\delta_\sharp$.
Hence $M^s_{C,\delta_\sharp}(E_0,s,d) =\emptyset$.

Let $[(E',\alpha')] \in M^s_{C,\delta_\sharp}(E_0, s-s',d-d')$
and $k=\rank (\im \alpha')$.
If $\im \alpha' \subsetneq E'$, then
\begin{align*}
 \frac{d+\delta_\sharp}s =\frac{d-d' +\delta_\sharp }{s-s'} =\mu_{\delta_\sharp}(E',\alpha') > \mu_{\delta_\sharp}(\im \alpha',\alpha')  =\frac{\deg (\im \alpha') +\delta_\sharp }{k} \geq \frac{e_k +\delta_\sharp }{k},
\end{align*}
which contradicts the definition of $\delta_\sharp$.
Hence  $\alpha'$ is surjective (in particular $s-s'=k \leq r$) and 
\[
 \frac{d+\delta_\sharp}s =\frac{d-d' +\delta_\sharp }{s-s'} = \frac{\deg E' +\delta_\sharp }{k} \geq \frac{e_k +\delta_\sharp }{k},
\]
which implies $d-d'= \deg E'=e_k$
and $\delta_\sharp = \frac{kd-s e_k }{s-k}$ by the definition of $\delta_\sharp$.
Thus $ M_{C,\delta_\sharp}(E_0, s,d) $ is the disjoint union of 
$M^s_{C,\delta_\sharp}(E_0, k, e_k) \times M_C(s-k,d-e_k) $ for $1 \leq k \leq r$
such that $\delta_\sharp= \frac{kd-s e_k }{s-k}$.
\end{proof}

\begin{lemma}\label{lemma-upper bound on dim of M_dagger-ep, s >r}
If $s >r$, we have
\[
\dim M_{C,\delta_\sharp -\ep} (E_0,s,d) \leq r(d-s \mu_{\min}(E_0)) + s^2 (g-1)
\]
for $\delta_\sharp$ in \eqref{eq_dagger} and sufficiently small $\ep >0$.
\end{lemma}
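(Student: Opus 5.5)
Since $s>r$, Lemma \ref{lem_suff_large_delta} (1) gives $M_{C,\delta}(E_0,s,d)=\emptyset$ for $\delta>\delta_\sharp$. So every point of $M_{C,\delta_\sharp-\ep}(E_0,s,d)$ maps under $\pi^-_{\delta_\sharp}$ to a point of $M_{C,\delta_\sharp}(E_0,s,d)$ that is not in the image of $\pi^+_{\delta_\sharp}$. The plan is therefore to bound $\dim M_{C,\delta_\sharp-\ep}(E_0,s,d)$ stratum by stratum over the decomposition of $M_{C,\delta_\sharp}(E_0,s,d)$ in Lemma \ref{lemma-upper bound on dim of M_dagger-ep, s >r_pre}. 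That decomposition is a finite disjoint union of the pieces
\[
M^s_{C,\delta_\sharp}(E_0,k,e_k)\times M_C(s-k,d-e_k), \qquad 1\le k\le r,\quad \delta_\sharp=\tfrac{kd-se_k}{s-k}.
\]
It suffices to show that the inverse image $W_k$ of each piece under $\pi^-_{\delta_\sharp}$ has dimension at most $r(d-s\mu_{\min}(E_0))+s^2(g-1)$.

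Lemma \ref{lem_closure_of_stable_F}, applied with $s'=s-k$ and $d'=d-e_k$, gives
\[
\dim W_k=\dim M^s_{C,\delta_\sharp}(E_0,k,e_k)+(s-k)(\delta_\sharp+s(g-1)).
\]
We then estimate the first term. By Lemma \ref{lemma-upper bound on dim of M_dagger-ep, s >r_pre}, every $[(E',\alpha')]$ in this space has $\alpha'$ surjective, so $E'$ is a quotient of $E_0$ of rank $k$ and degree $e_k$. Hence $M^s_{C,\delta_\sharp}(E_0,k,e_k)$ embeds in $\Quot_C(E_0,k,e_k)$. Since $e_k$ is the minimal degree of a rank-$k$ quotient, a flat limit of such quotients cannot acquire torsion, because saturating would give a rank-$k$ quotient of smaller degree. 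So this Quot scheme parametrizes locally free quotients. Its dimension is bounded either by \cite[Theorem 4.1]{PR} or by Lemma \ref{lemma-upper bound on dim of Quot} with the roles of $E_0$ and $E^0$ swapped. Either way, the bound should be of the shape $r(e_k-k\mu_{\min}(E_0))+k(r-k)$, or something smaller, using $\mu_{\min}$ of the relevant bundle. I expect this to be the delicate point: I need to check which $\mu_{\min}$ appears, and I may use a cruder bound, for instance via the tangent space $\Hom(\ker\alpha',E')$, which has dimension at most $k(r-k)(g-1)$ plus a degree term.

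The remaining step is arithmetic. Substituting $(s-k)\delta_\sharp=kd-se_k$ gives
\[
\dim W_k\le r(e_k-k\mu_{\min}(E_0))+k(r-k)+kd-se_k+(s-k)s(g-1).
\]
We compare this with $r(d-s\mu_{\min}(E_0))+s^2(g-1)$. The difference between the target and the bound is
\[
(r-k)\bigl(d-e_k\bigr)+\bigl(s-k\bigr)\bigl(e_k-r\mu_{\min}(E_0)\bigr)-k(r-k)+ks(g-1)
\]
(up to regrouping). Each piece should be nonnegative for the following reasons. First, $e_k\ge k\mu_{\min}(E_0)$. Second, $d-e_k=\deg F\ge$ roughly $(s-k)\cdot(\text{slope})$, where $F=E/E'$ has slope $(d+\delta_\sharp)/s>e_k/k$. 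Third, $ks(g-1)\ge k(r-k)$ because $g\ge2$ and $s>r$. The main obstacle I anticipate is making the inequality $d-e_k\ge(s-k)e_k/k\ge(s-k)\mu_{\min}(E_0)$ (equivalently, $\mu(F)=\mu(E',\alpha')\ge\mu_{\min}(E_0)$) combine correctly with the factor $(r-k)$ versus $r$. If the bookkeeping is tight, I would handle it by bounding $e_k\ge k\mu_{\min}(E_0)$ and $d-e_k\ge (s-k)\mu_{\min}(E_0)$ separately and absorbing the $k(r-k)$ term into $ks(g-1)$.
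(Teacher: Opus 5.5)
Your outline follows the paper's proof. The steps match: the decomposition of Lemma~\ref{lemma-upper bound on dim of M_dagger-ep, s >r_pre}, the fiber-dimension count of Lemma~\ref{lem_closure_of_stable_F} with $(s',d')=(s-k,d-e_k)$, the embedding $M^s_{C,\delta_\sharp}(E_0,k,e_k)\subset\Quot_C(E_0,k,e_k)$, and then arithmetic.

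The one real difference is the Quot estimate. The paper bounds $\dim\Quot_C(E_0,k,e_k)$ by $k(r-k)$, citing \cite[Theorem 4.1]{PR} at the minimal degree $e_k$. The chain then closes using only $e_k\ge k\mu_{\min}(E_0)$, $g\ge 2$ and $d\ge s\mu_{\min}(E_0)$.

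You use the weaker bound $r(e_k-k\mu_{\min}(E_0))+k(r-k)$. This is legitimate: it is exactly the form of \cite[Theorem 4.1]{PR} that the paper applies to $\Quot_C(E_0,s,d)$ inside the proof of Lemma~\ref{lemma-upper bound on dim of Quot}, so take it from there. Do not use Lemma~\ref{lemma-upper bound on dim of Quot} itself with $E_0$ and $E^0$ swapped: that produces $\mu_{\min}(E^0)=-\mu_{\max}(E_0)$, which is the wrong invariant for your arithmetic.

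With this weaker input, your claim that each summand of the difference is nonnegative is false. The term $(s-k)(e_k-r\mu_{\min}(E_0))$ is negative whenever $e_k<r\mu_{\min}(E_0)$, and this can happen for $k<r$ when $\mu_{\min}(E_0)>0$.

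Your fallback, however, closes the estimate exactly. Since $\frac{d-e_k}{s-k}=\frac{d+\delta_\sharp}{s}=\frac{e_k+\delta_\sharp}{k}\ge\mu_{\min}(E_0)$, you get $(r-k)(d-e_k)\ge(r-k)(s-k)\mu_{\min}(E_0)$. Together with $(s-k)(e_k-r\mu_{\min}(E_0))\ge-(s-k)(r-k)\mu_{\min}(E_0)$, the first two summands add to something nonnegative. Finally $ks(g-1)\ge ks>k(r-k)$ absorbs the remaining term.

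So your argument is complete once written with the fallback. Compared with the paper's, it costs slightly more bookkeeping. In exchange, it needs only the $\mu_{\min}$-form of the Popa--Roth bound, not the sharper minimal-degree bound $k(r-k)$.
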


\begin{proof}
Let $k$ be as in Lemma \ref{lemma-upper bound on dim of M_dagger-ep, s >r_pre}.
Since $\alpha'$ is surjective for any $[(E',\alpha')] \in M^s_{C,\delta_\sharp}(E_0, k, e_k)$,
there exists an embedding $M^s_{C,\delta_\sharp}(E_0,k,e_k)\subset \Quot(E_0,k,e_k)$. 
By Lemmas \ref{lemma-fiber of contraction}, \ref{lem_closure_of_stable_F}, \ref{lemma-upper bound on dim of M_dagger-ep, s >r_pre} and \cite[Theorem 4.1]{PR},
$\dim M_{C,\delta_\sharp-\varepsilon}(E_0,s,d)$ is bounded above by the maximum of 
\begin{align*}
\dim \Quot(E_0,k,e_k)  &+ { (s-k)(\delta_\sharp+s(g-1))} \\
     & { \leq } k(r-k) +(s-k)\delta_\sharp + (s-k)s(g-1) \\
    & = (kd-s e_k) + k(r-k)+ (s-k)s(g-1) \\
    & \leq  k(d-s\mu_{\min}(E_0))+k(r-k)(g-1)+ (s-k)s(g-1)\\
    & =  k(d-s\mu_{\min}(E_0))+(s^2-(s-r)k-k^2)(g-1) \\
    & \leq r(d-s\mu_{\min}(E_0)) +s^2(g-1)
\end{align*}
over such $k$.
\end{proof}

\begin{lemma}\label{lemma-upper bound on dim}
For $s \geq 1$ and $\delta' > 0$, we have
\[
\dim M_{C, \delta'}(E_0, s, d) \leq r(d-s \mu_{\min}(E_0)) + s t (g - 1)
\]
for $t \coloneqq \max \{s,r\}$.
\end{lemma}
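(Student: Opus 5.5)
We argue by descending wall-crossing in $\delta'$, starting from the large-$\delta$ end where the moduli space is a Quot scheme (or empty), and moving down through the finitely many critical values. The critical values are those $\delta$ with $\frac{d+\delta}{s}\in\bigcup_{s'=1}^{s-1}\frac{1}{s'}\Z$. For non-critical $\delta'$ stability equals semistability, and the moduli space is constant on each chamber. At a critical $\delta$, every point of $M_{C,\delta\pm\ep}$ maps to $M_{C,\delta}$ via $\pi^\pm_\delta$. Since $M_{C,\delta}$ is the disjoint union in Lemma \ref{lem_disjoint_union}, it suffices to bound $\dim M_{C,\delta-\ep}$ in terms of $\dim M_{C,\delta+\ep}$ and the dimensions of the strata. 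It is also enough to treat non-critical $\delta'$: the space at a critical $\delta$ is the image of $M_{C,\delta-\ep}$ plus the strictly semistable strata, and these are bounded by the same estimates below.

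\emph{Base case.} If $s\le r$ and $\delta'>\delta_\sharp$, Lemma \ref{lem_suff_large_delta} gives $M_{C,\delta'}(E_0,s,d)\simeq\Quot_C(E^0,r-s,d+e)$. Lemma \ref{lemma-upper bound on dim of Quot} then bounds its dimension by $r(d-s\mu_{\min}(E_0))+s(r-s)$, which is at most $r(d-s\mu_{\min}(E_0))+sr(g-1)$ since $g\ge2$. If $s>r$, the space is empty for $\delta'>\delta_\sharp$, and Lemma \ref{lemma-upper bound on dim of M_dagger-ep, s >r} handles the chamber just below $\delta_\sharp$.

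\emph{Inductive step.} We induct on $s$ and, for fixed $s$, descend through the critical values. The stable locus $M^s_{C,\delta}(E_0,s,d)$ is contained in the image of $M_{C,\delta+\ep}$; indeed $\delta$-stable pairs stay stable for $\delta\pm\ep$. Hence its preimage in $M_{C,\delta-\ep}$ is isomorphic to it and has dimension at most $\dim M_{C,\delta+\ep}$, which we bound by the downward induction. The rest of $M_{C,\delta-\ep}$ lies over strata $M^s_{C,\delta}(E_0,s-s',d-d')\times M_C(s',d')$. By Lemma \ref{lem_closure_of_stable_F}, their preimages have dimension $\dim M^s_{C,\delta}(E_0,s-s',d-d')+s'(\delta+s(g-1))$. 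For the first factor we apply the induction on rank, giving at most $r(d-d'-(s-s')\mu_{\min})+(s-s')t'(g-1)$ with $t'=\max\{s-s',r\}\le t$.

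\emph{Key inequality.} The remaining task, and the main obstacle, is to show
\[
r(d-d'-(s-s')\mu_{\min})+(s-s')t(g-1)+s'\delta+s's(g-1)\le r(d-s\mu_{\min})+st(g-1),
\]
using $\frac{d+\delta}{s}=\frac{d'}{s'}$, i.e.\ $s'\delta=sd'-s'd$. Rearranging, this reduces to
\[
sd'-s'd-rd'+rs'\mu_{\min}+s's(g-1)\le s't(g-1).
\]
Since $t\ge s$, the terms $s's(g-1)\le s't(g-1)$ cancel against the right side. What remains is
\[
(s-r)d'\le s'd-rs'\mu_{\min}, \qquad\text{equivalently}\qquad (s-r)\frac{d'}{s'}\le d-r\mu_{\min}.
\]
When $s\le r$ the left side is nonpositive for $d'\ge0$. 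I would argue this via the existence of a $\delta$-stable pair with $\alpha'\neq0$: this gives $\frac{d-d'+\delta}{s-s'}\ge\frac{e_{k}+\delta}{k}$-type bounds, from which one gets $\frac{d'}{s'}=\frac{d+\delta}{s}\ge\mu_{\min}$ roughly. When $s>r$ the constraint is $\delta\le\delta_\sharp$, which bounds $d'/s'$ from above by $\frac{d+\delta_\sharp}{s}$. I expect a careful case check to close the inequality, possibly using $t\ge r$ to absorb the slack. If the direct estimate fails, the fallback is to bound the whole of $M_{C,\delta'}$ at once by a stratification via Jordan--Hölder type (Harder--Narasimhan) filtrations, in the spirit of Lemma \ref{lemma-upper bound on dim of M_dagger-ep, s >r}.
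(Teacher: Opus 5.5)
Your overall structure matches the paper's proof. You use induction on $s$ and descending induction on $\delta$. You use the embedding of $M_{C,\delta-\ep}(E_0,s,d)\setminus W^-_\delta$ into $M_{C,\delta+\ep}(E_0,s,d)$. You bound $W^-_\delta$ via Lemma \ref{lem_closure_of_stable_F} and induction on rank. Your rearrangement of the target inequality is also correct.

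The gap is that the step you call the main obstacle is never proved, and it is the only non-formal step. After substituting $d'/s'=(d+\delta)/s$, your reduced inequality $(s-r)d'\le s'(d-r\mu_{\min}(E_0))$ is \emph{equivalent} to
\[
\frac{d+\delta}{s}\ \ge\ \mu_{\min}(E_0)+\frac{\delta}{r},
\]
and none of the routes you sketch delivers this:
\begin{itemize}
\item For $s\le r$, the sign observation $(s-r)d'\le 0$ settles the matter only if $d\ge r\mu_{\min}(E_0)$. That is not available, and neither is $d'\ge 0$.
\item The bound $\frac{d+\delta}{s}\ge\mu_{\min}(E_0)$ that you extract ``roughly'' drops exactly the $\delta/r$ term that is needed.
\item For $s>r$, the constraint $\delta\le\delta_\sharp$ gives $\frac{d+\delta}{s}\le\max_i\frac{d-e_i}{s-i}$. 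Comparing this with $\frac{d-r\mu_{\min}(E_0)}{s-r}$ needs the further input $d\ge s\mu_{\min}(E_0)$, which you do not establish.
\end{itemize}
Deferring to ``a careful case check'' or to an unspecified Harder--Narasimhan stratification leaves the lemma unproved.

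The missing idea is a single use of semistability against the subpair generated by the image of $\alpha$. If $W^-_\delta\neq\emptyset$, take $[(E,\alpha)]\in W^-_\delta$. It is $\delta$-semistable, so
\[
\frac{d+\delta}{s}=\mu_\delta(E,\alpha)\ \ge\ \mu_\delta(\im\alpha,\alpha)=\frac{\deg(\im\alpha)+\delta}{\rank(\im\alpha)}\ \ge\ \mu_{\min}(E_0)+\frac{\delta}{r}.
\]
The last step holds because $\im\alpha$ is a nonzero quotient of $E_0$, so its slope is at least $\mu_{\min}(E_0)$, and its rank is at most $r$.

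Equivalently, you can apply this to the stable factor $(E',\alpha')$, whose $\delta$-slope is also $\frac{d+\delta}{s}$. Your ``$\frac{e_k+\delta}{k}$-type bound'' already contains the needed inequality, provided you keep $\frac{\delta}{k}\ge\frac{\delta}{r}$ and use $e_k\ge k\mu_{\min}(E_0)$.

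This closes the estimate for $s\le r$ and $s>r$ alike, with no case analysis, which is how the paper concludes.
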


\begin{proof}
We prove this lemma by induction on $s$.
If $s=1$,
$ M_{C,\delta} (E_0,s,d)= \Quot_C(E^0,r-s,d+e)$ by Lemma \ref{lem_suff_large_delta}
and hence the inequality in this lemma holds by Lemma \ref{lemma-upper bound on dim of Quot}.

Let $s >1$.
We show this lemma for $s $ by descending induction on $\delta'$. 
If $\delta' > \delta_\sharp$, the statement of this lemma holds.
In fact, this is trivial if $s >r$ since $M_{C, \delta'}(E_0, s, d) =\emptyset$ by Lemma \ref{lem_suff_large_delta}.
If $s \leq r$,
then $M_{C, \delta'}(E_0, s, d) = \Quot_C(E^0, r - s, d+e)$ and hence this lemma follows from 
Lemma \ref{lemma-upper bound on dim of Quot} in this case.

Assume $\delta' \leq \delta_\sharp$.
By Lemma \ref{lemma-fiber of contraction},
$\pi_{\delta'}^- : M_{C, \delta' -\ep}(E_0, s, d) \to M_{C, \delta'}(E_0, s, d) $ is surjective for sufficiently small $\ep >0$.
Hence it suffices to show $\dim M_{C, \delta' - \ep }(E_0, s, d) \leq r(d-s \mu_{\min}(E_0)) + s t (g - 1)$.

Let $\delta > 0$ and define
\begin{align}\label{eq_W^-}
W^-_\delta \coloneqq \left\{ [(E, \alpha)] \in M_{C,\delta - \varepsilon}(E_0, s, d) \,\middle|\, (E, \alpha) \text{ is not } \delta\text{-stable} \right\}
\end{align}
for $0 < \varepsilon \ll \delta$. 
By Lemmas \ref{lem_disjoint_union}, \ref{lem_closure_of_stable_F},
we have
\begin{align}\label{eq_dim_W^-}
\dim W^-_\delta &=\max_{(s',d')} \{ \dim M^s_{C,\delta}(E_0, s - s', d - d') + s'(\delta + s(g - 1)) \},
\end{align}
where we take the maximum for all $(s',d')$ as in Lemma \ref{lemma-fiber of contraction}.
Since 
\[
\dim M^s_{C,\delta}(E_0, s - s', d - d') \leq r(d - d'- (s-s')\mu_{\min}(E_0) ) + (s - s')t(g - 1)
\]
by induction on $s$,
$\dim W^-_\delta $ is at most the maximum of 
\begin{align*}
r(d - d'&- (s-s')\mu_{\min}(E_0) ) + (s - s')t (g - 1)  + s'(\delta + s(g - 1)) \\
&=r(d-s \mu_{\min}(E_0) ) + st(g - 1) -  (rd' - rs' \mu_{\min}(E_0) - s'\delta) - s'(t - s)(g - 1).
\end{align*}
By the condition $\frac{d+\delta}{s} =\frac{d'}{s'}$, we have
\begin{align}
\label{eq_lemma-upper bound on dim}
\begin{split}
rd' - rs' \mu_{\min}(E_0) - s'\delta =rs'\left( \frac{d + \delta}{s} -  \mu_{\min}(E_0) -  \frac{\delta}r \right) .
\end{split}
\end{align}
If $W^-_\delta  \neq \emptyset$,
we can take  $[(E,\alpha)] \in W^-_\delta $, which is $\delta$-semistable,  and hence
\begin{align*}
\frac{d + \delta}{s} =\mu_\delta(E,\alpha) \geq \mu_\delta(\im \alpha,\alpha) =\frac{\deg(\im \alpha) + \delta}{{\rm rank}(\im \alpha)} \geq \mu_{\min}(E_0)  +   \frac{\delta}{r}.
\end{align*}
Thus \eqref{eq_lemma-upper bound on dim} is non-negative.
Since $s'(t - s)(g - 1) $ is non-negative as well,
$\dim W^-_\delta $ is at most $r(d-s \mu_{\min}(E_0) ) + st(g - 1)$.

Since $M_{C, \delta - \varepsilon}(E_0, s, d) \setminus W^-_\delta$ is embedded into $M_{C, \delta + \varepsilon}(E_0, s, d)$, 
this lemma follows by Lemmas \ref{lemma-upper bound on dim of Quot}, \ref{lemma-upper bound on dim of M_dagger-ep, s >r} and descending induction on $\delta$.
\end{proof}

\subsection{Dimension of the locus where $M_{C,\delta - \varepsilon}(E_0, s, d) \to M_{C,\delta}(E_0, s, d)$ is not an isomorphism for $d \gg 0$}

In the rest of this paper, when we say that
$d$ is \emph{sufficiently large}, $d$ could depend on $g,E_0,r,s$ but not on $\delta$.

In this subsection, we estimate the dimension of 
\begin{align*}
W^-_\delta = \left\{ [(E, \alpha)] \in M_{C,\delta - \varepsilon}(E_0, s, d) \,\middle|\, (E, \alpha) \text{ is not } \delta\text{-stable} \right\}
\end{align*}
in \eqref{eq_W^-} for sufficiently large $d$.
Recall that the expected dimension of $M_{C,\delta - \varepsilon}(E_0, s, d) $ is 
\[
 \mathrm{exp.dim} \,  M_{C,\delta - \varepsilon}(E_0, s, d) \coloneqq rd +se -s(r-s)(g-1)
\]
by Lemma \ref{lem_dim}.

First, we consider the case $1 \leq s \leq r$.

\begin{lemma}\label{lemma-upper bound on dim of W}
Fix $1 \leq s \leq r$ and $c \geq 0$. 
Then there exists an integer $d_c$ such that for all $d \geq d_c$ and all $\delta >0$, it holds that
\[
\dim W^-_\delta \leq \mathrm{exp.dim} \, M_{C,\delta - \varepsilon}(E_0, s, d) - c.
\]
\end{lemma}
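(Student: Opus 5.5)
The plan is to bound $\dim W^-_\delta$ through the formula \eqref{eq_dim_W^-} obtained from Lemmas \ref{lem_disjoint_union} and \ref{lem_closure_of_stable_F}:
\[
\dim W^-_\delta =\max_{(s',d')} \{ \dim M^s_{C,\delta}(E_0, s - s', d - d') + s'(\delta + s(g - 1)) \},
\]
where $1 \leq s' \leq s-1$ and $\frac{d+\delta}{s}=\frac{d'}{s'}$. If no such $(s',d')$ exists, or if $W^-_\delta=\emptyset$, there is nothing to prove. So we may assume $W^-_\delta \neq \emptyset$ and bound each term in the maximum separately.

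For the first summand we apply Lemma \ref{lemma-upper bound on dim} with rank $s-s'$. Since $s-s' < s \leq r$, we have $t=\max\{s-s',r\}=r$, and so
\[
\dim M^s_{C,\delta}(E_0, s - s', d - d') \leq r(d-d'-(s-s')\mu_{\min}(E_0)) + (s-s')r(g-1).
\]
Adding $s'(\delta+s(g-1))$ and rearranging exactly as in the proof of Lemma \ref{lemma-upper bound on dim}, each term is at most
\[
r(d-s\mu_{\min}(E_0)) + sr(g-1) - rs'\Bigl(\frac{d+\delta}{s}-\mu_{\min}(E_0)-\frac{\delta}{r}\Bigr) - s'(r-s)(g-1).
\]

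The key step is to get a lower bound for the subtracted quantity that grows linearly in $d$ and does not depend on $\delta$. We expand
\[
rs'\Bigl(\frac{d+\delta}{s}-\mu_{\min}(E_0)-\frac{\delta}{r}\Bigr) = rs'\Bigl(\frac{d}{s}-\mu_{\min}(E_0)\Bigr) + s'\delta\Bigl(\frac{r}{s}-1\Bigr).
\]
The $\delta$-term is nonnegative precisely because $s \leq r$; this is where that hypothesis enters. Once $d \geq s\mu_{\min}(E_0)$, the first term is at least $r(\frac{d}{s}-\mu_{\min}(E_0))$, because $s'\geq 1$. Also $s'(r-s)(g-1)\geq 0$. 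Hence, uniformly in $\delta$ and in $(s',d')$,
\[
\dim W^-_\delta \leq r(d-s\mu_{\min}(E_0)) + sr(g-1) - \frac{r}{s}d + r\mu_{\min}(E_0).
\]

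Finally we compare this with $\mathrm{exp.dim}\,M_{C,\delta-\ep}(E_0,s,d) = rd+se-s(r-s)(g-1)$. The difference satisfies
\[
\mathrm{exp.dim}\,M_{C,\delta-\ep}(E_0,s,d) - \dim W^-_\delta \geq \frac{r}{s}d - K,
\]
where $K$ is a constant depending only on $g,r,s,e,\mu_{\min}(E_0)$. Choosing $d_c$ with $\frac{r}{s}d_c \geq K + c$ (and $d_c \geq s\mu_{\min}(E_0)$) proves the lemma.

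The main point to watch is this uniformity in $\delta$. The factor $\frac{r}{s}-1\geq 0$ guarantees that large $\delta$ only helps the bound. We also need to check that Lemma \ref{lemma-upper bound on dim} applies at the wall value $\delta$ itself, which it does, since that lemma holds for every $\delta'>0$ and the stable locus is contained in the full moduli space.
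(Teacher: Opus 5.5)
Your proof is correct and follows essentially the same route as the paper: bound $\dim W^-_\delta$ via \eqref{eq_dim_W^-} and Lemma~\ref{lemma-upper bound on dim} with $t=r$, rewrite $rd'-s'\delta$ using $\frac{d+\delta}{s}=\frac{d'}{s'}$, and use $s\le r$ to remove the dependence on $\delta$, which leaves a codimension bound linear in $d$. Your step using $d\ge s\mu_{\min}(E_0)$ only makes explicit the uniformity over $s'$ that the paper leaves implicit in ``for sufficiently large $d$''.
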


\begin{proof}
By the proof of Lemma \ref{lemma-upper bound on dim}, 
$\dim W^-_\delta $ is at most the maximum of 
\[
 r(d - d'- (s-s')\mu_{\min}(E_0) )  + (s - s')t(g - 1) + s'(\delta + s(g - 1)).
\]
Hence, $\mathrm{exp.dim} \, M_{C,\delta - \varepsilon}(E_0, s, d)  - \dim W^-_\delta $ is at least the minimum of 
\begin{align*}
rd+se &- s(r - s)(g - 1) -  r(d - d'- (s-s')\mu_{\min}(E_0) )
 - (s - s')t(g - 1) - s'(\delta + s(g - 1)) \\
&= rd' -s' \delta +se +r(s-s')\mu_{\min}(E_0)  - (s(r-s)  + (s-s')t +s's )(g - 1)
\\
&= s'r  \left( \frac{d+\delta}{s} -\frac{\delta}{r} \right) +se +r(s-s')\mu_{\min}(E_0)  -  (s(r-s)  + (s-s')t +s's )(g - 1)\\
& \geq  s'r \cdot \frac{d}s +se +r(s-s')\mu_{\min}(E_0)  - (s(r-s)  + (s-s')t +s's )(g - 1) ,
\end{align*}
which is larger than $c$ for sufficiently large $d$.
Note that we use $\frac{d+\delta}{s} =\frac{d'}{s'}$ (resp.\ $s \leq r$) in the second equality (resp.\ in the last inequality).
\end{proof}

Next, we consider the case $s > r$.

\begin{lemma}\label{lemma-upper bound on dim of W,s >r}
Fix $ s > r$.
Set $\kappa \coloneqq  \frac{(r-1)d-se_{r-1}}{s-r+1}$.
Then
\begin{enumerate}
\item For $c \geq 0$, there exists an integer $d_c$ such that for all $d \geq d_c$ and all $0 < \delta \leq  \kappa$,
it holds that
\[
\dim W^-_\delta \leq \mathrm{exp.dim} \, M_{C,\delta - \varepsilon}(E_0, s, d)  - c.
\]
\item Assume $d \gg 0$.
Then, for $\kappa < \delta <\delta_\sharp$,
$ M^s_{C,\delta}(E_0, s, d)$ is smooth with $\dim  M^s_{C,\delta}(E_0, s, d) =\mathrm{exp.dim} \, M_{C,\delta - \varepsilon}(E_0, s, d) $
if it is non-empty\footnote{We will see that $ M^s_{C,\delta}(E_0, s, d)\neq \emptyset$ in Proposition \ref{prop_SQM_s>r}.}.
Furthermore,
\begin{align*}
\dim W^-_\delta 
\begin{cases}
\leq \mathrm{exp.dim} \, M_{C,\delta - \varepsilon}(E_0, s, d) -2  & \text{ if } (s, \delta ) \neq (r+1,\kappa_\sharp) \\
= \mathrm{exp.dim} \, M_{C,\delta - \varepsilon}(E_0, s, d) -1  & \text{ if } (s, \delta ) = (r+1,\kappa_\sharp) 
\end{cases}
\end{align*}
for $\kappa_\sharp \coloneqq rd +(r+1)(e-1)$.
\item Assume $d \gg 0$. Then we have an isomorphism 
\[
 M_{C,\delta_\sharp}(E_0, s, d) \xrightarrow{\sim} M_C(s-r,d+e) : [(E,\alpha)] \mapsto [E/\im \alpha].
\]
\end{enumerate}
\end{lemma}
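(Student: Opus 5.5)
The plan is to run the same dimension count as in the proof of Lemma \ref{lemma-upper bound on dim of W}, namely \eqref{eq_dim_W^-} combined with Lemmas \ref{lem_disjoint_union} and \ref{lem_closure_of_stable_F}, and to look closely at the quantity $rd'-s'\delta$ once $s>r$. Write $\mathrm{exp.dim}-\dim W^-_\delta$ as the minimum over the admissible $(s',d')$ of
\begin{align*}
s'r\left(\frac{d+\delta}{s}-\frac{\delta}{r}\right)+se+r(s-s')\mu_{\min}(E_0)-(\cdots)(g-1).
\end{align*}
For $s>r$ the coefficient of $\delta$, namely $s'(r-s)/s$, is negative, so the lower bound is no longer automatic and we have to bound $\delta$ from above.

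For (1), assume $\delta\leq\kappa$. The key input is this: for a $\delta$-semistable $(E,\alpha)$ one has $\rank\im\alpha\leq r$, and comparing $\mu_\delta(\im\alpha,\alpha)\leq\mu_\delta(E,\alpha)$ with the definition of $\kappa$ through $e_{r-1}$ should force a gap $\frac{d+\delta}{s}-\frac{\delta}{r}\geq c'd$ for some fixed $c'>0$. For this I would use $\delta\leq\kappa<\delta_\sharp$, together with $\kappa\approx\frac{(r-1)d}{s-r+1}$ for large $d$. Concretely, $\frac{d+\delta}{s}-\frac{\delta}{r}=\frac{d}{s}-\frac{(s-r)\delta}{rs}\geq\frac{d}{s}\left(1-\frac{(s-r)(r-1)}{r(s-r+1)}\right)-O(1)$, and the bracket is positive. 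With this linear-in-$d$ bound, the induction estimate from Lemma \ref{lemma-upper bound on dim} for $\dim M^s_{C,\delta}(E_0,s-s',d-d')$ gives the claim for $d\gg0$, exactly as before. Here I have to keep track of $t=\max\{s,r\}=s$ for the total space, while the substratum may have rank $s-s'\leq r$.

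For (2), the first step is smoothness. When $\kappa<\delta<\delta_\sharp$, I expect every $\delta$-stable pair to have $\alpha$ of rank exactly $r$, hence injective, so $\ker\alpha=0$. The reason is that a rank $\leq r-1$ image would violate stability by the definition of $\kappa$, just as in the proof of Lemma \ref{lem_suff_large_delta}. Lemma \ref{lem_dim} then gives smoothness of the expected dimension. Next, for $W^-_\delta$ I would classify the possible $(s',d')$. In the range $\delta>\kappa$ the stable factor $(E',\alpha')$ should again have $\alpha'$ injective, and then $M^s_{C,\delta}(E_0,s-s',d-d')$ is smooth of expected dimension. This holds either by the previous point applied to rank $s-s'$, or, when $s-s'=r$, because it lies in a Quot scheme of torsion quotients. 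So
\begin{align*}
\dim W^-_\delta=\mathrm{exp.dim}\,M(E_0,s-s',d-d')+s'(\delta+s(g-1)).
\end{align*}
The codimension then equals the explicit integer $\chi$-difference $s'r\frac{d+\delta}{s}-s'\delta-\dots$. Simplifying with $\frac{d'}{s'}=\frac{d+\delta}{s}$ should give roughly $s'(s-s'-r)(\text{slope terms})+\dots$, and the codimension is $1$ only in the extremal case $s'=s-r=1$ with $d-d'$ minimal, i.e. $\deg E'=d+e-\dots$. Solving for $\delta$ there should give $\kappa_\sharp=rd+(r+1)(e-1)$; otherwise it is $\geq2$. \textbf{This case analysis is the main obstacle}: it requires an exact formula for the codimension and a check of every boundary case.

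For (3), at $\delta=\delta_\sharp$, Lemma \ref{lemma-upper bound on dim of M_dagger-ep, s >r_pre} says the moduli space is a disjoint union of products $M^s(E_0,k,e_k)\times M_C(s-k,d-e_k)$ with $\alpha'$ surjective. For $d\gg0$, the maximum defining $\delta_\sharp$ should be attained only at $k=r$, where $e_r=\deg E_0=-e$; this comes from comparing the leading terms $\frac{kd}{s-k}$. For $k=r$ the map $\alpha'$ is an isomorphism onto $E'\cong E_0$, so $M^s(E_0,r,-e)$ is a point, and the map $[(E,\alpha)]\mapsto[E/\im\alpha]$ identifies the whole space with $M_C(s-r,d+e)$. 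I would verify that this is an isomorphism of coarse spaces by constructing the inverse $F\mapsto[(E_0\oplus F,\mathrm{id}\oplus0)]$.
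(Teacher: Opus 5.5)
Your treatment of (1), (3) and the smoothness half of (2) follows the paper's route. For (1), the bound $\frac{d+\delta}{s}-\frac{\delta}{r}\geq\frac{d+(s-r)e_{r-1}}{r(s-r+1)}$ comes purely from $\delta\le\kappa$; the comparison with $\im\alpha$ you mention is not needed. For (3), the unique maximizer $i=r$ in $\delta_\sharp$ together with Lemma \ref{lemma-upper bound on dim of M_dagger-ep, s >r_pre} gives $\{[(E_0,\mathrm{id})]\}\times M_C(s-r,d+e)$.

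The genuine gap is the estimate of $\dim W^-_\delta$ in (2). You label it the main obstacle and only guess the outcome. The guess also misidentifies the extremal stratum: it has $\deg E'=1-e$, i.e.\ $d'=d+e-1$.

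The missing computation goes as follows. Injectivity of $\alpha'$ follows directly from the slope equality $\frac{d-d'+\delta}{s-s'}=\frac{d+\delta}{s}$ by the same argument as for $\alpha$. You should not appeal to the statement of (2) for $(s-s',d-d')$, whose $\kappa$ and $\delta_\sharp$ differ. Injectivity gives $s-s'\ge r$, and Lemma \ref{lem_dim} gives $\dim M^s_{C,\delta}(E_0,s-s',d-d')=r(d-d')+(s-s')e+(s-s')(s-s'-r)(g-1)$. Hence the codimension of the corresponding piece of $W^-_\delta$ is
\[
rd'+s'e-s'\delta+s'(s-s'-r)(g-1)=rs'\Bigl(\frac{d+\delta}{s}-\frac{\delta-e}{r}\Bigr)+s'(s-s'-r)(g-1).
\]
The first term is positive exactly because $\delta<\delta_\sharp=\frac{rd+se}{s-r}$; this positivity is the mechanism your sketch does not identify.

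If $s-s'>r$, the second term is at least $g-1\geq 1$. Since the total is an integer and the first term is strictly positive, the codimension is at least $2$. If $s-s'=r$, the expression equals $(s-r)(d-d'+e)$, a positive integer. It equals $1$ if and only if $s=r+1$ and $d-d'=1-e$, i.e.\ $\delta=\kappa_\sharp$.

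For the equality claim you also need this stratum to be nonempty. This holds because $M^s_{C,\kappa_\sharp}(E_0,r,1-e)=\Quot_C(E^0,0,1)=\P_C(E_0)\neq\emptyset$. So the maximum in \eqref{eq_dim_W^-} is attained with codimension exactly one, not merely bounded by it.
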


\begin{proof}
(1) By the same argument as in the proof of Lemma \ref{lemma-upper bound on dim of W},
$\mathrm{exp.dim} \, M_{C,\delta - \varepsilon}(E_0, s, d) - \dim W^-_\delta $ is at least the minimum of 
\begin{align*}
 s'r  \left( \frac{d+\delta}{s} -\frac{\delta}{r} \right) +se +r(s-s')\mu_{\min}(E_0)  -  (s(r-s)  + (s-s')t +s's )(g - 1).
\end{align*}
If $\delta \leq \kappa$, we have
\begin{align*}
\frac{d+\delta}{s} -\frac{\delta}{r}= \frac{rd -(s-r)\delta}{sr} \geq \frac{rd -(s-r)\kappa}{sr} = \frac{d+(s-r)e_{r-1}}{r(s-r+1)}
\end{align*}
and hence (1) holds.\\
(2) Recall that
\begin{align*}
\delta_\sharp &= \max\limits_{1 \leq i \leq r} \frac{id-s e_i }{s-i} =\min \left\{\delta \geq 0 \mid \frac{d+\delta}{s} \leq \frac{e_i+\delta}{i} \text{ for any } 1 \leq i \leq r\right\} ,
\end{align*}
which coincides with $ \frac{rd+se}{s-r} $ by $d \gg 0$ and $e_r=\det E_0=-e$.
Similarly, $\kappa$ coincides with
\begin{align*}
\max\limits_{1 \leq i \leq r-1} \frac{id-s e_i }{s-i}=\min \left\{\delta \geq 0 \mid \frac{d+\delta}{s} \leq \frac{e_i+\delta}{i} \text{ for any } 1 \leq i \leq r-1 \right\}
\end{align*}
by $d \gg 0$.
We also note that, for $d \gg 0$, 
we have 
\begin{align}\label{eq_lemma-upper bound on dim of W,s >r}
\kappa <\delta <\delta_\sharp \ \Leftrightarrow \ \frac{-e +\delta}{r} < \frac{d+\delta}{s} < \frac{e_{r-1}+\delta}{r-1}.
\end{align}

Let $\kappa < \delta < \delta_\sharp$ and $[(E,\alpha)] \in M_{C,\delta}(E_0, s, d)$.
If $\alpha$ is not injective,
for $k \coloneqq \rank (\im \alpha) \leq r-1$, it holds that
\begin{align*}
\frac{d+\delta}{s} =\mu_{\delta} (E,\alpha)  \geq \mu_{\delta} (\im \alpha,\alpha) = \frac{\deg (\im \alpha) + \delta}{k} \geq \frac{e_k+\delta}{k},
\end{align*}
which contradicts $\delta >\kappa$. 
Hence $\alpha$ is injective.
Then $ M^s_{C,\delta}(E_0, s, d)$ is smooth of dimension $\mathrm{exp.dim} \, M_{C,\delta - \varepsilon}(E_0, s, d) $ at $[(E,\alpha)]$ by Lemma \ref{lem_dim}.

By \eqref{eq_dim_W^-}, we have
\[
\dim W^-_\delta = \max_{(s',d')} \{ \dim M^s_{C,\delta}(E_0, s - s', d - d') + s'(\delta + s(g - 1)) \},
\]
where we take the maximum for all $(s',d')$ as in Lemma \ref{lemma-fiber of contraction} with $M^s_{C,\delta}(E_0, s - s', d - d') \neq \emptyset$.
For $[(E',\alpha')] \in M^s_{C,\delta}(E_0, s - s', d - d')$,
we can show that $\alpha'$ is injective 
by using $ \frac{d+\delta}{s} =\frac{d-d'+\delta}{s-s'}$ and the above argument.
In particular, $s-s' \geq r$ 
and $M^s_{C,\delta}(E_0, s - s', d - d')$ is smooth of dimension
\begin{align*}
r(d-d') + (s-s')e +(s-s')(s-s' -r) (g-1) 
\end{align*}
if $M^s_{C,\delta}(E_0, s - s', d - d') \neq \emptyset$ by Lemma \ref{lem_dim}.
Then $\mathrm{exp.dim} \, M_{C,\delta - \varepsilon}(E_0, s, d) - \dim W^-_\delta$ is the minimum of 
\begin{align}\label{eq_codim}
\begin{split}
(rd &+se +s(s-r)(g-1)) \\
&- (r(d-d') + (s-s')e +(s-s')(s-s' -r) (g-1) ) - s'(\delta + s (g - 1))  \\
&= rd' +s'e -s' \delta + s'(s-s'-r) (g-1)\\
&=rs'\left( \frac{d'}{s'} + \frac{e}{r}- \frac{\delta}{r} \right)+ s'(s-s'-r) (g-1)\\
&=rs'\left( \frac{d+\delta}{s} - \frac{-e +\delta}{r} \right)+ s'(s-s'-r) (g-1).
\end{split}
\end{align}
By \eqref{eq_lemma-upper bound on dim of W,s >r},
$ \frac{d+\delta}{s} - \frac{-e +\delta}{r}  $ is positive.
We already see that $ s-s' \geq r$.
Thus if $s -s' >r$, the integer \eqref{eq_codim} is at least two.

Assume $s-s'=r$.
Then the positive integer \eqref{eq_codim} is equal to
\begin{align*}
rs'\left( \frac{d+\delta}{s} - \frac{-e+\delta}{r} \right) &=rs'\left( \frac{d-d'+\delta}{s-s'} - \frac{-e+\delta}{r} \right)  \\
&= rs'\left( \frac{d-d'+\delta}{r} - \frac{-e+\delta}{r} \right) 
=(s-r)(d-d'+e).
\end{align*}
This is equal to one if and only if $s=r+1$ and $d' =d+e-1$.
By  $ \frac{d+\delta}{s} =\frac{d-d'+\delta}{s-s'}$,
this is equivalent to $ s=r+1$ and $ \delta= rd + (r+1)(e-1) =\kappa_\sharp$.

Summarizing the discussion above, we have 
$\mathrm{exp.dim} \, M_{C,\delta - \varepsilon}(E_0, s, d)  - \dim W^-_\delta \geq 2$  if $(s, \delta ) \neq (r+1,\kappa_\sharp) $.
If $(s, \delta ) = (r+1,\kappa_\sharp) $,
then $(s',d')=(1,d+e-1)$ satisfies $\frac{d+ \delta}{s} = \frac{d-d'+\delta}{s-s'}$ and 
\[
M^s_{\kappa_\sharp}(E_0,s-s',d-d') = M^s_{\kappa_\sharp}(E_0,r,1-e) = \Quot_C(E^0,0,1) =\P_C(E_0) \neq \emptyset.
\]
Hence we have $ \dim W^-_{\kappa_\sharp}  = \mathrm{exp.dim} \, M^s_{C,\kappa_\sharp - \varepsilon}(E_0, s, d) -1$ in this case.\\
(3) 
Since $d \gg 0$, the maximum in 
$\delta_\sharp =\max\limits_{1 \leq i \leq r} \frac{id-s e_i }{s-i}$ is attained uniquely by $i=r$.
Hence \begin{align*}
M_{C,\delta_\sharp}(E_0, s, d) &=M^s_{C,\delta_\sharp}(E_0, r, -e) \times M_C(s-r, d+e) \\
&=\{[E_0,\id_{E_0}]\} \times M_C(s-r, d+e)  \simeq M_C(s-r, d+e) .
\end{align*}
by Lemma \ref{lemma-upper bound on dim of M_dagger-ep, s >r_pre}.
By construction, this isomorphism maps $[(E,\alpha)] \in M_{C,\delta_\sharp}(E_0, s, d) $ to $[E/\im \alpha] \in M_C(s-r, d+e)$.
\end{proof}

\subsection{The case $ M_{C,\delta}(E_0, s, d)_L$}

In this subsection, we show results similar to those in the previous subsections for $ M_{C,\delta}(E_0, s, d)_L$.
We set
\[
 \mathrm{exp.dim} \, M_{C,\delta}(E_0, s, d)_L \coloneqq  \mathrm{exp.dim} \, M_{C,\delta}(E_0, s, d) -g =rd +se -s(r-s)(g-1)-g.
\]
The following is an analogue of a part of Lemma \ref{lemma-upper bound on dim of W,s >r} (2).

\begin{lemma}\label{lem_smoothness_M_L}
Let $ s > r$ and assume $d \gg 0$.
For  $\kappa < \delta <\delta_\sharp$,
$ M^s_{C,\delta}(E_0, s, d)_L$ is smooth with $\dim M_{C,\delta}(E_0, s, d)_L =  \mathrm{exp.dim} \, M_{C,\delta}(E_0, s, d)_L$  if it is non-empty.
\end{lemma}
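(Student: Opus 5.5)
The plan is to deduce this from Lemma \ref{lemma-upper bound on dim of W,s >r} (2) by showing that $\det : M^s_{C,\delta}(E_0,s,d) \to \Pic^d(C)$ is smooth, i.e.\ that its differential is surjective at every point. Smoothness of the fibre of dimension $\mathrm{exp.dim}\,M_{C,\delta}(E_0,s,d) - g$ then follows.

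First, by the proof of Lemma \ref{lemma-upper bound on dim of W,s >r} (2), every $[(E,\alpha)] \in M^s_{C,\delta}(E_0,s,d)$ with $\kappa<\delta<\delta_\sharp$ has $\alpha$ injective. Hence $\ker\alpha=0$, and by Lemma \ref{lem_dim} and its proof the obstruction space $\Ext^1(I^\bullet,E)$ vanishes. The tangent space is $\Hom(I^\bullet,E)$, of dimension $\mathrm{exp.dim}$. From the exact sequence \eqref{eq_lem_dim} we get a map $\Hom(I^\bullet,E) \to \Ext^1(E,E)$, and this map is surjective because $\Ext^1(I^\bullet,E)=0$ gives surjectivity of $\Ext^1(E,E)\to\Ext^1(E_0,E)$. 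More precisely, the sequence $\Hom(I^\bullet,E)\to\Ext^1(E,E)\to\Ext^1(E_0,E)\to 0$ is exact, so the image is the kernel of $\Ext^1(E,E)\to\Ext^1(E_0,E)$, and not all of $\Ext^1(E,E)$. Thus surjectivity onto $\Ext^1(E,E)$ does not hold in general, and the correct step is the one below.

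The differential of $\det$ is the composite of $\Hom(I^\bullet,E) \to \Ext^1(E,E)$ with the trace map $\mathrm{tr}: \Ext^1(E,E) = H^1(\mathcal{E}nd\,E) \to H^1(\calo_C)$. So we must show that $\mathrm{tr}$ is surjective when restricted to $K:=\ker(\Ext^1(E,E)\to\Ext^1(E_0,E))$. I would do this by splitting off the scalars. The inclusion $H^1(\calo_C)\to H^1(\mathcal{E}nd\,E)$ given by $\calo_C\cdot \id_E$ composes with the trace to multiplication by $s$, which is an isomorphism. It therefore suffices to show that the image of $H^1(\calo_C)$ lies in $K$. A class $\eta\cdot\id_E$ maps to $\eta\cdot\alpha \in \Ext^1(E_0,E)$, which is the image of $\eta$ under $H^1(\calo_C)\to H^1(E_0^\vee\otimes E)$ induced by $\alpha$.

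This vanishing is the main obstacle, and it is not automatic. The alternative I would pursue is to realize scalar deformations geometrically: tensoring by a deformation $N_t$ of $\calo_C$ in $\Pic^0(C)$ moves $\det E$ by $N_t^{\otimes s}$. One also needs to deform $\alpha$ to a map $E_0\to E\otimes N_t$, and since $\Ext^1(E_0,E)$ need not vanish this may be obstructed. So I would instead use a global argument. The group $\Pic^0(C)$ acts on $M_{C,\delta}(E_0,s,d)$ only together with twisting $E_0$, which is not allowed, so I would argue via dimensions. By Lemma \ref{lemma-upper bound on dim of W,s >r} (2) the space $M^s_{C,\delta}(E_0,s,d)$ is smooth of dimension $\mathrm{exp.dim}$. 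Every fibre $M^s_{C,\delta}(E_0,s,d)_L$ is a determinantal-type locus cut out by $g$ equations, so each component has dimension at least $\mathrm{exp.dim}-g$. The fibres are isomorphic for all $L$: fix $N$ with $N^{\otimes s}\cong L'\otimes L^{-1}$, and use the degree-preserving correspondence given by elementary modifications $E\mapsto E\otimes N$ combined with the identification of $\Hom(E_0,E)$ via Lemma \ref{lem_dim}-type vanishing, valid since $d\gg0$. Generic smoothness of $\det$ then shows that the general fibre is smooth of dimension $\mathrm{exp.dim}-g$, and equidimensionality forces every fibre to have this dimension. Smoothness at every point then comes from the tangent-space bound $\dim T \le \mathrm{exp.dim}-g+\dim\coker(d\det)$. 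I expect establishing $\coker(d\det)=0$, i.e.\ the vanishing of $\eta\cdot\alpha$ in $K$ above, to be the genuine difficulty. I would first try to prove it using the $\delta$-stability of $(E,\alpha)$ together with Serre duality, reducing it to $\Hom(E,\omega_C\otimes E_0)=0$ in the relevant degree range.
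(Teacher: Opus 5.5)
Your set-up is correct: $\alpha$ is injective, $\Ext^1(I^\bullet,E)=0$, and the differential of $\det$ at $[(E,\alpha)]$ is the trace restricted to $K=\ker(\Ext^1(E,E)\to\Ext^1(E_0,E))$. But you never prove $\mathrm{tr}(K)=H^1(\calo_C)$, which is the whole content of the lemma, and the route you propose for it fails.

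The scalar classes $\eta\cdot\id_E$ need not lie in $K$. By Serre duality, the map $H^1(\calo_C)\to\Ext^1(E_0,E)$, $\eta\mapsto\eta\cdot\alpha$, is dual to $\Hom(E,E_0\otimes\omega_C)\to H^0(\omega_C)$, $\phi\mapsto\mathrm{tr}(\phi\circ\alpha)$, and this dual map can be nonzero:
\begin{itemize}
\item Take $\delta=\delta_\sharp-\varepsilon$, $F$ stable of rank $s-r$ and degree $d+e$, and $0\neq\sigma\in H^0(\omega_C)$ with zero divisor $Z$.
\item Since $\Hom(F,E_0\otimes\omega_C)=0$ but $\Hom(F,E_0\otimes\omega_C|_Z)\neq 0$, there is a non-split class $\xi\in\Ext^1(F,E_0)$ with $\sigma\xi=0$.
\item The corresponding pair $(E,\alpha)$ is stable by Claim \ref{claim_lemma-fiber of contraction}.
\item The endomorphism $\sigma\cdot\id_{E_0}$ lifts to some $\phi\colon E\to E_0\otimes\omega_C$, and $\mathrm{tr}(\phi\circ\alpha)=r\sigma\neq 0$.
\end{itemize}
For the same reason, the vanishing $\Hom(E,\omega_C\otimes E_0)=0$ that you hope to reduce to is not available on $(\kappa,\delta_\sharp)$. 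The Remark after Lemma \ref{lem_dim} gives it only for $\delta<\frac{d-2s(g-1)-s\mu_{\max}(E_0)}{s-1}$.

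The global fallback does not close the gap either. The twist $E\mapsto E\otimes N$ does not act on pairs with fixed source $E_0$. Generic smoothness only controls a general fibre. Your last step again needs $\coker(d\det)=0$, so the argument is circular.

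The missing idea is to move the trace to the quotient $E/\alpha(E_0)$. Since $\alpha$ is injective, $I^\bullet\simeq (E/E_0)[-1]$, so the tangent space is $\Ext^1(E/E_0,E)$. Also $\det E\cong\det(E/E_0)\otimes\det E_0$ with $\det E_0$ fixed, so it suffices to show that $[(E,\alpha)]\mapsto[\det(E/E_0)]\in\Pic^{d+e}(C)$ is smooth. Its differential is the composite $\Ext^1(E/E_0,E)\to\Ext^1(E/E_0,E/E_0)\xrightarrow{\mathrm{tr}}H^1(\calo_C)$. The first arrow is surjective because the next term $\Ext^2(E/E_0,E_0)$ vanishes on a curve. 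The trace is surjective because $\rank(E/E_0)=s-r\geq 1$.

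In your own notation, every element of $K$ has the form $\zeta\circ q$ with $\zeta\in\Ext^1(E/E_0,E)$ and $q\colon E\to E/E_0$ the quotient map. By cyclicity of the trace, $\mathrm{tr}_E(\zeta\circ q)=\mathrm{tr}_{E/E_0}(q\circ\zeta)$. This, not the scalar classes on $E$, is what makes $\mathrm{tr}|_K$ surjective.
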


\begin{proof}
Since $M^s_{C,\delta}(E_0, s, d)$ is smooth of dimension  $ rd +se -s(r-s)(g-1)$ by Lemma \ref{lemma-upper bound on dim of W,s >r} (2),
it suffices to show the smoothness of the morphism 
\[
M^s_{C,\delta}(E_0, s, d)\to \Pic^{d}(C), \ [(E,\alpha)]\mapsto [\det E].
\]

By \cite[Theorem 1.2]{Lin}, we have an isomorphism $ T_{[(E,\alpha)]} M^s_{C,\delta}(E_0, s, d) \simeq \Hom(I^\bullet, E)$, where $I^\bullet =\{E_0 \xrightarrow{\alpha} E\}$.
Since $\alpha$ is injective for such $[(E,\alpha)] $ by the proof of Lemma \ref{lemma-upper bound on dim of W,s >r} (2),
we have $I^\bullet \simeq E/E_0 [-1]$ and hence  $ T_{[(E,\alpha)]} M^s_{C,\delta}(E_0, s, d) \simeq \Ext^1(E/E_0, E)$.
We see that the isomorphism $ T_{[(E,\alpha)]} M^s_{C,\delta}(E_0, s, d) \simeq \Ext^1(E/E_0, E)$ is obtained as follows:

Let $\eta' \in \Ext^1(E/E_0, E) $
and let  $\eta \in  \Ext^1(E, E)$ be the image of $\eta'$ by the natural map $\Ext^1(E/E_0, E) \to  \Ext^1(E, E) $.
Then we have a diagram 
\[
\begin{tikzcd}
\eta : 0\ar[r] & E \ar[r, "\iota"]   \ar[d,equal] & \cale \ar[r, "\pi"]  \ar[d, "\tilde{q}"] & E \ar[r]  \ar[d, "q"] & 0\\
\eta' : 0\ar[r] & E \ar[r] & \cale' \ar[r] & E/E_0 \ar[r] & 0
\end{tikzcd}
\]
Hence $\ker \tilde{q} \simeq \ker q $, which is isomorphic to $E_0$ by $\alpha$.
Denote by $\beta$ the composite map  $E_0 \simeq \ker \tilde{q}  \hookrightarrow \cale$.
Then $\pi\circ  \beta =\alpha$.
For $\tilde{\alpha} =(\beta, \iota \circ \alpha) : E_0 \oplus \ep E_0 \to \cale$, we have a commutative diagram
\[
\begin{tikzcd}
0 \ar[r] &  E_0 \ar[r, "\times \ep"] \ar[d, "\alpha"] & E_0 \oplus \ep E_0  \ar[r, "p_1"] \ar[d, "\tilde{\alpha}"] & E_0 \ar[r] \ar[d, "\alpha"] & 0 \\
0\ar[r] & E \ar[r, "\iota"]  & \cale \ar[r, "\pi"]   & E \ar[r]   & 0
\end{tikzcd}
\]
where $p_1$ is the projection to the first factor.
Let $\tilde{C} =C \times \Spec \C[\ep]/(\ep^2) $.
The sheaf $\cale$ has a structure of  $\calo_{\tilde{C}}$-module by defining the multiplication map $\times \varepsilon \coloneqq \iota \circ \pi : \cale \to \cale$.
Then $\tilde{\alpha} :  E_0 \oplus \ep E_0 \to \cale$ is a homomorphism of $\calo_{\tilde{C}}$-modules.
In fact, 
\[
\begin{tikzcd}
E_0 \oplus \ep E_0 \ar[r, "\times \ep"] \ar[d, "\tilde{\alpha}"] & E_0 \oplus \ep E_0  \ar[d,"\tilde{\alpha}"]\\
\cale \ar[r, "\times  \ep"] & \cale
\end{tikzcd}
\]
is commutative since for $(a,\ep b) \in E_0 \oplus \ep E_0$,
\begin{align*}
\tilde{\alpha} (\ep (a,\ep b)) = \tilde{\alpha} (0,\ep a) = \iota (\alpha (a)) \quad \text{and} \quad  \ep \tilde{\alpha} (a,\ep b) =\iota \circ \pi  (\beta(a) + \iota (\alpha(b))) = \iota (\alpha(a)) 
\end{align*}
coincide.
Hence
we have a morphism $\Spec \C[\ep]/(\ep^2) \to M^s_{C,\delta}(E_0, s, d) $, which maps the closed point of $\Spec \C[\ep]/(\ep^2)$ to $[(E,\alpha)]$ by $\tilde{\alpha}|_C =\alpha$.
Thus we have a map $ \Ext^1(E/E_0, E) \to  T_{[(E,\alpha)]} M^s_{C,\delta}(E_0, s, d)$.

Conversely,
take an element of $T_{[(E,\alpha)]} M^s_{C,\delta}(E_0, s, d) $, which corresponds to 
a pair 
\[
 \tilde{\alpha} : E_0 \oplus \ep E_0 \to \cale 
\]
on $\tilde{C}$
such that $\cale$ is locally free  and  $ \tilde{\alpha}|_C  : E_0 \to \cale|_C$ coincides with $\alpha : E_0 \to E$.
By $0 \to (\ep)/(\ep^2) \to \C[\ep]/(\ep^2) \to \C \to 0$ and $(\ep)/(\ep^2)  \simeq \C$ as $\C[\ep]/(\ep^2)$-modules,
$ \cale \otimes (\ep)/(\ep^2) \simeq \cale|_C =E$ and hence 
we have a diagram
\begin{equation}\label{eqn_deformations}
\begin{tikzcd}
0 \ar[r] &  E_0 \ar[r, "\times \ep"] \ar[d, "\alpha"] & E_0 \oplus \ep E_0  \ar[r] \ar[d, "\tilde{\alpha}"] & E_0 \ar[r] \ar[d, "\alpha"] & 0 \\
0\ar[r] & E \ar[r] \ar[d, equal] & \cale \ar[r] \ar[d] & E \ar[r] \ar[d]& 0\\
0\ar[r] & E \ar[r] & \cale/\tilde{\alpha}(E_0 \oplus \ep 0) \ar[r] & E/E_0 \ar[r] & 0
\end{tikzcd}
\end{equation}
with exact rows.
We note that the lowest row is an exact sequence of $\calo_C$-modules, not  $\calo_{\tilde{C}}$-modules.
The lowest row corresponds to an element $\eta' \in \Ext^1(E/E_0,E)$ and hence we have  a map 
$T_{[(E,\alpha)]} M^s_{C,\delta}(E_0, s, d) \to  \Ext^1(E/E_0, E) $.
It is easy to check that this is the inverse of the above map.

To show the smoothness of $M^s_{C,\delta}(E_0, s, d)\to \Pic^{d}(C), \, [(E,\alpha)]\mapsto [\det E]$, it is enough to show the smoothness of the morphism
\[
f : M^s_{C,\delta}(E_0, s, d)\to \Pic^{d}(C)\xrightarrow{\sim} \Pic^{d+e}(C), \  [(E,\alpha)]\mapsto [\det (E/E_0)].
\]
Let
$\tilde{\alpha} : E_0 \oplus \ep E_0 \to \cale$
on $\tilde{C}$
be an element of $T_{[(E,\alpha)]} M^s_{C,\delta}(E_0, s, d) $ and $\eta'\in \Ext^1(E/E_0,E)$ be the corresponding element under the above identification. 
Then under the differential 
\begin{equation}\label{eqn_tangent_space}
df_{[(E,\alpha)]}: T_{[(E,\alpha)]} M^s_{C,\delta}(E_0, s, d) \to T_{[\det(E/E_0)]}{\Pic^{d+e}}(C),
\end{equation}
the morphism $\tilde{\alpha}$
is mapped to $\det(\cale/\tilde{\alpha}(E_0 \oplus \ep E_0))$, which is a line bundle on $\tilde{C}$.

Now recall that an element  
$\eta''$ of ${\rm Ext}^1(E/E_0, E/E_0)$, that is, an extension 
\[
\eta'':0\to E/E_0\to \mc E''\to E/E_0\to 0
\]
corresponds to a deformation of the sheaf $E/E_0$: the sheaf $\mc E''$ has a structure of  $\mc O_{\tilde{C}}$-module with multiplication by $\varepsilon$ given by $\mc E''\to E/E_0\hookrightarrow \mc E''$ and
$\mc E''$ is flat over $\C[\varepsilon]$ with $\mc E''|_C\simeq E/E_0$. 
Now $\cale/\tilde{\alpha}(E_0 \oplus \ep E_0)$ is a deformation of $E/E_0$ and it follows from \eqref{eqn_deformations} that we have the diagram
\[
\begin{tikzcd}
    0\ar[r] & E \ar[r] \ar[d] & \cale/\tilde{\alpha}(E_0 \oplus \ep 0) \ar[r] \ar[d] & E/E_0 \ar[r] \ar[d,equal] & 0 \\
    0\ar[r] & E/E_0 \ar[r] & \cale/\tilde{\alpha}(E_0 \oplus \ep E_0) \ar[r] & E/E_0 \ar[r] & 0
\end{tikzcd}
\]
Hence
the corresponding element $\eta''$ of $\cale/\tilde{\alpha}(E_0 \oplus \ep E_0)$ in $\Ext^1(E/E_0,E/E_0)$ is the image of $\eta'$ under the natural map 
$\Ext^1(E/E_0,E)\to \Ext^1(E/E_0,E/E_0)$. 

Now recall that we have the trace map
${\rm Ext}^1(E/E_0, E/E_0)\xrightarrow{\mathrm{tr}}H^1(C,\calo_C)$  \cite[\S 10.1.2]{HL}. 
Under the trace map, the deformation $\mc E''$  is mapped to  the deformation $\det \mc E''$ of the line bundle $\det E/E_0$ \cite[Theorem 3.23]{Thomas}. 
Therefore, we see that the differential \eqref{eqn_tangent_space} coincides with  the  composite map
\[
\Ext^1(E/E_0,E)\to \Ext^1(E/E_0,E/E_0)\xrightarrow{\mathrm{tr}} H^1(C,\calo_C)
\]
under the above identification.
The map $\Ext^1(E/E_0,E)\to \Ext^1(E/E_0,E/E_0)$ is surjective by $\dim C=1$,  and since $\rank E/E_0\geq 1$, the trace map is surjective \cite[Lemma 10.1.3]{HL}. 
Therefore \eqref{eqn_tangent_space} is surjective and hence $f$ is smooth.
\end{proof}

The following is an analogue of Lemmas \ref{lemma-upper bound on dim of W}, \ref{lemma-upper bound on dim of W,s >r} for $M_{C,\delta - \varepsilon}(E_0, s, d)_L  $.

\begin{lemma}\label{lem_similar_to_M_L}
For $\delta>0$,  set
\[
W^-_{\delta,L} \coloneqq W^-_{\delta}  \cap M_{C,\delta - \varepsilon}(E_0, s, d)_L = \left\{ [(E, \alpha)] \in M_{C,\delta - \varepsilon}(E_0, s, d)_L \,\middle|\, (E, \alpha) \text{ is not } \delta\text{-stable} \right\}.
\]
\begin{enumerate}
    \item Assume $1 \leq s \leq r$ (resp.\ $s >r$).
    For $c \geq 0$, there exists an integer $d_c$ such that for all $d \geq d_c$ and all $\delta >0$ (resp.\ $0 < \delta \leq \kappa$), 
    it holds that
\[
\dim W^-_{\delta,L} \leq \mathrm{exp.dim} \, M_{C,\delta - \varepsilon}(E_0, s, d)_L - c.
\]
\item Assume $s >r $ and $d \gg 0$.
Then for  $\kappa < \delta <\delta_\sharp$,
\begin{align*}
\dim W^-_{\delta,L} 
\begin{cases}
\leq \mathrm{exp.dim} \, M_{C,\delta - \varepsilon}(E_0, s, d)_L -2  & \text{ if } (s, \delta ) \neq (r+1,\kappa_\sharp) \\
= \mathrm{exp.dim} \, M_{C,\delta - \varepsilon}(E_0, s, d)_L -1  & \text{ if } (s, \delta ) = (r+1,\kappa_\sharp) 
\end{cases}
\end{align*}
for $\kappa_\sharp \coloneqq rd +(r+1)(e-1)$.
\end{enumerate}
\end{lemma}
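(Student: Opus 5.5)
The plan is to redo the dimension counts of Lemmas \ref{lemma-upper bound on dim of W} and \ref{lemma-upper bound on dim of W,s >r} with the determinant fixed. The structural facts all carry over. By Lemma \ref{lem_disjoint_union}, the $\delta$-semistable non-stable locus of $M_{C,\delta}(E_0,s,d)$ is a disjoint union of strata $M^s_{C,\delta}(E_0,s-s',d-d')\times M_C(s',d')$. By Lemmas \ref{lemma-fiber of contraction} and \ref{lem_closure_of_stable_F}, $W^-_\delta$ over each stratum has fibers $\P(\Ext^1(F,E'))$ of dimension $s'(\delta+(s-s')(g-1))-1$, and $W^-_\delta$ lies in the closure of the part over stable $F$.

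The only change when we impose $\det E=L$ is on the base. A point of $W^-_{\delta,L}$ lying over $([(E',\alpha')],[F])$ satisfies $\det E'\otimes\det F\simeq L$. So the base stratum gets replaced by the fiber product
\[
\bigl(M^s_{C,\delta}(E_0,s-s',d-d')\times M_C(s',d')\bigr)\times_{\Pic^{d}(C)}\{[L]\}.
\]
The map $\det:M_C(s',d')\to\Pic^{d'}(C)$ is surjective, and every fiber $M_C(s',L')$ has dimension $\dim M_C(s',d')-g$ (they are all isomorphic under tensoring by degree-zero line bundles, up to an isogeny). Hence this fiber product has dimension exactly $\dim M^s_{C,\delta}(E_0,s-s',d-d')+\dim M_C(s',d')-g$. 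Adding the projective fibers, the same closure argument as in Lemma \ref{lem_closure_of_stable_F} (deforming $G_t$ inside $M_C(s',\det G_0)$) gives
\[
\dim W^-_{\delta,L}=\max_{(s',d')}\bigl\{\dim M^s_{C,\delta}(E_0,s-s',d-d')+s'(\delta+s(g-1))\bigr\}-g=\dim W^-_\delta-g .
\]
For an upper bound it even suffices to note that each fiber of $W^-_\delta\to\Pic^d(C)$ has dimension at most this quantity by the fiberwise count. For the equality case in (2) we use the exact count.

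Since $\mathrm{exp.dim}\,M_{C,\delta-\varepsilon}(E_0,s,d)_L=\mathrm{exp.dim}\,M_{C,\delta-\varepsilon}(E_0,s,d)-g$ by definition, both sides shift by $g$. Then (1) follows from Lemma \ref{lemma-upper bound on dim of W} and Lemma \ref{lemma-upper bound on dim of W,s >r}(1), and (2) follows from Lemma \ref{lemma-upper bound on dim of W,s >r}(2).

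The main obstacle is justifying that fixing the determinant cuts dimension by exactly $g$ on each stratum, rather than just by at most $g$. The point is that the factor $M_C(s',d')$ alone already surjects onto $\Pic$ with equidimensional fibers, so the determinant condition can be absorbed entirely by $F$, whatever $E'$ is. For the equality case $(s,\delta)=(r+1,\kappa_\sharp)$, where $s'=1$, the factor $M_C(1,d+e-1)=\Pic^{d+e-1}(C)$ makes this explicit: $F$ is uniquely determined by $L\otimes(\det E')^{-1}$. The $L$-fiber is then $\P_C(E_0)$ times a projective space of dimension $\delta+r(g-1)-1$, which gives codimension exactly one in $\mathrm{exp.dim}\,M_{C,\delta-\varepsilon}(E_0,s,d)_L$.
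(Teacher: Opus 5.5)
Your proof is correct. For part (2) it has the same structure as the paper's. The paper also reduces everything to the claim that each stratum $\bigl(M^s_{C,\delta}(E_0,s-s',d-d')\times M_C(s',d')\bigr)\cap M_{C,\delta}(E_0,s,d)_L$, when non-empty, has dimension $\dim M^s_{C,\delta}(E_0,s-s',d-d')+\dim M_C(s',d')-g$. It then reruns the count of Lemma \ref{lemma-upper bound on dim of W,s >r}(2).

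Where you differ is in how you prove that claim. The paper works with the first factor and its determinant map $f_1$:
\begin{itemize}
\item for $s-s'>r$ it shows $f_1$ is smooth, using Lemma \ref{lem_smoothness_M_L};
\item for $s-s'=r$ it identifies the first factor with $\Quot_C(E^0,0,d-d'+e)$, and combines constancy of the fibre dimension of the Quot-to-Chow morphism (a cited external result) with the Abel--Jacobi map.
\end{itemize}
Your observation makes all of this unnecessary and treats both cases uniformly. The factor $M_C(s',d')$ alone absorbs the determinant condition, because its determinant fibres are all isomorphic of dimension $\dim M_C(s',d')-g$. Hence the $L$-slice projects onto the first factor surjectively, with every fibre of that dimension. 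This uses nothing about $M^s_{C,\delta}(E_0,s-s',d-d')$, so it is the more elementary and more robust argument.

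For part (1) the paper does even less than you. It uses $W^-_{\delta,L}\subset W^-_\delta$ and applies Lemmas \ref{lemma-upper bound on dim of W} and \ref{lemma-upper bound on dim of W,s >r}(1) with $c+g$ in place of $c$. So the exact $-g$ shift is only needed for the equality case in (2).

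Two small remarks:
\begin{itemize}
\item In the equality case $(s,\delta)=(r+1,\kappa_\sharp)$, the $L$-slice of $W^-_{\kappa_\sharp}$ is a projective bundle over $\P_C(E_0)$, not a product. Only its dimension matters, so your count is unaffected.
\item The fixed-determinant closure argument you invoke needs a family $G_t$ with constant determinant. You can get one from an arbitrary family with $G_t$ stable for $t\neq 0$: after a finite base change, twist by $s'$-th roots of $\det G_t\otimes(\det G_0)^{-1}$.
\end{itemize}
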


\begin{proof}
Since $\mathrm{exp.dim} \, M_{C,\delta - \varepsilon}(E_0, s, d)_L =\mathrm{exp.dim} \, M_{C,\delta - \varepsilon}(E_0, s, d) -g$ and we fix $g$,
(1) follows from Lemma \ref{lemma-upper bound on dim of W}, \ref{lemma-upper bound on dim of W,s >r} (1).

For (2), we note that 
$M_{C,\delta}(E_0,s,d)_L \setminus M^s_{C,\delta}(E_0,s,d)_L $ is the disjoint union of 
\begin{align}\label{eq_lem_similar_to_M_L}
\left(   M^s_{C,\delta}(E_0, s - s', d - d') \times M_C(s', d')\right) \cap M_{C,\delta}(E_0,s,d)_L
\end{align}
for all $(s',d')$ as in Lemma \ref{lemma-fiber of contraction} with $M^s_{C,\delta}(E_0, s - s', d - d') \neq \emptyset$.
By the proof of Lemma \ref{lemma-upper bound on dim of W,s >r} (2),
$s-s' \geq r$ and  $\alpha'$ is injective for $[(E',\alpha')] \in M^s_{C,\delta}(E_0, s - s', d - d') $.
In particular,  $\dim M^s_{C,\delta}(E_0, s - s', d - d') = \mathrm{exp.dim} \, M^s_{C,\delta}(E_0, s - s', d - d') $.

\begin{claim}\label{claim_lem_similar_to_M_L}
For any $[L] \in \Pic^d (C)$, the dimension of  \eqref{eq_lem_similar_to_M_L} is 
\[
\dim M^s_{C,\delta}(E_0, s - s', d - d') + \dim M_C(s', d') -g 
\]
if it is non-empty.
\end{claim}

\begin{proof}[Proof of Claim \ref{claim_lem_similar_to_M_L}]
Consider the morphism
\begin{align}\label{eq_lem_similar_to_M_L_2}
  M^s_{C,\delta}(E_0, s - s', d - d') \times M_C(s', d') \xrightarrow{f_1 \times f_2} \Pic^{d-d'}(C) \times \Pic^{d'}(C) \xrightarrow{\otimes} \Pic^d(C)
\end{align}
defined by 
\[
([(E',\alpha')] ,[F]) \mapsto ([\det E'], [\det F]) \mapsto [\det E' \otimes \det F].
\]
The fiber of \eqref{eq_lem_similar_to_M_L_2} over $[L] \in \Pic^d(C)$ is nothing but \eqref{eq_lem_similar_to_M_L}.

By $\frac{d-d'+\delta}{s-s'} =\frac{d+\delta}{s}$ and $\delta <\delta_\sharp = \frac{rd+se}{s-r}$,
we have
\begin{align*}
d-d' = \frac{s-s'}{s}d -\frac{s'}{s} \delta >  \frac{s-s'}{s}d -\frac{s'}{s} \delta_\sharp = \frac{(s-s'-r)d -s'e}{s-r}.
\end{align*}
Hence if $s-s' >r$, $d-d'$ is sufficiently large.
Hence by $\frac{d-d'+\delta}{s-s'} =\frac{d+\delta}{s}$ and \eqref{eq_lemma-upper bound on dim of W,s >r},
we can apply Lemma \ref{lem_smoothness_M_L} and its proof to $M^s_{C,\delta}(E_0, s - s', d - d')_{L'} $ for $[L'] \in \Pic^{d-d'}(C)$.
Hence $f_1$ is smooth.
Since all the fibers of $f_2$ are isomorphic and $\otimes $ is smooth,
this claim holds if $s-s'> r$.

Assume $s-s'=r$.
In this case, $M^s_{C,\delta}(E_0, s - s', d - d') =\Quot_C(E^0, 0,d-d'+e)$ holds.
To see this, by Lemma \ref{lem_suff_large_delta}, it suffices to check
\begin{align*}
\frac{d-d' + \delta}{s-s'} < \frac{e_i+\delta}{i} \quad \text{for any} \quad 1 \leq i \leq r-1,
\end{align*}
which holds by $\frac{d-d'+\delta}{s-s'} =\frac{d+\delta}{s}$ and $\delta >\kappa$.
Then $f_1$ is decomposed as
\begin{align*}
M^s_{C,\delta}(E_0, r, d - d') =\Quot_C(E^0, 0,d-d'+e) \xrightarrow{\pi} \Sym^{d-d'+e} (C) \xrightarrow{\rho} \Pic^{d-d'+e} (C) \simeq  \Pic^{d-d'} (C),
\end{align*}
where $\pi$ is the Quot-to-Chow morphism sending the quotient $[E^0 \to F] $ to the effective divisor determined by the torsion sheaf $F$,
$\rho$ is the Abel--Jacobi map,
and the last isomorphism is defined by $ [L'] \mapsto [L' \otimes \det E_0]$.
By \cite[Corollary 6.6]{GS}, \cite[Theorem 1.2]{Birkar:2024aa},
the dimension of the fiber of $\pi$ is constant.
On the other hand, all the fibers of
\begin{align}
\varphi :   \Sym^{d-d'+e} (C) \times \Pic^{d'} (C) \xrightarrow{\rho \times f_2}  \Pic^{d-d'+e} (C) \times \Pic^{d'} (C)  \xrightarrow{\otimes}  \Pic^{d+e} (C)
\end{align}
are isomorphic since for $[L_1], [L_2] \in  \Pic^{d+e} (C)$, we have an isomorphism
\begin{align*}
\varphi^{-1}([L_1]) \xrightarrow{\sim} \varphi^{-1}([L_2]) , \ ([D], L') \mapsto ([D], L' \otimes L_1^{-1} \otimes L_2). 
\end{align*}
Hence the dimension of \eqref{eq_lem_similar_to_M_L_2} is constant and this claim holds in the case $s-s'=r$ as well.
\end{proof}
By Claim  \ref{claim_lem_similar_to_M_L},
(2) holds by the same argument as in the proof of Lemma \ref{lemma-upper bound on dim of W,s >r} (2).
\end{proof}

\section{SQM}\label{sec_SQM}

In this section,
we show that for $s \leq r$, $M_{C,\delta}(E_0, s, d)$ and $M_{C,\delta}(E_0, s, d)_L$ are SQMs of $\Quot_C(E^0, r - s, d+e)$ and $\Quot_C(E^0, r - s, d+e)_{L \otimes \det E^0}$ respectively
if $d $ is sufficiently large and $M_{C,\delta}(E_0, s, d) =M^s_{C,\delta}(E_0, s, d)$.
We also show a similar result in the case $s >r$.

\subsection{The case $1 \leq s \leq r$}

\begin{proposition}\label{prop_SQM_s_leq_r}
Assume $1 \leq s \leq r$ and $d$ is sufficiently large.
Let $\delta >0$, $ [L] \in \Pic^d (C)$ and assume $M_{C,\delta}(E_0, s, d) =M^s_{C,\delta}(E_0, s, d)$.
Then $M_{C,\delta}(E_0, s, d)$ (resp.\ $M_{C,\delta}(E_0, s, d)_L$) is irreducible, reduced, normal, locally complete intersection, locally factorial, 
and a SQM of $\Quot_C(E^0, r - s, d+e)$ (resp.\ $\Quot_C(E^0, r - s, d+e)_{L \otimes \det E^0}$).

In particular, it holds that
\begin{align*}
\dim M_{C,\delta}(E_0, s, d) &= dr+se - s(r - s)(g - 1),\\
\dim M_{C,\delta}(E_0, s, d)_L &= dr+se - s(r - s)(g - 1)- g
\end{align*}
for such $\delta$.
\end{proposition}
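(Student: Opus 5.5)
We argue by descending wall-crossing in $\delta$, starting from the Quot scheme end. For $\delta>\delta_\sharp$, Lemma \ref{lem_suff_large_delta} identifies $M_{C,\delta}(E_0,s,d)$ with $\Quot_C(E^0,r-s,d+e)$. For $d\gg0$, this is irreducible and generically smooth of the expected dimension $dr+se-s(r-s)(g-1)$ by \cite{PR}, and locally factorial by \cite{GS-Picard}. The fiber $\Quot_L$ has the analogous properties. Only finitely many walls $\delta$ (those with $\frac{d+\delta}{s}\in\bigcup_{s'<s}\frac1{s'}\Z$, $\delta\le\delta_\sharp$) separate a given generic $\delta$ from the Quot chamber. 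We therefore compare the moduli spaces on the two sides of a single wall $\delta$ via the open sets
\[
U^-:=M_{C,\delta-\ep}(E_0,s,d)\setminus W^-_\delta,\qquad U^+:=M_{C,\delta+\ep}(E_0,s,d)\setminus W^+_\delta,
\]
where $W^+_\delta$ is defined like $W^-_\delta$. Both $U^\pm$ are identified with the $\delta$-stable locus $M^s_{C,\delta}(E_0,s,d)$.

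The first step is codimension control. By Lemma \ref{lemma-upper bound on dim of W} (with $c=2$, and uniformly in $\delta$), $\dim W^-_\delta\le \mathrm{exp.dim}-2$ once $d\ge d_2$. For $W^+_\delta$ we run the symmetric argument. Its fibers over $M^s_{C,\delta}(E_0,s-s',d-d')\times M_C(s',d')$ are $\P(\Ext^1(E',F))$, of dimension governed by $\chi(E',F)$. This requires the analogues of Lemmas \ref{lemma-fiber of contraction} and \ref{lem_closure_of_stable_F} with the roles of sub and quotient swapped. Combining this with the upper bound of Lemma \ref{lemma-upper bound on dim} for the stable factor, we aim for the same type of estimate $\dim W^+_\delta\le\mathrm{exp.dim}-2$ for $d\gg0$. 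For the $L$-fibers we use Lemma \ref{lem_similar_to_M_L}(1) and its $+$ analogue.

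The second step gives the local structure. Every component of $M_{C,\delta\pm\ep}$ has dimension at least $\hom(I^\bullet,E)-\ext^1(I^\bullet,E)=\mathrm{exp.dim}$, by \eqref{eq_lem_dim_tangent_space} and stability. By descending induction, suppose $M_{\delta+\ep}$ is irreducible of dimension $\mathrm{exp.dim}$. Then $U^+=U^-$ is irreducible of that dimension, and $W^-_\delta$ is too small to contain a component. Hence $M_{\delta-\ep}$ is irreducible of expected dimension. Being cut out locally by $\ext^1(I^\bullet,E)$ equations in a smooth space of dimension $\hom(I^\bullet,E)$ (the obstruction theory of \cite{Lin}), it is a local complete intersection. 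Lemma \ref{lem_dim}, together with Remark-type bounds, shows that the locus where $\Hom(E,\omega_C\otimes\ker\alpha)\ne0$ has codimension at least $2$ for $d\gg0$. We verify this by the same dimension count as in Lemma \ref{lemma-upper bound on dim of Quot}, stratifying by the destabilizing data. Serre's criterion (lci gives $S_2$, and $R_1$ follows) then gives reducedness and normality.

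For local factoriality and the SQM statement, note that $M_{\delta-\ep}$ and $M_{\delta+\ep}$ are isomorphic in codimension $1$ via $U^\pm$. Composing over the finitely many walls gives a birational map to $\Quot$ that is an isomorphism in codimension one. Local factoriality of an lci variety whose singular locus has codimension at least $4$ follows from Grothendieck's theorem (SGA2). We therefore upgrade the codimension estimate: we take $c=4$ in Lemma \ref{lemma-upper bound on dim of W} for the non-smooth locus, which is allowed since $d$ is large. $\Q$-factoriality, and hence the SQM property, follows, and the dimension formula for $M_L$ is $\mathrm{exp.dim}-g$ via flatness of $\det$ (fibers are equidimensional by Lemma \ref{lem_similar_to_M_L}). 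The main obstacle is the symmetric estimate for $W^+_\delta$ and the codimension-$4$ bound on the singular locus uniformly in $\delta$; there we would mimic the computation in Lemma \ref{lemma-upper bound on dim of W}, using $s\le r$ to make the leading term $s'r d/s$ dominate.
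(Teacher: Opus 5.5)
There is a genuine gap at the step you call the main obstacle, and the fibre description you give for it is wrong.

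\textbf{The fibres of $W^+_\delta$.} On the $+$ side the $\delta$-Jordan--H\"older sub is $(F,0)$ and the quotient is $(E',\alpha')$. So a point of the fibre of $M_{C,\delta+\ep}(E_0,s,d)\to M_{C,\delta}(E_0,s,d)$ over $([(E',\alpha')],[F])$ is an extension $0\to F\to E\to E'\to 0$ \emph{together with a lift of $\alpha'$ to $E$}. These pair extensions form a space $\mathbb{E}$ with
\[
0\to\Hom(E',F)\to\Hom(E_0,F)\to\mathbb{E}\to\Ext^1(E',F)\to\Ext^1(E_0,F)=0 .
\]
Hence the fibre has dimension $\chi(E_0,F)-\chi(E',F)-1=rd'+s'e-s'\delta+s'(s-s'-r)(g-1)-1$, not $\ext^1(E',F)-1$. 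At the last wall $\delta_N=\delta_\sharp$, for instance, $\Ext^1(E',F)=0$ and the whole fibre comes from $\Hom(E_0,F)$.

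With the correct fibres, $\codim W^+_\delta=s'(\delta+(s-s')(g-1))$ whenever the stratum has its expected dimension. This can equal exactly $2$ (e.g.\ $g=s=2$, $d$ odd, $\delta=\delta_\flat=1$), so unlike for $W^-_\delta$ there is no slack coming from $d\gg0$. To prove $\codim\ge 2$ you would need three things, none of which you supply:
\begin{itemize}
\item the exact dimension of $M^s_{C,\delta}(E_0,s-s',d-d')$, which induction gives only when $d-d'\gg0$;
\item a separate argument when $d-d'$ is bounded;
\item a $+$-side analogue of Lemma \ref{lem_closure_of_stable_F} for strictly semistable $F$.
\end{itemize}
Without this estimate the SQM claim does not follow. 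Your descending induction also uses it implicitly, to know that $U^+\neq\emptyset$.

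\textbf{How the paper avoids it.} The paper never estimates $W^+_\delta$. It uses only $W^-$ to see that $M_{C,\delta'}\dashrightarrow M_{C,\delta}\dashrightarrow\Quot_C(E^0,r-s,d+e)$ contracts no divisors for $0<\delta'<\delta_\flat$. It then squeezes Picard numbers:
\[
2+\rho(\Pic^d(C))\le\rho(M_{C,\delta'})\le\rho(M_{C,\delta})\le\rho(\Quot_C(E^0,r-s,d+e))=2+\rho(\Pic^d(C)).
\]
The lower bound comes from $M_{C,\delta'}\to M_C(s,d)$, whose general fibre is a projective space. The last equality is Claim \ref{claim_prop_SQM_s_leq_r}, via \cite{GS-Picard} and Appendix \ref{appendix_s=2}. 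Equality forbids extracted divisors. The same small-$\delta$ end supplies nonemptiness, and the $L$-version is handled identically using $\rho(\Quot_L)=2$.

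\textbf{Normality and local factoriality.} Here you misuse Lemma \ref{lemma-upper bound on dim of W}. It bounds the flipping loci $W^-_\delta$, not the obstruction locus $\{\Hom(E,\omega_C\otimes\ker\alpha)\neq0\}$. The Remark after Lemma \ref{lem_dim} covers only $\ker\alpha=0$ or small $\delta$. The count in Lemma \ref{lemma-upper bound on dim of Quot} bounds total dimension, not that locus. So ``$c=4$ for the non-smooth locus'' and your codimension-$2$ claim are unjustified.

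You do not need them, though:
\begin{itemize}
\item Lemma \ref{lemma-upper bound on dim of W} with $c=4$, applied at the finitely many walls above $\delta$, gives a closed $Z\subset M_{C,\delta}(E_0,s,d)$ of codimension $\ge4$ such that $M_{C,\delta}(E_0,s,d)\setminus Z$ is an open subset of $\Quot_C(E^0,r-s,d+e)$.
\item That Quot scheme is reduced and normal with singular locus of codimension $\ge4$ \cite[\S\S 4, 5]{GS-Picard}; for the fibre, see the proof of \cite[Proposition 8.1]{GS-Picard}.
\item Lci, Serre's criterion and Grothendieck's theorem then give reducedness, normality and local factoriality. This is exactly the paper's argument.
\end{itemize}

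\textbf{The dimension of the fibre.} Similarly, $\dim M_{C,\delta}(E_0,s,d)_L=\mathrm{exp.dim}-g$ does not follow from Lemma \ref{lem_similar_to_M_L} alone. It comes from descending induction starting at $\Quot_C(E^0,r-s,d+e)_{L\otimes\det E^0}$, which is irreducible of that dimension by \cite[Theorem 8.7]{GS-Picard}. This is combined with the lower bound coming from $M_{C,\delta}(E_0,s,d)_L$ being a fibre over $\Pic^d(C)$.
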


\begin{proof}
If $s=1$, $M_{C,\delta}(E_0, s, d) = \Quot_C(E^0, r - 1, d+e)$ for any $\delta >0$ by Lemma \ref{lem_suff_large_delta}.
Since $\Quot_C(E^0, r - 1, d+e)$ is a $\P^{e+r(d-g+1)-1}$ bundle over $\Pic^{d+e}(C) $ by Remark \ref{rem_small_delta} or \cite[Theorem 3.3]{GS-Picard},
the statement holds.

In the rest of the proof, we assume $s \geq 2$.
For sufficiently large $d$, $\Quot_C(E^0, r - s, d+e)$ is irreducible of dimension $dr+se - s(r - s)(g - 1)$ by \cite[Theorems 6.2, 6.4]{PR}. 
For each $\delta' >0$,
$M_{C,\delta' -\ep}(E_0, s, d)  \setminus W_{\delta'}^- $ is embedded into $M_{C,\delta' +\ep}(E_0, s, d)  $ for sufficiently small $\ep>0$.
Hence by Lemma \ref{lemma-upper bound on dim of W} and descending induction on $\delta$, it holds that
\begin{align}\label{eq_prop_SQM_s_leq_r_dim_M_leq_Q}
\dim M_{C,\delta}(E_0, s, d) \leq \dim \Quot_C(E^0, r - s, d+e) = dr+se - s(r - s)(g - 1),
\end{align}
and there exists at most one  irreducible component of this dimension.
Furthermore, 
\[
\dim  M_{C,\delta''}(E_0, s, d) \geq dr+se - s(r - s)(g - 1)
\]
for $0 < \delta'' < \delta_\flat$ by Lemma \ref{lemma-small delta}, Remark \ref{rem_small_delta}.
Hence by increasing induction on $\delta$,
the equality holds in \eqref{eq_prop_SQM_s_leq_r_dim_M_leq_Q}, and
there exists exactly one  irreducible component of this dimension.

By assumption,
$\delta$-semistability coincides with $\delta$-stability. For any $[(E, \alpha)] \in M_{C,\delta}(E_0, s, d)$, we have
\begin{align*}
\hom (I^\bullet, E) - \ext^1(I^\bullet, E) = dr+se - s(r - s)(g - 1) \leq  \dim_{[(E,\alpha)]} M_{C,\delta}(E_0, s, d)
\end{align*}
by \eqref{eq_lem_dim_tangent_space}.
Hence $M_{C,\delta}(E_0, s, d)$ is a locally complete intersection of the expected dimension at every point by \S \ref{subsec_tangent_space}, and hence is irreducible.

By Lemma \ref{lemma-upper bound on dim of W}, there exists a closed subset $Z \subset M_{C,\delta}(E_0, s, d)$ of codimension at least $4$ such that
\[
M_{C,\delta}(E_0, s, d) \setminus Z \hookrightarrow \Quot_C(E^0, r - s, d+e).
\]
By \cite[\S 4, 5]{GS-Picard}, $\Quot_C(E^0, r - s, d+e)$ is reduced and normal with singularities of codimension at least $4$.
Hence $M_{C,\delta}(E_0, s, d)$ is reduced, normal, and locally factorial as in the proof of \cite[Theorem 6.3]{GS-Picard}.

Take  $0 < \delta' < \delta_\flat$ for $\delta_\flat$ in Lemma \ref{lemma-small delta}. 
Since $\dim W_{\delta''}^- \leq dr+se -s(r-s)(g-1) -2$ for any $\delta''>0$, 
both birational rational maps
\[
M_{C,\delta'}(E_0, s, d) \dashrightarrow M_{C,\delta}(E_0, s, d) \dashrightarrow M_{C,\delta_\sharp +\ep}(E_0, s, d) = \Quot_C(E^0, r - s, d+e)
\]
do not contract divisors. 

\begin{claim}\label{claim_prop_SQM_s_leq_r}
It holds that $\Pic (\Quot_C(E^0, r - s, d+e)) \simeq \Pic(\Pic^0(C)) \oplus \Z^2$.
\end{claim}

\begin{proof}[Proof of Claim \ref{claim_prop_SQM_s_leq_r}]
If $s=r$, there exists a natural morphism $\Quot_C(E^0, 0, d+e)  \to \Sym^{d+e} (C)$ which maps a quotient $[E^0 \twoheadrightarrow F]$ to  the effective divisor on $C$ determined by the torsion sheaf $F$.
\cite[Theorem 11]{GS-nef} states that $\Pic (\Quot_C(E^0, 0, d+e)) \simeq \Pic( \Sym^{d+e} C ) \oplus \Z $.
Since the natural morphism $ \Sym^{d+e} (C) \to \Pic^{d+e}(C)$ is a projective bundle for $d +e \gg 0$, 
we have $\Pic (\Quot_C(E^0, r - s, d+e)) \simeq \Pic( \Pic^{d+e} C ) \oplus \Z^2$.
If $s=r-1$, this holds by  \cite[Theorem 9.1]{GS-Picard}.

Assume $2 \leq s \leq r-2$.
If $(g,r-s) \neq (2,2)$, $ \rho(\Quot_C(E^0, r - s, d+e)) = 2 + \rho(\Pic^d(C))$ holds by \cite[Theorem 1.2]{GS-Picard}.
If $(g,r-s) = (2,2)$, $\rho(\Quot_C(E^0, r - s, d+e)) = 2 + \rho(\Pic^d(C))$ holds by Lemma \ref{lem_appendix_picard_group_s=2},
which will be shown in the appendix.
\end{proof}

Thus it holds that
\[
\rho(M_{C,\delta'}(E_0, s, d)) \leq \rho(M_{C,\delta}(E_0, s, d)) \leq \rho(\Quot_C(E^0, r - s, d+e)) = 2 + \rho(\Pic^d(C)).
\]
On the other hand,
there is a morphism $f : M_{C,\delta'}(E_0, s, d) \to M_C(s, d)$ whose general fiber is $\mathbb{P}^{se+r(d -s(g-1)) - 1}$, and hence
\[
\rho(M_{C,\delta'}(E_0, s, d)) \geq 1 + \rho(M_C(s, d)) = 2 + \rho(\Pic^d(C))
\]
by $s \geq 2$.
Therefore, $\rho(M_{C,\delta}(E_0, s, d))= \rho(\Quot_C(E^0, r - s, d+e))$ holds
and hence $M_{C,\delta}(E_0, s, d)$ is a SQM of $\Quot_C(E^0, r - s, d+e)$.

The proof for $M_{C,\delta}(E_0, s, d)_L$ is similar.
For sufficiently large $d$, $\Quot_C(E^0, r - s, d+e)_{L\otimes \det E^0}$ is irreducible of dimension $dr+se - s(r - s)(g - 1)-g$ by \cite[Theorem 8.7]{GS-Picard}.
As in the case of $M_{C,\delta}(E_0, s, d)$, by using Lemma \ref{lem_similar_to_M_L} (1) instead of Lemma \ref{lemma-upper bound on dim of W},
we see that 
\begin{align*}
\dim M_{C,\delta}(E_0, s, d)_L = dr+se - s(r - s)(g - 1)-g,
\end{align*}
and there exists exactly one  irreducible component of this dimension.
On the other hand,
the dimension of each irreducible component of $M_{C,\delta}(E_0, s, d)_L$ is at least $dr+se - s(r - s)(g - 1)-g$
since $M_{C,\delta}(E_0, s, d)_L$ is a fiber of the morphism $M_{C,\delta}(E_0, s, d) \to \Pic^d(C)$ and
the dimension of each irreducible component of $M_{C,\delta}(E_0, s, d)_L$ is at least 
\[
\dim M_{C,\delta}(E_0, s, d) -\dim  \Pic^d(C) =dr+se - s(r - s)(g - 1)-g.
\]
Hence $ M_{C,\delta}(E_0, s, d)_L $ is irreducible and locally complete intersection since so is  $ M_{C,\delta}(E_0, s, d)$.
The reducedness and normality of $ M_{C,\delta}(E_0, s, d)_L $ follows from a similar argument
by Lemma  \ref{lem_similar_to_M_L} (1) since $\Quot_C(E^0, r - s, d+e)_{L\otimes \det E^0}$ is reduced and normal with singularities of
codimension at least $4$ by the proof of \cite[Proposition 8.1]{GS-Picard}.

\begin{claim}\label{claim_prop_SQM_s_leq_r_L}
It holds that $\Pic (\Quot_C(E^0, r - s, d+e)_{L\otimes \det E^0}) \simeq  \Z^2$.
\end{claim}

\begin{proof}[Proof of Claim \ref{claim_prop_SQM_s_leq_r_L}]
Assume $s=r$.
Let $\pi_1:C^{d+e}\to C^{(d+e)}$ be the quotient map. Let $\Delta \subset C^{(d+e)}$ be the diagonal and set $U\coloneqq \P(H^0(L\otimes \det E^0)) \cap (C^{(d+e)}\setminus \Delta)$.  Note that $U\neq \emptyset$ and it is the set of all reduced divisors of degree $d+e$ which are linearly equivalent to $L\otimes \det E^0$. Moreover, by \cite[Chapter III, Section 1, p.111-112]{ACGH}, $\pi_1^{-1}(U)$ is irreducible. Let $\phi:\Quot_C(E^0,0,d+e)_{L\otimes \det E^0}\to \mb P(H^0(L\otimes \det E^0))$ be the restriction of the Quot-Chow map. Let $\pi_2:\P(E_0)^{d+e}\to C^{d+e}$ be the natural map.
Then, as in \cite[Lemma 10]{GS-nef} we have a commutative diagram
\[
\begin{tikzcd}
    \pi_2^{-1}\pi_1^{-1}(U) \ar[r,"\widetilde{\pi}_1"] \ar[d,"\widetilde{\phi}"] & \phi^{-1}(U) \ar[d,"\phi"] \\
    \pi_1^{-1}(U) \ar[r,"\pi_1"]
   & U
 \end{tikzcd}
\]
Now let $\mc L$ be a line bundle on $\Quot(E^0,0,d+e)_{L\otimes \det E^0}$. Note that since 
$\pi_1^{-1}(U)$ is integral and the maps $\widetilde{\phi}\,,\widetilde{\pi}_1$ are $S_{d+e}$-equivariant, we have 
\[
\widetilde{\pi}_1^*\mc L\cong \widetilde{\phi}^{*}\mc L'\otimes \bigotimes\limits_{i=1}^{d+e} \mc O_i(m),
\]where $\mc L'$ is a line bundle on $\pi_1^{-1}(U)$ and $\mc O_{i}(1)$ is the pullback of the universal line bundle on $\P(E_0)$ via the $i$-th projection $(\P(E_0))^{d+e}\to \P(E_0)$. Now recall from the proof of \cite[Lemma 10]{GS-nef} that there exists a line bundle $\mc O_{\mc Q}(1)$ on $\Quot(E^0,0,d+e)$ whose restriction to fiber $\phi^{-1}(\{c_1,c_2,\ldots, c_{d+e}\})=\prod \P(E_0|_{c_i})$ is $\bigotimes\limits_i \mc O_{\P(E_0|_{c_i})}(1)$. Therefore $\mc L\otimes \mc O_{\mc Q}(-m)$ restricted to fiber of $\phi$ over $U$ is trivial. Since $\phi$ is flat and has integral fibers \cite[Corollary 6.4]{GS}, by \cite[Lemma 2.1.2]{MR-ss} we get that it is trivial over all the fibers of $\phi$.
This implies that $\mc L\otimes \mc O_{\mc Q}(-m)=\phi^*\mc L''$ for a line bundle $L''$ over $\mb P(H^0(L\otimes \det E^0))$. This completes the proof for the case $s=r$.

If $s=r-1$, this holds by  \cite[Theorem 9.1]{GS-Picard}.

Assume $2 \leq s \leq r-2$.
If $(g,r-s) \neq (2,2)$, this holds by \cite[Theorem 1.4]{GS-Picard}. 
If $(g,r-s) = (2,2)$, this holds by Lemma \ref{lem_appendix_picard_group_s=2,L}.
\end{proof}

Since rational maps {$M_{C,\delta'}(E_0, s, d)_L \dashrightarrow  M_{C,\delta}(E_0, s, d)_L  \dashrightarrow \Quot_C(E^0, r - s, d+e)_{L\otimes \det E^0}$} do not contract divisors and 
$\rho(M_{C,\delta'}(E_0, s, d)_L) \geq 1 + \rho(M_C(s, L)) = 2 $ for $0 < \delta' < \delta_\flat$ and $\delta>0$,
we obtain $\rho(M_{C,\delta'}(E_0, s, d)_L)=2$ and hence $M_{C,\delta}(E_0, s, d)_L$ is a SQM of $\Quot_C(E^0, r - s, d+e)_{L\otimes \det E^0}$.
\end{proof}

\begin{corollary}\label{cor_pic_s_leq_r}
    Assume $1\leq s\leq r$ and $d$ is sufficiently large. Let $\delta>0$ and assume $M_{C,\delta}(E_0,s,d)=M_{C,\delta}^s(E_0,s,d)$. Let $[L]\in \Pic^d(C)$.
    \begin{enumerate}
        \item If $s=1$, then $\Pic(M_{C,\delta}(E_0,s,d))=\Pic(\Pic^d(C)) \oplus \Z$ and
        \[
        \Pic(M_{C,\delta}(E_0,s,d)/\Pic^d(C)) \simeq \Pic(M_{C,\delta}(E_0, s, d)_L)= \Z .
        \]
 \item If $2\leq s\leq r$, then
$\Pic(M_{C,\delta}(E_0,s,d))=\Pic(\Pic^d(C))\oplus \Z^2$ and 
\[
\Pic(M_{C,\delta}(E_0,s,d)/\Pic^d(C)) \simeq  \Pic(M_{C,\delta}(E_0, s, d)_L)=\Z^2.
\]
    \end{enumerate}
\end{corollary}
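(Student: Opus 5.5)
The idea is to transport the Picard groups of the Quot scheme to $M_{C,\delta}(E_0,s,d)$ along the SQM of Proposition \ref{prop_SQM_s_leq_r}. Then I read off the relative and fiberwise statements.

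\textbf{Case $s=1$.} By Lemma \ref{lem_suff_large_delta}, $M_{C,\delta}(E_0,1,d)=\Quot_C(E^0,r-1,d+e)$ for every $\delta>0$. This is a projective bundle over $\Pic^{d+e}(C)\simeq\Pic^d(C)$ (Remark \ref{rem_small_delta}, or \cite[Theorem 3.3]{GS-Picard}). The projective bundle formula gives $\Pic(M)=\Pic(\Pic^d(C))\oplus\Z\,\calo(1)$. The fiber over $[L]$ is a projective space, so $\Pic(M_L)=\Z$. The map $\Pic(M/\Pic^d(C))\to\Pic(M_L)$ sends $\calo(1)$ to $\calo(1)$, hence it is an isomorphism.

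\textbf{Case $2\le s\le r$.} By Proposition \ref{prop_SQM_s_leq_r}, $M\coloneqq M_{C,\delta}(E_0,s,d)$ is normal and locally factorial. Moreover $M\dashrightarrow Q\coloneqq\Quot_C(E^0,r-s,d+e)$ is an isomorphism in codimension one: by Lemma \ref{lemma-upper bound on dim of W}, the indeterminacy loci on both sides have codimension $\ge2$. Both varieties are locally factorial, so
\[
\Pic(M)\simeq\Cl(M)\simeq\Cl(Q)\simeq\Pic(Q),
\]
and Claim \ref{claim_prop_SQM_s_leq_r} gives $\Pic(M)\simeq\Pic(\Pic^d(C))\oplus\Z^2$. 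The same argument applies to the fibers. $M_L$ is locally factorial and is an SQM of $Q_{L\otimes\det E^0}$, the exceptional loci having codimension $\ge 2$ by Lemma \ref{lem_similar_to_M_L}(1). Hence Claim \ref{claim_prop_SQM_s_leq_r_L} yields $\Pic(M_L)=\Z^2$.

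It remains to identify the relative Picard group. The identification $\Cl(M)\simeq\Cl(Q)$ is compatible with pullback from $\Pic^d(C)$, because both varieties carry a $\det$ morphism and the rational map commutes with it. So the summand $\Pic(\Pic^d(C))$ in the decomposition of $\Pic(Q)$ corresponds to $\det^*\Pic(\Pic^d(C))$ in $\Pic(M)$. I should check in \cite{GS-Picard} that the decomposition there is of exactly this form, namely $\det^*$ plus two explicit line bundles. This gives $\Pic(M/\Pic^d(C))\simeq\Z^2$.

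For the restriction map $\Pic(M/\Pic^d(C))\to\Pic(M_L)$, I reduce to the analogous statement for $Q$. There, the two generators restrict to a basis of $\Pic(Q_L)\simeq\Z^2$ by \cite[Theorems 1.2, 1.4]{GS-Picard}, or by Appendix \ref{appendix_s=2} in the exceptional case. Restriction commutes with the codimension-$\ge2$ identifications on the open set where $M$ and $Q$ agree, so the map is an isomorphism for $M$ as well.

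\textbf{Main obstacle.} The delicate step is this last compatibility of restriction with the SQM. The open common locus $U$ must meet $M_L$ in a subset with complement of codimension $\ge2$ in $M_L$; that is exactly the content of Lemma \ref{lem_similar_to_M_L}(1) applied fiberwise. One then uses $\Pic(M_L)=\Pic(U\cap M_L)$.
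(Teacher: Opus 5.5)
Your proposal is correct and follows the paper's proof. The paper uses the projective-bundle structure for $s=1$, and for $2\le s\le r$ it cites the SQM of Proposition~\ref{prop_SQM_s_leq_r} together with Claims~\ref{claim_prop_SQM_s_leq_r} and \ref{claim_prop_SQM_s_leq_r_L}; your discussion of the relative Picard group and the restriction map makes explicit what the paper leaves implicit.

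One small correction: Lemmas~\ref{lemma-upper bound on dim of W} and \ref{lem_similar_to_M_L}(1) only bound the loci $W^-_\delta$ on the $M$-side. The codimension-$\ge 2$ statement on the $Q$-side (and on $Q_{L\otimes\det E^0}$) must be taken from the SQM conclusion of Proposition~\ref{prop_SQM_s_leq_r}, which gets it from the Picard-number comparison, not from those lemmas.
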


\begin{proof}
(1) holds since $M_{C,\delta}(E_0,s,d) \to \Pic^d (C)$ is a projective bundle.\\
(2) holds by Proposition \ref{prop_SQM_s_leq_r} and Claims \ref{claim_prop_SQM_s_leq_r}, \ref{claim_prop_SQM_s_leq_r_L}.
\end{proof}

\subsection{The case $s >r$}

If $d $ is sufficiently large,
$\delta_\sharp =\frac{dr+se}{s-r}$ and we have an isomorphism 
\[
M_{C,\delta_\sharp}(E_0,s,d) \xrightarrow{\sim}M_C(s-r,d+e) , \  [(E,\alpha)] \mapsto [E/\im \alpha]
\]
by Lemma \ref{lemma-upper bound on dim of W,s >r} (3).  

\begin{lemma}\label{lemma-M_delta^dagger_s>r}
    Let $d\gg 0 $ and $0 < \varepsilon\ll 1$. 
    Then $M_{C,\delta_\sharp-\varepsilon}(E_0,s,d)$ is irreducible, smooth of dimension $rd+se+s(s-r)(g-1)$. Furthermore, the morphism 
\begin{align*}
\pi : M_{C,\delta_\sharp-\varepsilon}(E_0,s,d)\to M_{C,\delta_\sharp}(E_0,s,d)=M_{C}(s-r,d+e), \ [(E,\alpha)] \mapsto [E/\im\alpha]
\end{align*}    
is a projective bundle over  $M^s_{C}(s-r,d+e)$, and induces an isomorphism 
$\Pic (M_{C,\delta_\sharp-\varepsilon}(E_0,s,d))\simeq \Pic (M_{C}(s-r,d+e) ) \oplus \Z$.

Similarly, $M_{C,\delta_\sharp-\varepsilon}(E_0,s,d)_L$ is irreducible, smooth of dimension $rd+se+s(s-r)(g-1)-g$.
Furthermore, the restriction
\begin{align*}
\pi_L: M_{C,\delta_\sharp-\varepsilon}(E_0,s,d)_L\to M_{C,\delta_\sharp}(E_0,s,d)_L=M_{C}(s-r,L\otimes \det E^0)
\end{align*}    
of $\pi$ induces an isomorphism 
$\Pic (M_{C,\delta_\sharp-\varepsilon}(E_0,s,d))_L\simeq \Pic (M_{C}(s-r,L\otimes \det E^0)) \oplus \Z$.
\end{lemma}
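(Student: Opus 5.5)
We study $\pi$ via the fibers of $\pi_{\delta_\sharp}^-$, using Lemmas \ref{lemma-fiber of contraction} and \ref{lem_closure_of_stable_F} at $\delta=\delta_\sharp$. By Lemma \ref{lemma-upper bound on dim of W,s >r} (3) and its proof, for $d\gg 0$ the only stratum of $M_{C,\delta_\sharp}(E_0,s,d)$ is $\{[(E_0,\id_{E_0})]\}\times M_C(s-r,d+e)$, so $s'=s-r$, $d'=d+e$, $E'=E_0$. Since $\pi_{\delta_\sharp}^-$ is surjective and every point of $M_{C,\delta_\sharp-\ep}(E_0,s,d)$ lies over this stratum, $M_{C,\delta_\sharp-\ep}(E_0,s,d)=W$ in the notation of Lemma \ref{lem_closure_of_stable_F}. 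That lemma then shows $W$ is the closure of $W^o$. Over $M^s_C(s-r,d+e)$, $W^o$ is a $\P(\Ext^1(F,E_0))$-bundle of fiber dimension $(s-r)(\delta_\sharp+r(g-1))-1$. Hence $W$ is irreducible, and its dimension is $(s-r)^2(g-1)+1+(s-r)\delta_\sharp+(s-r)r(g-1)-1$. Substituting $(s-r)\delta_\sharp=rd+se$, this equals $rd+se+s(s-r)(g-1)$, which is the expected dimension.

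For smoothness, let $[(E,\alpha)]$ be any point. By Lemma \ref{lemma-JH filtration}, $\im\alpha\subset F_1=E'$. Running the argument of Lemma \ref{lemma-upper bound on dim of W,s >r} (2) with $\delta_\sharp-\ep>\kappa$ shows that $\alpha$ is injective, so $\ker\alpha=0$. Lemma \ref{lem_dim} then gives smoothness of the expected dimension at every point, because $\delta_\sharp-\ep$ is generic and so every semistable pair is stable.

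The projective bundle structure comes from the fine moduli property. Over $M^s_C(s-r,d+e)$ there need not be a universal bundle, but there is a twisted universal family $\mc F$. Since $\Hom(F,E_0)=0$ by Lemma \ref{lem_preliminary} (3), the space $\Ext^1(F,E_0)$ has constant dimension. Its relative version $R^1p_*\mc{H}om(\mc F,p_C^*E_0)$ is locally free (up to the same twist), and $\pi^{-1}(M^s_C)$ is its projectivization. This follows from Claim \ref{claim_lemma-fiber of contraction} together with the universal property, since non-split extensions of $F$ by $E_0$ are exactly the points of the fiber.

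\textbf{Picard group.} The strictly semistable locus of $M_C(s-r,d+e)$ has codimension $\ge2$ when $s-r\ge2$ and $g\ge2$ (the case $s-r=1$ is trivial). Its preimage in $M_{C,\delta_\sharp-\ep}(E_0,s,d)$ also has codimension $\ge2$. To see this, compare the dimension estimate over strictly semistable $F$ with the closure argument: the fiber dimension stays $\dim\Ext^1(F,E_0)-1$ there, because $\Hom(F,E_0)=0$ still holds. Since $M_{C,\delta_\sharp-\ep}(E_0,s,d)$ is smooth, $\Pic$ is computed on this open subset. There it equals $\Pic(M^s_C)\oplus\Z\cdot\calo(1)$. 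Finally, $\Pic(M^s_C)=\Pic(M_C)$ because $M_C$ is locally factorial and the complement has codimension $\ge2$ (Drezet--Narasimhan). The class $\calo(1)$ exists globally as a relative $\calo(1)$ after the standard twist, using that the Brauer obstruction cancels for projectivizations of twisted bundles.

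\textbf{The case of $L$.} Restrict everything to $\det^{-1}[L]$. Over $M_C(s-r,L\otimes\det E^0)$ the same fiber description holds, so irreducibility and the dimension (reduced by $g$) follow. Smoothness follows from Lemma \ref{lem_smoothness_M_L} (whose proof applies at $\delta_\sharp-\ep$), and the Picard computation is identical, using Drezet--Narasimhan for the fixed-determinant moduli space.

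The main obstacle is the Picard step. It requires codimension $\ge2$ of the preimage of the strictly semistable locus and the existence of a global tautological line bundle despite the lack of a universal family. For the latter I would first try to realize $\calo(1)$ directly via the universal pair on the fine moduli space $M_{C,\delta_\sharp-\ep}(E_0,s,d)$, taking for instance a determinant line bundle restricted to fibers.
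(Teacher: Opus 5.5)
The first half of your argument matches the paper: identifying $M_{C,\delta_\sharp-\ep}(E_0,s,d)$ with the closure of $W^o$ via Lemma \ref{lem_closure_of_stable_F}, the dimension count, and smoothness from injectivity of $\alpha$ and Lemma \ref{lem_dim}. The gap is in the Picard step.

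\textbf{The codimension claim fails in one case.} You assert that $M_C(s-r,d+e)\setminus M^s_C(s-r,d+e)$ has codimension $\ge 2$ whenever $s-r\ge 2$ and $g\ge 2$. This is false for $g=2$, $s-r=2$, $d+e$ even. There $M_C(2,d+e)$ is a $\P^3$-bundle over $\Pic^{d+e}(C)$, and the strictly semistable locus $D$ is a divisor (a Kummer quartic surface in each fiber). Consequently $\Pic(M^s_C)\neq\Pic(M_C)$; for fixed determinant one gets $\Pic(\P^3\setminus D_L)\simeq\Z/4$. Moreover $\pi^{-1}(D)$ is a divisor in $M_{C,\delta_\sharp-\ep}(E_0,s,d)$, so computing the Picard group on $\pi^{-1}(M^s_C)$ loses a generator.

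This is why the paper treats $(g,s-r)=(2,2)$ separately in Appendix \ref{sec_appendix_s-r=2}:
\begin{enumerate}
\item It shows $h^{-1}(D)$ is irreducible (Lemma \ref{leq_irreducible_g=2,s-r=2}). It does this by surjecting onto it from the iterated extension bundle $P'_2\to P'_1\times J^{\frac{d+e}{2}}$ and checking $(\delta_\sharp-\ep)$-stability of the resulting pairs.
\item It shows $h^{-1}(D)$ is reduced (Lemma \ref{lem_appB_reduced}), via smoothness of $h$ on the locus where $E/E_0$ is simple.
\item Hence $h^*D$ is a prime divisor, and it compares the sequences $0\to\Z[D]\to\Cl(M_C(2,d+e))\to\Cl(M^s_C(2,d+e))\to 0$ before and after pullback (Corollary \ref{cor_appB_Pic}).
\end{enumerate}
The same work is needed with fixed determinant.

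\textbf{The justification fails even when $(g,s-r)\neq(2,2)$.} You argue that $\pi^{-1}(M_C\setminus M^s_C)$ has codimension $\ge 2$ because the fibers of $\pi$ stay $\P(\Ext^1(F,E_0))$. This is not true. The fiber over a strictly semistable point $[F]$ contains extensions of every $F'$ S-equivalent to $F$. For example, over $[N_1\oplus N_2]$ with $N_1\not\simeq N_2$, the nonsplit $F'\in\P(\Ext^1(N_2,N_1))\simeq\P^{g-2}$ contribute a piece of dimension $\dim\Ext^1(F,E_0)+g-3$. So for $g\ge 3$ the fiber dimension jumps, and the true codimension in rank two is $g-1$, not the $2g-3$ your count gives.

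What controls the codimension is the strictly semistable locus inside a parameter space (stack level), not inside $M_C$. The paper therefore works with the projective bundle $\P\to R^{ss}$ over an open subset of a Quot scheme. It applies Bhosle's bound $\codim_{R^{ss}}(R^{ss}\setminus R^s)\ge 2$, valid for $(g,s-r)\neq(2,2)$, together with its fixed-determinant version. It then pushes this down through the surjection $\mb U\to M_{C,\delta_\sharp-\ep}(E_0,s,d)$ using \cite[Lemma 2.2]{GS-Picard}.

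\textbf{Two smaller points.}
\begin{itemize}
\item Your one-line treatment of irreducibility in the fixed-determinant case also needs a density argument you do not supply. The paper gets it from connectedness of all fibers of $\pi$ together with smoothness.
\item You do not need a global relative $\calo(1)$. The kernel of the fiber-degree map on $\Pic(\pi^{-1}(M^s_C))$ is $\pi^*\Pic(M^s_C)$, and its image is nonzero. That already gives the abstract splitting $\Pic(\pi^{-1}(M^s_C))\simeq\Pic(M^s_C)\oplus\Z$.
\end{itemize}
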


\begin{proof}
By Lemma \ref{lemma-fiber of contraction},
$\pi : \pi^{-1}(M^s(s-r,d+e)) \to M^s(s-r,d+e)$ is a projective bundle.
Furthermore,
$W_{\delta_\sharp}^-$ defined by \eqref{eq_W^-} coincides with $M_{C,\delta_\sharp-\varepsilon}(E_0,s,d) $ since $M^s_{C,\delta_\sharp}(E_0,s,d) =\emptyset$
by $\mu_{\delta_\sharp}(E_0,\id) = \frac{d+\delta_\sharp}{s}$.
By the proof of Lemma \ref{lem_closure_of_stable_F},
$W_{\delta_\sharp}^-$ coincides with the closure of $ \pi^{-1}(M^s(s-r,d+e)) $.
Hence $M_{C,\delta_\sharp-\varepsilon}(E_0,s,d) =W_{\delta_\sharp}^-$ is irreducible. 
By Lemma \ref{lemma-upper bound on dim of W,s >r} (2),
$M_{C,\delta_\sharp-\varepsilon}(E_0,s,d) =M^s_{C,\delta_\sharp-\varepsilon}(E_0,s,d)$ is smooth of dimension $rd+se+s(s-r)(g-1)$.

To determine $\Pic (M_{C,\delta_\sharp-\varepsilon}(E_0,s,d))$, 
we introduce some notation.
    By $d\gg 0$, we can assume that every semistable bundle $F$ with degree $d+e$ and rank $s-r$ is globally generated and $H^1(F)=0$. Let $N=(d+e)-(s-r)(g-1)$.
    Let $R^{ss}\subset \Quot_C(\mc O_C^N,s-r,e+d)$ be the open subset parametrizing quotients $[\mc O^N_C\to F]$ such that $F$ is semistable and $H^0(\mc O^N_C)\xrightarrow{\sim} H^0(F)$. 
    Then $R^{ss}$ is smooth and equidimensional.
    Now note that for any $F$ semistable with degree $d+e$ and rank $s-r$,
     we have ${\rm Hom}(F,E_0)=0$ since $\mu(F) = \frac{d+e}{ s-r} > \mu_{\max}(E_0)$ by $d \gg 0$.
    Hence we have a projective bundle $\P \to R^{ss}$ whose fiber over $[\mc O^N_C\to F]$ is given by $\P({\rm Ext}^1(F,E_0))$. 
    Let $\mb U\subset \P$ be the open subset parametrizing non-split extensions $0\to E_0 \xrightarrow{\alpha} E\to F\to 0$ where $(E, \alpha)$ is $(\delta_\sharp-\varepsilon)$-stable.
      Now consider the natural morphism $\mb U\to M_{C,\delta_\sharp-\varepsilon}(E_0,s,d)$.

\begin{claim}\label{claim_surjective_U_to_P}
The morphism $\mb U\to M_{C,\delta_\sharp-\varepsilon}(E_0,s,d)$ is surjective.
\end{claim}

\begin{proof}[Proof of Claim \ref{claim_surjective_U_to_P}]
For $[(E,\alpha)] \in M_{C,\delta_\sharp-\varepsilon}(E_0,s,d)$,
$\alpha$ is injective by the proof of Lemma \ref{lemma-upper bound on dim of W,s >r} (3) since $[(E,\alpha)]$ is $\delta_\sharp$-semistable.
Furthermore,  $E/\im \alpha$ is semistable since $\mu_{\delta_\sharp}(E,\alpha) =\mu(E/\im \alpha)$. 
Hence $0 \to E_0 \xrightarrow{\alpha} E \to E/\im \alpha \to 0$ corresponds to a point in $\mb U$.
\end{proof}

Let $R^s\subset R^{ss}$ be the open  subset parametrizing quotients $[\mc O^N_C\to F]$
with $F$ stable. Let $\mb U^s$ be the inverse image of $R^s$ in $\P$. Note that $\mb U^s \subset \mb U$ by Claim \ref{claim_lemma-fiber of contraction}.

If $(g,s-r) \neq (2,2)$, we have that $\codim_{R^{ss}}(R^{ss}\setminus R^s)\geq 2$ by \cite[Proposition 1.2]{Bhosle}.
Hence $\codim_{\P}(\P\setminus \mb U)\geq \codim_{\P}(\P\setminus \mb U^s) \geq 2$ and 
\[
\codim_{M_{C,\delta_\sharp-\varepsilon}(E_0,s,d)}(M_{C,\delta_\sharp-\varepsilon}(E_0,s,d)\setminus \pi^{-1}(M^s(s-r,d+e)))\geq 2
\]
by applying \cite[Lemma 2.2]{GS-Picard} to the restriction of each irreducible component of $\mb U$.
Since $\pi : \pi^{-1}(M^s(s-r,d+e)) \to M^s(s-r,d+e)$ is a projective bundle,
we have 
\begin{align*}
\Pic (M_{C,\delta_\sharp-\varepsilon}(E_0,s,d)) &= \Pic (\pi^{-1}(M^s(s-r,d+e))) \\
&\simeq \Pic (M^s_{C}(s-r,d+e)) \oplus \Z = \Pic (M_{C}(s-r,d+e)) \oplus \Z.
\end{align*}
If $(g,s-r) = (2,2)$, we will show $\Pic (M_{C,\delta_\sharp-\varepsilon}(E_0,s,d)) \simeq  \Pic (M_{C}(s-r,d+e)) \oplus \Z$ in Lemma \ref{cor_appB_Pic} in Appendix \ref{sec_appendix_s-r=2}.
Thus the statement of this lemma for $M_{C,\delta_\sharp-\varepsilon}(E_0,s,d)$ is proved.

Since $M_{C,\delta_\sharp-\varepsilon}(E_0,s,d), M_{C}(s-r,d+e)$ are normal and the general fiber of $\pi$ is a projective space, which is connected,
all fibers of $\pi$ are connected.
Hence $M_{C,\delta_\sharp-\varepsilon}(E_0,s,d)_L=\pi^{-1}( M_{C}(s-r,L\otimes \det E^0)) $ is connected.
Thus $M_{C,\delta_\sharp-\varepsilon}(E_0,s,d)_L$ is irreducible, smooth of dimension $rd+se+s(s-r)(g-1)-g$ by Lemma \ref{lem_smoothness_M_L}.

Let $R^{ss}_L, R^s_L$ be the fibers of the natural morphisms from $R^{ss}, R^s$ to $\Pic^{d+e}(C)$ over $[L \otimes \det E^0]$.
If $(g,s-r) \neq (2,2)$,  we have that $\codim_{R^{ss}_L}(R_L^{ss}\setminus R_L^s)\geq 2$ by \cite[Corollary 1.3]{Bhosle}.
Then the rest of the proof is similar.
If $(g,s-r) = (2,2)$, we will show $\Pic (M_{C,\delta_\sharp-\varepsilon}(E_0,s,d)_L) \simeq  \Pic (M_{C}(s-r,L\otimes \det E^0)) \oplus \Z$ in  Lemma \ref{cor_appB_Pic} .
\end{proof}

\begin{lemma}\label{lem_s=r+1, last model}
Assume $s=r+1$, $d \gg 0$, and $ 0<  \ep \ll 1 $.
\begin{enumerate}
\setlength{\itemsep}{0mm}
\item $M_{C,\delta_\sharp-\varepsilon}(E_0, r+1, d)$ is a projective bundle over $ \Pic^{d}(C)$.
\item For $\kappa_\sharp<\delta<\delta_\sharp$, it holds that $M_{C,\delta}(E_0, r+1, d)=M_{C,\delta_\sharp-\varepsilon}(E_0, r+1, d)$.
\item There exists a divisorial contraction $ M_{C,\kappa_\sharp-\varepsilon}(E_0, r+1, d) \to M_{C,\delta_\sharp-\varepsilon}(E_0, r+1, d)$.
\item $\Pic  (M_{C,\kappa_\sharp-\varepsilon}(E_0, r+1, d))  \simeq \Pic (M_{C,\delta_\sharp-\varepsilon}(E_0, r+1, d)) \oplus \Z \simeq \Pic ( \Pic^{d}(C)) \oplus \Z^2$.
\end{enumerate}
Similar statements hold for  $M_{C,\delta}(E_0, r+1, d)_L$,
that is, 
\begin{enumerate}[label=(\arabic*)']
\setlength{\itemsep}{0mm}
\item $M_{C,\delta}(E_0, r+1, d)_L$ is a projective space.
\item For $\kappa_\sharp<\delta<\delta_\sharp$, it holds that $M_{C,\delta}(E_0, r+1, d)_L=M_{C,\delta_\sharp-\varepsilon}(E_0, r+1, d)_L$.
\item There exists a divisorial contraction $ M_{C,\kappa_\sharp-\varepsilon}(E_0, r+1, d)_L \to M_{C,\delta_\sharp-\varepsilon}(E_0, r+1, d)_L$.
\item  $\Pic  (M_{C,\kappa_\sharp-\varepsilon}(E_0, r+1, d))_L \simeq \Pic (M_{C,\delta_\sharp-\varepsilon}(E_0, r+1, d)_L) \oplus \Z \simeq \Z^2$.
\end{enumerate}
\end{lemma}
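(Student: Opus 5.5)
For $s=r+1$, Lemma \ref{lemma-M_delta^dagger_s>r} applies with $s-r=1$. Then $M_C(1,d+e)=\Pic^{d+e}(C)$ consists entirely of stable bundles, namely line bundles. So the map $\pi\colon M_{C,\delta_\sharp-\ep}(E_0,r+1,d)\to \Pic^{d+e}(C)\simeq\Pic^d(C)$ is a projective bundle everywhere, which gives (1). By Lemma \ref{lemma-fiber of contraction}, its fibre over $[F]$ is $\P(\Ext^1(F,E_0))$, of dimension $\delta_\sharp+r(g-1)-1$. Statement (1)' follows by restricting over a point.

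For (2), I use the wall analysis in the proof of Lemma \ref{lemma-upper bound on dim of W,s >r} (2). For $\kappa<\delta<\delta_\sharp$, a wall needs $(s',d')$ with $s-s'\ge r$. Since $s=r+1$, this forces $s'=1$ and $s-s'=r$, and the codimension computation there gives codimension $(s-r)(d-d'+e)=d-d'+e$. Each wall $\delta$ corresponds to a value $d-d'+e=m\ge1$, with $\delta$ determined by $\frac{d+\delta}{r+1}=d'$, that is $\delta=(r+1)d'-d$. The condition $\delta>\kappa_\sharp=rd+(r+1)(e-1)$ forces $d'>d+e-1$, so $m<1$. Hence no wall with $\kappa_\sharp<\delta<\delta_\sharp$ has $M^s_{C,\delta}(E_0,r,d-d')=\Quot_C(E^0,0,m)$ nonempty. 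Consequently $W^\pm_\delta=\emptyset$ and $M_{C,\delta}=M_{C,\delta_\sharp-\ep}$ throughout that interval. One must also check that there are no walls with $s-s'<r$; this was shown there using $\delta>\kappa$, together with $\kappa<\kappa_\sharp$ for $d\gg0$.

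For (3), I examine the wall $\delta=\kappa_\sharp$, where $m=1$, $s'=1$ and $d'=d+e-1$. On the $\kappa_\sharp+\ep$ side, the non-$\kappa_\sharp$-stable locus $W^+$ should be empty, or at least the morphism $\pi^+_{\kappa_\sharp}$ should be an isomorphism. On the $\kappa_\sharp-\ep$ side, $W^-_{\kappa_\sharp}$ is a divisor by Lemma \ref{lemma-upper bound on dim of W,s >r} (2), since it has codimension exactly $1$. It is a $\P^{\kappa_\sharp+r(g-1)-1}$-bundle over $\P_C(E_0)\times\Pic^{d+e-1}(C)$, by Lemmas \ref{lemma-fiber of contraction} and \ref{lem_closure_of_stable_F}. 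The main obstacle is showing that $\pi^+$ is an isomorphism, so that the composite $M_{\kappa_\sharp-\ep}\to M_{\kappa_\sharp}\simeq M_{\kappa_\sharp+\ep}=M_{\delta_\sharp-\ep}$ is a birational morphism with exceptional divisor $W^-$.

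To handle this, I first take a point of $M_{\kappa_\sharp}$ of the form $(E'\oplus F,\alpha'\oplus0)$, with $E'$ a rank $r$ elementary modification of $E_0$ and $F$ a line bundle. Its $(\kappa_\sharp+\ep)$-stable preimages are extensions $0\to F\to E\to E'\to0$ in which $\alpha$ lifts. I would show these form a single point. The key step is to compute $\Ext^1(E',F)$ modulo the constraint that $\alpha'$ lifts; I expect this to amount to the dual count $\ext^1(E',F)-\dots=0$, via the same $\chi$ computation as in Lemma \ref{lemma-fiber of contraction}, applied with the roles reversed. Since $M_{\kappa_\sharp}$ is normal and the map is bijective and birational, Zariski's main theorem then gives an isomorphism. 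If this direct approach fails, I would instead argue that $M_{\kappa_\sharp+\ep}$ is a smooth projective bundle (by (1) and (2)) mapping birationally and bijectively onto $M_{\kappa_\sharp}$.

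For (4), $M_{\kappa_\sharp-\ep}$ is smooth, by Lemma \ref{lemma-upper bound on dim of W,s >r} (2) together with stability. The Picard group of a smooth variety blown down along an irreducible exceptional divisor $W^-$ increases by exactly $\Z$, generated by $[W^-]$. This gives $\Pic(\Pic^d(C))\oplus\Z^2$, using (1). The statements for $L$ follow by restricting over $[L]$; this is compatible with everything above because the walls and the loci $W^\pm$ are defined over $\Pic^d(C)$, and $W^-_L$ remains an irreducible divisor by Claim \ref{claim_lem_similar_to_M_L}.
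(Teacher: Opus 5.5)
Your arguments for (1), (2), (4), and for $W^-_{\kappa_\sharp}$ being an irreducible divisor, follow the paper. The step of (3) that everything rests on, however, has a gap.

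You want $\pi^+_{\kappa_\sharp}\colon M_{C,\delta_\sharp-\ep}(E_0,r+1,d)=M_{C,\kappa_\sharp+\ep}(E_0,r+1,d)\to M_{C,\kappa_\sharp}(E_0,r+1,d)$ to be an isomorphism, and you invoke Zariski's main theorem ``since $M_{\kappa_\sharp}$ is normal''. Nothing available gives normality, or even reducedness, of the moduli space at the wall $\delta=\kappa_\sharp$. Proposition~\ref{prop_SQM_s>r} only concerns $\delta$ with $M_{C,\delta}=M^s_{C,\delta}$, and the paper deliberately works with $M_{C,\kappa_\sharp}(E_0,r+1,d)$ with its reduced structure. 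Your fallback has the same defect: a bijective birational morphism from a smooth variety is only the normalization of its image.

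The missing idea is that no isomorphism is needed. Once $\pi^+_{\kappa_\sharp}$ is finite and birational, it is the normalization of $M_{C,\kappa_\sharp}(E_0,r+1,d)_{\mathrm{red}}$. Since $M_{C,\kappa_\sharp-\ep}(E_0,r+1,d)$ is smooth, hence normal, and maps dominantly onto $M_{C,\kappa_\sharp}(E_0,r+1,d)$, the map $\pi^-_{\kappa_\sharp}$ factors through this normalization. This produces $\mu$ with $\pi^-_{\kappa_\sharp}=\pi^+_{\kappa_\sharp}\circ\mu$, and $\mu$ is an isomorphism off $W^-_{\kappa_\sharp}$ and contracts it. This is what the paper does.

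The paper also obtains finiteness of $\pi^+_{\kappa_\sharp}$ without any fiber computation:
\begin{itemize}
\item by (1), $\rho(M_{C,\delta_\sharp-\ep}(E_0,r+1,d))=\rho(\Pic^d(C))+1$;
\item since $\dim M_{C,\kappa_\sharp}(E_0,r+1,d)>\dim\Pic^d(C)$, one has $\rho(M_{C,\kappa_\sharp}(E_0,r+1,d))\ge\rho(\Pic^d(C))+1$;
\item hence $\pi^+_{\kappa_\sharp}$ contracts no curves.
\end{itemize}

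Your fiber computation can replace this Picard-number argument, since quasi-finite plus proper gives finite, but it has to be set up correctly.
\begin{itemize}
\item For $d\gg0$ one has $\Ext^1(E',F)\cong H^0(E'\otimes F^{-1}\otimes\omega_C)^\vee=0$. So every sheaf extension $0\to F\to E\to E'\to0$ splits, and ``$\Ext^1(E',F)$ modulo lifting'' is zero, which would make the fiber empty.
\item The relevant space is the extension group of pairs, $\Hom(E_0,F)/\alpha'^*\Hom(E',F)\cong\Ext^1(\coker\alpha',F)\cong\C$. Its projectivization is one point.
\item You must still check $(\kappa_\sharp+\ep)$-stability of the resulting pair, since Lemma~\ref{lemma-fiber of contraction} only treats $\pi^-$.
\end{itemize}

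Three smaller points:
\begin{itemize}
\item $W^+$ is not empty. It consists of the classes in $\P(\Ext^1(N,E_0))$ that vanish in $\Ext^1(N(-x),E_0)$ for some $x\in C$.
\item For (3)$'$, Claim~\ref{claim_lem_similar_to_M_L} gives only the dimension of $W^-_{\kappa_\sharp,L}$, not its irreducibility.
\item The paper obtains irreducibility from the chain
\[
W^-_{\kappa_\sharp}\to\P_C(E_0)\times\Pic^{d+e-1}(C)\to C\times\Pic^{d+e-1}(C)\xrightarrow{\tau}\Pic^{d+e}(C).
\]
Here the first two maps are projective bundles and every fiber of $\tau$ is isomorphic to $C$.
\end{itemize}
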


\begin{proof}
(1) By Lemma \ref{lemma-fiber of contraction},
\begin{align*}
\pi:  M_{C,\delta_\sharp-\varepsilon}(E_0, r+1, d) \to  M_{C,\delta_\sharp}(E_0, r+1, d) =M_C(1,d+e) =\Pic^{d+e}(C) \simeq \Pic^{d}(C) 
\end{align*}
 is a projective bundle.\\
(2) Let $\kappa_\sharp<\delta<\delta_\sharp$. 
Then we have $M_{C,\delta}(E_0, r+1, d) \setminus M^s_{C,\delta}(E_0, r+1, d) =\emptyset$.
Indeed, if it is non-empty, then there exists $1 \leq s' < s$ and $d'$ such that 
$\frac{d+\delta}{s}=\frac{d'}{s'}$ and $M_{C,\delta}^s(E_0,s-s',d-d')\neq \emptyset$ by Lemma \ref{lem_disjoint_union}. 
By the proof of Lemma \ref{lemma-upper bound on dim of W,s >r} (2) and $\delta>\kappa_\sharp > \kappa$,
we have $s-s' \geq r =s-1$, that is, $s-s'=r$ and $s'=1$. 
This means that $\frac{d+\delta}{s}=\frac{d-d'+\delta}{s-s'}=\frac{d-d'+\delta}{r}$ and hence
\begin{align*}
    d-d' =\frac{r}{r+1}d-\frac{\delta}{r+1}.
\end{align*}
By $\kappa_\sharp =  rd+(r+1)(e-1 ) < \delta < \delta_\sharp = rd+(r+1)e$,
we have $-e < d-d' < 1-e$, which is a contradiction.

Hence 
$M_{C,\delta}(E_0,s,d) = M_{C,\delta_\sharp-\varepsilon}(E_0,s,d)$ holds for any $\kappa_\sharp < \delta < \delta_\sharp$.\\
(3) Applying the above argument to $\delta=\kappa_\sharp $,
we obtain $s-s'=r$, $ d-d'=1-e$ and hence
\begin{align*}
M_{C,\kappa_\sharp}(E_0, r+1, d) \setminus M^s_{C,\kappa_\sharp}(E_0, r+1, d) 
&=   M^s_{C,\kappa_\sharp}(E_0, r, 1-e) \times M_C(1,d+e-1) \\
&=   \Quot_C(E^0,0,1)   \times \Pic^{d+e-1}(C) \\
&=   \P_C(E_0)  \times \Pic^{d+e-1}(C) ,
\end{align*}
where the second equality holds since $\kappa_\sharp =rd +(r+1)(e-1) $  is sufficiently large.

Let 
\[
W^-_{\kappa_\sharp} = \left\{ [(E, \alpha)] \in M_{C,\kappa_\sharp- \varepsilon}(E_0, r+1, d) \,\middle|\, (E, \alpha) \text{ is not } \kappa_\sharp\text{-stable} \right\}
\]
as in \eqref{eq_lemma-upper bound on dim of W,s >r}.
By Lemma \ref{lemma-fiber of contraction},
\begin{align*}
\pi_{\kappa_\sharp}^-:  M_{C,\kappa_\sharp-\varepsilon}(E_0, r+1, d) \to  M_{C,\kappa_\sharp}(E_0, r+1, d) 
\end{align*}
induces a projective bundle
\begin{align*}
\pi_{\kappa_\sharp}^-|_{W^-_{\kappa_\sharp} } : W^-_{\kappa_\sharp}  \to  \P_C(E_0)  \times \Pic^{d+e-1}(C).
\end{align*}
Since $\dim W^-_{\kappa_\sharp} =\dim M_{C,\kappa_\sharp-\varepsilon}(E_0, r+1, d) -1$ by Lemma \ref{lemma-upper bound on dim of W,s >r},
$ W^-_{\kappa_\sharp}$  is a prime divisor on $M_{C,\kappa_\sharp-\varepsilon}(E_0, r+1, d) $.

On the other hand,
we have a morphism 
\begin{align*}
\pi_{\kappa_\sharp}^+ :  M_{C,\delta_\sharp-\varepsilon}(E_0, r+1, d) =M_{C,\kappa_\sharp+\varepsilon}(E_0, r+1, d) \to  M_{C,\kappa_\sharp}(E_0, r+1, d)
\end{align*}
by (2), 
which is an isomorphism over $M^s_{C,\kappa_\sharp}(E_0, r+1, d)$.
By $\dim  M_{C,\kappa_\sharp}(E_0, r+1, d) >\dim \Pic^d(C)$,  we have $\rho( M_{C,\kappa_\sharp}(E_0, r+1, d)) \geq  \rho(\Pic^{d}(C)  ) +1 $.
On the other hand, (1) implies that
$\rho (M_{C,\delta_\sharp-\varepsilon}(E_0, r+1, d)) =\rho ( \Pic^{d}(C)  )+1 $
and hence $\pi_{\kappa_\sharp}^+$ is a finite morphism.
Thus
$\pi_{\kappa_\sharp}^+$ is the normalization of { $M_{C,\kappa_\sharp}(E_0, r+1, d)$ (with the reduced structure)}. 
Since $ M_{C,\kappa_\sharp-\varepsilon}(E_0, s, d)$ is smooth by Lemma \ref{lemma-upper bound on dim of W,s >r} (2),
the morphism $\pi_{\kappa_\sharp}^-$ factors as
\begin{align*}
M_{C,\kappa_\sharp-\varepsilon}(E_0, r+1, d) \xrightarrow{\mu} M_{C,\delta_\sharp-\varepsilon}(E_0, r+1, d) 
 \xrightarrow{\pi_{\kappa_\sharp}^+}  M_{C,\kappa_\sharp}(E_0, r+1, d).
\end{align*}
Since $\pi_{\kappa_\sharp}^-$ is an isomorphism on $M_{C,\kappa_\sharp-\varepsilon}(E_0, r+1, d) \setminus W^-_{\kappa_\sharp}$
and contracts the prime divisor $W^-_{\kappa_\sharp} $,
so is $\mu$.
Hence $\mu$ is a divisorial contraction.\\[1mm]
(4) follows from (1), (3).

(1)', (2)' follow from (1), (2).
We can show (3)' by the same argument if we check that 
$W^-_{\kappa_\sharp,L} =W^-_{\kappa_\sharp}   \cap  M_{C,\kappa_\sharp-\varepsilon}(E_0, r+1, d)_L$ is a prime divisor of $M_{C,\kappa_\sharp-\varepsilon}(E_0, r+1, d)_L$
and $ M_{C,\kappa_\sharp-\varepsilon}(E_0, s, d)_L$ is smooth.
The latter follows from Lemma \ref{lem_smoothness_M_L}.
To show the former,
$W^-_{\kappa_\sharp,L} $ is the fiber over $[L]$ of the morphism
\begin{align*}
  W^-_{\kappa_\sharp}  \xrightarrow{\pi_{\kappa_\sharp}^-|_{W^-_{\kappa_\sharp} }} \P_C(E_0)  \times \Pic^{d+e-1}(C) \xrightarrow{\varpi \times \id} C  \times \Pic^{d+e-1}(C) \xrightarrow{\tau} \Pic^{d+e} (C) \simeq \Pic^d(C),
\end{align*}
where $\varpi: \P_C(E_0) \to C$ is the natural morphism and $\tau$ is defined by $(p, [N]) \mapsto [\calo_C(p) \otimes N]$,
and the last isomorphism is defined by $\otimes \det E_0$.
Since $\pi_{\kappa_\sharp}^-|_{W^-_{\kappa_\sharp} }, \varpi \times \id$ are projective bundles and all fibers of $\tau$ are isomorphic to $C$,
the fiber $W^-_{\kappa_\sharp,L} $ is irreducible.
Hence $W^-_{\kappa_\sharp,L} $ is a prime divisor by Lemma \ref{lem_similar_to_M_L} (2).
Thus (3)' holds.

(4)' follows from (1)', (3)'.
\end{proof}

\begin{proposition}\label{prop_SQM_s>r}
 Assume $s  >r$ and $d$ is sufficiently large.
Let $0 < \delta < \delta_\sharp$ and assume $M_{C,\delta}(E_0, s, d) =M^s_{C,\delta}(E_0, s, d)$.
Then
\begin{enumerate}
\item
 $M_{C,\delta}(E_0, s, d)$ is irreducible, reduced, normal, locally complete intersection, locally factorial,
 of dimension $ dr+se - s(r - s)(g - 1)$.
\item If $\kappa<\delta<\delta_\sharp$, then $M_{C,\delta}(E_0, s, d)$ is smooth.
\item If $s-r\geq 2$, then $M_{C,\delta}(E_0, s, d)$
is a SQM of $M_{C,\delta_\sharp-\varepsilon}(E_0,s,d)$ and 
\[
\Pic (M_{C,\delta}(E_0, s, d))=\Pic (\Pic^d (C)) \oplus \Z^2.
\]
\item If $s-r=1$ and $0<\delta<\kappa_\sharp$, then $M_{C,\delta}(E_0, s, d)$ is a SQM of $M_{C,\kappa_\sharp-\varepsilon}(E_0, s, d)$ and 
\[
\Pic (M_{C,\delta}(E_0, s, d))=\Pic (\Pic^d (C))  \oplus \Z^2.
\]
\end{enumerate}
Similar statements hold for $M_{C,\delta}(E_0, s, d)_L$ for $[L] \in \Pic^d(C)$.
\end{proposition}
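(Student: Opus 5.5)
We follow the proof of Proposition \ref{prop_SQM_s_leq_r}, but run the induction downward from $\delta_\sharp$ instead of from the Quot scheme. The starting model is $M_{C,\delta_\sharp-\varepsilon}(E_0,s,d)$. By Lemma \ref{lemma-M_delta^dagger_s>r} it is irreducible and smooth of the expected dimension $rd+se+s(s-r)(g-1)$, and its Picard group is $\Pic(M_C(s-r,d+e))\oplus\Z$. For $s-r\geq 2$ this equals $\Pic(\Pic^d(C))\oplus\Z^2$, since $M_C(s-r,d+e)$ has Picard group $\Pic(\Pic^{d+e}(C))\oplus\Z$. For $s=r+1$ we instead start from $M_{C,\kappa_\sharp-\varepsilon}(E_0,r+1,d)$, whose Picard group is given by Lemma \ref{lem_s=r+1, last model} (4).

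\textbf{Dimension and irreducibility.} For each wall $\delta'$, the locus $M_{C,\delta'-\varepsilon}\setminus W^-_{\delta'}$ embeds into $M_{C,\delta'+\varepsilon}$, and symmetrically on the other side. Running descending induction on $\delta$ with Lemma \ref{lemma-upper bound on dim of W,s >r} (1) and (2), we get two facts:
\begin{itemize}
\item $\dim M_{C,\delta}(E_0,s,d)\leq rd+se+s(s-r)(g-1)$;
\item there is exactly one component of this dimension; the lower bound comes from $\dim\geq \hom(I^\bullet,E)-\ext^1(I^\bullet,E)$ in \eqref{eq_lem_dim_tangent_space}.
\end{itemize}
To run this induction we need the walls crossed to have $\dim W^\pm$ at least $2$ below the expected dimension. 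This holds for $\delta\leq\kappa$ by part (1) once $d\gg 0$ with $c$ fixed. For $\kappa<\delta<\delta_\sharp$ it holds by part (2), except at the single wall $(r+1,\kappa_\sharp)$, which is exactly why case (4) starts from the $\kappa_\sharp-\varepsilon$ model.

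We also need the analogous estimate for $W^+$. I would obtain it with the same argument as in Lemmas \ref{lemma-fiber of contraction}--\ref{lem_closure_of_stable_F}, using $\mathbb{P}(\Ext^1(E',F))$ in place of $\mathbb{P}(\Ext^1(F,E'))$. Assuming this, $M_{C,\delta}$ is a local complete intersection of pure expected dimension, hence irreducible. Part (2) then follows directly from Lemma \ref{lemma-upper bound on dim of W,s >r} (2), since smoothness holds on the stable locus and stable equals semistable by hypothesis.

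\textbf{Reducedness, normality, factoriality.} Let $\kappa<\delta_1<\delta_\sharp$ be generic. Then $M_{C,\delta_1}$ is smooth. For arbitrary $\delta$, the locus where $M_{C,\delta}$ and $M_{C,\delta_1}$ fail to be isomorphic has codimension $\geq 4$ once $d\gg0$ (with $c=4$ in Lemma \ref{lemma-upper bound on dim of W,s >r} (1)), at least when all walls crossed lie below $\kappa$. So $M_{C,\delta}$ is smooth outside codimension $4$; being a local complete intersection, it is normal by Serre's criterion and locally factorial by Grothendieck's theorem, as in \cite[Theorem 6.3]{GS-Picard}. Walls between $\kappa$ and $\delta_\sharp$ only lose codimension $2$, but the moduli spaces there are smooth anyway by part (2).

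\textbf{SQM and Picard rank.} All birational maps in the chain are isomorphisms in codimension one, so they do not contract divisors. Hence
\[
\rho(M_{C,\delta})\leq\rho(M_{C,\delta_\sharp-\varepsilon}) = \rho(\Pic^d(C))+2,
\]
or $\leq \rho(M_{C,\kappa_\sharp-\varepsilon})$ when $s=r+1$. For the reverse inequality, take $\delta<\delta_\flat$: the fibration $M_{C,\delta}\to M_C(s,d)$ with projective-space fibres (Remark \ref{rem_small_delta}) gives $\rho\geq 2+\rho(\Pic^d(C))$. Equality throughout gives the SQM statements, and local factoriality then yields the Picard groups. The $L$-versions follow the same way using Lemmas \ref{lem_smoothness_M_L} and \ref{lem_similar_to_M_L} and the $L$-parts of Lemmas \ref{lemma-M_delta^dagger_s>r} and \ref{lem_s=r+1, last model}.

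The main obstacle is the codimension bookkeeping for the walls in $(\kappa,\delta_\sharp)$, where only codimension $2$ is available, together with the $W^+$ estimates needed to pass between $\delta\pm\varepsilon$.
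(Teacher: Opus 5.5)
Your plan is the paper's proof:
\begin{itemize}
\item start from $M_{C,\delta_\sharp-\varepsilon}(E_0,s,d)$ (or from $M_{C,\kappa_\sharp-\varepsilon}(E_0,r+1,d)$ for (4));
\item control the walls with Lemma \ref{lemma-upper bound on dim of W,s >r};
\item get the lci property from \eqref{eq_lem_dim_tangent_space};
\item get reducedness, normality and local factoriality by comparing with the smooth model $M_{C,\kappa+\varepsilon}(E_0,s,d)$ away from codimension $4$;
\item fix the Picard number by the sandwich between $0<\delta'<\delta_\flat$ and the top model.
\end{itemize}
The one step that does not hold up is your reliance on estimates for $W^+_\delta$, which you leave as an assumption. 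Your sketch for that assumption is also off. A $(\delta+\varepsilon)$-stable pair over $[(E'\oplus F,\alpha'\oplus 0)]$ is an extension $0\to(F,0)\to(E,\alpha)\to(E',\alpha')\to 0$, and the data include a lift of $\alpha'$ to $E$. So the relevant extension group is the hyper-Ext from the complex $\{E_0\xrightarrow{\alpha'}E'\}$ to $F$. Its dimension is $\chi(E_0,F)-\chi(E',F)$ once $\Ext^1(E_0,F)=0$, which holds for $d\gg0$. It is not $\Ext^1(E',F)$.

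More importantly, you do not need $W^+$ at all, and the paper never uses it.

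\textbf{Part (1).} Descending induction, using only that $M_{C,\delta'-\varepsilon}\setminus W^-_{\delta'}$ is open in $M_{C,\delta'+\varepsilon}$ and that $\dim W^-_{\delta'}<\mathrm{exp.dim}$, gives the dimension upper bound and at most one component of expected dimension. Since every point is stable, \eqref{eq_lem_dim_tangent_space} shows every component has at least the expected dimension. Nonemptiness comes from increasing induction starting at $0<\delta''<\delta_\flat$, where Remark \ref{rem_small_delta} supplies a component of expected dimension; this again uses only $W^-$. Only $\operatorname{codim}W^-\geq 1$ is needed here, so the wall $(r+1,\kappa_\sharp)$ is harmless for (1).

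\textbf{Parts (3) and (4).} $\operatorname{codim}W^-\geq 2$ shows that $M_{C,\delta'}\dashrightarrow M_{C,\delta}\dashrightarrow M_{C,\delta_\sharp-\varepsilon}$ (resp.\ $M_{C,\kappa_\sharp-\varepsilon}$) contract no divisors, so $\rho$ can only increase along the chain. The sandwich
\[
2+\rho(\Pic^d(C))\leq\rho(M_{C,\delta'})\leq\rho(M_{C,\delta})\leq 2+\rho(\Pic^d(C))
\]
then forces equality. That equality is exactly what rules out divisors contracted in the reverse direction. So ``isomorphism in codimension one'' is an output of the rank argument, not an input requiring $W^+$.

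With this change your argument is the paper's.
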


\begin{proof}
(2)  is proved in Lemma \ref{lemma-upper bound on dim of W,s >r}.\\[1mm]
(1) 
By Lemma \ref{lemma-M_delta^dagger_s>r}, $M_{C,\delta_\sharp-\varepsilon}(E_0,s,d)$ is irreducible of dimension $dr+se - s(r-s)(g - 1)$. 
Hence $M_{C,\delta}(E_0, s, d)$ is irreducible and locally complete intersection of dimension $ dr+se - s(r - s)(g - 1)$
by the argument in the proof of Proposition \ref{prop_SQM_s_leq_r}, by using $M_{C,\delta_\sharp-\varepsilon}(E_0,s,d)$ instead of $ \Quot_C(E^0,r-s,d+e)$.

By Lemma \ref{lemma-upper bound on dim of W,s >r} (1), if $\delta \leq \kappa$ then there exists a closed subset $Z \subset M_{C,\delta}(E_0, s, d)$ of codimension at least $4$ such that
\[
M_{C,\delta}(E_0, s, d) \setminus Z \hookrightarrow M_{C,\kappa+\varepsilon}(E_0,s,d).
\]
Since
$M_{C,\kappa+\varepsilon}(E_0,s,d)$ is smooth by (2), 
$M_{C,\delta}(E_0, s, d)$ is reduced, normal, and locally factorial. This completes the proof of (1).
\\[1mm]
(3)
Let $s-r\geq 2$. 
Take  $0 < \delta' < \delta_\flat$ for $\delta_\flat$ in Lemma \ref{lemma-small delta}. Then both birational rational maps
\[
M_{C,\delta'}(E_0, s, d) \dashrightarrow M_{C,\delta}(E_0, s, d) \dashrightarrow M_{C,\delta_\sharp-\ep}(E_0, s, d)
\]
do not contract divisors by Lemma \ref{lemma-upper bound on dim of W,s >r} (1), (2). Thus,
\[
\rho(M_{C,\delta'}(E_0, s, d)) \leq \rho(M_{C,\delta}(E_0, s, d)) \leq \rho(M_{C,\delta_\sharp-\varepsilon}(E_0,s,d)) = 2 + \rho(\Pic^d(C)),
\]
where the last equality holds by Lemma \ref{lemma-M_delta^dagger_s>r} and $ s-r \geq 2$.
On the other hand,
there is a morphism $f : M_{C,\delta'}(E_0, s, d) \to M_C(s, d)$ whose general fiber is $\mathbb{P}^{se+r(d - s(g-1)) - 1}$, so
\[
\rho(M_{C,\delta'}(E_0, s, d)) \geq 1 + \rho(M_C(s, d)) = 2 + \rho(\Pic^d(C)).
\]
Therefore, $\rho(M_{C,\delta}(E_0, s, d))= \rho(M_{C,\delta_\sharp-\varepsilon}(E_0,s,d))$ holds
and hence $M_{C,\delta}(E_0, s, d)$ is a SQM of $M_{C,\delta_\sharp-\varepsilon}(E_0,s,d)$. This completes the proof of (3).
\\[1mm]
(4) Let $s-r=1$ and $0<\delta<\kappa_\sharp$. 
Take  $0 < \delta' < \delta_\flat$ for $\delta_\flat$ in Lemma \ref{lemma-small delta}. Then both birational rational maps
\[
M_{C,\delta'}(E_0, s, d) \dashrightarrow M_{C,\delta}(E_0, s, d) \dashrightarrow M_{C,\kappa_\sharp-\ep}(E_0, s, d)
\]
do not contract divisors by Lemma \ref{lemma-upper bound on dim of W,s >r} (1), (2). Thus,
\[
\rho(M_{C,\delta'}(E_0, s, d)) \leq \rho(M_{C,\delta}(E_0, s, d)) \leq \rho(M_{C,\kappa_\sharp-\varepsilon}(E_0,s,d)) = 2 + \rho(\Pic^d(C)),
\]
where the last equality holds by Lemma \ref{lem_s=r+1, last model} (4).
The rest is the same as (3).

The proof for $M_{C,\delta}(E_0, s, d)_L$ is similar, by using Lemmas \ref{lem_smoothness_M_L}, \ref{lem_similar_to_M_L} instead of Lemma \ref{lemma-upper bound on dim of W,s >r} .
\end{proof}

\section{Description of $\delta$ with $M_{C,\delta}(E_0, s, d)  \neq M^s_{C,\delta}(E_0, s, d)$}\label{sec_Gamma}

In this section, 
we determine when $M_{C,\delta}(E_0, s, d) \neq M^s_{C,\delta}(E_0, s, d)$ holds for sufficiently large $d$.
By Lemma \ref{lem_disjoint_union},
$M_{C,\delta}(E_0, s, d) \setminus M^s_{C,\delta}(E_0, s, d)$ is the union of 
\[
M_{C,\delta}^s(E_0,s-s',d-d') \times M_C(s',d')
\]
 with 
$1 \leq s' \leq s-1,d' \in \Z$ and $\frac{d+\delta}{s} =\frac{d'}{s'}$.
Since $M_C(s',d')\neq \emptyset$,
it suffices to consider whether $M_{C,\delta}^s(E_0,s-s',d-d') $ is empty or not for such $s',d'$.

\subsection{The case $s \leq  r$}

In this subsection, we assume $2 \leq s \leq r$ and 
$d$ is sufficiently large.
In this case,
\begin{align}\label{eq_delta^dagger_suff_large_d, s_leq_r}
\delta_\sharp = \max\limits_{1 \leq i \leq s-1} \frac{id-s e_i }{s-i} = (s-1)d-s e_{s-1}.
\end{align}

\begin{proposition}\label{prop_gamma_s<r+1}
Assume $2 \leq s \leq r$ and $d$ is sufficiently large.
Let 
\[
\Gamma=\{  \delta_1 < \delta_2 < \cdots < \delta_N \} \subset  (0, \delta_\sharp]
\] 
be the set of $\delta \in (0,\delta_\sharp]$ such that
\[
\frac{d + \delta}{s} = \frac{d'}{s'} \quad  \text{for some integers} \quad  1 \leq s' \leq s - 1 \quad \text{and} \quad  d' \leq d- e_{s-s'}.
\]
Then
\begin{enumerate}
\item For $\delta>0$,  $M_{C,\delta}(E_0, s, d) \neq M^s_{C,\delta}(E_0, s, d)$ holds if and only if $\delta \in \Gamma$.
\item $\delta_1=\delta_\flat$.
\item $\delta_N=\delta_\sharp =(s-1)d-se_{s-1}$.
\end{enumerate}
\end{proposition}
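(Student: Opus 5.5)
The plan is to use the decomposition in Lemma \ref{lem_disjoint_union}, as explained at the start of this section. Then $M_{C,\delta}(E_0,s,d)\neq M^s_{C,\delta}(E_0,s,d)$ holds if and only if there are $1\le s'\le s-1$ and $d'$ with $\frac{d+\delta}{s}=\frac{d'}{s'}$ and $M^s_{C,\delta}(E_0,m,D)\neq\emptyset$, where $m\coloneqq s-s'$ and $D\coloneqq d-d'$; note $\frac{D+\delta}{m}=\frac{d+\delta}{s}$. So (1) reduces to the following claim. For $d\gg0$, $0<\delta$, and such $(s',d')$, we have $M^s_{C,\delta}(E_0,m,D)\neq\emptyset$ if and only if $D\ge e_m$. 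Once (1) is proved, (2) and (3) are arithmetic on the set $\Gamma$.

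\emph{Necessity.} Let $(E',\alpha')$ be $\delta$-stable of rank $m\le s-1\le r-1$ and degree $D$, and put $\mu\coloneqq\frac{D+\delta}{m}=\frac{d+\delta}{s}>\frac{d}{s}$. Let $k$ be the rank of $\operatorname{im}\alpha'$ and $G$ its saturation. If $k=m$, then $\alpha'$ is generically surjective and $D\ge\deg\operatorname{im}\alpha'\ge e_m$. If $k<m$, the quotient pair $(E'/G,0)$ has slope $>\mu$, so
\[
D=\deg G+\deg(E'/G)> e_k+(m-k)\frac{d}{s}.
\]
Since the $e_i$ depend only on $E_0$, this exceeds $e_m$ once $d$ is large, uniformly in $\delta$. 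In both cases $D\ge e_m$.

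\emph{Sufficiency, which I expect to be the main obstacle.} Assume $D\ge e_m$ and take a quotient $E_0\twoheadrightarrow F$ of rank $m$ and degree $e_m$. Choose $E'\supset F$ with $E'/F$ torsion of length $D-e_m$, and let $\alpha'\colon E_0\to F\subset E'$; this $\alpha'$ is generically surjective. By the argument of Lemma \ref{lem_suff_large_delta}, the pair $(E',\alpha')$ is $\delta$-stable as soon as $\frac{D+\delta}{m}<\frac{e_i+\delta}{i}$ for all $i<m$, i.e.\ as soon as $\delta$ exceeds the $\delta_\sharp$ attached to $(m,D)$. This inequality does not hold automatically, and I would handle the remaining range in two ways. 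First, try to verify it directly: $\delta\le\delta_\sharp$ together with $\frac{D+\delta}{m}=\frac{d+\delta}{s}$ should bound $\delta$ relative to $D$. Failing that, argue by induction on $m$. Proposition \ref{prop_SQM_s_leq_r} applied to $(m,D)$ (valid since $D$ is large by the identity $\frac{D+\delta}{m}=\frac{d+\delta}{s}$) shows that for generic $\delta'$ the moduli space is irreducible and stable, hence nonempty. One then has to rule out that our specific $\delta$ is a wall for $(m,D)$ at which every pair is strictly semistable. Equivalently, one needs a $\delta$-stable point in the fibre over a wall point; Lemma \ref{lemma-fiber of contraction} and the non-split extension Claim \ref{claim_lemma-fiber of contraction} do not directly produce one, so this is where genuine work is needed.

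\emph{Parts (2) and (3).} For (2): by Lemma \ref{lemma-small delta}, no wall lies in $(0,\delta_\flat)$. At $\delta_\flat$, the minimizing $d'/s'$ satisfies $d'\approx s'd/s$, hence $d-d'\approx\frac{s-s'}{s}\,d\ge e_{s-s'}$ for $d\gg0$, so $\delta_\flat\in\Gamma$. For (3): $\delta=\frac{s d'}{s'}-d\le\frac{s(d-e_{s-s'})}{s'}-d$, and for $d\gg0$ this is maximized at $s'=1$ with $d'=d-e_{s-1}$, giving $\delta=(s-1)d-se_{s-1}$. This value equals $\delta_\sharp$ by \eqref{eq_delta^dagger_suff_large_d, s_leq_r}, so it lies in $(0,\delta_\sharp]$ and is therefore $\delta_N$.
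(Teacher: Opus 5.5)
Your reduction to the claim that $M^s_{C,\delta}(E_0,m,D)\neq\emptyset$ iff $D\ge e_m$ (with $m=s-s'$, $D=d-d'$) is the paper's. Your necessity argument, which saturates $\im\alpha'$ and tests stability against the quotient pair $(E'/G,0)$, is correct. It is uniform in $\delta$, so it avoids the paper's case split. Parts (2) and (3) are also fine; in (2), the absence of walls in $(0,\delta_\flat)$ follows directly from the definition of $\delta_\flat$, not from Lemma~\ref{lemma-small delta}.

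The gap is sufficiency, which you leave open, and your ``direct'' route cannot close it. Write $\delta_\sharp(m,D)$ for the $\delta_\sharp$ attached to $(m,D)$, and note $\delta=(md-sD)/s'$. Then $\delta>\delta_\sharp(m,D)$ holds iff $D<\frac{(m-i)d+s'e_i}{s-i}$ for all $1\le i\le m-1$. For $d\gg0$ this is equivalent to $D<\frac{d+s'e_{m-1}}{s'+1}$. This fails whenever $m\ge2$ and $\delta$ stays bounded, because then $D\approx\frac{m}{s}d>\frac{d}{s'+1}$; an example is $s=3$, $s'=1$, $d'=\lfloor d/3\rfloor+1$. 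In that case your elementary-modification pair is not $\delta$-stable. Also, the identity $\frac{D+\delta}{m}=\frac{d+\delta}{s}$ does not by itself make $D$ large: near $\delta_\sharp$, $D$ can be close to $e_m$. $D$ is large exactly in the complementary regime $D\ge\frac{d+s'e_{m-1}}{s'+1}$. That is the split the paper uses.

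In the complementary regime (necessarily $m\ge2$), the wall problem you flag is settled by a dimension count; you do not need a stable point in a particular fibre of $\pi^-_\delta$.
\begin{itemize}
\item Choose $\varepsilon>0$ small enough that $\delta-\varepsilon$ is not a wall for $(m,D)$.
\item $D$ is large and only finitely many $m$ occur, so Proposition~\ref{prop_SQM_s_leq_r} shows that $M_{C,\delta-\varepsilon}(E_0,m,D)$ is irreducible of dimension $rD+me-m(r-m)(g-1)$.
\item Lemma~\ref{lemma-upper bound on dim of W}, applied to $(m,D)$ with $c=1$ (it holds for every $\delta>0$), bounds the locus $W^-_\delta\subset M_{C,\delta-\varepsilon}(E_0,m,D)$ of pairs that are not $\delta$-stable. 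Its dimension is at most $rD+me-m(r-m)(g-1)-1$.
\item Hence $M_{C,\delta-\varepsilon}(E_0,m,D)\setminus W^-_\delta\neq\emptyset$. By the definition of $W^-_\delta$ its points are $\delta$-stable, so $M^s_{C,\delta}(E_0,m,D)\neq\emptyset$.
\end{itemize}
This is the content behind the paper's brief appeal to Proposition~\ref{prop_SQM_s_leq_r} in this case; no induction on $m$ is needed. With this step added and the regime split made explicit, your argument proves (1).
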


\begin{proof}
(1) It suffices to show the following claim.

\begin{claim}\label{claim_gamma_s<r+1}
For integers $1 \leq s' \leq s-1,d' \in \Z$ with $\frac{d+\delta}{s} =\frac{d'}{s'}$, $M^s_{C,\delta}(E_0,s-s',d-d') \neq \emptyset $ holds if and only if $d' \leq d-e_{s-s'}$.
\end{claim}

\begin{proof}[Proof of Claim \ref{claim_gamma_s<r+1}]
If $s-s'=1$,
$ M^s_{C,\delta}(E_0,s-s',d-d')  \neq \emptyset $ if and only if 
there exists a non-zero  morphism $\alpha : E_0 \to E$ to a locally free $E$ with $\rank E=1, \deg E=d -d'$,
which is equivalent to $d-d' \geq e_1$.

Assume $s-s'\geq 2$.
The $\delta_\sharp$ of $ M_{C,\delta}(E_0,s-s',d-d')$ is
\begin{align}\label{eq_claim_gamma_s<r+1}
\max_{1 \leq i \leq s-s'-1}\frac{i(d-d')-(s-s')e_i}{s-s'-i}.
\end{align}
Since $\delta = \frac{s}{s'}d'-d$, we see that
$\delta$ is greater than \eqref{eq_claim_gamma_s<r+1} if and only if 
\[
d-d' < \frac{(s-s'-i)d + s'e_i}{s-i}
\]
for any $1 \leq i \leq s-s'-1$,
which is equivalent to
\begin{align}\label{eq_claim_gamma_s<r+1_eq2}
d-d' < \frac{d + s'e_{s-s'-1}}{s'+1} ,
\end{align}
since $d$ is sufficiently large.

If the condition \eqref{eq_claim_gamma_s<r+1_eq2} holds, by Lemma \ref{lem_suff_large_delta},
$ M^s_{C,\delta}(E_0,s-s',d-d')  \neq \emptyset $ if and only if 
there exists a generically surjective morphism $\alpha : E_0 \to E$ to a locally free $E$ with $\rank E=s-s', \deg E=d -d'$,
which is equivalent to $d-d' \geq e_{s-s'}$.

Assume that  the condition \eqref{eq_claim_gamma_s<r+1_eq2} does not hold.
Since we fix $s$, there are finitely many choices of $s'$.
Hence $d-d'$ is sufficiently large since so is $d $,  and $ M^s_{C,\delta}(E_0,s-s',d-d') \neq \emptyset$ by Proposition \ref{prop_SQM_s_leq_r}.
In this case,  $d-d'$ is sufficiently large and hence  $ d' \leq d-e_{s-s'}$ holds.
\end{proof}

\noindent
(2) By the definition of $\delta_\flat$, we have $\delta_\flat \leq \delta_1$.
Let $\frac{d+\delta_\flat}{s} =\frac{d'}{s'}$ with $1 \leq s' \leq s-1$ and $d' \in \Z$. 
By the definition of $\delta_\flat$,
we have $ \frac{d'-1}{s'} \leq \frac{d}s$ 
and hence $d' \leq \frac{s'}{s}d + 1 \leq d -e_{s-s'}$,
where the last inequality holds by $d \gg 0$ and $s' <s$.
Thus $\delta_\flat \in \Gamma$ and hence $\delta_\flat \geq \delta_1$.\\
(3) In this case, $\delta_\sharp = (s-1)d - se_{s-1} $ by \eqref{eq_delta^dagger_suff_large_d, s_leq_r}.
Then we have 
\begin{align*}
 \frac{d+\delta_\sharp}{s} = d -e_{s-1} = \frac{d'}{s'}
\end{align*}
for $s'=1, d'=d-e_{s-1} $.
Thus $\delta_\sharp \in \Gamma$.
\end{proof}

\subsection{The case $s >r$}
In this subsection, we assume $s >r$ and 
$d$ is sufficiently large.
In this case,
\begin{align*}
\delta_\sharp = \max\limits_{1 \leq i \leq r} \frac{id-s e_i }{s-i} = \frac{rd+se}{s-r}.
\end{align*}

\begin{proposition}\label{prop_gamma_s >r}
Assume $s > r$ and $d$ is sufficiently large.
Let 
\[
\Gamma=\{  \delta_1 < \delta_2 < \cdots < \delta_N \} \subset  (0, \delta_\sharp]
\] 
be the set of $\delta \in (0,\delta_\sharp]$ such that
\[
 \frac{d + \delta}{s} = \frac{d'}{s'}  \quad \text{for some integers} \quad  1\leq s'<s-r \quad  \text{ or } \quad  s-r\leq s'\leq s-1,\    d' \leq d- e_{s-s'  } .
\]
Then
\begin{enumerate}
\item For $\delta>0$,  $M_{C,\delta}(E_0, s, d) \neq M^s_{C,\delta}(E_0, s, d)$ holds if and only if $\delta \in \Gamma$.
\item $\delta_1=\delta_\flat$.
\item $\delta_N=\delta_\sharp =\frac{rd+se}{s-r}$.
\end{enumerate}
\end{proposition}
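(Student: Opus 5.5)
Following the proof of Proposition \ref{prop_gamma_s<r+1}, part (1) reduces via Lemma \ref{lem_disjoint_union} to a claim: for $1 \leq s' \leq s-1$ and $d'$ with $\frac{d+\delta}{s}=\frac{d'}{s'}$ and $0<\delta\le\delta_\sharp$, the space $M^s_{C,\delta}(E_0,s-s',d-d')$ is nonempty if and only if either $s'<s-r$, or $s'\ge s-r$ and $d'\le d-e_{s-s'}$. Set $s''=s-s'$. Since $\frac{d-d'+\delta}{s''}=\frac{d+\delta}{s}$ and $\delta\le\delta_\sharp=\frac{rd+se}{s-r}$, the same computation as in Claim \ref{claim_lem_similar_to_M_L} gives
\[
d-d' \ge \frac{(s''-r)d-s'e}{s-r}.
\]
So $d-d'$ is large when $s''>r$.

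\emph{Case $s''>r$ (i.e. $s'<s-r$).} Here I want nonemptiness unconditionally. The pair with invariants $(s'',d-d')$ has its own $\delta_\sharp'=\frac{r(d-d')+s''e}{s''-r}$, and the equality of $\delta$-slopes shows $\delta<\delta_\sharp'$ exactly because $\delta<\delta_\sharp$. The boundary case $\delta=\delta_\sharp$ needs a separate check; there equality would force $d-d'=\frac{(s''-r)d-s'e}{s-r}$. Since $d-d'$ is still large, Proposition \ref{prop_SQM_s>r} and the description of the models applied to $(s'',d-d')$ show the moduli space is nonempty of the expected dimension. For $\delta$ in the interval between $\delta_\sharp'-\varepsilon$ and $\delta_\sharp'$, Lemma \ref{lemma-M_delta^dagger_s>r} gives nonemptiness. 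Stability with $\delta$ not a wall for $(s'',d-d')$ is handled by walls having complements of positive codimension (Lemma \ref{lemma-upper bound on dim of W,s >r}), so the stable locus is dense. When $\delta$ is itself a wall for $(s'',d-d')$, I would still show $M^s_\delta\ne\emptyset$ by that codimension estimate: the non-stable locus at $\delta$ is the image of $W^-_\delta$, of smaller dimension than the irreducible $M_{\delta-\varepsilon}$. This is the step I expect to be the main obstacle, since one must confirm that the dimension bounds apply uniformly for every smaller $(s'',d-d')$ once $d\gg0$. I would argue this by induction on $s$, with finitely many $s''$.

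\emph{Case $s''\le r$.} This copies Claim \ref{claim_gamma_s<r+1}. If $s''=1$, nonemptiness is equivalent to $d-d'\ge e_1$. If $s''\ge2$, compare $\delta$ with the $\delta_\sharp$ of $(s'',d-d')$.
\begin{itemize}
\item If $\delta$ exceeds it, Lemma \ref{lem_suff_large_delta} gives nonemptiness if and only if there is a generically surjective $E_0\to E$, i.e. $d-d'\ge e_{s''}$.
\item Otherwise $d-d'$ is large, so $M^s$ is nonempty by Proposition \ref{prop_SQM_s_leq_r} and the analysis of Proposition \ref{prop_gamma_s<r+1}, and $d'\le d-e_{s''}$ holds automatically.
\end{itemize}

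For (2), $\delta_\flat\le\delta_1$ holds by definition. Conversely, write $\frac{d+\delta_\flat}{s}=\frac{d'}{s'}$. Then $d'\le\frac{s'}{s}d+1\le d-e_{s-s'}$ for $d\gg0$, so $\delta_\flat\in\Gamma$ whichever range $s'$ lies in.

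For (3), take $s'=s-r$ and $d'=d+e$. Then $\frac{d+\delta_\sharp}{s}=\frac{d+e}{s-r}$, and $d'=d-e_r$ since $e_r=-e$. Hence $\delta_\sharp\in\Gamma$; it is the maximum since $\Gamma\subset(0,\delta_\sharp]$.
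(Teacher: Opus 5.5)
Your proposal is essentially the paper's proof. You make the same reduction of (1), via Lemma~\ref{lem_disjoint_union}, to deciding when $M^s_{C,\delta}(E_0,s-s',d-d')\neq\emptyset$. You use the same split into two cases:
\begin{itemize}
\item $s-s'>r$, where $\delta$ does not exceed the $\delta_\sharp$ of $(s-s',d-d')$ and $d-d'$ is large, so Proposition~\ref{prop_SQM_s>r} applies;
\item $s-s'\le r$, which copies Claim~\ref{claim_gamma_s<r+1}.
\end{itemize}
Your arguments for (2) and (3) are also the paper's. Your extra step is a genuine improvement. When $\delta$ is a wall for $(s-s',d-d')$, you use $\dim W^-_\delta<\dim M_{C,\delta-\varepsilon}(E_0,s-s',d-d')$ to produce \emph{stable} pairs. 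This makes explicit a point the paper passes over when it simply cites Proposition~\ref{prop_SQM_s>r} for nonemptiness.

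One correction: your ``separate check'' at $\delta=\delta_\sharp$ with $s-s'>r$ cannot succeed.
\begin{itemize}
\item By your own slope computation, $\delta=\delta_\sharp$ forces $\delta$ to equal the $\delta_\sharp$ of $(s-s',d-d')$.
\item At that value the stable locus is empty, by the first paragraph of the proof of Lemma~\ref{lemma-upper bound on dim of M_dagger-ep, s >r_pre}.
\item Proposition~\ref{prop_SQM_s>r} only covers $\delta$ strictly below that value. There $M_{C,\delta}(E_0,s-s',d-d')$ is nonempty but consists entirely of strictly semistable pairs.
\end{itemize}
This does not hurt the proposition. We have $\delta_\sharp\in\Gamma$, and $M_{C,\delta_\sharp}(E_0,s,d)\neq M^s_{C,\delta_\sharp}(E_0,s,d)$ already holds via $s'=s-r$, $d'=d+e$ and the $\delta_\sharp$-stable pair $(E_0,\id)$. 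So you should state the $s-s'>r$ half of your claim only for $\delta<\delta_\sharp$. The paper's Claim~\ref{claim_nonemptyness_s>r} has the same imprecision.
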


\begin{proof}
(1) It suffices to show the following claim.

\begin{claim}\label{claim_nonemptyness_s>r}
Let $\delta \in (0, \delta_\sharp] $  and  
$ 1 \leq s' \leq s - 1,\ d' \in \mathbb{Z}$
such that $  \frac{d + \delta}{s} = \frac{d'}{s'} $.
Then    $M^s_{C,\delta}(E_0,s-s',d-d') \neq \emptyset $ if and only if 
either
\begin{itemize}
\item     $s-s'>r$, or
\item $s-s'\leq r$ and $d'\leq d-e_{s-s'}$.
\end{itemize}
\end{claim}

\begin{proof}[Proof of Claim \ref{claim_nonemptyness_s>r}]  
Assume $s-s' >r$.
Then the $\delta_\sharp$ of $ M_{C,\delta}(E_0,s-s',d-d')$ is
\begin{align}\label{eq_claim_gamma_s>r}
\max_{1 \leq i \leq r} \frac{i(d-d')-(s-s')e_i}{s-s'-i}.
\end{align}
Hence we see that $\delta$ is greater than \eqref{eq_claim_gamma_s>r} if and only if
\begin{align*}
d-d' < \frac{(s-s'-r)d -s'e}{s-r}
\end{align*}
as in the proof of Claim \ref{claim_gamma_s<r+1}.

On the other hand,
\begin{align}\label{eq_claim_gamma_s>r_eq2}
\begin{aligned}
d-d'  = d -\frac{s'}{s}(d+\delta)  &\geq d -\frac{s'}{s}(d+\delta_\sharp) \\
&= \frac{s-s'}{s} d -\frac{s'}{s}\cdot \frac{rd+se}{s-r} =\frac{(s-s'-r)d -s'e}{s-r} .
\end{aligned}
\end{align}
Hence $\delta$ is less than or equal to \eqref{eq_claim_gamma_s>r}.
Since $d-d' $ is sufficiently large by \eqref{eq_claim_gamma_s>r_eq2} and $s-s'>r$,
we have $M_{C,\delta}(E_0,s-s',d-d')\neq \emptyset$ by Proposition \ref{prop_SQM_s>r}.

If $s-s'\leq r$, 
we see that $M_{C,\delta}(E_0,s-s',d-d') \neq \emptyset $ if and only if $d' \leq d-e_{s-s'}$ by the same proof as Claim \ref{claim_gamma_s<r+1}.
\end{proof}

\noindent
(2) By definition, $\delta_\flat \leq \delta_1$.
Let $\frac{d+\delta_\flat}{s} =\frac{d'}{s'}$ with $1 \leq s' \leq s-1$ and $d' \in \Z$. If $s'<s-r$, then $\delta_\flat \in \Gamma$ and hence $\delta_\flat \geq \delta_1$. If $s'\geq s-r$,
by the definition of $\delta_\flat$,
we have $ \frac{d'-1}{s'} \leq \frac{d}s$ 
and hence $d' \leq \frac{s'}{s}d + 1 \leq d - e_{s-s'}$.
Thus $\delta_\flat \in \Gamma$ and hence $\delta_\flat \geq \delta_1$.\\
(3)
We have $ \frac{d+\delta_\sharp}{s} = \frac{d+e}{s-r}$.
Therefore by taking $s'=s-r, d'=d-e_{r}=d+e$, we have $\delta_\sharp \in \Gamma$.
\end{proof}

\section{Nef Cones}\label{section_nef cone}

In this section, we assume $s \geq 2$ and 
$d$ is sufficiently large.

Let $\Gamma=\{\delta_\flat =\delta_1 < \delta_2 < \cdots < \delta_N=\delta_\sharp\}$ be as in Proposition \ref{prop_gamma_s<r+1} for $s \leq r$ and Proposition \ref{prop_gamma_s >r} for $s > r$.
We set 
\[
M_i \coloneqq M_{C,\delta}(E_0, s, d), \quad M_{i,L} \coloneqq M_{C,\delta}(E_0, s, d)_L
\]
for $\delta \in (\delta_{i-1}, \delta_i)$, where $\delta_0 \coloneqq 0$ and $\delta_{N+1} \coloneqq \infty$.
Let $p_C^*E_0 \to \mc E_i$ be the universal object on $M_{i,L} \times C$. Then we have a group homomorphism
\[
\lambda_{\cale_i} : K(C) \to \Pic(M_{i,L})
\]
from the Grothendieck group $K(C)$ of $C$,
defined as the composite
\[
K(C) \xrightarrow{p_C^*} K^0(M_{i,L} \times C) \xrightarrow{\cdot [\mc E_i]} K^0(M_{i,L} \times C) \xrightarrow{p_!} K^0(M_{i,L}) \xrightarrow{\det} \Pic(M_{i,L}),
\]
where $p_C : M_{i,L} \times C \to C$ and $p : M_{i,L} \times C \to M_{i,L}$ are the natural projections and $p_!$ is defined by
\[
p_![G] = \sum_{\nu \geq 0} (-1)^\nu [R^{\nu} p_* G]
\]
(see \cite[Definition 8.1.1]{HL}).

Since $\Pic(M_{i,L}) \simeq \Z $ or $\Z^2$ by Corollary \ref{cor_pic_s_leq_r}, Proposition \ref{prop_SQM_s>r},
the image $\lambda_{\cale_i}([x])$ does not depend on $x \in C$.
Hence we simply write $[x] \in K(C)$ as $[{\rm pt}]$ for any $x \in C$ when we consider the image by $\lambda_{\cale_i} $.

\begin{lemma}\label{lem_class_P,P'}
Let $w=(a[\calo_C] +b[{\rm pt}]) \cdot \td(C)^{-1}  \in K(C)$,
where $\td(C) $ is the Todd class of $C$.
\begin{enumerate}
\item  Let $[E] \in M^s_C(s,L)$ and $\P \coloneqq \P(\Hom(E_0,E)) $ be the fiber of 
the morphism  
\[
 f : M_{1,L} \to M_C(s,L) : [(E,\alpha)] \mapsto [E]
\]
(see Remark \ref{rem_small_delta}).
Then $ \lambda_{\cale_1} (w)|_{\P} =\calo_{\P}(ad+bs ) $.
\item 
Let $1 \leq i \leq N$, $\dfrac{d + \delta_i}{s} = \dfrac{d'}{s'}$ with $1 \leq s' \leq s - 1$ and $d' \in \mathbb{Z}$. 
Let 
\[
[(E' \oplus F, \alpha \oplus 0)] \in \left( M^s_{C,\delta_i}(E_0, s - s', d - d') \times M_C^s(s', d') \right) \cap M_{C,\delta_i}(E_0, s, d)_L
\]
and $\P_i \coloneqq \mathbb{P}({\rm Ext}^1(F, E'))$,
which is the fiber of 
\[
p_i^- : M_{i,L} =M_{C,\delta_i-\ep}(E_0, s, d)_L \to M_{\delta_i} \coloneqq M_{C,\delta_i}(E_0, s, d)_L
\] over $[(E' \oplus F, \alpha \oplus 0)]$ by Lemma \ref{lemma-fiber of contraction}.
Then 
$ \lambda_{\cale_i} (w)|_{\P_i} =\calo_{\P_i}(-ad'-bs' )  $.
\end{enumerate}
\end{lemma}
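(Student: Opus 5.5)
The plan is to reduce both statements to one Riemann--Roch computation. We identify the restriction of the universal pair $p_C^*E_0 \to \cale_i$ to $\P \times C$ (resp.\ $\P_i \times C$) with an explicit tautological family, and then compute the determinant of cohomology on it. The key numerical input holds for any vector bundle $G$ on $C$ of rank $m$ and degree $c$. Since $w = (a[\calo_C]+b[{\rm pt}])\cdot \td(C)^{-1}$, Riemann--Roch gives
\[
\chi(G\otimes w)=\int_C \ch(G)\,\ch(w)\,\td(C)=\int_C (m + c\,[{\rm pt}])(a+b\,[{\rm pt}]) = ac+bm .
\]
Moreover, if $N$ is a line bundle on the base $T$, then $\lambda$ of the family $p^*N\otimes p_C^*G$ evaluated at $w$ is $N^{\chi(G\otimes w)}$, because $p_!(p^*N\otimes p_C^*(G\otimes w)) = [N]\cdot\chi(G\otimes w)$ in $K^0(T)$. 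Also $\lambda$ is additive in short exact sequences of families, since $p_!$ and $\det$ are.

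For (1), on $\P=\P(\Hom(E_0,E))$ the tautological pair is $p_C^*E_0 \to p_C^*E\otimes \calo_{\P}(1)$, given by the universal element of $\Hom(E_0,E)\otimes H^0(\calo_{\P}(1))$. It is a family of $\delta$-stable pairs by Remark \ref{rem_small_delta}, so its classifying map is the inclusion $\P\hookrightarrow M_{1,L}$. Since $M^s$ is a fine moduli space, $\cale_1|_{\P\times C}$ agrees with this family up to twisting by some $p^*N$. This twist is pinned down by the requirement that the source of the universal map is the untwisted $p_C^*E_0$: an automorphism of a stable pair is a scalar (Lemma \ref{lem_preliminary}(2)), so an isomorphism of families compatible with the maps $p_C^*E_0\to\cdot$ forces $N$ trivial. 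The formula above with $G=E$ then gives $\lambda_{\cale_1}(w)|_{\P}=\calo_{\P}(1)^{ad+bs}$.

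For (2), on $\P_i=\P(\Ext^1(F,E'))$ we use the universal extension
\[
0\to p_C^*E' \to \mathcal{G} \to p_C^*F\otimes \calo_{\P_i}(-1)\to 0 .
\]
It exists because $\Ext^1_{\P_i\times C}(p_C^*F(-1),p_C^*E') = \Ext^1(F,E')\otimes H^0(\calo_{\P_i}(1))$, using $\Hom(F,E')=0$ as shown in the proof of Lemma \ref{lemma-fiber of contraction}, and we take the tautological element. The pair map is $p_C^*E_0\xrightarrow{\alpha}p_C^*E'\hookrightarrow\mathcal G$, again with untwisted source. By Claim \ref{claim_lemma-fiber of contraction} every fibre is $(\delta_i-\ep)$-stable, and the classifying map is the identification of $\P_i$ with the fibre of $p_i^-$ in Lemma \ref{lemma-fiber of contraction}. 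By the same normalization argument as in (1), $\cale_i|_{\P_i\times C}\simeq\mathcal G$. Additivity then gives
\[
\lambda_{\cale_i}(w)|_{\P_i} = \calo_{\P_i}(0)^{\chi(E'\otimes w)}\otimes \calo_{\P_i}(-1)^{\chi(F\otimes w)} = \calo_{\P_i}(-ad'-bs'),
\]
since $\rank F=s'$ and $\deg F=d'$.

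The main obstacle is the identification of the restricted universal family with the tautological one, including the correct twist. The Riemann--Roch step itself is routine. I would verify this identification carefully through the functor of points of the fine moduli space $M^s$: a family over $T$ is a pair $p_C^*E_0\to\mathcal E$ up to isomorphism of pairs. The statement also implicitly assumes that $\cale_i$ is normalized so that the map has source $p_C^*E_0$ itself; this is what makes $\lambda_{\cale_i}$ well defined, independent of twisting by line bundles from the base. In (2) one must also check that the extension is taken as $F(-1)$ rather than $E'(1)$. The latter choice would force twisting $\alpha$ and so violate this normalization, and it would flip the sign of the answer.
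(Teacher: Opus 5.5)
Your proposal is correct and follows the paper's proof. The paper likewise identifies $\cale_1|_{\P\times C}$ with $p_C^*E\otimes\calo_{\P}(1)$, and $\cale_i|_{\P_i\times C}$ with the universal extension $0\to p_C^*E'\to\cale_{\P_i}\to p_C^*F\otimes p^*\calo_{\P_i}(-1)\to 0$; it then reads off the degree by a Grothendieck--Riemann--Roch computation with truncated Chern characters, where you use the projection formula, Riemann--Roch on $C$ and additivity (a cosmetic difference), and your remark that the fixed source $p_C^*E_0$ rules out a twist by $p^*N$ makes explicit a point the paper leaves implicit.
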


\begin{proof}
(1) On $\P \times C$,
we have the universal homomorphism
\begin{align*}
p_C^* E_0 \to p_C^* E \otimes \calo_{\P}(1)
\end{align*}
and hence $\cale_{\P}\coloneqq p_C^* E \otimes \calo_{\P}(1) =\cale_1|_{\P \times C}$. 
Then
\begin{align*}
[\ch (\cale_{\P})]_{\leq 2} = (s,d[{\rm pt}],0) \left(1,l,\frac{l^2}{2}\right)  = \left(s, sl + d [{\rm pt}],  \frac{s}{2}l^2 +d l[{\rm pt}]\right)
\end{align*}
for $l=c_1(\calo_{\P}(1))$.
For $w=(a[\calo_C] + b[{\rm pt}]) \cdot \td(C)^{-1}  \in K(C)$,
\begin{align*}
[\ch (\cale_{\P})\otimes p_C^* w]_{\leq 2} = \left(as, asl + (ad + bs)[{\rm pt}], \frac{as}{2}l^2 +(ad+bs) l [{\rm pt}] \right)\cdot p_C^* \td(C)^{-1}.
\end{align*}
Hence $ \lambda_{\cale_1} (w)|_{\P}  = \lambda_{\cale_{\P}} (w) = c_1( p_* ([\ch (\cale_{\P})\otimes p_C^* w] \cdot p_C^* \td(C))) = (ad+bs) \calo_{\P}(1)$.\\[1mm]
(2) On $\P_i \times C$, we have the universal extension
\[
0 \to p_C^* E' \to\cale_{\P_i} \to p_C^* F \otimes p^* \mathcal{O}_{\P_i}(-1) \to 0
\]
for $\cale_{\P_i} \coloneqq \cale_i|_{\P_i\times C}$ as in \cite[Example 2.2.12]{HL}.
Then
\begin{align*}
[\ch (\cale_{\P_i})]_{\leq 2} &=(s-s', (d-d')[{\rm pt}],0) + (s', d'[{\rm pt}],0)\left(1,-l, \frac{l^2}{2}\right)   \\
&= \left(s, -s' l + d[{\rm pt}],  \frac{s'}{2} l^2 -d' l[{\rm pt}] \right)
\end{align*}
for $l=c_1(\calo_{\P_i}(1))$.
For $w=(a[\calo_C] + b[{\rm pt}]) \cdot \td(C)^{-1}  \in K(C)$,
\begin{align*}
[\ch (\cale_{\P_i})\otimes p_C^* w]_{\leq 2} = \left(as, -as'l + (ad + bs)[{\rm pt}], \frac{as'}{2} l^2 +(-ad'-bs') l [{\rm pt}] \right)\cdot p_C^* \td(C)^{-1}.
\end{align*}
Hence $ \lambda_{\cale} (w)|_{\P_i}  = \lambda_{\cale_{\P_i}} (w) = c_1( p_* ([\ch (\cale_{\P_i})\otimes p_C^* w] \cdot p_C^* \td(C))) = (-ad' -bs') \calo_{\P_i}(1)$.
\end{proof}

Let $p_C^*E_0 \to \t{\mc E}_i$ be the universal object on $M_i \times C$. As before, we have a group homomorphism $ K(C) \xrightarrow{\lambda_{\t \cale_i}} \Pic(M_{i})$ and by \cite[Lemma 8.1.2]{HL} we have $\lambda_{\t \cale_i}(w)|_{M_{i,L}}=\lambda_{\cale_i}(w)$ for $w\in K(C)$. In other words,
the homomorphism 
$\lambda_{\t \cale_i} : K(C) \to \Pic(M_{i})\to \Pic(M_i/\Pic^d(C))$ coincides with $\lambda_{\cale_i}$ under the natural identification $\Pic(M_i/\Pic^d(C))\xrightarrow{\sim} \Pic(M_{i,L})$. For this reason, we denote the image of $\lambda_{\t \cale_i}(w)$ in $\Pic(M_i/\Pic^d(C))$ by $\lambda_{\cale_i}(w)$ as well. The determinant morphism $M_{i}\to \Pic^d(C)$ is denoted by $\pi_i$.

\begin{proposition}\label{lem_Nef}
For simplicity, $\cale$ denotes $\cale_1$.
Assume $s \geq 2$.
\begin{enumerate}
\item  The natural surjective map $\Pic(M_1/\Pic^d(C) )_\Q \to 
N^1(M_1/\Pic^d(C) )_\Q$ is an isomorphism.
\item 
$\{ \lambda_{\mc E}([{\rm pt}]), \lambda_{\mc E}([\mc O_C])\}$ forms a basis of $N^1(M_{1}/\Pic^d(C))_{\R} \simeq  N^1(M_{1,L})_{\mb R}$.
\item   Let $\theta$ be the pullback of an ample class on $M_C(s,d)$ to $M_{1}$
via the morphism $ f : M_{1} \to M_C(s,d) : [(E,\alpha)] \mapsto [E]$.
Then 
\begin{align*}
\R_{\geq 0} \theta = 
\mb R_{\geq 0}\left(\left(\frac{d}{s}-(g-1)\right)\lambda_{\cale}([{\rm pt}] )-\lambda_{\cale} ([\calo_C])  \right)
\end{align*}
is a common edge of $\Nef (M_{1}/\Pic^d(C)), \Mov (M_{1}/\Pic^d(C)) $ and $\Eff (M_{1}/\Pic^d(C)),$ 
as well as
$\Nef (M_{1,L}), \Mov (M_{1,L}) $ and $\Eff (M_{1,L})$.
\item  For $1\leq i\leq N$ and $(s,i) \neq (r+1,N)$,
\footnote{Recall that $M_i$ and $M_{i,L}$ are SQMs of $M_1$ and  $M_{1,L}$ respectively for such $(s,i)$ by Propositions \ref{prop_SQM_s_leq_r}, \ref{prop_SQM_s>r}.
If $(s,i) = (r+1,N)$, $M_{N,L} =M_{C,\delta_\sharp-\ep}(E_0,r+1,d)_L $ has Picard number one by Lemma \ref{lem_s=r+1, last model}.
}
we have
\begin{align*}
\Nef(M_{i}/\Pic^d(C)) & =
\Nef(M_{i,L})  \\
&=  \mb R_{\geq 0}\left(\left(\frac{d+\delta_{i-1}}{s}  -(g-1) \right)\lambda_{\mc E}([{\rm pt}]) -\lambda_{\mc E}([\mc O_C]) \right) \\ 
 & \ + \mb R_{\geq 0}\left(\left(\frac{d+\delta_{i}}{s}  -(g-1) \right)\lambda_{\mc E}([{\rm pt}]) -\lambda_{\mc E}([\mc O_C]) \right).
\end{align*}
\item If $2 \leq s \leq r$, 
\footnote{In this case, $M_{N+1}=\Quot_C(E^0,r-s,d+e)$ and $M_{N+1,L}=\Quot_C(E^0,r-s,d+e)_{L \otimes \det E^0}$ by Lemma \ref{lem_suff_large_delta} and $(\delta_N,\delta_{N+1}) =(\delta_\sharp, \infty)$.}
we have
\begin{align*}
\Nef(M_{N+1}/\Pic^d(C)) & = \Nef(M_{N+1,L}) \\
&=  \mb R_{\geq 0} \left(\left(\frac{d+\delta_{N}}{s} -(g-1)\right)\lambda_{\mc E}([{\rm pt}])-\lambda_{\mc E}([\mc O_C])\right)  + \mb R_{\geq 0}\lambda_{\mc E}([{\rm pt}])\\
&=\mb R_{\geq 0} \left(\left(d-e_{s-1} -(g-1)\right)\lambda_{\mc E}([{\rm pt}])-\lambda_{\mc E}([\mc O_C])\right)  + \mb R_{\geq 0}\lambda_{\mc E}([{\rm pt}]).
\end{align*}
\end{enumerate}
Furthermore, all the extremal rays in (3), (4), (5) are spanned by ($\pi_i$-)semiample line bundles in $N^1(M_i/\Pic^d(C))_{\R}$ and $N^1(M_{i,L})_{\R}$.
\end{proposition}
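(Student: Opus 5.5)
The plan is to use the test curves of Lemma \ref{lem_class_P,P'} to detect the boundary rays, and to use the contractions $f$, $p_i^\pm$ and the Quot-to-Chow/forgetful morphisms to produce semiample classes spanning those rays. Since the relative Picard rank is $2$ (Corollary \ref{cor_pic_s_leq_r}, Proposition \ref{prop_SQM_s>r}), each nef cone is a $2$-dimensional cone. It is therefore determined by exhibiting two non-proportional semiample classes, each of which is trivial on some contracted curve.

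First I would establish (1) and (2). Put $w_1=\td(C)^{-1}$, i.e.\ $(a,b)=(1,0)$, and $w_2=[\mathrm{pt}]$, i.e.\ $(a,b)=(0,1)$; note $\td(C)^{-1}=[\calo_C]-(1-g)[\mathrm{pt}]$. By Lemma \ref{lem_class_P,P'}(1), on a fibre $\P$ of $f$ the class $\lambda_{\cale}([\mathrm{pt}])$ restricts to $\calo_{\P}(s)$, while $\lambda_{\cale}(\td(C)^{-1})$ restricts to $\calo_{\P}(d)$. A curve $\ell\subset\P$ and a curve pulled back from $M_C(s,L)$ are then two curves on which the pairing matrix with $\lambda_{\cale}([\mathrm{pt}]),\lambda_{\cale}([\calo_C])$ is nondegenerate. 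Concretely, the combination $\theta_0:=(\frac ds-(g-1))\lambda_{\cale}([\mathrm{pt}])-\lambda_{\cale}([\calo_C])$ has $a=-1$ and $b=d/s$, hence degree $-d+s\cdot d/s=0$ on $\ell$. One checks that $\theta_0$ is a positive multiple of $f^*$ of the Drezet--Narasimhan ample generator $\Theta$ on $M_C(s,L)$, by the usual determinant-line-bundle computation with $w$ orthogonal to $(s,d)$; this is exactly Lemma \ref{lem_class_P,P'}(1) at degree $0$. Since $\Theta$ is nonzero on curves in $M_C(s,L)$, the two classes are independent on curves. This gives (2), and also (1), because numerical triviality on these curves forces triviality in $\Pic\otimes\Q\simeq\Q^2$.

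For (3): $\theta_0=f^*\Theta$ is semiample and has degree $0$ on the fibre $\P$, so it is not big. It therefore spans an edge of the nef cone. As $f$ is a fibration of positive relative dimension, $\theta_0$ also lies on the boundary of $\Eff$: any divisor $D$ beyond it has negative degree on the covering family of lines $\ell$ in fibres, so $D$ is not effective. Hence $\theta_0$ spans an edge of $\Eff$, and therefore of $\Mov$ too.

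For (4) I would proceed by induction on $i$, using the wall-crossing diagrams $M_{i}\xrightarrow{p_i^-}M_{\delta_i}\xleftarrow{p_i^+}M_{i+1}$. By Lemma \ref{lem_class_P,P'}(2), the class $w$ with $(a,b)=(-1,\frac{d'}{s'})=(-1,\frac{d+\delta_i}{s})$ has degree $0$ on every fibre $\P_i$ of $p_i^-$. I would take $\theta_i$ to be the corresponding class $(\frac{d+\delta_i}{s}-(g-1))\lambda_{\cale}([\mathrm{pt}])-\lambda_{\cale}([\calo_C])$; this is independent of the chosen $(s',d')$ on the wall. The main step is to show that $\theta_i$ is the pullback of an ample class on $M_{\delta_i}$. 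For this I would invoke the GIT construction of \cite{Wan,Lin}: the polarization of $M_{C,\delta}$ is a determinant bundle whose $K$-class depends linearly on $\delta$, so (up to identifying it with $\theta_\delta$) it is $\theta_\delta$ with $\delta$ varying. This makes $\theta_\delta$ ample on $M_{C,\delta}$ for $\delta\notin\Gamma$, and semiample, descending to $M_{\delta_i}$, at walls. I expect this identification of the Wan/Lin linearization with $\lambda_{\cale}$ to be the main obstacle; if it proves awkward, a fallback is to verify directly that the classes $\theta_\delta$ are nef on all curves contracted by $p_i^\pm$ and then use the normality of $M_{\delta_i}$. Granting the identification, $\theta_{i-1}$ and $\theta_i$ are semiample on $M_i$, contract the curves in $\P_{i-1}^+$ and $\P_i$ respectively, and are non-proportional since $\delta_{i-1}\ne\delta_i$. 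Hence they span $\Nef(M_i)$. The case $(s,i)\neq(r+1,N)$ is needed so that $M_i$ is an SQM and has Picard rank $2$. The corresponding statements for the fibres $M_{i,L}$ follow by restriction, via $\Pic(M_i/\Pic^d)\simeq\Pic(M_{i,L})$.

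For (5): the ray $\theta_N$ comes from the wall at $\delta_\sharp$, exactly as in (4). The other ray is $\lambda_{\cale}([\mathrm{pt}])$. On $M_{N+1,L}=\Quot$, this is the pullback of an ample class under the morphism that sends a quotient to its associated torsion/Chow data; equivalently it is the determinant of the universal quotient restricted to $\{x\}\times\Quot$, i.e.\ the Plücker-type semiample class on the Quot scheme. It is semiample and trivial on curves along which only the torsion part of the quotient moves with fixed support, for instance the $\P^{s-1}$ families in $\Quot(\ker q,0,1)$. Hence it is not ample, and together with $\theta_N$ it spans $\Nef(M_{N+1,L})$. The second formula in (5) then follows from Proposition \ref{prop_gamma_s<r+1}(3), which gives $\frac{d+\delta_N}{s}=d-e_{s-1}$.
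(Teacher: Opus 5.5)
You have the paper's architecture: test curves from Lemma \ref{lem_class_P,P'}, semiample classes pulled back along $f$, $p_i^{\pm}$ and a determinant map, and relative Picard rank two. Parts (1)--(3) are fine. Getting the sign of $\theta_0$ from Drezet--Narasimhan descent, $\lambda_{\cale}(-s[\calo_C]+\chi(E)[{\rm pt}])=f^*\Theta$, is a legitimate substitute for the paper's use of the first wall fibre $\P_1$.

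\textbf{The gap is in (4), and hence in the $\theta_N$-edge of (5).} You make semiampleness of $\theta_i$ depend on identifying the Wan/Lin GIT linearization with a $\lambda_{\cale}$-class, and you do not prove that identification. Your fallback (check nefness on curves contracted by $p_i^{\pm}$, then use normality of $M_{\delta_i}$) gives neither semiampleness nor descent to $M_{\delta_i}$.

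\textbf{The missing idea is that no identification is needed.} $M_{\delta_i}$ is projective, and $p_i^-\colon M_i\to M_{\delta_i}$ is an isomorphism over the dense stable locus and contracts the fibres $\P_i$. So $\gamma_i:=p_i^{-*}A$, with $A$ ample, is semiample, nonzero in $N^1(M_i/\Pic^d(C))$, and of degree $0$ on lines in $\P_i$. By Lemma \ref{lem_class_P,P'}(2), such a line pairs with $\lambda_{\cale}((a[\calo_C]+b[{\rm pt}])\cdot\td(C)^{-1})$ to give $-(ad'+bs')$. Its annihilator in the two-dimensional $N^1$ is therefore exactly $\R\theta_i$, so $\gamma_i=c\,\theta_i$ with $c\neq 0$, with no GIT input.

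The sign is fixed as follows.
\begin{enumerate}
\item For $i=1$: $\theta_1$ has degree $\delta_1>0$ on a line in a fibre of $f$, so $c>0$.
\item For $i\geq 2$: $\gamma_{i-1}$, pulled back along $p_{i-1}^+$, is an edge of $\Nef(M_i)$. $\Nef(M_i)$ must lie on the opposite side of $\R\gamma_{i-1}$ from $\Nef(M_{i-1})$, because nef cones of the non-isomorphic SQMs $M_{i-1}$ and $M_i$ have disjoint interiors. The rays $\R_{\geq 0}\theta_\delta$ rotate monotonically towards $\lambda_{\cale}([{\rm pt}])$ as $\delta$ grows, so this forces $c>0$.
\end{enumerate}
The same argument applied to $p_N^+$ gives the $\theta_N$-edge in (5).

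\textbf{The other edge in (5) also needs more than you give.} A Quot-to-Chow map exists only for $s=r$. The determinant of the universal quotient at a single point $x$ is not visibly globally generated, since $\wedge^s(\cale^\vee|_x)\to\wedge^s E^0_x$ vanishes where the quotient has torsion at $x$. You need the global map $[E^0\twoheadrightarrow B]\mapsto[\det(\ker)\hookrightarrow\wedge^sE^0]$ to $\P(\Hom(L^{-1},\wedge^sE^0))$, together with the seesaw identification of its $\calo(1)$ with $\lambda_{\cale}([{\rm pt}])$ as in Claim \ref{claim_lambda(pt)}. After that, your $\P^{s-1}$ curves in $\Quot(\ker q,0,1)$ do show that $\lambda_{\cale}([{\rm pt}])$ is not ample.
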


\begin{proof}
(1) The morphism $\pi_1 $ is decomposed as 
$M_1 \xrightarrow{f} M_C(s,d) \xrightarrow{\det} \Pic^d(C) $.
Since $s \geq 2$, we have $\dim M_C(s,d)> \dim  \Pic^d(C)$.
Let $H \in \Pic (M_1)$ and $H' \in \Pic (M_C(s,d))$ be ample line bundles.
Then $H,f^* H'$ form a basis of $\Pic(M_1/\Pic^d(C) )_\Q$.
Let $C,C' \subset M_1$ be curves such that $C$ is contracted by $f$ and 
$f(C') \subset  M_C(s,d) $ is a contracted curve by $\det$.
If the image of $aH +b f^* H'$ in $N^1(M_1/\Pic^d(C) )$ is zero,
then $0=(aH +b f^* H'.C) = a (H.C)$ and $0 =(aH +b f^* H'.C') = a(H.C') + b (H'. f_*(C')) $.
Since $(H.C) >0 , (H'. f_*(C')) >0$,
we have $a =b=0$.
Thus (1) holds.\\
(2) 
Recall that
$\lambda_{\cale}  ([{\rm pt}])$ does not depend on the choice of ${\rm pt} \in C$.
Hence to compute $\lambda_{\cale}((a[\calo_C] +b[{\rm pt}]) \cdot \td(C)^{-1} )$, we may replace 
$\td(C)^{-1} $ with $[\calo_C] +(g-1) [{\rm pt}]$.

To prove (2),
it suffices to show that 
\[
\{\lambda_{\mc E}([{\rm pt}]), \lambda_{\mc E}([\mc O_C]) +(g-1) \lambda_{\mc E}([{\rm pt}])\} 
=\{\lambda_{\mc E}([{\rm pt}]  \cdot \td(C)^{-1}), \lambda_{\mc E}([\mc O_C] \cdot \td(C)^{-1})\} 
\]
forms a basis of $N^1(M_{1,L})_{\mb R} $.
Let $\P,\P_1 \subset M_{1,L}$ be as in Lemma \ref{lem_class_P,P'}.
By Lemma \ref{lem_class_P,P'},
\begin{align*}
\lambda_{\mc E}([{\rm pt}]  \cdot \td(C)^{-1})|_{\P} &=\calo_{\P}(s) , \quad \lambda_{\mc E}([\mc O_C] \cdot \td(C)^{-1})|_{\P} = \calo_{\P}(d),\\
\lambda_{\mc E}([{\rm pt}]  \cdot \td(C)^{-1})|_{\P_1} &=\calo_{\P_1}(-s') , \quad \lambda_{\mc E}([\mc O_C] \cdot \td(C)^{-1})|_{\P_1} = \calo_{\P_1}(-d') .
\end{align*}
If $  \lambda_{\mc E}([{\rm pt}]  \cdot \td(C)^{-1}) , \lambda_{\mc E}([\mc O_C] \cdot \td(C)^{-1})$ are linearly dependent,
it holds that $d/s =(-d')/(-s')= (d+\delta_1)/s$,
which contradicts $\delta_1 >0$.
Thus $ \lambda_{\mc E}([{\rm pt}]  \cdot \td(C)^{-1}),  \lambda_{\mc E}([\mc O_C] \cdot \td(C)^{-1})$ are linearly independent
and hence form a basis of $N^1(M_{1}/\Pic^d(C))_{\R}=N^1(M_{1,L})_{\mb R} \simeq \R^2$.
\\
(3) Since $\theta$ is the pullback of an ample class on $M_C(s,d)$, it is semiample and hence $f$-semiample and $\theta\in \Nef(M_{1}/\Pic^d(C))$. 
On the other hand, 
since $\dim M_1 > \dim M_C(s,d)$, we see that
$\R_{\geq 0} \theta$ spans a common edge of $\Nef (M_{1}/\Pic^d(C)), \Mov (M_{1}/\Pic^d(C)) $ and $\Eff (M_{1}/\Pic^d(C))$. The same argument shows that $\theta$ also
spans a common edge of $\Nef (M_{1,L}), \Mov (M_{1,L}) $ and $\Eff (M_{1,L})$.
Thus it suffices to compute the class of $\theta$.

Let $\P,\P_1 \subset M_{1,L}$  be  as in Lemma \ref{lem_class_P,P'} and let $\frac{d+\delta_1}{s} =\frac{d'}{s'}$.
By Lemma \ref{lem_class_P,P'},
$ \lambda_{\cale} (w_0)|_{\P}  = \calo_{\P}$ for 
\[
w_0= \left( \frac{d}{s}[{\rm pt}]-[\calo_C]  \right) \cdot \td(C)^{-1} =\left(\frac{d}{s}-(g-1)\right)[{\rm pt}] - [\calo_C] .
\]
Thus $\theta$ is a multiple of
\begin{align}\label{eq_theta}
\lambda_{\cale} (w_0) =  \left(\frac{d}{s}-(g-1)\right)\lambda_{\cale}([{\rm pt}]) -\lambda_{\cale} ([\calo_C]) .
\end{align}
Since $\theta$ is nef and
$ \lambda_{\cale} (w_0)|_{\P_1} = \calo_{\P_1}(d' -\frac{d}{s}s') =\calo_{\P_1}(\frac{s'}{s}\delta_1) $,
$\theta$ is a positive multiple of \eqref{eq_theta}.
Hence (3) holds.\\[1mm]
(4) For $1 \leq i \leq N$, let  $\dfrac{d + \delta_i}{s} = \dfrac{d'}{s'}$ with $1 \leq s' \leq s - 1$ and $d' \in \mathbb{Z}$, and
 $\P_i \subset M_{i,L}$ be a fiber of $p_i^- : M_{i,L} \to M_{\delta_i,L}$
as in Lemma \ref{lem_class_P,P'}.
By Lemma \ref{lem_class_P,P'},
$ \lambda_{\cale} (w_i)|_{\P_i} =  \calo_{\P_i}$ for 
\begin{align*}
    w_i \coloneqq  \left(\frac{d'}{s'}[{\rm pt}] - [\calo_C]  \right) \cdot \td(C)^{-1} &=\left(\frac{d'}{s'}-(g-1)  \right)[{\rm pt}]-[\calo_C]  \\
    & =\left(\frac{d+\delta_i}{s}-(g-1)  \right)[{\rm pt}] -[\mc O_C].   
\end{align*}
Since $N^1(M_{\delta_i}/\Pic^d(C))_{\R} \xrightarrow{\sim} N^1(M_{\delta_i,L})_{\R}$ is a one-dimensional subspace of $N^1(M_{i,L})_{\R} \simeq \mathbb{R}^2$, we have
\[
N^1(M_{\delta_i,L})_{\R} = \mathbb{R} \lambda_{\mc E}(w_i) \subset N^1(M_{i,L}).
\]
Let us denote by $\gamma_i\in N^1(M_i/\Pic^d(C)) \simeq N^1(M_{i,L})$  the pullback of an ample class from $M_i \to M_{\delta_i}$.
Since $ \gamma_1$ is nef and 
$ \lambda_{\cale} (w_1)|_{\P} = \calo_{\P}(-d  +\frac{d'}{s'}s) =\calo_{\P}(\frac{s'}{s}\delta_1) $ for $\P \subset M_{1,L}$ in Lemma \ref{lem_class_P,P'},
$ \gamma_1$ is a positive multiple of $ \lambda_{\cale} (w_1)$.
Hence 
\begin{align*}
\Nef(M_1/\Pic^d(C))= \Nef(M_{1,L}) &=\mb R_{\geq 0} \theta + \mb R_{\geq 0}  \lambda_{\cale} (w_1) \\
&=  \mb R_{\geq 0}\left(\left(\frac{d}{s}  -(g-1) \right)\lambda_{\mc E}([{\rm pt}]) -\lambda_{\mc E}([\mc O_C]) \right) \\ 
 & \  + \mb R_{\geq 0}\left(\left(\frac{d+\delta_{1}}{s}  -(g-1) \right)\lambda_{\mc E}([{\rm pt}]) -\lambda_{\mc E}([\mc O_C]) \right)
\end{align*}
holds.

For $2 \leq i \leq N$,
 $\gamma_i$ is in the opposite side of $\Nef(M_{i-1,L})$ from $\gamma_{i-1}$.
Hence we see that $\gamma_i$ is a positive multiple of $  \lambda_{\cale} (w_i) $ by induction on $i$.
Thus 
\begin{align*}
\Nef(M_i/\Pic^d(C))=\Nef(M_{i,L}) &=\mb R_{\geq 0} \lambda_{\cale} (w_{i-1}) + \mb R_{\geq 0}  \lambda_{\cale} (w_i) \\
&=  \mb R_{\geq 0}\left(\left(\frac{d+\delta_{i-1}}{s}  -(g-1) \right)\lambda_{\mc E}([{\rm pt}]) -\lambda_{\mc E}([\mc O_C]) \right) \\ 
 & \  + \mb R_{\geq 0}\left(\left(\frac{d+\delta_{i}}{s}  -(g-1) \right)\lambda_{\mc E}([{\rm pt}])-\lambda_{\mc E}([\mc O_C])\right)
\end{align*}
holds for $2 \leq i \leq N$.\\[1mm]
(5) Let $2 \leq s \leq r$.
Since $ \mb R_{\geq 0}  \lambda_{\cale} (w_N)$ is an edge of $\Nef(M_{N+1}/\Pic^d(C)) \xrightarrow{\sim} \Nef(M_{N+1,L})$ by (4),
it suffices to show that $\lambda_{\mc E}([{\rm pt}]) $ is a ($\pi_{N+1}$-)semiample but not ($\pi_{N+1}$-)ample class on $M_{N+1}$ and $M_{N+1,L}$. 

Let $p_C^* E_0 \to \t \cale_{N+1}$ be the universal object on $M_{N+1} \times C$.
Then we have
\[
0 \to \t{\mc E}_{N+1}^\vee \to p_C^*E^0 \to p_C^*E^0 / \t{\mc E}_{N+1}^\vee \to 0,
\]
which is the universal exact sequence on $M_{N+1} \times C= \Quot_C(E^0, r - s, d+e) \times C$. Hence we have a homomorphism
${\det}(\t{\mc E}_{N+1}^\vee)\hookrightarrow \wedge^sp_C^*E^0$ on $C\times M_{N+1}$ and this defines a morphism
\begin{align}\label{eq_lem_Nef_s_leq_r}
\varphi : M_{N+1}\to \Quot_C(\wedge^s E^0,\tilde{r}-1, \tilde{d}),
\end{align}
where $\tilde{r}=\rank \wedge^s E^0$ and $\tilde{d}={\deg}(\wedge^s E^0)+d$.
By \cite[Theorem 3.3]{GS-Picard}, we have that $\Quot_C(\wedge^s E^0,\tilde{r}-1, \tilde{d})$ is a projective bundle $\widetilde{\P} \to \Pic^d(C)$ whose fiber over $[L]$ is $\P(\Hom(L^{-1},\wedge^s  E^0 ))$.

\begin{claim}\label{claim_lambda(pt)}
The pullback $\varphi^* \mathcal{O}_{\widetilde{\P} } (1) $ of the tautological line bundle $\mathcal{O}_{\widetilde{\P} } (1)$ on $\widetilde{\P} $
coincides with $\lambda_{\t \cale}([{\rm pt}]) $ in  $ \Pic (M_{N+1}/\Pic^d(C))$.
\end{claim}

\begin{proof}[Proof of Claim \ref{claim_lambda(pt)}]
Since $ \Pic (M_{N+1}/\Pic^d(C)) \simeq \Pic (M_{N+1,L})$,
it suffices to show that the pullback of $\calo(1)$ by $\varphi_L =\varphi|_{ M_{N+1,L}} : M_{N+1,L} \to \P(\Hom(L^{-1},\wedge^s  E^0 ))$
coincides with $\lambda_{\cale}([{\rm pt}]) $ in  $  \Pic (M_{N+1,L})$.

Since $\det \cale_{N+1}|_{ [(E,\alpha)] \times C} =\det E=L$ for any $ [(E,\alpha)]  \in M_{N+1,L}$, 
there exists $H \in \Pic (M_{N+1,L})$ such that $\det \cale_{N+1} \simeq p^*H \otimes p_C^* L$ for the projection $p :  M_{N+1,L} \times C \to M_{N+1,L}$ by Seesaw principle.
Then for $x \in C$, we have 
\[
\lambda_{\cale}([{\rm pt}]) = p_* (\det \cale_{N+1} |_{ M_{N+1,L} \times \{x\}}) \simeq  p_* ((p^*H \otimes p_C^* L)  |_{ M_{N+1,L} \times \{x\}}) = H \in \Pic (M_{N+1,L}),
\]
where the first equality holds by  \cite[Examples 8.1.3 i)]{HL}.

On the other hand, it holds that 
$\varphi_L^*\calo(1) = p_* (\det \cale_{N+1} \otimes p_C^* L^{-1}) \simeq p_* p^* H =H$, where the first equality holds by the definition of $\varphi$.
Hence we have $ \varphi_L^*\calo(1) = H=\lambda_{\cale}([{\rm pt}]) $.
\end{proof}

Since $\mc O(1)$ is relatively ample, we get that  
the class 
\[
\lambda_{\cale}([{\rm pt}]) =\varphi^* \calo(1)  \in N^1(M_{N+1}/\Pic^d(C))_{\R} \xrightarrow{\sim} N^1(M_{N+1,L})_{\mb R}
\]
is ($\pi_{N+1}$-)semiample. 
We see that  $\lambda_{\mc E}([{\rm pt}]) $ is not  ample on $M_{N+1,L}$ by the same argument as \cite[Proposition 4.4]{GS-eff} (see also the proof of Proposition \ref{prop_canonical_divisor} in the case $s \leq r$).
In particular, $\lambda_{\mc E}([{\rm pt}]) $ is not  ($\pi_{N+1}$-)ample on $M_{N+1}$ and $M_{N+1,L}$.
\end{proof}

\section{Canonical divisors}\label{sec_canonical_div}

In this section, we show the following proposition, which describes the class of the canonical divisor of $M_{1,L}$.
This description is used to determine the effective cones of $M_{i,L}$ for $s=r\pm 1$ in the next section.
We use the notation in the previous section.

\begin{proposition}\label{prop_canonical_divisor}
    Assume $s \geq 2$. 
    Then the class of the canonical divisor $K_{M_{1,L}}$ is 
    \begin{align*}
(2s(g-1)-e-2d) \lambda_{\cale}([{\rm pt}]) + (2s-r) \lambda_{\cale} ([\calo_C]) .
\end{align*}
\end{proposition}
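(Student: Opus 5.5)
The plan is to compute $K_{M_{1,L}}$ from the deformation theory of pairs, and then to check the answer on the test fibers $\P$ and $\P_1$ of Lemma \ref{lem_class_P,P'}. By Proposition \ref{lem_Nef} (2), $\lambda_{\cale}([{\rm pt}])$ and $\lambda_{\cale}([\calo_C])$ form a basis of $N^1(M_{1,L})_\R$. Since $\Pic(M_{1,L})\simeq\Z^2$, it is enough to identify $K_{M_{1,L}}$ on a big open subset $U\subset M_{1,L}$ whose complement has codimension $\ge 2$. Moreover, the class is pinned down by its degrees on two independent families of curves.

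\emph{Step 1 (the open set).} Take $U$ to be the locus where $M_{1,L}$ is smooth of expected dimension. By Lemma \ref{lem_dim} and Lemma \ref{lem_similar_to_M_L}, together with the remark after Lemma \ref{lem_dim} (case (3), valid for $0<\delta<\delta_1=\delta_\flat$ and $d\gg0$), this is the whole stable locus. By Proposition \ref{prop_SQM_s_leq_r} / \ref{prop_SQM_s>r}, $M_{1,L}$ is normal and locally factorial, so $K$ is determined on $U$.

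\emph{Step 2 (determinant of the tangent bundle).} On $U\times C$ the universal pair gives the complex $I^\bullet=\{p_C^*E_0\to\cale\}$. By \cite[Theorem 1.2]{Lin} and the vanishings in the proof of Lemma \ref{lem_dim}, $T_{M_1}|_U=\mathcal{E}xt^0_p(I^\bullet,\cale)$, and the $\mathcal{E}xt^{-1}$ and $\mathcal{E}xt^{1}$ sheaves vanish. Hence
\[
\det T_{M_1}=\det Rp_*(p_C^*E_0^\vee\otimes\cale)-\det Rp_*\mathcal{E}nd(\cale).
\]
Passing to the fiber $M_{1,L}$ removes only a trivial $H^1(\calo_C)$ summand from the trace part, which does not change $c_1$. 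The first term is $\lambda_{\cale}([E^0])$. Using the identity $[E^0]=r[\calo_C]+e[{\rm pt}]$ in $K(C)$ modulo classes that $\lambda_{\cale}$ kills, this equals $r\lambda_{\cale}([\calo_C])+e\lambda_{\cale}([{\rm pt}])$. For the second term I would apply Grothendieck--Riemann--Roch. This gives $c_1(Rp_*\mathcal{E}nd\,\cale)=p_*\big((s-1)c_1(\cale)^2-2s\,c_2(\cale)\big)$ plus a term in $c_1(\cale)$ times $c_1(\omega_C)$. Then I would rewrite this using the normalization $\det\cale\simeq p^*H\otimes p_C^*L$ (as in the proof of Claim \ref{claim_lambda(pt)}) and the GRR expressions for $\lambda_{\cale}([\calo_C])$ and $\lambda_{\cale}([{\rm pt}])=H$. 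The expected outcome is
\[
c_1(Rp_*\mathcal{E}nd\,\cale)=-2s\,\lambda_{\cale}([\calo_C])-2\big(d-s(g-1)\big)\lambda_{\cale}([{\rm pt}]),
\]
which yields $K=(2s(g-1)-e-2d)\lambda_{\cale}([{\rm pt}])+(2s-r)\lambda_{\cale}([\calo_C])$.

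\emph{Step 3 (check on test curves).} The tangent computation involves sign and normalization pitfalls, so I would verify it independently. By Lemma \ref{lem_class_P,P'} (1), $\lambda_{\cale}([{\rm pt}])|_\P=\calo_\P(s)$ and $\lambda_{\cale}([\calo_C])|_\P=\calo_\P(d-s(g-1))$. The proposed class then restricts on the fiber $\P$ to $\calo_\P\big(-(se+r(d-s(g-1)))\big)=\calo_\P(-\dim\P-1)$. This equals $K_\P$, as it must, since $f$ is a projective bundle near $\P$ and the normal bundle of $\P$ is trivial. A second check on a line in $\P_1$ (Lemma \ref{lem_class_P,P'} (2)) would require the normal bundle of the flip locus, so I would use $\P$ only as a sanity check and rely on Step 2.

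The main obstacle is the GRR bookkeeping in Step 2. The universal family is only determined up to twisting by a line bundle pulled back from $M_{1,L}$, and $c_1(Rp_*\mathcal{E}nd\,\cale)$ must be shown to be independent of this twist. It then has to be expressed through the two $\lambda$-classes via $\td(C)^{-1}=[\calo_C]+(g-1)[{\rm pt}]$. I would handle this by working with the twist-invariant combination $2s\,c_2-(s-1)c_1^2$.
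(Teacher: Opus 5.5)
Your route differs from the paper's and can be made to work, but the one formula you commit to in Step~2 is wrong, and the conclusion you draw from it does not follow.

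\textbf{The error.} Plug your value $c_1(Rp_*\mathcal{E}nd\,\cale)=-2s\,\lambda_{\cale}([\calo_C])-2(d-s(g-1))\lambda_{\cale}([{\rm pt}])$ into $K=-\lambda_{\cale}([E^0])+c_1(Rp_*\mathcal{E}nd\,\cale)$. This gives coefficient $-(2s+r)$ on $\lambda_{\cale}([\calo_C])$, not $2s-r$.

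Your own test fiber exposes this. On $\P\times C$ one has $\cale=p_C^*E\otimes\calo_{\P}(1)$, so $\mathcal{E}nd\,\cale=p_C^*\mathcal{E}nd\,E$ and $c_1(Rp_*\mathcal{E}nd\,\cale)|_{\P}$ must be trivial. Your formula instead has degree $-4s(d-s(g-1))$ on $\P$.

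\textbf{The correct computation.}
\begin{itemize}
\item Since $\ch_1(\mathcal{E}nd\,\cale)=0$, the Todd class contributes nothing, so there is no $c_1(\cale)\cdot c_1(\omega_C)$ term. Thus $c_1(Rp_*\mathcal{E}nd\,\cale)=p_*\big((s-1)c_1^2-2s\,c_2\big)$ with $c_i=c_i(\cale)$.
\item Put $h=\lambda_{\cale}([{\rm pt}])$. The normalization $\det\cale\simeq p^*H\otimes p_C^*L$ gives $c_1=p^*h+d\,p_C^*[{\rm pt}]$, so $p_*(c_1^2)=2dh$.
\item GRR for $\cale$ gives $\lambda_{\cale}([\calo_C])=\tfrac12 p_*(c_1^2)-p_*c_2-(g-1)h$, that is, $p_*c_2=(d-g+1)h-\lambda_{\cale}([\calo_C])$.
\end{itemize}
Hence
\[
c_1(Rp_*\mathcal{E}nd\,\cale)=2s\,\lambda_{\cale}([\calo_C])-2\big(d-s(g-1)\big)\lambda_{\cale}([{\rm pt}]),
\]
which does restrict trivially to $\P$. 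Then $K=-r\lambda_{\cale}([\calo_C])-e\lambda_{\cale}([{\rm pt}])+c_1(Rp_*\mathcal{E}nd\,\cale)$ is exactly the claimed class.

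\textbf{Two smaller points.}
\begin{itemize}
\item The twist worry is a non-issue. $\mathcal{E}nd\,\cale$ is twist-invariant on the nose. What you need is the paper's normalization, with universal map from $p_C^*E_0$; that is what makes $\det R\mathcal{H}om_p(p_C^*E_0,\cale)=\lambda_{\cale}([E^0])$. The target formula is not invariant under twisting $\cale$ alone, so you must keep this normalization throughout.
\item Lemma~\ref{lem_similar_to_M_L} does not give smoothness of $M_{1,L}$. To pass from $T_{M_1}$ to $T_{M_{1,L}}$ you need $\det$ to be smooth, which holds as in Lemma~\ref{lem_smoothness_M_L}. For semistable $E$ of slope $d/s\gg0$ one has $\Ext^1(E_0,E)=0$, so $\Hom(I^\bullet,E)\to\Ext^1(E,E)$ is onto, and the trace is onto.
\end{itemize}

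\textbf{Comparison with the paper.} With the sign fixed, your argument is a valid alternative. The paper never computes $c_2$ or a tangent bundle on $M_{1,L}$. It writes $K_{M_{1,L}}=\lambda_{\cale}((a[{\rm pt}]+b[\calo_C])\cdot\td(C)^{-1})$ and gets one linear equation from $\P$. It gets the second on another model, using that all $M_{i,L}$ are SQMs with the same $\Pic\simeq\Z^2$ and the same canonical class:
\begin{itemize}
\item for $s\ge r+2$, a fiber $\P(\Ext^1(F,E_0))$ of $M_{N,L}\to M_C(s-r,L\otimes\det E^0)$;
\item for $s=r+1$, the projective space $M_{N,L}$ reached by the divisorial contraction;
\item for $s\le r$, a line $\P(V)$ contracted by $\varphi_L$ on the Quot scheme, where the tangent sheaf is $p_*\mathcal{H}om(\cale^\vee,p_C^*E^0/\cale^\vee)$ off codimension two.
\end{itemize}
That needs a three-way case split and the global results on the other chambers, but only restrictions of explicit families to rational curves. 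Your computation is uniform in $s$ and stays entirely on $M_{1,L}$. Its cost is a sheaf-level form of Lin's tangent-space description, $T_{M_1}\simeq\mathcal{E}xt^0_p(I^\bullet,\cale)$. This follows from the Kodaira--Spencer map, since $\mathcal{E}xt^{\pm1}_p$ vanish fiberwise.
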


\begin{proof}
Let 
\begin{align*}
    K_{M_{i,L}} = \lambda_{\cale} ( ( a [{\rm pt}] +b [\calo_C] )  \cdot \td(C)^{-1})
    = (a +b(g-1)) \lambda_{\cale}([{\rm pt}]) + b \lambda_{\cale} ([\calo_C]) 
\end{align*}
for some $a, b$.

Recall that we have a natural morphism $f : M_{1,L} \to M_C(s,L)$,
which is a $\P^{se +r(d-s(g-1))-1}$-bundle over $M^s_C(s,L)$.
Let $[E] \in M^s_C(s,L)$ and  $\P=\P(\Hom(E_0,E)) = f^{-1}([E]) $ as  in the proof of Lemma \ref{lem_class_P,P'}.
Then $K_{M_{1,L}}|_{\P} = \calo_{\P}( -\dim \P-1)$ holds.
Since $ \lambda_{\cale} ( ( a [{\rm pt}] +b [\calo_C] )  \cdot \td(C)^{-1})|_{\P} = \calo_{\P}(as+bd)$ by Lemma \ref{lem_class_P,P'},
it holds that
\begin{align}\label{eq_ad+bs=}
    as+bd =-\dim \P -1 = -se -r(d-s(g-1)) =  - (e-r(g-1))s -rd .
\end{align}

First, we consider the case $s \geq r+2 $.
In this case,
we have a natural morphism 
\[
p_N^-: M_{N,L} \to M_{\delta_\sharp} (E_0,s,d)_L =M_{C}(s-r, L \otimes \det E^0),
\]
which is a $ \P^{rd+se +r(s-r)(g-1) -1}$-bundle over $M_{C}^s(s-r, L \otimes \det E^0)$ by Lemma \ref{lemma-M_delta^dagger_s>r}.
Let $[F] \in M_{C}^s(s-r, L \otimes \det E^0)$ and  $\P_N=\P(\Ext^1(F,E_0)) = {p_N^-}^{-1}([F]) $ as in the proof of Lemma \ref{lem_class_P,P'}.
Then $K_{M_{N,L}}|_{\P_N} = \calo_{\P_N}( -\dim \P_N-1)$ holds.
Since  $\frac{d+\delta_\sharp}{s}=\frac{d+e}{s-r}$, it holds that 
 $\lambda_{\cale} ( ( a [{\rm pt}]+b [\calo_C] ) \cdot \td(C)^{-1})|_{\P_N} =\calo_{\P_N}( -a(s-r)-b(d+e))  $ by Lemma \ref{lem_class_P,P'}.
Hence
\begin{align}\label{eq_ad'+bs'=}
-a(s-r) -b(d+e)=-\dim \P_N -1 =-rd-se -r(s-r)(g-1)
\end{align}
holds.
By \eqref{eq_ad+bs=} and \eqref{eq_ad'+bs'=},
we have $a=r(g-1) -e-2d$ and  $b= 2s-r$,
and hence
\begin{align*}
   K_{M_{N+1,L}} &=  (a +b(g-1)) \lambda_{\cale}([{\rm pt}])+b \lambda_{\cale} ([\calo_C])  \\
   &=  (2s(g-1) -e-2d ) \lambda_{\cale}([{\rm pt}])+(2s-r) \lambda_{\cale} ([\calo_C]) . 
\end{align*}
Thus this proposition holds in the case $s \geq r+2$.

Next, we consider the case $s = r+1 $.
In this case, $\delta_{N-1}$ and $\delta_N$ coincide with $\kappa_\sharp$ and $\delta_\sharp$ in Lemma \ref{lemma-upper bound on dim of W,s >r} respectively, and 
we have a divisorial contraction
\begin{align*}
M_{N-1,L} =M_{C,\kappa_\sharp-\varepsilon}(E_0, r+1, d)_L \xrightarrow{\mu} M_{C,\delta_\sharp-\varepsilon}(E_0, r+1, d)_L = \P(\Ext^1(L \otimes \det E^0, E_0)) \eqqcolon \P_N
\end{align*}
by Lemma \ref{lem_s=r+1, last model}.
Hence there exists a closed subset $Z \subset \P_N$ whose codimension is at least two such that $\P_N \setminus Z $ is naturally embedded into $ M_{N-1,L} $.
Then $K_{M_{N-1,L}}|_{\P_N \setminus Z} =K_{\P_N \setminus Z}= \calo_{\P_N \setminus Z}( -\dim \P_N-1)$ holds.
Since $\lambda_{\cale} ( ( a[{\rm pt}] +b [\calo_C]) \cdot \td(C)^{-1})|_{\P_N} =\calo_{\P_N}( -a(s-r)-b(d+e))  $ as in the case $s \geq r+2$ above, we have
\begin{align}\label{eq_ad'+bs'=,s=r+1}
\begin{split}
-a(s-r)-b(d+e)  =-\dim \P_N -1 &=-rd-(r+1)e -r(g-1) \\
&= -rd -se -r(s-r)(g-1).
\end{split}
\end{align}
By \eqref{eq_ad+bs=} and \eqref{eq_ad'+bs'=,s=r+1},
we have $a=r(g-1) -e-2d$ and $b= 2s-r$.
Hence this proposition holds in the case $s =r+1$.

Lastly, we consider the case $s \leq r$.
To obtain another equation,
we consider a fiber of the morphism 
\[
\varphi_L: M_{N+1,L} =\Quot_C(E^0,r-s,d+e)_{L\otimes \det E^0} \to \P(\Hom(L^{-1}, \wedge^s E^0)) 
\]
which maps $[E^0 \stackrel{q}{\twoheadrightarrow}Q]$ to  $[\det (\ker q) \hookrightarrow \wedge^s E^0]$.
Fix $x \in C$ and take general 
\[
[E^0 \stackrel{q'}{\twoheadrightarrow} Q'] \in \Quot_C(E^0, r-s,d+e-1)_{L \otimes \det E^0\otimes \calo_C(-x)}.
\]
Since $d $ is sufficiently large, we can take such $[E^0 \stackrel{q'}{\twoheadrightarrow} Q']$
so that $Q'$ is locally free of rank $r-s$ at the point $x$ by \cite[\S 8]{GS-Picard}.
Let $V \subset \Hom(E^0, k(x))$ be a general subspace of dimension two.
Since $Q'$ is locally free, the dimension of $ \ker (q' \otimes k(x)) \subset E^0\otimes k(x)$ is $s \geq 2$.
Since $V$ is general, $V \cap \ker (q' \otimes k(x))^{\perp} =0$ and 
hence $(q',v) : E^0 \to Q' \oplus k(x)$ is surjective for any $v \in V \setminus \{0\}$.
Thus we have an exact sequence
\begin{align*}
 0 \to \bar{\cale}^{\vee} \to p_C^* E^0 \to 
 p_C^* Q' \oplus ( p_C^* k(x) \otimes p^*\calo_{\P(V)}(1) )\to 0
\end{align*}
on $C \times \P(V)$,
which induces a morphism $\P(V) \to \Quot_C(E^0, r-s,d+e)_{L \otimes \det E^0} =M_{N+1,L}$,
which is an inclusion by the generality of $V$.
Since 
\begin{align*}
\det ( p_C^* Q' \oplus ( p_C^* k(x) \otimes p^*\calo_{\P(V)}(1) )) = \det p_C^* Q' \otimes p_C^* \calo_C(x) \simeq p_C^* (L \otimes \det E^0),
\end{align*}
we have $\det \bar{\cale} \simeq p_C^* L$.
Since $\bar{\cale} = \cale_{N+1}|_{\P(V)} $ and $\varphi_L^* \calo(1) \simeq p_* (\det \cale_{N+1} \otimes p_C^* L^{-1} )$ for $p : M_{N+1,L} \times C \to M_{N+1,L}$,
it holds that $\varphi_L^* \calo(1) |_{\P(V)} $ is a trivial bundle.
Hence $\P(V)$ is contracted by the morphism $\varphi_L: M_{N+1,L} \to \P(\Hom(L^{-1}, \wedge^s E^0))$.

Let $\cale=\cale_{N+1}$ on $M_{N+1,L} =\Quot_C(E^0, r-s,d+e)_{L \otimes \det E^0}$ and
\begin{align*}
   0 \to \cale^{\vee} \to p_C^* E^0 \to  p_C^* E^0/ \cale^\vee \to 0
\end{align*}
be the universal exact sequence on $\Quot_C(E^0, r-s,d+e)_{L \otimes \det E^0} \times C$.
Let $T_{M_{N+1}}$ be the tangent sheaf of $M_{N+1}=\Quot_C(E^0, r-s,d+e)$.
Outside a codimension two subset,
\footnote{That is, on the open subset $U$ where $\Ext^1 (E^\vee, E^0/E^\vee) =H^1(E \otimes (E^0/E^\vee)) =0$.
We note that this vanishing holds if $E$ is semistable since $ H^1(E \otimes E^0) =0$ by the semistability of $E$ and $d \gg 0$.
Since $E$ is semistable if $[(E,\alpha)] \in M_{1,L}$,
$M_{N+1,L} \setminus U$ is contained in the indeterminacy locus of $M_{N+1,L} \dashrightarrow M_{1,L}$
and hence the codimension of $M_{N+1,L} \setminus U$ is at least two.
}
the restriction $ T_{M_{N+1}}|_{M_{N+1,L}}$ coincides with
$p_* \calh om (\cale^{\vee}, p_C^* E^0/ \cale^\vee) $
and $R^ip_* \calh om (\cale^{\vee}, p_C^* E^0/ \cale^\vee)=0 $ for $i >0$.
Hence $-K_{M_{N+1,L}} = -K_{M_{N+1}}|_{M_{N+1,L}}$ is the first chern class of 
$p_! ( \calh om (\cale^{\vee}, p_C^* E^0/ \cale^\vee))$.

Now we compute $p_! ( \calh om (\cale^{\vee}, p_C^* E^0/ \cale^\vee))|_{\P(V)}$.
For $\calq \coloneqq  p_C^* Q' \oplus ( p_C^* k(x) \otimes p^*\calo_{\P(V)}(1) )$,
it holds that
\begin{align}\label{eq_chern}
\begin{aligned}
\ch(\calq)_{\leq 2} &= (r-s,(d+e-1)[{\rm pt}] ,0) + (0,[{\rm pt}],0)\left(1,l,\frac{l^2}2\right)\\
& = (r-s,(d+e)[{\rm pt}], l[{\rm pt}]),     \\
\ch(\bar{\cale}^\vee)_{\leq 2} &=(r, e[{\rm pt}],0) - \ch(\calq)_{\leq 2} 
= (s,-d[{\rm pt}], -l[{\rm pt}]), \\
\ch(\bar{\cale} \otimes \calq)_{\leq 2}
&= (s,d[{\rm pt}], -l[{\rm pt}]) (r-s,(d+e)[{\rm pt}], l[{\rm pt}]) \\
&=(s(r-s), (s(d+e) +(r-s)d)[{\rm pt}], (2s-r)l[{\rm pt}])
\end{aligned}
\end{align}
and hence 
\begin{align*}
c_1(p_! (\bar{\cale} \otimes \calq)) = c_1(p_* ([\ch(\bar{\cale} \otimes \calq)]\cdot p_C^* \td(C)))   = \calo_{\P(V)}(2s-r).
\end{align*}
Thus we have $K_{M_{N+1,L}}|_{\P(V)} =-c_1(p_! (\bar{\cale} \otimes \calq))  = \calo_{\P(V)}(-2s+r) $.

On the other hand,
\begin{align*}
\lambda_{\cale}([{\rm pt}])|_{\P(V)} =\calo_{\P(V)}, \quad \lambda_{\cale} ([\calo_C])|_{\P(V)} = c_1(p_! (\bar{\cale})) = \calo_{\P(V)}(-1) 
\end{align*}
by \eqref{eq_chern}.
Thus we have
\begin{align*}
    K_{M_{N+1,L}}|_{\P(V)} 
    = \left( (a +b(g-1)) \lambda_{\cale}([{\rm pt}]) +b \lambda_{\cale} ([\calo_C])  \right) |_{\P(V)} = \calo_{\P(V)}(-b).
\end{align*}
Hence $b =2s-r$ holds.
Then $a= r(g-1)-e-2d $ by \eqref{eq_ad+bs=}
and hence
\begin{align*}
   K_{M_{N+1,L}} &=  (a +b(g-1)) \lambda_{\cale}([{\rm pt}])  +b \lambda_{\cale} ([\calo_C])  \\
   &=  (2s(g-1) -e-2d ) \lambda_{\cale}([{\rm pt}])+ (2s-r) \lambda_{\cale} ([\calo_C]). \qedhere
\end{align*}
\end{proof}

\section{Proof of the main result}\label{sec_proof_of_main_result}

Let 
\begin{align*}
\tilde{\Gamma} =\begin{cases}
\emptyset &  \text{ if } s=1\\
\Gamma  & \text{ if } 2 \leq s \leq r\\
\Gamma \cup (\delta_\sharp, \infty )  & \text{ if } s > r,
\end{cases}
\end{align*}
where $\Gamma$ is as in Proposition \ref{prop_gamma_s<r+1} for $s \leq r$ and in Proposition \ref{prop_gamma_s >r} for $s >r$.
For sufficiently large $d$, we have
\begin{align*}
\delta_\sharp=
\begin{cases}
(s-1)d-se_{s-1} & \text{ if }  2 \leq s \leq r \\
\frac{rd +se}{s-r} &  \text{ if }  s > r .
\end{cases}
\end{align*}
Furthermore, let $\kappa_\sharp =rd+(r+1)(e-1)$ for $s >r$ as in Lemma \ref{lemma-upper bound on dim of W,s >r}.

In this section, we prove the following theorem, which is the main result of this paper.

\begin{theorem}\label{thm_MDS}
Let $C$ be a smooth projective curve of genus $g\geq 2$ over $\C$.
Let $E^0$ be a vector bundle on $C $ of rank $r \geq 1$ and degree $e$.
Let $s \geq 1$.
Assume  $d $  is sufficiently large depending on $g, E_0, r,s$.
Then the following hold for any $\delta \in (0,\infty) \setminus \tilde{\Gamma}$ and $[L] \in \Pic^d (C)$.
\begin{enumerate}
\setlength{\itemsep}{0mm}
\item $\pi_\delta: M_{C,\delta}(E_0,s,d)\to \Pic^d(C) , \ [(E,\alpha)] \mapsto [\det E]$ is a Mori dream morphism and $M_{C,\delta}(E_0,s,d)_L$ is a Mori dream space.
\item If $s=1$, or $s=r+1$ and $\kappa_\sharp < \delta < \delta_\sharp$, then $\pi_\delta$ is a projective bundle and
$M_{C,\delta}(E_0,s,d)_L$ is a projective space.
\item  If $2 \leq s  \leq r$, we have
\begin{align*}
   \Mov (M_{C,\delta}(E_0,s,d)/\Pic^d(C))&=\Mov (M_{C,\delta}(E_0,s,d)_L) \\
   &=  
   \begin{cases}
     \R_{\geq 0} \theta_0  + \R_{\geq 0} \lambda_{\cale}([{\rm pt}])  &  \text{ if } s=r-1, r \\
     \R_{\geq 0} \theta_0  + \R_{\geq 0} \theta'_0   & \text{ if }  s \leq r-2
   \end{cases}
\end{align*}
and 
\begin{align*}
   \Eff(M_{C,\delta}(E_0,s,d)/\Pic^d(C)) &= \Eff (M_{C,\delta}(E_0,s,d)_L) \\
   &=  
   \begin{cases}
     \R_{\geq 0} \theta_0  + \R_{\geq 0} \lambda_{\cale}([{\rm pt}])  &  \text{ if } s= r \\
     \R_{\geq 0} \theta_0  + \R_{\geq 0} \theta'_0   & \text{ if }  s \leq r-1
   \end{cases}
\end{align*}
for
\begin{align*}
\theta_0&=  \left(\frac{d}{s}-(g-1)\right)\lambda_{\cale}([{\rm pt}])  -\lambda_{\cale} ([\calo_C])  ,\\
\theta'_0&=    \left(\frac{d+e}{r-s}+(g-1)\right)\lambda_{\cale}([{\rm pt}]) + \lambda_{\cale} ([\calo_C]) .
\end{align*}
\item If $s= r+1$ and $\delta < \kappa_\sharp$, or $ s \geq r+2$,
we have
\begin{align*}
   \Mov(M_{C,\delta}(E_0,s,d)/\Pic^d(C)) &= \Mov (M_{C,\delta}(E_0,s,d)_L) \\
   &=  
   \begin{cases}
     \R_{\geq 0} \theta_0  + \R_{\geq 0} \gamma'  &  \text{ if } s=r+1 \\
     \R_{\geq 0} \theta_0  + \R_{\geq 0} \gamma   & \text{ if }  s \geq r+2
   \end{cases}
\end{align*}
and 
\begin{align*}
  \Eff(M_{C,\delta}(E_0,s,d)/\Pic^d(C)) = \Eff (M_{C,\delta}(E_0,s,d)_L) =       \R_{\geq 0} \theta_0  + \R_{\geq 0} \gamma  
\end{align*}
for 
\begin{align*}
 \theta_0&=     \left(\frac{d}{s}-(g-1)\right)\lambda_{\cale}([{\rm pt}])  -\lambda_{\cale} ([\calo_C]) ,\\
 \gamma&=   \left(\frac{d+e}{s-r}-(g-1)\right)\lambda_{\cale}([{\rm pt}]) -\lambda_{\cale} ([\calo_C]) ,\\
 \gamma'&=   \left(d+e-g\right)\lambda_{\cale}([{\rm pt}]) -\lambda_{\cale} ([\calo_C]) .
\end{align*}
\end{enumerate}
\end{theorem}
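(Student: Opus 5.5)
We assemble the SQM results of \S\ref{sec_SQM}, the chamber structure of \S\ref{sec_Gamma}, the nef cone computation of Proposition~\ref{lem_Nef} and the canonical class of Proposition~\ref{prop_canonical_divisor}. First we dispose of (2). For $s=1$, Lemma~\ref{lem_suff_large_delta} identifies $M_{C,\delta}(E_0,1,d)$ with $\Quot_C(E^0,r-1,d+e)$, a projective bundle over $\Pic^d(C)$. For $s=r+1$ and $\kappa_\sharp<\delta<\delta_\sharp$, Lemma~\ref{lem_s=r+1, last model}(1),(2),(1)',(2)' gives the same conclusion. Projective bundles over $\Pic^d(C)$ (respectively projective spaces) are trivially Mori dream morphisms (respectively Mori dream spaces), which settles (1) in these cases.

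For the remaining $\delta$ we fix the chambers $(\delta_{i-1},\delta_i)$. By Propositions~\ref{prop_SQM_s_leq_r} and \ref{prop_SQM_s>r}, all $M_i$ (and $M_{i,L}$) with $i\le N+1$ for $s\le r$, respectively $i\le N$ (or $i\le N-1$ when $s=r+1$), are $\Q$-factorial SQMs of one another over $\Pic^d(C)$, with relative Picard number $2$. Proposition~\ref{lem_Nef}(1) gives $\Pic_\Q=N^1$ for $M_1$, and this transfers along the SQMs. Proposition~\ref{lem_Nef}(3)--(5) shows that the nef cones are the consecutive chambers $\R_{\ge0}\lambda(w_{i-1})+\R_{\ge0}\lambda(w_i)$, with semiample edges. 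The union of these nef cones is the cone spanned by $\theta_0=\lambda(w_0)$ and the outer edge of the last model. We must show that this union is all of $\Mov$, and identify $\Eff$. By Proposition~\ref{lem_Nef}(3), $\theta_0$ is a common edge of $\Nef$, $\Mov$ and $\Eff$ because $f:M_1\to M_C(s,d)$ is a fibration of positive relative dimension. So everything reduces to the other end.

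\textbf{Case $2\le s\le r$.} The last model is $\Quot$, and its outer nef edge is $\lambda([{\rm pt}])$, which is semiample via $\varphi$ of \eqref{eq_lem_Nef_s_leq_r}.
\begin{itemize}
\item If $s=r$, $\varphi$ has positive relative dimension (it is essentially Quot-to-Chow onto $\P(H^0)$). Hence $\lambda([{\rm pt}])$ is an edge of $\Eff$ as well, and $\Mov=\Eff$.
\item If $s=r-1$, $\varphi$ is birational and contracts a divisor (the locus of quotients with torsion). Then $\Mov$ ends at $\lambda([{\rm pt}])$, and $\Eff$ extends to the class of that exceptional divisor. We expect this class to be $\theta'_0$; we would compute it as the vanishing divisor of a section of $\lambda(w)$ for suitable $w$.
\item If $s\le r-2$, we use the SQM $\Quot_C(E^0,k,n)_L\dashrightarrow\Quot_C(E_0,s,d)_{\tilde L}$ from the introduction, and apply Proposition~\ref{lem_Nef} to the dual family. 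We must match the bases: dualizing interchanges the universal sheaves, so $\lambda_{\cale'}([{\rm pt}])$ and $\lambda_{\cale'}([\calo_C])$ are expressed through $\lambda_{\cale}$ via the universal sequence $0\to\cale^\vee\to p_C^*E^0\to\calq\to0$ and Riemann--Roch. Under this matching the first nef edge $\theta$ of the dual side becomes $\theta'_0$. Thus $\Mov$ is spanned by $\theta_0$ and $\theta'_0$, both extremal, since each is pulled back from a fibration of positive relative dimension; hence $\Eff=\Mov$.
\end{itemize}

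\textbf{Case $s>r$.}
\begin{itemize}
\item If $s\ge r+2$, the last model $M_N$ maps to $M_C(s-r,d+e)$ with positive relative dimension (Lemma~\ref{lemma-M_delta^dagger_s>r}). Its class is $\lambda(w_N)=\gamma$, since $\frac{d+\delta_\sharp}{s}=\frac{d+e}{s-r}$. So $\Mov=\Eff=\langle\theta_0,\gamma\rangle$.
\item If $s=r+1$ and $\delta<\kappa_\sharp$, the last SQM is $M_{N-1}$. Its outer edge $\lambda(w_{N-1})$, with $\frac{d+\kappa_\sharp}{r+1}=d+e-1$, equals $\gamma'$ and induces the divisorial contraction $\mu$ of Lemma~\ref{lem_s=r+1, last model}(3). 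Hence $\Mov$ ends at $\gamma'$. The effective cone extends to the exceptional divisor $W^-_{\kappa_\sharp}$.
\end{itemize}

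The main obstacle is identifying the exceptional divisor classes, namely $\gamma$ in the $s=r+1$ case and $\theta'_0$ in the $s=r-1$ case, as extremal rays of $\Eff$. Our first attempt is to use Proposition~\ref{prop_canonical_divisor}: compare $K_{M_{N-1,L}}$ with $\mu^*K_{\P_N}+aW^-$, where $a=\codim-1$ of the center, to solve for $[W^-]$. Since $[W^-]$ is contracted, it lies outside $\Mov$, so $\Eff=\langle\theta_0,[W^-]\rangle$. The analogous computation on the $\Quot$ side handles $s=r-1$, and the result is transported to the dual picture. Finally, combining everything gives conditions 1)–3) of the (relative) Mori dream definition, proving (1), (3) and (4).
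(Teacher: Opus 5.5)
Your proposal is correct and follows the paper's proof essentially step by step. The steps are the same: the projective-bundle cases; the chain of SQMs with consecutive nef chambers from Proposition~\ref{lem_Nef}; the edge $\theta_0$ together with an outer edge coming from a positive-dimensional fibration ($\varphi$ for $s=r$, $M_N\to M_C(s-r,d+e)$ for $s\ge r+2$, the dual Quot side via the change of basis for $s\le r-2$); and the comparison of canonical classes via Proposition~\ref{prop_canonical_divisor}, which identifies the exceptional divisor with $\gamma$ for $s=r+1$ and with $\theta'_0$ for $s=r-1$.

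The only difference is cosmetic. You do not need the discrepancy to equal $\operatorname{codim}-1$, which the paper never shows. It suffices that $K_{M_{N-1,L}}-\mu^*K_{M_{N,L}}$ (resp.\ $K_{M_{N+1,L}}-\varphi_L^*K_{\Quot_C(E_0,r-1,d)_L}$) is a nonzero class. That in turn relies on the exact normalizations $\mu^*\mathcal{O}(1)=\gamma'$ and $\varphi_L^*\mathcal{O}(1)=\lambda_{\cale}([{\rm pt}])$, and the sign is then fixed because $[W]$ lies on the far side of the nef cone.
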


The slices of $\Mov(M_{i,L})$ for $2 \leq s \leq r$ are described in Figures \ref{mov_0,1}, \ref{mov_2}.
The slices of $\Mov(M_{i,L})$ for $s >r$ are described in the following figures.

\begin{figure}[htbp]
\centering
\begin{tikzpicture}
\draw(0,0)--(12.5,0);

\foreach \x in {2.5,5,10,12.5,15}
  \draw[very thick] (15-\x,5pt)--(15-\x,-5pt);

\foreach \y/\ytext in {2.5/$\theta_N=\gamma$,5/$\theta_{N-1}$,10/$\theta_2$,12.5/$\theta_1$,15/$\theta_0$}
  \draw (15-\y,0) node[above=1ex] {\ytext};

\foreach \z/\ztext in {3.75/$M_{N,L}$,7.5/$ \cdots\cdots$,11.25/$M_{2,L}$,13.75/$M_{1,L}$}
  \draw (15-\z,0) node[below] {\ztext};
\end{tikzpicture}
\caption{The slice of $\Mov(M_{i,L})=\Eff(M_{i,L})$ for $s \geq r +2$} \label{mov_geq_r+2}
\end{figure}
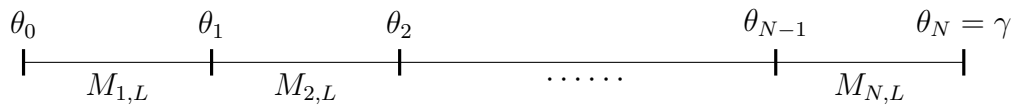

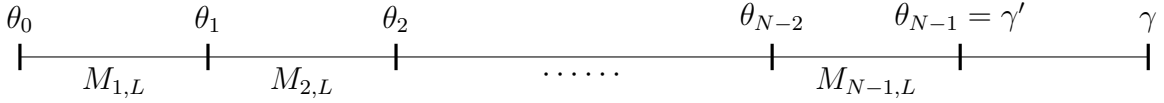
\begin{figure}[htbp]
\centering
\begin{tikzpicture}
\draw(0,0)--(15,0);

\foreach \x in {0,2.5,5,10,12.5,15}
  \draw[very thick] (15-\x,5pt)--(15-\x,-5pt);

\foreach \y/\ytext in {0/$\gamma$,2.5/$\theta_{N-1}=\gamma'$,5/$\theta_{N-2}$,10/$\theta_2$,12.5/$\theta_1$,15/$\theta_0$}
  \draw (15-\y,0) node[above=1ex] {\ytext};

\foreach \z/\ztext in {3.75/$M_{N-1,L}$,7.5/$ \cdots\cdots$,11.25/$M_{2,L}$,13.75/$M_{1,L}$}
  \draw (15-\z,0) node[below] {\ztext};
\end{tikzpicture}
\caption{The slice of $\Mov(M_{i,L})= \mb R_{\geq 0} \theta_0 +  \mb R_{\geq 0} \gamma'$ and $\Eff(M_{i,L})=\mb R_{\geq 0} \theta_0 +  \mb R_{\geq 0} \gamma$ for $s = r +1 \geq 2$} \label{mov_r+1}
\end{figure}

In the rest of this section,
we prove this theorem.

\subsection{The case when the (relative) Picard number is one}

\begin{proof}[Proof of Theorem \ref{thm_MDS}, the case $s =1$, or $s=r+1$ and $\kappa_\sharp < \delta < \delta_\sharp$]
\mbox{}

If $s=1$, $M_{C,\delta} (E_0,1,d) = \Quot_C(E^0,r-1,d+e) $ for any $\delta >0$.
By Lemma \ref{lemma-small delta} and Remark \ref{rem_small_delta},
\[
\pi_\delta: M_{C,\delta} (E_0,1,d) \to M_C(1,d)=\Pic^d (C) , \ [(E,\alpha)] \mapsto [E] 
\]
is a projective bundle 
and hence $M_{C,\delta} (E_0,1,d)_L$ is a projective space.
In particular,
$\pi_\delta$ is  a Mori dream morphism and
$M_{C,\delta} (E_0,1,d)_L$ is a Mori dream space.

If $s=r+1$ and $\kappa_\sharp < \delta <\delta_\sharp$,
the morphism 
\[
M_{C,\delta} (E_0,r+1,d) =M_{C,\delta_\sharp -\ep} (E_0,r+1,d) \to M_{C,\delta_\sharp} (E_0,r+1,d)=\Pic^{d+e} (C) ,
\]
which maps $[(E,\alpha)] $ to $[E/\im \alpha]$,
is a projective bundle by Lemmas \ref{lemma-M_delta^dagger_s>r}, \ref{lem_s=r+1, last model}.
Hence $M_{C,\delta} (E_0,r+1,d)  \to \Pic^{d+e} (C) \simeq \Pic^d(C) $ is a Mori dream morphism and
$M_{C,\delta} (E_0,r+1,d)_L$ is a projective space.
\end{proof}

\subsection{The case $s \geq r+2$}

\begin{proof}[Proof of Theorem \ref{thm_MDS}, the case $s \geq r+2$]
\mbox{}

We use the notation in the previous sections.
In this case,
the morphism 
\[
M_{N}\to M_{C,\delta_\sharp}(E_0,s,d)=M_{C}(s-r,d+e), \ [(E,\alpha) ] \mapsto [E/\im \alpha]
\] in Lemma \ref{lemma-M_delta^dagger_s>r} is a surjective morphism with 
$\dim M_{N} > \dim M_{C}(s-r,d+e)$ and $\dim M_{N,L} > \dim M_{C}(s-r,d+e)_{L \otimes \det E^0}$.
Hence the pullback of an ample line bundle on $M_{C}(s-r,d+e)$ to 
$M_{N}$
spans a common edge of $\Mov(M_N/\Pic^d(C))$, $\Eff(M_N/\Pic^d(C))$, $\Mov(M_{N,L}) $ and $\Eff(M_{N,L})$. 
By the proof of Proposition \ref{lem_Nef} (4),
the pullback is a positive multiple of 
\begin{align*}
\left(\frac{d+\delta_{N}}{s}  -(g-1) \right)\lambda_{\mc E}([{\rm pt}]) -\lambda_{\mc E}([\mc O_C])=\left(\frac{d+e}{s-r}-(g-1)\right)\lambda_{\cale}([{\rm pt}]) -\lambda_{\mc E}([\mc O_C])=\gamma
\end{align*}
by $\delta_N=\delta_\sharp =\frac{rd+se}{s-r}$.
By Proposition  \ref{lem_Nef} (1),
$\R_{\geq 0} \theta_0 $
spans another  common edge of 
 $\Mov(M_N/\Pic^d(C))$, $\Eff(M_N/\Pic^d(C))$, $\Mov(M_{N,L}) $ and $\Eff(M_{N,L})$. Hence $\Mov(M_{N,L}) = \Eff(M_{N,L}) =     \R_{\geq 0} \theta_0  + \R_{\geq 0} \gamma  $ holds. 

By Proposition \ref{lem_Nef},
$\Mov(M_{N}/\Pic^d (C)) =\Mov(M_{N,L})$ is the union of $\Nef (M_i/\Pic^d (C)) = \Nef (M_{i,L}) $ for $1 \leq i \leq N$
and each nef cone is spanned by ($ \pi_i$-)semiample line bundles,
where $\pi_i =\pi_\delta$ for $\delta \in (\delta_{i-1},\delta_i)$.
Hence $\pi_i$ is a Mori dream morphism and $M_{i,L}$ is a Mori dream space.
\end{proof}

\subsection{The case $ s =r +1$ and $0 < \delta < \kappa_\sharp$}

\begin{proof}[Proof of Theorem \ref{thm_MDS}, the case $s =r+1$ and $0 < \delta < \kappa_\sharp$]
\mbox{}

By Lemma \ref{lem_s=r+1, last model},
the morphism $p^-_{\kappa_\sharp} : M_{N-1} =M_{C,\kappa_\sharp -\ep}(E_0,s,d) \to M_{C,\kappa_\sharp}(E_0,s,d)$ factors as 
\[
M_{N-1} =M_{C,\kappa_\sharp -\ep}(E_0,s,d) \xrightarrow{\mu}  M_{N} = M_{C,\delta_\sharp-\varepsilon}(E_0,s,d) \xrightarrow{p^{+}_{\kappa_\sharp}} M_{C,\kappa_\sharp}(E_0,s,d)
\]
with a divisorial contraction $\mu$  and a finite morphism $ p^{+}_{\kappa_\sharp}$. 
Hence the pullback $\gamma_{N-1}$ of an ample line bundle on $M_{C,\kappa_\sharp}(E_0,s,d)$ 
spans an edge of $\Mov(M_{N-1}/\Pic^d(C))$.
Since $\mu_L :M_{N-1,L} \to M_{N,L}$ also contracts a divisor by Lemma \ref{lem_similar_to_M_L} (2),
the same proof shows that the class $\gamma_{N-1}$ spans an edge of $\Mov(M_{N-1,L})$ as well.
Moreover,
the class $[W]$ of the exceptional divisor $W$ of $\mu$ in $N^1(M_{N-1}/\Pic^d(C))_{\R} = N^1(M_{N-1,L})_{\mb R}$ spans  an edge of $\Eff(M_{N-1}/\Pic^d(C))$ and $\Eff(M_{N-1,L})$.

By the proof of Proposition \ref{lem_Nef} (4),
$\gamma_{N-1}$ is a positive multiple of 
\begin{align}\label{eq_thm_MDS_s=r+1}
\left(\frac{d+\delta_{N-1}}{s}  -(g-1) \right)\lambda_{\mc E}([{\rm pt}]) -\lambda_{\mc E}([\mc O_C])=  (d+e-g)\lambda_{\cale}([{\rm pt}]) -\lambda_{\mc E}([\mc O_C])=\gamma'
\end{align}
by $\delta_{N-1} =\kappa_\sharp=rd +(r+1)(e-1)$.
Hence $\R_{\geq 0} \gamma'$ is an edge of  $\Mov(M_{N-1}/\Pic^d(C))$ and $\Mov(M_{N-1,L})$.

To determine $\R_{\geq 0} [W]$,
we compare the canonical divisors of $ M_{N-1,L} $ and $M_{N,L}$.
In this case,
\[
M_N =M_{C,\delta_\sharp-\ep} (E_0,r+1,d) \to M_{C,\delta_\sharp} (E_0,r+1,d)=M_C(1,d+e) =\Pic^{d+e}(C)
\]
is a projective bundle and $M_{N,L}=\P(\Ext^1(L\otimes \det E^0,E_0))$ by Lemmas \ref{lemma-fiber of contraction}, \ref{lem_s=r+1, last model}.
By Proposition \ref{prop_canonical_divisor},
\begin{align*}
 K_{M_{N-1,L}} = (2(r+1)(g-1) -e- 2d)\lambda_{\cale}([{\rm pt}])  + (r+2) \lambda_{\cale} ([\calo_C]).
\end{align*}
On the other hand,
$\mu^*  K_{M_{N,L} }  $ is a multiple of
\begin{align*}
\gamma' = (d+e-g)\lambda_{\cale}([{\rm pt}] ) -\lambda_{\cale} ([\calo_C])  =\lambda_{\cale} (((d+e-1)[{\rm pt}] -[\calo_C] )\cdot \td(C)^{-1})
\end{align*}
by \eqref{eq_thm_MDS_s=r+1}.
Applying Lemma \ref{lem_class_P,P'} (2) to $\delta=\delta_\sharp$,
the restriction of $\gamma'$ to $M_{N-1,L} \setminus W=M_{N,L} \setminus \mu(W) $ is $\calo(1)$.  
Hence we have
\begin{align*}
\mu^* K_{M_{N,L}} = \mu^* K_{\P(\Ext^1(L\otimes \det E^0,E_0))}  &= -(\dim \P(\Ext^1(L\otimes \det E^0,E_0))+1) \gamma' \\
&=-(dr+(r+1)e + r(g-1)) \gamma'.
\end{align*}
Since
\begin{align*}
\gamma= (d+e-g+1)\lambda_{\cale}([{\rm pt}] )  -\lambda_{\cale} ([\calo_C])  =\gamma' + \lambda_{\cale}([{\rm pt}]) ,
\end{align*}
it holds that 
\begin{align*}
K_{M_{N-1,L}} &-\mu^*    K_{M_{N,L}} \\
&=   (2(r+1)(g-1) -e- 2d)\lambda_{\cale}([{\rm pt}]) + (r+2) \lambda_{\cale} ([\calo_C]) +(dr+(r+1)e + r(g-1)) \gamma'\\
 &=(dr +(r+1)e +r(g-1) -r-2) \gamma.
\end{align*}
Since $dr +(r+1)e +r(g-1) -r-2 \neq 0$ by $d \gg 0$, 
$[W]$ is a multiple of $\gamma$.
Since $[W]$ is on the opposite side of 
\begin{align*}
\Nef (M_{N-1,L}) =\mb R_{\geq 0}& \left(\left(\frac{d+\delta_{N-2}}{s} -(g-1)\right)\lambda_{\mc E}([{\rm pt}])-\lambda_{\mc E}([\mc O_C]) \right)  \\
&+ \mb R_{\geq 0} \left(\left(\frac{d+\delta_{N-1}}{s} -(g-1)\right)\lambda_{\mc E}([{\rm pt}]) -\lambda_{\mc E}([\mc O_C]) \right) 
\end{align*}
from $ \mb R_{\geq 0} \left(\left(\frac{d+\delta_{N-1}}{s} -(g-1)\right)\lambda_{\mc E}([{\rm pt}]) -\lambda_{\mc E}([\mc O_C]) \right)=\R_{\geq 0} \gamma' $,
$[W]$ is a positive multiple of $\gamma$.

Since $\R_{\geq 0} \theta_0 $ is a common edge of $\Mov(M_{N-1}/\Pic^d(C))$, $\Eff(M_{N-1}/\Pic^d(C))$, $\Mov(M_{N-1,L})$ and $\Eff(M_{N-1,L}) $,
we have  $\Mov(M_{N-1}/\Pic^d(C))=\Mov(M_{N-1,L}) =    \R_{\geq 0} \theta_0  + \R_{\geq 0} \gamma'  $
and $\Eff(M_{N-1}/\Pic^d(C)) =\Eff(M_{N-1,L}) = \R_{\geq 0} \theta_0  + \R_{\geq 0} \gamma$.

By Proposition \ref{lem_Nef},
$ \Mov(M_{N-1}/\Pic^d(C))=\Mov (M_{N-1,L})$ is the union of $\Nef (M_{i}/\Pic^d(C))= \Nef (M_{i,L})$ for $1 \leq i \leq N-1$
and each nef cone is spanned by ($\pi_i$-)semiample line bundles.
Hence $\pi_i$ is a Mori dream morphism and $M_{i,L}$ is a Mori dream space.
\end{proof}

\subsection{The case $ s =r \geq 2$}
In this case, $\wedge^s E^0 =\wedge^r E^0$ is a line bundle and hence 
the morphism \eqref{eq_lem_Nef_s_leq_r} is 
\begin{align}\label{eq_quot-to-chow}
M_{N+1}=\Quot_C(E^0, 0, d+e) \to \Quot_C(\wedge^r E^0,0,d+e) \simeq \Sym^{d+e} (C).
\end{align}
As in the proof of Lemma \ref{lem_Nef} (5), 
$\Quot_C(\wedge^r E^0,0,d+e)$ is a projective bundle over $\Pic^d(C)$ and
$\lambda_{{\mc E}}([{\rm pt}])$ is the class of the pullback of $\calo(1)$ in 
$N^1(M_{N+1}/\Pic^d(C))$.

\begin{proof}[Proof of Theorem \ref{thm_MDS}, the case $s=r \geq 2$]
\mbox{}

In this case, $\dim  \Quot_C(\wedge^r E^0,0,d+e)=d+e $ is less than  $\dim M_{N+1} = r(d+e)$ by $r = s \geq  2$.
Hence $\lambda_{\mc E}([{\rm pt}])$ spans a common edge of $\Mov (M_{N+1}/\Pic^d(C)), \Eff (M_{N+1}/\Pic^d(C))$, $\Mov (M_{N+1,L}), \Eff (M_{N+1,L})$.
Since $\theta_0$ spans a common edge of these cones as well,
all these cones coincide with
$ \R_{\geq 0} \theta_0 + \R_{\geq 0}\lambda_{\cale}([{\rm pt}])$.
By Proposition \ref{lem_Nef},
$ \Mov (M_{N+1}/\Pic^d(C))= \Mov (M_{N+1,L})= \R_{\geq 0} \theta_0 + \R_{\geq 0}\lambda_{\cale}([{\rm pt}])$ is the union of $ \Nef (M_{i}/{\Pic^d(C)})= \Nef (M_{i,L})$ for $1 \leq i \leq N+1$
and each nef cone is spanned by ($\pi_i$-)semiample line bundles.
Hence $\pi_i$ is a Mori dream morphism and $M_{i,L}$ is a Mori dream space.
\end{proof}

\subsection{The case $s =r-1 \geq 2$}

\begin{proof}[Proof of Theorem \ref{thm_MDS}, the case $s=r-1 \geq 2$]
\mbox{}

By  $\wedge^{r-1} E^0 \simeq E_0\otimes \det E^0$, the morphism \eqref{eq_lem_Nef_s_leq_r} is 
\begin{align*}
\varphi : M_{N+1} =\Quot_C(E^0,1,d+e) \to 
 \Quot_C(E_0,r-1,d) . 
\end{align*}
Since $\varphi$ maps $[E^0 \stackrel{q}{\twoheadrightarrow} Q]$ to $[E_0 \twoheadrightarrow (\ker q)^\vee]$ if $Q$ is locally free,
$\varphi $ is an isomorphism on the locus where $Q$ is locally free.
Since the Picard numbers of $M_{N+1}$ and $ \Quot_C(E_0,r-1,d)$ are $\rho(\Pic^d(C))+2$ and $\rho(\Pic^d(C)) +1$ respectively,
$\varphi$ is a divisorial contraction. 
Similarly, 
\[
\varphi_L : M_{N+1,L} =\Quot_C(E^0,1,d+e)_{L\otimes \det E^0} \to \Quot_C(E_0,r-1,d)_{L}
\]
is also a divisorial contraction. 
By the proof of Proposition \ref{lem_Nef} (5),
this shows that $\lambda_{\mc E}([{\rm pt}])$ is represented by a ($\pi_{N+1}$-)semiample line bundle 
and spans a common edge of $\Mov (M_{N+1}/\Pic^d(C))$, $\Mov(M_{N+1,L})$.
Hence we have
\begin{align*}
 \Mov (M_{N+1}/\Pic^d(C))= \Mov (M_{N+1,L})= \R_{\geq 0} \theta_0 + \R_{\geq 0}\lambda_{\cale}([{\rm pt}]),
\end{align*}
which is the union of $ \Nef (M_{i}/{\Pic^d(C)})= \Nef (M_{i,L})$ for $1 \leq  i \leq N+1$.
Then we see that $\pi_i$ is a Mori dream morphism and 
$M_{i,L} $ is a Mori dream space as in the case $s=r$.

Since $\varphi$ and $\varphi_L$ are divisorial contractions,
$\Eff(M_i/\Pic^d(C))$ and $  \Eff (M_{i,L})$ are spanned by $ \theta_0$ and 
the class $[W]$ of  the exceptional divisor $W$ of $\varphi$.
To determine $\R_{\geq 0} [W]$,
we compare the canonical divisors.
By Claim \ref{claim_lambda(pt)},
$ \lambda_{\cale}([{\rm pt}])$ is the pullback of $\calo(1)$ on $ \Quot_C(E_0,r-1,d)_{L} =\P(\Hom(L^{-1}\otimes \det E_0, E_0))$.
Hence  we have
\begin{align*}
 &K_{M_{N+1,L}} -\varphi^*    K_{\Quot_C(E_0,r-1,d)_{L}  }  \\
 &=  (2(r-1)(g-1) -e- 2d)\lambda_{\cale}([{\rm pt}])+(r-2) \lambda_{\cale} ([\calo_C])  
 + (dr +(r-1)e -r(g-1)) \lambda_{\cale}([{\rm pt}])\\
 &= (r-2) (d+e + g-1)\lambda_{\cale}([{\rm pt}]) +(r-2) \lambda_{\cale} ([\calo_C]) =(r-2) \theta'_0
\end{align*}
by Proposition \ref{prop_canonical_divisor}.
Since $r-2 =s-1 \geq 1$,
$[W]$  is a multiple of 
$\theta'_0$.
Since $[W]$ is on the opposite side of 
\[
\Nef(M_{N+1,L}) =\mb R_{\geq 0} \left(\left(d-e_{s-1} -(g-1)\right)\lambda_{\mc E}([{\rm pt}])-\lambda_{\mc E}([\mc O_C])\right)  + \mb R_{\geq 0}\lambda_{\mc E}([{\rm pt}])
\]
from $\mb R_{\geq 0}\lambda_{\mc E}([{\rm pt}])$,
$[W]$ is a positive multiple of 
$\theta'_0$ and hence
\begin{align*}
\Eff(M_i/\Pic^d(C))=\Eff (M_{i,L} ) &=    \R_{\geq 0} \theta_0 +\R_{\geq 0} \theta'_0.\qedhere
\end{align*}
\end{proof}

\subsection{The case $2 \leq s \leq r-2$}

Let $k=r-s$ and set $\delta'_\sharp \coloneqq (k-1)d -k e'_{k-1} $ 
for 
\[
e'_{i} \coloneqq \min\{ \deg F \mid E^0 \twoheadrightarrow F, \rank F=i\} 
\]
and define $\Gamma'  =  \left\{ \delta'_\flat = \delta'_1 < \delta'_2 < \cdots < \delta'_{N'} =\delta'_\sharp  \right\}$ by
\begin{align*}
\Gamma'  \coloneqq \left\{ 
\delta \in (0, \delta'_\sharp] \,\middle|\, \frac{d + \delta}{k} = \frac{d'}{k'}  \text{ for some integers } 1 \leq k' \leq k - 1,\ d' \leq d -e'_{k-k'} \right\}.
\end{align*}
For $\delta \in (\delta'_{j-1}, \delta'_j)$,
we set $M'_j \coloneqq M_{C,\delta}(E^0, r-s, d+e)$ and $M'_{j,L} \coloneqq M_{C,\delta}(E^0,r- s, d+e)_{L\otimes \det E^0}$, where $\delta'_0 \coloneqq 0$ and $\delta'_{N'+1} \coloneqq \infty$. 

Note that the natural birational map
\begin{align*}
   M_{N+1}=\Quot(E^0,r-s,d+e) \dashrightarrow  M'_{N'+1}=\Quot(E_0,s,d)
\end{align*}
gives an isomorphism $M_{N+1} \setminus Z \simeq M'_{N'+1} \setminus Z' $,
where $Z,Z'$ are the loci parametrizing non-locally free quotients.
By \cite[Lemma 5.2]{GS-Picard}, 
the codimensions of $Z$ and $Z'$ are $r-s$ and $s$  respectively.
Hence $M'_{N'+1}$ is a SQM of $ M_{N+1}$.
Thus all the $M_{i},M'_{j}$ are SQMs of each other.
Similarly, the codimensions of $Z|_{M_{i,L}}$ and $Z'|_{M'_{j,L}}$ are $r-s$ and $s$ respectively by \cite[(8.5)]{GS-Picard}.
Hence all the $M_{i,L},M'_{j,L}$ are SQMs of each other.

Let $p_C^*E^0 \to \mc E'_j$ be the universal object on $M'_{j,L} \times C$ for $j \geq 1$. As before, we have the group homomorphism
\[
\lambda_{\mc E'_j} : K(C) \to \Pic(M'_{j,L})
\]
induced by $\mc E'_j$. We set $\mc E \coloneqq\mc E_{N+1}$ and $\mc E'\coloneqq\mc E'_{N'+1}$. 
Therefore we have two bases 
\[
\{\lambda_{\mc E}([\rm pt]), \lambda_{\mc E}([\mc O_C])\},\quad \{\lambda_{\mc E'}([{\rm pt}]),\lambda_{\mc E'}([\mc O_C])\}
\]
of $N^1(M_{i}/\Pic^d(C))=N^1(M_{i,L})_\R=N^1(M'_{j,L})_\R=N^1(M'_{j}/\Pic^d(C))$. 
We determine the relation between these bases.
\begin{lemma}\label{lemma-change of basis}
    We have  
\[
 \lambda_{\mc E}([{\rm pt}])=\lambda_{\mc E'}([{\rm pt}]), \quad 
\lambda_{\mc E'}([\mc O_C]) =-\lambda_{\mc E}([\mc O_C])-2(g-1)\lambda_{\mc E}([{\rm pt}]).
 \]
\end{lemma}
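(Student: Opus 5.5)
The plan is to compare the two universal families on the common open subset where both Quot schemes parametrize locally free quotients, and then to use relative duality on $M\times C\to M$.

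First, let $U\subset M_{N+1,L}$ be the complement of $Z|_{M_{N+1,L}}$, the locus of quotients $[E^0\stackrel{q}{\twoheadrightarrow}Q]$ with $Q$ locally free. By the discussion just before the lemma, $U$ is identified with $M'_{N'+1,L}\setminus Z'$, and both complements have codimension at least two. Since $M_{N+1,L}$ and $M'_{N'+1,L}$ are normal and locally factorial (Proposition \ref{prop_SQM_s_leq_r}), $\Pic$ of each space injects into $\Pic(U)$. It therefore suffices to prove the two identities after restricting to $U$.

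On $U\times C$ we have the universal sequence
\[
0\to \cale^\vee \to p_C^*E^0 \to \calq \to 0
\]
with $\calq$ locally free. Under $\Quot_C(E_0,s,d)\simeq M_{C,\delta}(E^0,r-s,d+e)$ (Lemma \ref{lem_suff_large_delta}), the point of $U$ corresponds to the pair $E^0\to Q$ with $Q=(\ker q^\vee)^\vee$, so the universal pair on $M'_{N'+1,L}$ restricts to $p_C^*E^0\to\calq$. The universal object is unique only up to twisting by a line bundle pulled back from the base. I will choose $\cale'$ so that $\cale'|_{U\times C}=\calq$. This choice is the main point to check: one must verify that the normalization of $\cale'$ used to define $\lambda_{\cale'}$ is compatible with the one for $\cale$. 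If it is not, a twist by $N\in\Pic$ changes $\lambda(w)$ by $\chi(w\cdot[\cale'_t])\,N$, and I would have to fix the twist by restricting to the same test curves $\P$ and $\P_i$ used in Lemma \ref{lem_class_P,P'}. I expect this normalization bookkeeping to be the only real obstacle.

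With this choice, in $K^0(U\times C)$ we have $[\cale']=[p_C^*E^0]-[\cale^\vee]$. Since $\lambda_{p_C^*E^0}(w)$ is trivial, this gives $\lambda_{\cale'}(w)=-\lambda_{\cale^\vee}(w)$. Next I apply Grothendieck duality for the smooth projective morphism $p$ of relative dimension one:
\[
Rp_*(\cale^\vee\otimes p_C^*w)\simeq \bigl(Rp_*(\cale\otimes p_C^*(w^\vee\otimes\omega_C))[1]\bigr)^\vee .
\]
Taking determinants, the dual and the shift each contribute a sign, and these cancel. This yields $\lambda_{\cale^\vee}(w)=\lambda_{\cale}(w^\vee\otimes\omega_C)$.

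Finally I substitute the two generators. For $w=[{\rm pt}]$ we have $[{\rm pt}]^\vee=-[{\rm pt}]$ (dualize $0\to\calo(-x)\to\calo\to k(x)\to0$), and $\omega_C\otimes[{\rm pt}]=[{\rm pt}]$. Hence $\lambda_{\cale'}([{\rm pt}])=\lambda_\cale([{\rm pt}])$. For $w=[\calo_C]$ we have $[\omega_C]=[\calo_C]+(2g-2)[{\rm pt}]$ in $K(C)$, recalling that $\lambda_\cale([x])$ is independent of the point $x$. Hence
\[
\lambda_{\cale'}([\calo_C])=-\lambda_\cale([\calo_C])-2(g-1)\lambda_\cale([{\rm pt}]),
\]
which is the claimed formula. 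Extending from $U$ to all of $M_{N+1,L}$ by the codimension-two argument above completes the proof, and the relative statement over $\Pic^d(C)$ follows by the same argument.
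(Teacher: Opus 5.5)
Your proof is correct. For the second identity it is essentially the paper's argument. On the locus of locally free quotients you use $\cale'\simeq p_C^*E^0/\cale^\vee$, the family $p_C^*E^0$ contributes a trivial determinant, and Grothendieck duality for $p$ turns $\lambda_{\cale^\vee}([\calo_C])$ into $\lambda_{\cale}([\omega_C])$.

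The genuine difference is in the first identity.
\begin{itemize}
\item The paper proves $\lambda_{\cale}([{\rm pt}])=\lambda_{\cale'}([{\rm pt}])$ geometrically. Both models map compatibly to $\P(\Hom(L^{-1},\wedge^s E^0))$ via the determinant morphisms $\varphi_L,\varphi'_L$. By Claim \ref{claim_lambda(pt)}, both classes are the pullback of $\calo(1)$.
\item You instead prove the single formula $\lambda_{\cale'}(w)=-\lambda_{\cale}(w^\vee\otimes\omega_C)$ for all $w\in K(C)$. Both identities then drop out by taking $w=[{\rm pt}]$ (using $[{\rm pt}]^\vee=-[{\rm pt}]$) and $w=[\calo_C]$.
\end{itemize}
Your route is more uniform. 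It avoids constructing $\varphi'_L$ and checking its compatibility with $\varphi_L$, which needs $\wedge^{r-s}E_0\simeq\wedge^sE^0\otimes\det E_0$ and $\det\calq^\vee\simeq\det\cale^\vee\otimes p_C^*\det E_0$. The paper's route has the advantage of reusing Claim \ref{claim_lambda(pt)}, which it needs anyway to show that $\lambda_{\cale}([{\rm pt}])$ spans a semiample edge of the nef cone.

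You also make explicit two points the paper leaves tacit.
\begin{itemize}
\item The reduction to the big open set $U$. Normality alone already gives injectivity of $\Pic\to\Pic(U)$; local factoriality is not needed there.
\item The normalization of $\cale'$. This is not actually an obstacle. The paper's convention is that universal pairs have untwisted source: $\cale$ and $\cale'$ are the duals of the universal kernels of the respective Quot schemes, as in the proof of Proposition \ref{lem_Nef} (5). With that convention $\cale'|_{U\times C}\simeq\calq$ holds on the nose. Your fallback of fixing a twist via the test curves of Lemma \ref{lem_class_P,P'} is therefore unnecessary.
\end{itemize}
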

\begin{proof}
The birational map 
\[
M_{N+1}\dashrightarrow M'_{N'+1}, \quad [E_0\xrightarrow{\alpha} E]\mapsto [E^0\xrightarrow{} E^{0}/\im \alpha^{\vee}]
\]
induces a commutative diagram
\[
\begin{tikzcd}
    M_{N+1,L} \ar[rd, "\varphi_L" swap] \ar[rr, dashed]& & M'_{N'+1,L} \ar[ld, "\varphi'_L"] \\
    & \P({\rm Hom}(L^{-1},\wedge^s E^0)) & 
\end{tikzcd}
\]
where the morphism $\varphi_L$  is the restriction of \eqref{eq_lem_Nef_s_leq_r},
and $\varphi'_L$ is defined similarly.
Since $\lambda_{\mc E}([{\rm pt}]), \lambda_{\mc E'}([{\rm pt}])$ are pullbacks of $\calo(1)$ on 
$\P({\rm Hom}(L^{-1},\wedge^s E^0))$,
we have $\lambda_{\mc E}([{\rm pt}])=\lambda_{\mc E'}([{\rm pt}])$.

Note that under the rational map $M_{N+1}\dashrightarrow M'_{N'+1}$,
we have $p_C^*E^0/\mc E^{\vee}\simeq \mc E'$. Hence 
\[
\lambda_{\mc E'}([\mc O_C])=\lambda_{p_C^*E^0/\mc E^{\vee}}([\mc O_C])=\lambda_{p_C^*E^0}([\mc O_C])-\lambda_{\mc E^{\vee}}([\mc O_C]).
\]
By definition, we have
\begin{align*}
\lambda_{p_C^*E^0}([\mc O_C]) =\det (p_! [p_C^*E^0] ) &= \det ( [p_*  p_C^*E^0] -[R^1p_* p_C^*E^0]) \\
&= \det ([H^0(E^0) \otimes \calo] - [H^1(E^0) \otimes \calo] ) =0.
\end{align*}
On the other hand,
we have
\begin{align*}
R\calh om ( Rp_* (\cale^\vee) ,\calo) \simeq Rp_* R\calh om (\cale^\vee, p_C^* \omega_C[1]) =Rp_* (\cale \otimes p_C^* \omega_C) [1] 
\end{align*}
by Grothendieck duality and hence
\begin{align*}
-\lambda_{\mc E^{\vee}}([\mc O_C]) =-\det Rp_* (\cale^\vee) &= \det R\calh om ( Rp_* (\cale^\vee) ,\calo)\\
& \simeq \det (Rp_* (\cale \otimes p_C^* \omega_C) [1] ) \\
&=- \det (Rp_* (\cale \otimes p_C^* \omega_C) ) \\
&=-\lambda_{\cale} ([\omega_C]) = -\lambda_{\mc E}([\mc O_C])-2(g-1)\lambda_{\mc E}([{\rm pt}]).
\end{align*}
This completes the proof of the lemma.
\end{proof}

\begin{corollary}\label{cor_nef_cone_M'_j,L}
For $1\leq j\leq N'$,
\begin{align*}
\Nef (M'_{j}/\Pic^d(C)) = &  \Nef (M'_{j,L}) \\
= & \mb R_{\geq 0}\left(\left(\frac{d+e+\delta'_{j-1}}{r-s}+(g-1)\right)\lambda_{\mc E}([{\rm pt}])+\lambda_{\mc E}([\mc O_C])\right) \\ 
 & + \mb R_{\geq 0}\left(\left(\frac{d+e+\delta'_{j}}{r-s}+(g-1)\right)\lambda_{\mc E}([{\rm pt}]) + \lambda_{\mc E}([\mc O_C])\right).
\end{align*}
For $j=N'+1$, we have
\begin{align*}
\Nef (M'_{N'+1}/\Pic^d(C)) = & \Nef (M'_{N'+1,L}) \\
= & \mb R_{\geq 0}\left(\left(\frac{d+e+\delta'_{N'}}{r-s}+(g-1)\right)\lambda_{\mc E}([{\rm pt}]) + \lambda_{\mc E}([\mc O_C])\right)  + \mb R_{\geq 0}\lambda_{\mc E}([{\rm pt}]) \\
= & \mb R_{\geq 0}\left(\left(d+e -e'_{s-1}+(g-1)\right)\lambda_{\mc E}([{\rm pt}])+\lambda_{\mc E}([\mc O_C])\right)  + \mb R_{\geq 0}\lambda_{\mc E}([{\rm pt}]),
\end{align*}
where $e'_{s-1} \coloneqq \min\{ \deg F \mid E^0 \twoheadrightarrow F, \rank F=s-1\}$.
Furthermore, all the extremal rays are spanned by (relative) semiample line bundles.
\end{corollary}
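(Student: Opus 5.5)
Apply Proposition \ref{lem_Nef} (4) and (5) to the moduli spaces $M'_j = M_{C,\delta}(E^0, r-s, d+e)$, with the roles of $E_0$ and $E^0$ interchanged. Concretely, replace $E_0$ by $E^0 = (E^0)^\vee{}^\vee$, $s$ by $k = r-s \geq 2$, $d$ by $d+e$, and $e$ by $-e$. The numbers $e'_i$ and the set $\Gamma'$ are then exactly the $e_i$ and $\Gamma$ of Proposition \ref{prop_gamma_s<r+1} for this new data, since $2 \leq k \leq r$ and $d+e$ is sufficiently large. Hence $\delta'_1 = \delta'_\flat$, $\delta'_{N'} = \delta'_\sharp$, and Proposition \ref{lem_Nef} gives, for $1 \leq j \leq N'$,
\[
\Nef(M'_j/\Pic^d(C)) = \Nef(M'_{j,L}) = \sum_{t \in \{j-1, j\}} \R_{\geq 0}\left( \left( \frac{d+e+\delta'_t}{k} - (g-1) \right) \lambda_{\cale'}([{\rm pt}]) - \lambda_{\cale'}([\calo_C]) \right),
\]
with the universal family $\cale'_j$ in place of $\cale_1$. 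For $j = N'+1$, Proposition \ref{lem_Nef} (5) gives the analogous cone with second edge $\lambda_{\cale'}([{\rm pt}])$. The semiampleness of the edges is part of the same proposition. Since all $M'_{j,L}$ are SQMs of each other and of $M_{N+1,L}$, their $N^1$ are canonically identified.

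One point needs checking before this transfer: the classes $\lambda_{\cale'_j}(w)$ must agree with $\lambda_{\cale'}(w) = \lambda_{\cale'_{N'+1}}(w)$ under these identifications. Each $\lambda_{\cale'_j}(w)$ is determined on the common open subset, whose complement has codimension at least $2$, by the universal family there. The universal families agree on that overlap up to twisting by a line bundle pulled back from the base, and in Proposition \ref{lem_Nef} the proof only uses $\lambda$ through its restrictions to fibers. The cleanest route is to note that in the proof of Proposition \ref{lem_Nef} the basis $\{\lambda([{\rm pt}]), \lambda([\calo_C])\}$ is transported between models via the isomorphisms in codimension one. I would state this compatibility explicitly, using that $\Pic$ is unchanged by SQMs (local factoriality, Proposition \ref{prop_SQM_s_leq_r}). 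The twist ambiguity also has to be addressed. Because the paper's normalization in Lemma \ref{lem_class_P,P'} is with $\cale$ as the fixed universal object, I would take the relevant weights $w$ with $\chi$-type normalization. In fact Lemma \ref{lem_class_P,P'} computes only restrictions to curves in fibers, which are twist-insensitive for the classes $w_i$ used in (4), so the cone description is unaffected.

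Next, substitute Lemma \ref{lemma-change of basis}: $\lambda_{\cale'}([{\rm pt}]) = \lambda_{\cale}([{\rm pt}])$ and $-\lambda_{\cale'}([\calo_C]) = \lambda_{\cale}([\calo_C]) + 2(g-1)\lambda_{\cale}([{\rm pt}])$. The coefficient $\frac{d+e+\delta'_t}{k} - (g-1)$ then becomes $\frac{d+e+\delta'_t}{r-s} + (g-1)$, giving exactly the stated generators. For the final equality at $j = N'+1$, use $\delta'_{N'} = \delta'_\sharp = (k-1)(d+e) - k e'_{k-1}$ (with degree $d+e$), so that $\frac{d+e+\delta'_\sharp}{k} = d + e - e'_{k-1}$.

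The main obstacle I expect is the indexing. The statement writes $e'_{s-1}$, while the computation yields $e'_{k-1}$, and the definition of $\delta'_\sharp$ in the text uses $d$ instead of $d+e$. I would reconcile these by carefully redoing formula \eqref{eq_delta^dagger_suff_large_d, s_leq_r} for the data $(E^0, k, d+e)$. If the mismatch persists, it should be treated as a typo in the statement to be fixed, not an obstruction to the argument.
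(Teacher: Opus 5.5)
Your proposal matches the paper's proof: apply Proposition~\ref{lem_Nef}\,(4),(5) to the data $(E^0, r-s, d+e)$, then rewrite the edges using Lemma~\ref{lemma-change of basis}. Your coefficient computation is correct. You are also right that the last formula should read $d+e-e'_{r-s-1}$, because $\tfrac{d+e+\delta'_\sharp}{k}=d+e-e'_{k-1}$ with $\delta'_\sharp=(k-1)(d+e)-ke'_{k-1}$. So $e'_{s-1}$ in the statement, and the $d$ in the definitions of $\delta'_\sharp$ and $\Gamma'$, are typos.

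One correction to your compatibility paragraph: the classes $w_i$ are not twist-insensitive. The Euler characteristic of a fiber against $w_i$ is $\delta_i\neq 0$, so twisting the universal family by $p^*N$ would shift $\lambda(w_i)$ by $\delta_i\, c_1(N)$.

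The correct reason there is no ambiguity is uniqueness. A universal pair with fixed source $p_C^*E^0$ is unique up to unique isomorphism, since a stable pair has only scalar automorphisms and these act on $\alpha$ by the same scalar. Hence the universal families on the different models agree exactly on the common open subset, whose complement has codimension $\geq 2$. Therefore the $\lambda$-classes correspond under the identification of Picard groups.
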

\begin{proof}
    This follows from Proposition \ref{lem_Nef} and Lemma  \ref{lemma-change of basis} .
\end{proof}

Recall that $\theta_0$ is the pullback of an ample class on $M_C(s,d)$ to $M_{1}$
via the morphism $ f : M_{1} \to M_C(s,d) : [(E,\alpha)] \mapsto [E]$.
Similarly,
let $\theta'_0$ be the pullback of an ample class by $M'_{1} \to M_C(r-s,d+e)$.

\begin{lemma}\label{lemma-effective cone'}
All the cones 
$\Eff (M_{i}/\Pic^d(C))$, $\Eff (M'_{j}/\Pic^d(C))$, $\Mov (M_{i}/\Pic^d(C))$, $\Mov (M'_{j}/\Pic^d(C))$ and  $\Eff (M_{i, L}), \Eff (M'_{j,L}) ,\Mov (M_{i, L}), \Mov (M'_{j,L})$  
coincide with
    \begin{align*}
      \mb R_{\geq 0}\theta_0+\mb R_{\geq 0}\theta'_0   
    &= \mb R_{\geq 0}\left( \left(\frac{d}{s}-(g-1)\right)\lambda_{\cale}([{\rm pt}])-\lambda_{\cale} ([\calo_C]) \right) \\
    & \qquad  + \mb R_{\geq 0} \left( \left(\frac{d+e}{r-s}+(g-1)\right)\lambda_{\cale}([{\rm pt}]) + \lambda_{\cale} ([\calo_C])\right).
    \end{align*}
\end{lemma}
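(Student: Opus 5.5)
Since all the $M_i, M'_j$ (and likewise all the $M_{i,L}, M'_{j,L}$) are SQMs of each other, they share one $N^1$, identified via Lemma \ref{lemma-change of basis}, and SQMs have the same effective and movable cones (isomorphism in codimension one). So it suffices to prove that $\Eff(M_{1,L})=\Mov(M_{1,L})=\R_{\geq 0}\theta_0+\R_{\geq 0}\theta'_0$, and similarly in the relative setting over $\Pic^d(C)$, where the same argument applies because $N^1(M_i/\Pic^d(C))\simeq N^1(M_{i,L})$ and all the morphisms used live over $\Pic^d(C)$.

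The plan is to produce the two boundary rays. By Proposition \ref{lem_Nef} (3), $\R_{\geq 0}\theta_0$ is a common edge of $\Nef$, $\Mov$ and $\Eff$ of $M_{1}$ and of $M_{1,L}$: it is the pullback of an ample class along the fibration $M_1\to M_C(s,d)$, which has positive-dimensional fibers. By symmetry, apply the same result to $M'_{1}=M_{C,\delta}(E^0,r-s,d+e)$ for small $\delta$. This is legitimate because $r-s=k\geq 2$ and $\rank E^0=r\geq r-s$, so Proposition \ref{lem_Nef} (3) applies with the roles of $E_0$ and $E^0$ swapped. It gives that the pullback $\theta'_0$ of an ample class on $M_C(r-s,d+e)$ spans a common edge of $\Nef$, $\Mov$ and $\Eff$ of $M'_1$ and of $M'_{1,L}$. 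Its class in the $\lambda_{\cale'}$ basis is $\left(\frac{d+e}{r-s}-(g-1)\right)\lambda_{\cale'}([{\rm pt}])-\lambda_{\cale'}([\calo_C])$.

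Next I convert this class using Lemma \ref{lemma-change of basis}. Substituting $\lambda_{\cale'}([{\rm pt}])=\lambda_{\cale}([{\rm pt}])$ and $-\lambda_{\cale'}([\calo_C])=\lambda_{\cale}([\calo_C])+2(g-1)\lambda_{\cale}([{\rm pt}])$, the class becomes $\left(\frac{d+e}{r-s}+(g-1)\right)\lambda_{\cale}([{\rm pt}])+\lambda_{\cale}([\calo_C])$, which is the stated $\theta'_0$.

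Since $N^1$ is two-dimensional and $\theta_0,\theta'_0$ are not proportional (their $\lambda_{\cale}([\calo_C])$-coefficients have opposite signs and neither vanishes), each of these pointed cones has exactly two extremal rays. We already have two edges of $\Eff$, namely $\R_{\geq 0}\theta_0$ and $\R_{\geq 0}\theta'_0$, and both lie in $\Mov\subset\Eff$ as edges. Hence $\Mov=\Eff=\R_{\geq 0}\theta_0+\R_{\geq 0}\theta'_0$.

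The main obstacle is making sure the two rays are distinct edges, i.e.\ that the cone they span is not degenerate or "wrapping the wrong way". I would handle this by checking that the interior contains a known nef class, for instance $\lambda_{\cale}([{\rm pt}])$, which is nef on $M_{N+1,L}$ by Proposition \ref{lem_Nef} (5). Indeed $\lambda_{\cale}([{\rm pt}])$ is a positive combination: $\theta_0+\theta'_0=\left(\frac{d}{s}+\frac{d+e}{r-s}\right)\lambda_{\cale}([{\rm pt}])$ with a positive coefficient. So the cone $\R_{\geq 0}\theta_0+\R_{\geq 0}\theta'_0$ is strictly convex and contains all the nef cones, which fixes the orientation.
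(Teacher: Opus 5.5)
Your proposal is correct and follows the paper's proof. Proposition~\ref{lem_Nef}~(3) gives the edge $\R_{\geq 0}\theta_0$ for $M_1$. The same result, applied to $M'_1$, gives $\theta'_0$ in the $\lambda_{\cale'}$-basis, which Lemma~\ref{lemma-change of basis} converts into the stated expression in the $\lambda_{\cale}$-basis; invariance of $\Eff$ and $\Mov$ under SQMs then identifies all the cones.

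Your extra observation that $\theta_0+\theta'_0$ is a positive multiple of $\lambda_{\cale}([{\rm pt}])$ is what actually shows the two rays are linearly independent; the opposite-sign remark alone only rules out positive proportionality. The paper leaves this independence implicit.
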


\begin{proof}
By Proposition \ref{lem_Nef}, $\R_{\geq 0} \theta_0 $
is a common edge of $\Eff (M_{i}/\Pic^d(C)), \Eff (M_{i, L}),\Mov (M_{i, L})$ and $\Mov (M_{i}/\Pic^d(C))$.
Similarly,
\begin{align*}
\R_{\geq 0} \theta'_0 &=
\R_{\geq 0}\left( \left(\frac{d+e}{r-s}-(g-1)\right)\lambda_{\cale'}([{\rm pt}]) -\lambda_{\cale'} ([\calo_C])  \right)\\
&=\R_{\geq 0} \left( \left(\frac{d+e}{r-s}+(g-1)\right)\lambda_{\cale}([{\rm pt}])+\lambda_{\cale} ([\calo_C]) \right)
\end{align*}
is a common edge of $\Eff (M'_{j, L})= \Eff (M_{i, L}),\Mov (M'_{j, L}) =\Mov (M_{i, L})$,
where the second equality holds by Lemma \ref{lemma-change of basis}.
Hence this lemma holds.
\end{proof}

\begin{proof}[Proof of Theorem \ref{thm_MDS}, the case $2\leq s \leq r-2$]
\mbox{}

By Proposition \ref{lem_Nef}, Corollary \ref{cor_nef_cone_M'_j,L}, and Lemma \ref{lemma-effective cone'},
$\Mov(M_{i,L})$ is the union of $ \Nef (M_{i,L}), \Nef (M'_{j,L})$ for $1 \leq i \leq N+1, 1 \leq j \leq N' +1$
and each nef cone is spanned by (relative) semiample line bundles.
Hence $M_{i,L}$ is a Mori dream space. Similarly, we get that $\pi_i$ is a Mori dream morphism.
\end{proof}

\begin{proof}[Proof of Theorem \ref{intro_thm_cones}]
For $[L] \in \Pic^n(C)$, we apply  Theorem \ref{thm_MDS} to $ M_{C,\delta}(E_0,r-k,n-e)_{L \otimes \det E_0}$.
Since $\mathcal{Q}_L =\Quot(E^0,k,n)_L \simeq   M_{C,\delta}(E_0,r-k,n-e)_{L \otimes \det E_0} $ for sufficiently large $\delta$,
Theorem \ref{intro_thm_cones} holds by taking $\lambda  =\lambda_{\cale}([{\rm pt}]), \mu= \lambda_{\cale} ([\calo_C])  $.
\end{proof}

\appendix 
\section{$g=2 $ and $s=2$}\label{appendix_s=2}

In this appendix, we use the notation in \S \ref{sec_SQM}.
Furthermore, we assume $g=2, s=2 \leq r$ and $d$ is sufficiently large.
Set $J^{d'} =\Pic^{d'}(C) $ for $d' \in \Z$.

First, assume $d $ is even.
By \cite[Theorem 3]{MR242185},
$M \coloneqq M_C(2,d)$ is a $\P^3$-bundle over $J^d$.
Let $\calo_M(1)$ be the tautological line bundle on the projective bundle $M$.
Then $D \coloneqq  M_C(2,d) \setminus M^s_C(2,d)$ is a prime divisor with $\calo_M(D) \simeq \calo_M(4) \in \Pic( M/J^d)$
since each fiber of $D \to J^d$ is isomorphic to the Kummer surface $\Pic^0(C)/\langle  -1_{\Pic^0(C)}\rangle$ naturally embedded into $\P^3$.

\begin{lemma}\label{lem_g=s=2,irreducible}
 Assume $d $ is even.
 Let $0 < \delta < \delta_\flat$ and  $f:  M_{C,\delta} (E_0,2,d) \to M_C(2,d)$ be the morphism in Lemma \ref{lemma-small delta}.
Then $f^{-1} (D)$ is irreducible.
\end{lemma}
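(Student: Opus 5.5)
Since $f^{-1}(D)$ is a closed subset of $M_{C,\delta}(E_0,2,d)$, which is irreducible of dimension $2r+\dim$-type expected dimension $n_0\coloneqq dr+2e$ (here $g=2$, $s=2$, so $s(r-s)(g-1)=2(r-2)$ and $n_0=dr+2e-2(r-2)$) by Proposition \ref{prop_SQM_s_leq_r} applied to a $\delta$ with $M_{C,\delta}=M^s_{C,\delta}$ (for $0<\delta<\delta_\flat=\delta_1$ this holds by Proposition \ref{prop_gamma_s<r+1}), the strategy is to stratify $D$ according to the type of the strictly semistable bundle, show that the fiber of $f$ over a general point of $D$ has a fixed dimension giving one component of dimension $n_0-1$, and show every other piece of $f^{-1}(D)$ lies in its closure. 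Every point of $D$ is $[L_1\oplus L_2]$ with $\deg L_i=d/2$. Write $D^o\subset D$ for the locus $L_1\not\simeq L_2$ (open dense in $D$, which fibers over $J^d$ with Kummer surface fibers) and $D\setminus D^o$ for the locus $L_1\simeq L_2$ (the $16$ nodes in each fiber).

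Over $[L_1\oplus L_2]\in D^o$, I would describe $f^{-1}$ explicitly: a $\delta$-stable pair $(E,\alpha)$ with $E$ S-equivalent to $L_1\oplus L_2$ is either $E=L_1\oplus L_2$ with both components of $\alpha$ nonzero, or a nonsplit extension $0\to L_i\to E\to L_j\to 0$ with $\alpha$ not factoring through $L_i$ (checked by the slope inequalities of $\delta$-stability with $\delta$ small: the only destabilizing subpairs are line subbundles of degree $d/2$ with zero induced map). The first stratum is an open subset of $\P(\Hom(E_0,L_1)\oplus\Hom(E_0,L_2))$ modulo the torus $\C^*$ acting on the two summands, of dimension $2h-2$ where $h=\hom(E_0,L_i)=r(d/2-1)+e$ (vanishing of $\Ext^1$ by $d\gg0$). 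The extension strata are bundles over $\P(\Ext^1(L_j,L_i))=\P^0$ (since $\ext^1=g-1=1$) with fiber an open subset of $\P(\Hom(E_0,E))$, of dimension $2h-1$. So over $D^o$ the two extension strata have dimension $\dim D+2h-1=(4g-4+... )$; I would check numerically that $\dim D + 2h-1 = n_0-1$, that the split stratum has dimension $n_0-2$ and lies in the closure of each extension stratum (degenerate the extension class to $0$ while keeping $\alpha$), and then show the two extension strata (sub $L_1$ vs sub $L_2$) are interchanged by the monodromy of $D^o$ (swapping $L_1,L_2$ is a loop in $D^o$ since the Kummer surface is the quotient of $\Pic^0$ by $-1$ and its smooth locus is the image of the irreducible $\{L_1\ne L_2\}\subset J^{d/2}\times J^{d/2}$). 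Concretely, $f^{-1}(D^o)$ is dominated by the irreducible variety parametrizing ordered pairs $(L_1,L_2)$ with a nonsplit extension $0\to L_1\to E\to L_2\to0$ and $\alpha$, so it is irreducible.

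Over $D\setminus D^o$ (dimension $\dim J^d$), the fiber consists of pairs with $E$ an extension of $L$ by $L$; its dimension is bounded by $\dim\P(\Ext^1(L,L))+\dim\P(\Hom(E_0,E))\le 1+2h-1$, plus $\dim J^d$, which is far below $n_0-1$; but since $f^{-1}(D)$ is a Cartier divisor (pullback of $D$, and $M_{C,\delta}$ is locally factorial/Cohen--Macaulay), every component has dimension exactly $n_0-1$, so these small pieces cannot form components. Hence $f^{-1}(D)$ is irreducible. The main obstacle I expect is the precise classification of $\delta$-stable pairs over $D^o$ and the dimension count; the monodromy argument is the key to gluing the two extension strata, and I would present it via the ordered-pair parametrization rather than topologically.
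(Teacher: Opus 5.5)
Your argument is correct, but it takes a different route from the paper.

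\textbf{The paper's route.} The paper never stratifies $D$. Given $[(E,\alpha)]\in f^{-1}(D)$, it takes a quotient $E\to N_1$ of degree $\frac{d}{2}$ and uses $\delta$-stability twice: to see that $\alpha_1\colon E_0\to N_1$ is nonzero, and later that the induced $\beta_2$ is nonzero. It then dualizes. The pair $(E,\alpha)$ is recovered from $\beta_1=\alpha_1^\vee\colon N_1^\vee\hookrightarrow E^0$ and $\beta_2\colon N_2^\vee\to\coker\beta_1$, by pulling back $0\to N_1^\vee\to E^0\to\coker\beta_1\to 0$ along $\beta_2$.

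Since the relevant $\Ext^1$ groups vanish for $d\gg 0$, these data form a tower of projective bundles $P_2\to P_1\times J^{d/2}\to J^{d/2}\times J^{d/2}$, and $P_2$ surjects onto $f^{-1}(D)$. Split extensions and the case $N_1\simeq N_2$ are covered automatically. So the paper needs no dimension count, no irreducibility of $M_{C,\delta}(E_0,2,d)$, and no Cartier property of $D$. The same construction also restricts directly to fixed determinant.

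\textbf{Your route.} Your parametrization by extension classes is more familiar but forces the case split. The group $\Ext^1(L_2,L_1)$ jumps from dimension $1$ to $2$ at $L_1\simeq L_2$, and split bundles carry no nonzero class. You compensate with global input:
\begin{itemize}
\item $M_{C,\delta}(E_0,2,d)$ is irreducible of dimension $n_0$. This is legitimately available from Proposition \ref{prop_SQM_s_leq_r}, since its irreducibility part precedes the Picard-group claim that invokes the appendix.
\item Krull's principal ideal theorem.
\end{itemize}
In exchange you get explicit stratum dimensions and purity of $f^{-1}(D)$.

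\textbf{Points to tighten.}
\begin{itemize}
\item The reason every component of $f^{-1}(D)$ has dimension $n_0-1$ is different from the one you give. $M_C(2,d)$ is smooth (a $\P^3$-bundle over $J^d$), so $D$ is Cartier. Since $f$ is dominant and its source is irreducible, $f^{-1}(D)$ is locally cut out by a single equation that does not vanish identically. Local factoriality of the source plays no role.
\item Your bound over $D\setminus D^o$ is $2h+2$, against $n_0-1=2h+3$. The margin is one, not ``far below'', though this suffices.
\item Once the Krull argument is in place, the degeneration argument for the split stratum over $D^o$ is unnecessary, since that stratum has dimension $n_0-2$.
\item The destabilizing subpairs are the degree-$\frac{d}{2}$ line subbundles containing $\im\alpha$, not those with zero induced map. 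The stability conditions you actually state are nonetheless the right ones.
\end{itemize}
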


\begin{proof}
Let $P_1\to J^{\frac{d}{2}}$ be the scheme whose fiber over $[N_1]\in J^{\frac{d}{2}}$ is given by $\P({\Hom(N_1^\vee,E^0)})$.
    Note that since $d\gg 0$, we have $\Ext^1(N_1^\vee,E^0)=0$ and hence $P_1\to J^{\frac{d}{2}}$ is a projective bundle.
    Now consider the scheme $P_2\to P_1\times J^{\frac{d}{2}}$ whose fiber over a point 
    \begin{align}\label{eq_irreducible_g=2,s=2}
    ([N_1^\vee \xrightarrow{\beta_1} E^0],[N_2])\in P_1\times J^{\frac{d}{2}}
    \end{align}
    is given by $\P(\Hom(N_2^\vee,\coker \beta_1))$. 
    Since $\Ext^1(N_2^\vee,\coker \beta_1)=0$, $P_2\to P_1\times J^{\frac{d}{2}}$ is a projective bundle.

 Let $[N_2^\vee \xrightarrow{\beta_2} \coker \beta_1] \in P_2$ be a point over  \eqref{eq_irreducible_g=2,s=2}.
Then we obtain a diagram
\[
\begin{tikzcd}
0 \ar[r] & N_1^\vee  \ar[r, "\beta_1"] & E^0 \ar[r]  & \coker \beta_1 \ar[r]  & 0 \\
0 \ar[r]& N_1^\vee  \ar[r] \ar[u, "\id"]  & E^\vee \ar[r]  \ar[u, "\beta"] & N_2^\vee \ar[r] \ar[u, "\beta_2"] & 0
\end{tikzcd}
\]
where the bottom row corresponds to $\eta \circ \beta_2 \in   \Ext^1(N_2^\vee,  N_1^\vee)$
for $\eta \in  \Ext^1(\coker \beta_1,  N_1^\vee) $ which corresponds to the top row,
and $\beta$ is the induced map.

\begin{claim}\label{claim_irreducible}
Let  $\alpha \coloneqq \beta^\vee : E_0 \to E$.
Then $\alpha$ is non-zero and $(E,\alpha)$ is $\delta$-stable.
\end{claim}

\begin{proof}[Proof of Claim \ref{claim_irreducible}]
Since $\alpha =\beta^\vee \neq 0$ by $\beta_1 \neq 0$,
we need to show that $(E,\alpha)$ is $\delta$-stable.

Assume that there exists a quotient $q : E \to F$ with $\rank F=1$ and 
$\mu(E,\alpha) \geq \mu(F,q \circ \alpha)$.
Since $E$ is semistable by the exact sequence $0 \to N_2 \to E \to N_1 \to 0$, we have $\deg F \geq \mu(E) =\frac{d}2$.
Hence $q \circ \alpha =0 $ and $\mu(E,\alpha) =\frac{d+\delta}{2} \geq \deg F$, which implies $\deg F=\frac{d}2$ by $\delta <\delta_\flat$.
If the composite map $F^\vee \xrightarrow{q^\vee} E^\vee \to N_2^\vee$ is zero, 
then $q^\vee$ factors as $F^\vee \hookrightarrow N_1^\vee \hookrightarrow E^\vee $.
Since $\deg N_1 =\deg F =\frac{d}2$,
 $F^\vee \hookrightarrow N_1^\vee$ is an isomorphism.
Then we have $ \beta_1=0 $ by $\beta \circ q^\vee=(q \circ \alpha)^\vee =0$,
which is a contradiction.
Thus the composite map $F^\vee \xrightarrow{q^\vee} E^\vee \to N_2^\vee$ is non-zero
and hence an isomorphism by $\deg N_2 =\deg F =\frac{d}2$.
Then we have $ \beta_2=0 $ by $\beta \circ q^\vee=0$,
which is a contradiction.
Thus $(E,\alpha)$ is $\delta$-stable.
\end{proof}

Note that we have a natural morphism
\[
P_2 \to P_1 \times J^{\frac{d}{2}} \to J^{\frac{d}2} \times  J^{\frac{d}2}  \to J^d,
\]
where the last morphism maps $([N_1],[N_2])$ to $[N_1 \otimes N_2]$.
By Claim \ref{claim_irreducible}, there exists  a morphism
\begin{align*}
\varphi : P_2 \to M_{C,\delta} (E_0,2,d)
\end{align*}
over $J^d$.
Since $E$ in  Claim \ref{claim_irreducible}  is not stable,
the image of $\varphi$ is contained in $f^{-1}(D)$.

On the other hand,
for any $[(E,\alpha)] \in f^{-1}(D)$,
there exist $N_i \in J^{\frac{d}{2}} $ and an extension $0 \to N_2 \to E \to N_1 \to 0$, which  corresponds to  $\eta' \in \Ext^1(N_1,N_2)$.
Since $(E,\alpha)$ is $\delta$-stable and $\mu(E)=\mu(N_1)$, the induced map $\alpha_1 : E_0 \to E \to N_1$ is non-zero.
Hence $\alpha_1^\vee$ is injective and we have a diagram
\[
\begin{tikzcd}
0 \ar[r] & N_1^\vee  \ar[r, "\alpha_1^\vee"] & E^0 \ar[r]  & \coker \alpha_1^\vee \ar[r]  & 0 \\
0 \ar[r]& N_1^\vee  \ar[r] \ar[u, equal]  & E^\vee \ar[r]  \ar[u, "\alpha^\vee"] & N_2^\vee \ar[r] \ar[u, "\beta_2"] & 0
\end{tikzcd}
\]

If the induced map $\beta_2$ is zero, 
there exists $ \gamma:  E^\vee \to N_1^\vee$ with $\alpha^\vee =\alpha_1^\vee \circ \gamma $.
Then $\alpha$ is decomposed as $ E_0 \xrightarrow{\alpha_1} N_1 \xrightarrow{\gamma^\vee} E $.
By $\alpha \neq 0$, so is $\gamma^\vee$ and hence $ \im \gamma^\vee \simeq N_1$.
Since $ \im \gamma^\vee \subset E$ contains $\im \alpha$,
we have $\mu( \im \gamma^\vee , \alpha) =\frac{d}{2} +\delta > \mu(E,\alpha)$, which contradicts the $\delta$-semistability of  $(E,\alpha)$.
Hence $ \beta_2 $ is non-zero and $\beta_1 \coloneqq \alpha_1^\vee$ and $\beta_2$ corresponds to a point $ p_2 \in P_2 $ with $\varphi(p_2) =[(E,\alpha)]$. 
Thus $ f^{-1}(D) = \im \varphi$, which is irreducible.
\end{proof}

\begin{lemma}\label{lem_reduced}
 Assume $d $ is even.
Then the divisor $f^*D$ is a prime divisor.
\end{lemma}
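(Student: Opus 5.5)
We must show $f^*D$ is reduced and irreducible. Irreducibility of its support is Lemma \ref{lem_g=s=2,irreducible}, since $\Supp f^*D = f^{-1}(D)$. It therefore suffices to show that $f^*D$ is generically reduced along $f^{-1}(D)$, i.e. that $f^*D$ has multiplicity one at a general point of $f^{-1}(D)$. Equivalently, we need a single point $p\in f^{-1}(D)$ and a smooth curve germ through $p$, transversal in a suitable sense, along which the local equation of $D$ pulled back by $f$ vanishes to order exactly one.

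\textbf{Step 1: dimension.} Using the parametrization $\varphi: P_2\to f^{-1}(D)$ from the previous proof, we compute $\dim P_2$. The fibres of $P_1\to J^{d/2}$ have dimension $\hom(N_1^\vee,E^0)-1$, and the fibres of $P_2\to P_1\times J^{d/2}$ have dimension $\hom(N_2^\vee,\coker\beta_1)-1$; both are computed by Riemann--Roch. We check that $\dim P_2 = \dim M_{C,\delta}(E_0,2,d)-1$, and that $\varphi$ is generically finite, in fact generically injective: for $E$ a non-split extension $0\to N_2\to E\to N_1\to 0$, the sub line bundle $N_2$ of degree $d/2$ is unique. This confirms that $f^{-1}(D)$ is a divisor, i.e. $f^*D$ has the expected support.

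\textbf{Step 2: multiplicity one.} Over a general point $[E]\in D$, with $E$ a non-split extension of $N_1$ by $N_2$ and $N_1\not\simeq N_2$, the space $M_C(2,d)$ is smooth (it is a $\P^3$-bundle over $J^d$) and $D$ is a smooth hypersurface near $[E]$, since the Kummer surface is smooth away from its 16 nodes. Near a general $p=[(E,\alpha)]\in f^{-1}(D)$, the space $M_{C,\delta}(E_0,2,d)$ is smooth by Lemma \ref{lem_dim} and Remark (3), because $\delta<\delta_\flat$ is small and $d\gg0$. Hence it suffices to show that $f$ is a smooth morphism at $p$, or at least that $df_p$ composed with the normal direction of $D$ is surjective. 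Then $f^*D$ is the pullback of a reduced smooth divisor under a submersion, and so is reduced at $p$.

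For this we use the description of tangent spaces. The differential $df_p:\Hom(I^\bullet,E)\to \Ext^1(E,E)$ is the map appearing in the exact sequence \eqref{eq_lem_dim}. Its cokernel injects into $\Ext^1(E_0,E)$, and $\Ext^1(E_0,E)=0$ for $d\gg0$ since $E$ is semistable of large slope. So $df_p$ maps onto $\Ext^1(E,E)$, which is $T_{[E]}$ of the moduli stack. The remaining step is to descend from $\Ext^1(E,E)$ to $T_{[E]}M_C(2,d)$ at a strictly semistable point. This is the main obstacle, because $M_C(2,d)$ is only a coarse space there. Instead of arguing with tangent spaces of the coarse space, we plan to use a family argument. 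Take a versal deformation of $E$ transverse to the strictly semistable locus: a one-parameter family $E_t$ with $E_0=E$ and $E_t$ stable for $t\neq0$, whose image curve meets $D$ transversally, as we verify via the $\P^3$-bundle description. Since $\Ext^1(E_0,E_t)=0$, the map $\alpha$ lifts along this family, which gives a curve in $M_{C,\delta}(E_0,2,d)$ through $p$ mapping isomorphically onto a curve meeting $D$ with multiplicity one. Therefore $f^*D$ has multiplicity one along $f^{-1}(D)$, and together with irreducibility this shows that $f^*D$ is a prime divisor.
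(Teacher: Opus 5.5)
Your reduction is the right one and agrees with the paper's up to the decisive point. Irreducibility comes from Lemma \ref{lem_g=s=2,irreducible}, so it suffices to see that $f^*D$ is reduced at one point $p=[(E,\alpha)]$ with $E$ a non-split extension of $N_1\not\simeq N_2$. You also correctly obtain surjectivity of $\Hom(I^\bullet,E)\to\Ext^1(E,E)$ from $\Ext^1(E_0,E)=0$. (Step 1 is unnecessary: $f^*D$ is Cartier because $M_C(2,d)$ is smooth, so its support is automatically of pure codimension one.)

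The gap is exactly at the step you flag as the main obstacle. You assert, but do not prove, that there is a deformation $E_t$ of the bundle $E$ itself whose classifying map $T\to M_C(2,d)$ meets $D$ with multiplicity one. The phrase ``as we verify via the $\P^3$-bundle description'' does not provide this. That description identifies the coarse space and the Kummer surface, but says nothing about how families through the particular non-polystable representative $E$ of the $S$-equivalence class $[N_1\oplus N_2]$ map to it. Likewise, choosing a curve in the Kuranishi space of $E$ transverse to the reduced strictly semistable hypersurface only helps if you already know that $D$ pulls back to a reduced divisor there. That is the statement to be proved.

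This is not a formality, because the answer depends on the representative. By a Luna slice at $N_1\oplus N_2$ (with $g=2$), $M_C(2,d)$ is étale locally $\Ext^1(N_1,N_1)\oplus\Ext^1(N_2,N_2)\oplus(\C_x\oplus\C_y)/\!/\mathbb{G}_m$. Here $\C_x=\Ext^1(N_1,N_2)$ and $\C_y=\Ext^1(N_2,N_1)$ carry opposite weights, $u=xy$ is a local coordinate, and $D=\{u=0\}$. Every deformation of the polystable bundle $N_1\oplus N_2$ has $x(0)=y(0)=0$, hence meets $D$ with multiplicity at least two. By contrast, $E$ sits at $(x_0,0)$ with $x_0\neq 0$, and the family $y=t$ gives $u=x_0t$.

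Some such local comparison between deformations of a simple, strictly semistable bundle and the coarse moduli space is the missing idea. The paper supplies it as follows. It restricts to the open set $U$ where $E$ is simple and factors $f|_U$ through the smooth algebraic space $S=\mathrm{Spl}^{ss}_C(2,d)$, whose tangent space at $E$ is $\Ext^1(E,E)$. It then cites Narasimhan--Ramanan for local injectivity of $S\to M_C(2,d)$, which makes this map a local isomorphism of smooth spaces of equal dimension. Together with your surjectivity computation, $f|_U$ is smooth, so $f^*D$ is reduced on $U\cap f^{-1}(D)\neq\emptyset$. With either that input or the slice computation inserted, your curve argument (lifting $\alpha$ along $E_t$ via $\Ext^1(E_0,E_t)=0$) would go through.
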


\begin{proof}
For $0 <\delta < \delta_\flat$, let $U\subset M_{C,\delta}(E_0,s,d)$ be 
the open subset given by  the locus of  points $[(E,\alpha)] $ such that $E$ is simple. Note that $f^{-1}(D)\cap U \neq \emptyset$.
Indeed, take $[N_1]\neq [N_2]\in J^{\frac{d}{2}}$ and a non-split extension  $0 \to N_2\to E\to N_1\to 0$.
Since $r \geq 2$ and $d \gg 0$, there exists a generically surjective map $\alpha : E_0 \to E$.
Then $E$ is simple and $(E,\alpha)$ is $\delta$-stable. Hence $[(E,\alpha)] \in f^{-1}(D) \cap U$.

    Since $f^{-1}(D) \cap U \neq \emptyset $,
    it suffices to show that $f|_U: U\to M_C(2,d)$ is smooth. Let us denote by $S\coloneqq{\rm Spl}_C^{ss}(2,d)$ the algebraic space parametrizing simple semistable vector bundles of rank $2$ and degree $d$ (see \cite[Theorem 1]{Narasimhan_Seshadri}, \cite[Theorem 7.4]{Altman_Kleiman}). The map $f|_U$ factors as 
    $U\to S\to M_C(2,d)$. It is known that $S$ is smooth, quasiseparated, locally of finite type over $\C$, and the tangent space at a point $[F]\in S$ is given by ${\Ext}^1(F,F)$.
    If $[(E,\alpha)]\in U$, by the proof of Lemma \ref{lem_smoothness_M_L}, the map $U\to S$ at the level of tangent spaces is given by the natural map $\Hom (I^\bullet,E)\to  \Ext^1(E,E)$ for $I^\bullet =\{ E_0 \xrightarrow{\alpha} E \}$, which is surjective 
    since $\Hom (I^\bullet,E)\to  \Ext^1(E,E) \to \Ext^1(E_0,E)  $ is exact and 
    $\Ext^1(E_0,E)=0$ by $d \gg 0$.
    By Remark (ii) on p.23 of \cite{MR242185}, $S \to M_C(2,d)$ is locally injective  and hence locally an isomorphism at such $E \in S$ since $S$ and $M_C(2,d)$ are smooth. 
    Hence $f|_U : U \to M_C(2,d)$ is smooth.
\end{proof}

In the rest of Appendix \ref{appendix_s=2}, $d$ could be odd. 

\begin{lemma}\label{lem_Pic_M}
The morphism $f$ induces 
$\Pic ( M_{C,\delta} (E_0,2,d)) \simeq \Pic (M_C(2,d)) \oplus \Z$.
\end{lemma}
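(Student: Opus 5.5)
The plan is to treat $f : M_{C,\delta}(E_0,2,d) \to M_C(2,d)$, with $0<\delta<\delta_\flat$, as a projective bundle away from a small bad locus, and then to account separately for any divisor lying over $D$.

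\textbf{Step 1 (generic projective bundle).} By Remark \ref{rem_small_delta}, $f$ restricts to a $\P^{m}$-bundle over the stable locus $M^s_C(2,d)$, where $m = 2e+r(d-2(g-1))-1$. The universal pair provides a line bundle restricting to $\calo(1)$ on the fibres, for instance $\lambda_{\cale}([{\rm pt}]\cdot\td(C)^{-1})$, whose restriction is $\calo_{\P}(2)$ by Lemma \ref{lem_class_P,P'}(1). A twisted universal family over the stable locus gives an honest $\calo(1)$. Hence $\Pic(f^{-1}(M^s_C(2,d))) \simeq \Pic(M^s_C(2,d)) \oplus \Z$. Since $M_{C,\delta}(E_0,2,d)$ is locally factorial (Proposition \ref{prop_SQM_s_leq_r}), restriction to an open subset is surjective on $\Pic$, and its kernel is generated by the prime divisors contained in the complement $f^{-1}(M_C(2,d)\setminus M^s_C(2,d))$.

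\textbf{Step 2 (odd $d$).} If $d$ is odd, semistability coincides with stability, so $M_C(2,d)=M^s_C(2,d)$ and there is nothing more to do.

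\textbf{Step 3 (even $d$).} Here $D = M_C(2,d)\setminus M^s_C(2,d)$ is a prime divisor. By Lemmas \ref{lem_g=s=2,irreducible} and \ref{lem_reduced}, $f^{-1}(D)$ is irreducible and $f^*D$ is a prime divisor. So the only prime divisor in the complement is $f^*D$, which is a pullback. This gives an exact sequence
\[
\Z[f^*D] \to \Pic(M_{C,\delta}(E_0,2,d)) \to \Pic(f^{-1}(M^s_C(2,d))) \to 0 ,
\]
and similarly for $M_C(2,d)$, with $\Z[D] \to \Pic(M_C(2,d)) \to \Pic(M^s_C(2,d))\to 0$. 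Since $M_C(2,d)$ is a $\P^3$-bundle over $J^d$, it is smooth, so $D$ is Cartier there. The two sequences are compatible via $f^*$, and the fibre-degree splitting from Step 1 is independent of the $D$ term. A diagram chase, using that $f^*$ is injective because $f$ has connected fibres (so $f_*\calo=\calo$), then gives $\Pic(M_{C,\delta}(E_0,2,d)) \simeq f^*\Pic(M_C(2,d)) \oplus \Z$.

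\textbf{Main obstacle.} I expect the hardest point to be the existence of a global line bundle restricting to $\calo(1)$ on the fibres, rather than only to $\calo(2)$, together with injectivity of $f^*$. For the first, the first thing to try is the classes $\lambda_{\cale}$ of the two basis elements of $K(C)$, which restrict to $\calo(2)$ and $\calo(d)$; when $d$ is odd their gcd is $1$. When $d$ is even, I would instead use $\lambda_{\cale}(w)$ with $a d+2b = 1$ impossible, and so fall back on the fact that the Brauer obstruction vanishes because the $\P$-bundle is the projectivization $\P(\Hom(E_0,E))$ of a genuine sheaf $p_*\mathcal{H}om(E_0,\cale)$ on the moduli stack. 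That sheaf descends up to twist, and its tautological $\calo(1)$ is defined on $M_{C,\delta}$ itself, since $M_{C,\delta}$ is a fine moduli space by stability.
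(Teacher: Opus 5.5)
Your argument follows the paper's route. The paper also removes $D$ and $f^{-1}(D)$ and uses that $f$ is a projective bundle over $M^s_C(2,d)$. It invokes Lemmas \ref{lem_g=s=2,irreducible} and \ref{lem_reduced} so that $\Z[D]\to\Z[f^*D]$ is an isomorphism, and concludes by a chase on the class-group sequences using factoriality. It phrases this with $\Cl$ and a snake-lemma diagram, which gives injectivity of $f^*$ without appealing to $f_*\calo=\calo$.

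What needs fixing is your ``main obstacle'' paragraph and the sentence in Step~1 about an ``honest $\calo(1)$''. For $d$ even that claim is false, and being a fine moduli space of pairs does not rescue it.
\begin{itemize}
\item As in the proof of Lemma \ref{lem_class_P,P'}(1), the universal pair restricts to $p_C^*E\otimes\calo_{\P}(1)$ on $\P\times C$.
\item Suppose some line bundle $\mathcal{L}$ on $f^{-1}(M^s_C(2,d))$ had degree $1$ on the fibres. Then $\cale\otimes p^*\mathcal{L}^{-1}$ would be trivial along the fibres of $f\times\id_C$. It would therefore descend to a Poincar\'e bundle on $M^s_C(2,d)\times C$, which does not exist when $\gcd(2,d)=2$ (Ramanan).
\item The tautological $\calo(1)$ of $\P(p_*\calh om(p_C^*E_0,\cale))$ lives on that projectivization, not on $M_{C,\delta}(E_0,2,d)$.
\item Consistently with this, every $\lambda_{\cale}(w)$ has fibre degree $ad+2b$, which is even.
\end{itemize}

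You do not need a degree-one class. The fibre-degree map $\Pic(f^{-1}(M^s_C(2,d)))\to\Z$ has kernel $f^*\Pic(M^s_C(2,d))$. Its image is a nonzero subgroup $n\Z$ (in fact $2\Z$, hit by $\lambda_{\cale}([{\rm pt}]\cdot\td(C)^{-1})$), which is free of rank one. So the sequence still splits abstractly, and your diagram chase goes through with a lift of a generator of $n\Z$ in place of $\calo(1)$. This is also how the $\Z$ in the paper's right-hand column has to be read.
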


\begin{proof}
If $d$ is odd, this follows from Remark \ref{rem_small_delta} since $M^s_C(2,d) =M_C(2,d) $.

Assume $d $ is even.
Since $f^* D$ is a prime divisor,
this lemma follows from the diagram
\[
\begin{tikzcd}
&&& 0 \arrow[d] &\\
0 \arrow[r]  & \Z [D] \arrow[r] \arrow[d, "{\rotatebox{90}{$\sim$}}"]& \Cl(M_C(2,d)) \arrow[r] \arrow[d, "f^*"]& \Cl(M_C(2,d) \setminus D) \arrow[r] \arrow[d] & 0\\
0 \arrow[r] & \Z [f^*(D)] \arrow[r] &\Cl (M_{C,\delta}(E_0,2,d)) \arrow[r]   & \Cl (M_{C,\delta}(E_0,2,d)\setminus f^{-1}(D))  \arrow[r] \arrow[d]  & 0 \\
&&& \Z \arrow[d] &\\
&&& 0 &
\end{tikzcd}
\]
and the factoriality of $M_{C,\delta} (E_0,2,d) ,M_C(2,d) $.
We note that the right column is exact since $f$ is a projective bundle over $M_C(2,d) \setminus D =M^s_C(2,d)$.
\end{proof}

If $r \geq 3$, there exists the composite of  birational maps
\begin{align*}
\psi:  \Quot_C(E_0, 2, d) \dashrightarrow \Quot_C(E^0, r-2, d+e) \dashrightarrow M_{C,\delta}(E_0,2,d).
\end{align*}

\begin{lemma}\label{lem_codim_semistable}
The birational map $\psi$ does not contract divisors.
\end{lemma}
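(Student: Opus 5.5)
The plan is to find an open subset $U\subset \Quot_C(E_0,2,d)$ on which $\psi$ is defined and an open immersion, and to show that its complement has codimension at least two. Then every prime divisor of $\Quot_C(E_0,2,d)$ meets $U$, so its image under $\psi$ is again a divisor. I will not use the Picard-number comparison from Proposition \ref{prop_SQM_s_leq_r}, since that argument relies on this appendix when $(g,r-s)=(2,2)$ and would be circular.

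\textbf{The open set.} Let
\[
U\coloneqq\{[E_0\stackrel{\alpha}{\twoheadrightarrow} E]\in \Quot_C(E_0,2,d)\mid E \text{ locally free and stable}\}.
\]
For such a point, $\alpha$ is surjective and in particular generically surjective. So $(E,\alpha)$ is $\delta''$-stable for every $\delta''>\delta_\sharp$ by Lemma \ref{lem_suff_large_delta}, and also for every $0<\delta''<\delta_\flat$ by Remark \ref{rem_small_delta}.

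\textbf{Stability for all $\delta$.} I will check that $(E,\alpha)$ is $\delta''$-stable for every $\delta''>0$. A quotient pair $(F,\alpha'')$ with $\rank F=1$ has $\alpha''\neq 0$, since $\alpha$ is surjective. Hence
\[
\mu_{\delta''}(F,\alpha'')=\deg F+\delta''>\tfrac{d}{2}+\tfrac{\delta''}{2},
\]
using $\deg F>d/2$, which follows from the stability of $E$.

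\textbf{$\psi$ is an open immersion on $U$.} For points of $U$, the first map of $\psi$ sends $[E_0\twoheadrightarrow E]$ to $[E^0\to E^0/\alpha^\vee(E^\vee)]$. By Lemma \ref{lem_suff_large_delta}(2), this point corresponds to the pair $(E,\alpha)$ of $M_{C,\delta''}(E_0,2,d)$ for $\delta''\gg0$. Each wall-crossing map $M_{C,\delta''+\varepsilon}\dashrightarrow M_{C,\delta''-\varepsilon}$ is the identity on pairs that are stable on both sides. Hence $\psi$ is defined on $U$ and sends $[E_0\twoheadrightarrow E]$ to $[(E,\alpha)]$. This is injective on $U$, and $U$ maps onto the open subset of $M_{C,\delta}(E_0,2,d)$ consisting of pairs with $E$ stable and $\alpha$ surjective. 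Both spaces are smooth there by Lemma \ref{lem_dim}, since $\ker\alpha$ and $E$ satisfy the vanishing there for $d\gg0$. So $\psi|_U$ is an open immersion.

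\textbf{Codimension of the complement.} First, the locus of non-locally-free quotients in $\Quot_C(E_0,2,d)$ has codimension $s=2$ by \cite[Lemma 5.2]{GS-Picard}. It remains to bound the locus $B$ of surjections $E_0\twoheadrightarrow E$ with $E$ locally free but not stable. I will stratify $B$ by a maximal destabilizing line subbundle, which gives an exact sequence $0\to N_2\to E\to N_1\to 0$ with $\deg N_2=a\geq d/2$ and $\deg N_1=d-a$. The composite $E_0\to E\to N_1$ is surjective, so each stratum is dominated by a parameter space consisting of:
\begin{itemize}
\item $[E_0\twoheadrightarrow N_1]\in \Quot_C(E_0,1,d-a)$,
\item a line bundle $N_2\in J^{a}$,
\item an extension class in $\Ext^1(N_1,N_2)$,
\item a lift of $\ker(E_0\to N_1)\to N_2$ compatible with the extension.
\end{itemize}
Using Lemma \ref{lemma-upper bound on dim of Quot} and $h^0$, $h^1$ estimates with $g=2$, I expect this dimension to be at most $2rd+2e-2(r-2)-c(a)$, where $c(a)\geq 2$ and $c(a)$ increases with $a$. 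This is a computation analogous to Lemma \ref{lemma-upper bound on dim of W}, with $d\gg0$ absorbing the constants.

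\textbf{Main obstacle.} The delicate case is the strictly semistable stratum $a=d/2$ for $d$ even, where $N_1$ and $N_2$ both have degree $d/2$. Here the count mirrors the description of $f^{-1}(D)$ in Lemma \ref{lem_g=s=2,irreducible}: it is the projective bundle $P_2$ over $P_1\times J^{d/2}$ with $\beta_1$ replaced by a surjection. Comparing $\dim P_2$ with $\dim\Quot_C(E_0,2,d)=2rd+2e-2(r-2)$ should show that this stratum has codimension $\geq 2$ precisely because $r\geq 3$. This is where I expect the inequality to be tight and the main work to lie.

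Once these bounds hold, $\Quot_C(E_0,2,d)\setminus U$ has codimension at least two, and $\psi$ does not contract divisors.
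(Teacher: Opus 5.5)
\textbf{The gap: the strictly semistable stratum is a divisor.} The problem is exactly where you put the ``main obstacle''. When $d$ is even, the locus of strictly semistable quotients is a \emph{divisor} in $\Quot_C(E_0,2,d)$, not a locus of codimension $\geq 2$, and $r\geq 3$ does not change this.

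Take $N_1\not\simeq N_2$ in $J^{d/2}$. Then $\ext^1(N_1,N_2)=h^1(N_2\otimes N_1^{-1})=g-1=1$, so there is a unique non-split extension $0\to N_2\to E\to N_1\to 0$, and $E$ is simple. This gives a $2g=4$-dimensional family of strictly semistable bundles. Over each such $E$, the quotients form a dense open subset of $\P(\Hom(E_0,E))$; a general map is surjective for $r\geq 3$ and $d\gg 0$. This projective space has dimension $rd+2e-2r-1$. Hence the stratum has dimension $rd+2e-2r+3$. On the other hand, $\dim\Quot_C(E_0,2,d)=rd+2e-2(r-2)(g-1)=rd+2e-2r+4$; note it is not $2rd+2e-2(r-2)$ as you wrote. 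So the stratum has codimension one.

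The same count, $\dim P_2=rd+2e-2r+3$, is why $f^*D$ is a prime divisor in Lemma \ref{lem_reduced}. This is the genus-two, rank-two phenomenon that the strictly semistable locus of $M_C(2,d)$ is a divisor. Consequently $\Quot_C(E_0,2,d)\setminus U$ has a divisorial component whenever $d$ is even. An open immersion on $U$ says nothing about that component, so your argument does not prove the lemma. Your bound $c(a)\geq 2$ is fine for $a>d/2$ but fails at $a=d/2$. For $d$ odd, stable equals semistable and your approach is fine.

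\textbf{The fix, which is what the paper does.} Use semistable instead of stable $E$. Your own $\delta''$-stability computation already covers this case. A rank-one quotient pair $(F,\alpha'')$ has $\alpha''\neq 0$ because $\alpha$ is surjective, and $\deg F\geq d/2$ by semistability. Therefore
\[
\deg F+\delta''\geq \tfrac d2+\delta''>\tfrac d2+\tfrac{\delta''}{2},
\]
and the strict inequality comes only from $\delta''>0$. So every locally free semistable quotient (with the $H^1$-vanishing) gives a pair that is stable for all $\delta''$. In particular the strictly semistable divisor lies inside the open set where $\psi$ is an isomorphism.

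With this choice you only need the non-semistable and bad loci to have codimension $\geq 2$. The paper gets this in two pieces:
\begin{itemize}
\item the non-torsion-free and $H^1(E_0^\vee\otimes E)\neq 0$ loci are handled by Lemmas 4.9 and 5.2 of the Gangopadhyay--Sebastian Picard-group paper;
\item the non-semistable locus is handled by Bhosle's Proposition 1.2 (codimension $\geq g=2$ in the parameter space $R'$), transported to $\calq^{\mathrm{tf}}_{\mathrm{g}}$ through $\mathbb{U}\to R'$ as in Lemma 7.11 of that paper.
\end{itemize}
Your direct stratification by destabilizing subbundles of degree $a>d/2$ would also serve for the non-semistable part.
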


\begin{proof}
Let  $\calq\coloneqq \Quot_C(E_0, 2, d)$  and
\begin{align*}
   \calq^{\mathrm{tf}}_{\mathrm{g}} \coloneqq \{ [E_0 \twoheadrightarrow E] \in \calq \mid \text{$E$ is torsion free and $H^1(E_0^{\vee} \otimes E) =0$} \} .
\end{align*}
By \cite[Lemmas 4.9,  5.2]{GS-Picard},
$ \calq \setminus \calq^{\mathrm{tf}}_{\mathrm{g}} $ has codimension $\geq 2$ in $\calq$.

Take $x \in C$, $n \gg 1$ and set $N=h^0(E\otimes \calo_C(nx)) = d +2n -2(g-1) =d+2n-2$ for $ [E_0 \twoheadrightarrow E] \in \calq^{\mathrm{tf}}_{\mathrm{g}}  $.
Let $R \subset \Quot_C(\calo_C^N, 2, d+2n)$ be the open subset consisting of $ [\calo_C^N \to G']$ such that 
$G'$ is locally free, $h^1(G')=0$, and the induced map $ \C^N \to H^0(G')$ is  an isomorphism.
Let $R'$ be the open subset of $R$ defined by the condition $H^1( E_0^{\vee} \otimes G'(-nx)) =0$.

Let $\calo_{R' \times C}^N \to \calg'$ be the universal quotient and set $\calg =\calg' \otimes p_C^* \calo_C(-nx)$.
Then $ {p_{R'}}_* \calh om(E_0, \calg)$ is locally free. Let $\mb U\subset \P_{R'}( {p_{R'}}_* \calh om(E_0, \calg))$ be the open subset parametrizing surjections $(E_0\to G' { \otimes \calo_C(-nx)}, \mc O^N_C\to G')$ for $[\mc O^N_C\to G']\in R'$.  We obtain a diagram
\[
\begin{tikzcd}
 \P_{R'}( {p_{R'}}_* \calh om(E_0, \calg)) \supset \mb U \arrow[r, "\Psi"] \arrow[d]  & \calq^{\mathrm{tf}}_{\mathrm{g}} \\
R'& 
\end{tikzcd}
\]
Let $\calq^{ss} \coloneqq \{[E_0 \twoheadrightarrow E] \in \calq  \mid \text{$E $ is semistable} \}$
and $ R^{ss} \coloneqq \{[\calo_C^N \twoheadrightarrow G'] \in R  \mid \text{$G' $ is semistable} \} $.
Since $\dim R' -\dim R' \setminus R^{ss} \geq g=2$ by \cite[Proposition 1.2]{Bhosle},
we obtain $ \dim \calq^{\mathrm{tf}}_{\mathrm{g}} - \dim \calq^{\mathrm{tf}}_{\mathrm{g}}\setminus \calq^{ss} \geq 2$
as in the proof of \cite[Lemma 7.11]{GS-Picard}.

For any $[E_0 \stackrel{q}{\twoheadrightarrow} E] \in \calq^{\mathrm{tf}}_{\mathrm{g}} \cap \calq^{ss}$,
$(E,q)$ is $\delta$-stable.
In fact, for any quotient $(E',q')$ of $(E,q)$ with $\rank E' =1$, we have $q' \neq 0$ and hence 
\[
\mu (E', q') = \deg E' +\delta > \mu(E) +\frac{\delta}2 =\mu(E,q).
\]
Hence $\calq^{\mathrm{tf}}_{\mathrm{g}} \cap \calq^{ss} $ is embedded into $M_{C,\delta}(E_0,2,d)$ by $\psi$ and  this lemma holds.
\end{proof}

\begin{lemma}\label{lem_appendix_picard_group_s=2}
For $r \geq 4$ and $0 < \delta < \delta_\flat$, 
$\Quot_C(E_0, 2, d ), \Quot_C(E^0, r-2, d+e),M_{C,\delta} (E_0,2,d)$ are SQMs of each other, and their Picard groups are isomorphic to $ \Pic (M_C(2,d)) \oplus \Z \simeq \Pic (\Pic^d (C)) \oplus \Z^{2}$.
\end{lemma}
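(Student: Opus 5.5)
The plan has three steps: first show that the three spaces are SQMs of each other, then compute the Picard group of $M_{C,\delta}(E_0,2,d)$, and finally transport that computation along the SQMs. We work in the setting of this appendix, so $g=2$, $s=2$, $d\gg0$, and now $r\geq 4$, which means $k=r-2\geq 2$.

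\emph{SQM step.} The map $\Quot_C(E_0,2,d)\dashrightarrow\Quot_C(E^0,r-2,d+e)$ is an isomorphism away from the loci of non-locally-free quotients. By \cite[Lemma 5.2]{GS-Picard} these loci have codimensions $2$ and $r-2\geq 2$ respectively. Here we use $r\geq4$ so that neither side contracts a divisor; this is where the hypothesis enters. For the second map $\Quot_C(E^0,r-2,d+e)=M_{C,\delta_\sharp+\ep}(E_0,2,d)\dashrightarrow M_{C,\delta}(E_0,2,d)$ we argue as in the proof of Proposition~\ref{prop_SQM_s_leq_r}. Each wall-crossing locus $W^\pm_{\delta'}$ has codimension at least $2$ by Lemma~\ref{lemma-upper bound on dim of W}. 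So neither direction contracts divisors, provided the dimension count of Lemma~\ref{lemma-upper bound on dim of W} is applied in both directions; the same count applies to the $W^+$ side by symmetry of the fiber description. Lemma~\ref{lem_codim_semistable} gives the same conclusion independently for the composite $\psi$. All three spaces are normal and locally factorial: for $M_{C,\delta}(E_0,2,d)$ by Proposition~\ref{prop_SQM_s_leq_r}, whose proof does not use the Picard claim for these properties, and for the Quot schemes by \cite{GS-Picard}. Hence birational maps that contract no divisor in either direction induce isomorphisms of class groups, equal to the Picard groups. Therefore they are SQMs once we know the Picard groups are finitely generated of the same rank, and in fact the class group identification already gives isomorphic Picard groups.

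\emph{Picard computation.} For $0<\delta<\delta_\flat$, Lemma~\ref{lem_Pic_M} gives $\Pic(M_{C,\delta}(E_0,2,d))\simeq\Pic(M_C(2,d))\oplus\Z$. It remains to show $\Pic(M_C(2,d))\simeq\Pic(J^d)\oplus\Z$ for $g=2$. If $d$ is even, $M_C(2,d)$ is a $\P^3$-bundle over $J^d$ by \cite{MR242185}, so this holds. If $d$ is odd, $M_C(2,d)$ is smooth and is a fibration over $J^d$ whose fibers $M_C(2,L)$ are smooth intersections of two quadrics in $\P^5$, with Picard group $\Z$. We use the standard result $\Pic(M_C(2,d))\simeq \Pic(J^d)\oplus\Z$ (Drezet--Narasimhan \cite{Drezet-Narasimhan}). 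More concretely, after tensoring by a line bundle the fibration is isotrivial, so the relative Picard group is $\Z$, generated by the restriction of a theta-type line bundle defined globally. Combining gives $\Pic(\Pic^d(C))\oplus\Z^2$.

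The main obstacle is the SQM claim across the wall crossings. We need codimension at least $2$ of the flipped loci on both sides, together with local factoriality of the intermediate $M_{C,\delta'}(E_0,2,d)$ for $\delta'\notin\Gamma$, so that class groups can be transported. We would rely on Proposition~\ref{prop_SQM_s_leq_r}'s proof for normality and local factoriality, since those parts do not use Claim~\ref{claim_prop_SQM_s_leq_r}. We would then transport $\Cl$ step by step through each $M_i$, from $M_1$ up to $M_{N+1}=\Quot_C(E^0,r-2,d+e)$ and then to $\Quot_C(E_0,2,d)$.
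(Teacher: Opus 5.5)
Your argument is essentially the paper's proof. You use \cite[Lemma 5.2]{GS-Picard} with $r\ge 4$ for the two Quot schemes, Lemma~\ref{lemma-upper bound on dim of W} to see that $M_{C,\delta}(E_0,2,d)\dashrightarrow\Quot_C(E^0,r-2,d+e)$ contracts no divisor, and Lemma~\ref{lem_codim_semistable} for the reverse direction. For the Picard group you use Lemma~\ref{lem_Pic_M}, together with the standard description of $\Pic(M_C(2,d))$, which the paper takes for granted.

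One correction concerns $W^+_{\delta'}$. Lemma~\ref{lemma-upper bound on dim of W} bounds only $W^-_{\delta'}$, and your appeal to ``symmetry of the fiber description'' does not prove the corresponding bound on $W^+_{\delta'}$. You do not need that bound, nor the wall-by-wall transport of class groups through the intermediate $M_i$ that you describe as the main obstacle. Lemma~\ref{lem_codim_semistable}, combined with the codimension-one isomorphism between the two Quot schemes, already shows that $\Quot_C(E^0,r-2,d+e)\dashrightarrow M_{C,\delta}(E_0,2,d)$ contracts no divisor. This is exactly how the paper closes the argument.
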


\begin{proof}
By \cite[Lemma 5.2]{GS-Picard}, $\Quot_C(E^0, r-2, d+e) \dashrightarrow \Quot_C(E_0, 2, d)$ is an isomorphism in codimension one.
By Lemma \ref{lem_codim_semistable},
$\psi :\Quot_C(E_0, 2, d)\dashrightarrow M_{C,\delta}(E_0, 2,d) $ does not contract divisors.
Since so does $M_{C,\delta}(E_0, 2,d)  \dashrightarrow \Quot_C(E^0, r-2, d+e)$ by Lemma \ref{lemma-upper bound on dim of W},
these three varieties are SQMs of each other.
Hence this lemma holds by Lemma \ref{lem_Pic_M}.
\end{proof}

Similar statements hold for $ \Quot_C(E^0, r-2, d+e)_L $.
In fact, if $d$ is even,
let $f_L:  M_{C,\delta} (E_0,2,L) \to M_C(2,L) \simeq \P^3$ be the restriction of $f$
and let $D_L = D \cap M_C(2,L) \sim \calo_{\P^3}(4)$,
which is a prime divisor.
Then $f_L^{-1}(D_L) =\varphi(P_{2,L})$ holds,
where  $P_{2,L}$ is the image of the fiber of 
$ P_2 \to J^d $ over $[L] \in J^d$.
Since $ P_2 \to J^d $ is obtained as
\begin{align*}
  P_2 \xrightarrow{p_2} P_1 \times J^{\frac{d}{2}} \xrightarrow{p_1}   J^{\frac{d}{2}} \times J^{\frac{d}{2}} \to J^{d},
\end{align*}
$P_{2,L} =(p_1 \circ p_2)^{-1} (J')$ holds,
where $J' =\{([N_1],[L\otimes N_1^{-1}]) \mid N_1 \in J^{\frac{d}{2}} \} \subset J^{\frac{d}{2}} \times J^{\frac{d}{2}}$.
Since $J' \simeq J^{\frac{d}{2}}$ is irreducible and $p_1,p_2$ are projective bundles,
$P_{2,L} =(p_1 \circ p_2)^{-1} (J')$ is irreducible.
Hence $f_L^{-1}(D_L)$ is irreducible as well.
Since $f_L^{-1}(D_L) \cap U \neq \emptyset$ for $U$  in Lemma \ref{lem_reduced},
$f_L^{-1}(D_L)$ is reduced by the computation in Lemma \ref{lem_reduced}.

By  \cite[(8.5)]{GS-Picard} and $r \geq 4$,
$\Quot_C(E^0, r-2, L \otimes \det E^0)$ and $\calq_L \coloneqq \Quot_C(E_0, 2, L)$ are isomorphic in codimension one.
Let 
\[
\calq^{\mathrm{tf}}_{\mathrm{g}} \coloneqq \{ [E_0 \twoheadrightarrow E] \in \calq \mid \text{$E$ is torsion free and $H^1(E_0^{\vee} \otimes E) =0$} \}.
\]
For $\calq^{\mathrm{tf}}_{\mathrm{g}} $ in Lemma \ref{lem_codim_semistable},
$ \calq_L \setminus \calq^{\mathrm{tf}}_{\mathrm{g}} $ has codimension $\geq 2$ in $\calq_L$ by \cite[(8.1), (8.5)]{GS-Picard}.
Hence we obtain the following lemma.

\begin{lemma}\label{lem_appendix_picard_group_s=2,L}
For $r \geq 4$, $0 < \delta < \delta_\flat$ and $[L]\in \Pic^d (C)$, 
$\Quot_C(E_0, 2, d +e)_{L \otimes \det E^0}, \Quot_C(E^0, r-2, d)_{L}, M_{C,\delta} (E_0,2,d)_L$ are SQMs of each other, and their Picard groups are isomorphic to $ \Pic (M_C(2,L)) \oplus \Z \simeq  \Z^{2}$.
\end{lemma}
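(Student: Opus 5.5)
The plan is to mirror the proof of Lemma \ref{lem_appendix_picard_group_s=2}, working fiberwise over $[L]$. The statement has two parts: the three spaces are SQMs of each other, and their common Picard group is $\Pic(M_C(2,L))\oplus\Z\simeq\Z^2$. I will establish the Picard group of $M_{C,\delta}(E_0,2,d)_L$ first, and then transport it to the other two spaces via the SQM statement.

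For the Picard group of $M_{C,\delta}(E_0,2,d)_L$, first suppose $d$ is odd. Then $M_C(2,L)$ equals its stable locus, so by Remark \ref{rem_small_delta} the map $f_L$ is a projective bundle over $M_C(2,L)$, and $\Pic(M_C(2,L))\simeq\Z$. Now suppose $d$ is even. The discussion preceding the statement shows that $f_L^{-1}(D_L)$ is irreducible and reduced, i.e.\ $f_L^*D_L$ is a prime divisor. The diagram argument of Lemma \ref{lem_Pic_M} then applies verbatim with $\Cl$ in place of $\Pic$. The inputs are:
\begin{itemize}
\item the exact sequence $0\to\Z[D_L]\to\Cl(\P^3)\to\Cl(\P^3\setminus D_L)\to0$;
\item the corresponding sequence upstairs;
\item the projective-bundle structure of $f_L$ over $M_C(2,L)\setminus D_L=M^s_C(2,L)$, which gives the extra $\Z$ in the right column.
\end{itemize}
Local factoriality of $M_{C,\delta}(E_0,2,d)_L$ is needed so that $\Cl=\Pic$. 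I would obtain it as in Proposition \ref{prop_SQM_s_leq_r}: $M_{C,\delta}(E_0,2,d)_L$ has an open subset with complement of codimension $\ge4$ (Lemma \ref{lem_similar_to_M_L}) that embeds into $\Quot_C(E^0,r-2,d+e)_{L\otimes\det E^0}$. That Quot scheme is a local complete intersection with singularities in codimension $\ge4$ by \cite{GS-Picard}, so the Grothendieck factoriality argument used there applies. This route must not use the Picard computation we are trying to prove, and it does not.

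For the SQM statement, I would chain three birational maps, each an isomorphism off closed sets of codimension $\ge2$:
\begin{enumerate}
\item $\Quot_C(E^0,r-2,d+e)_{L\otimes\det E^0}\dashrightarrow\calq_L$. This is an isomorphism in codimension one by \cite[(8.5)]{GS-Picard}, because $r\ge4$ makes both non-locally-free loci of codimension $\ge2$.
\item $\calq_L\dashrightarrow M_{C,\delta}(E_0,2,d)_L$. Following the proof of Lemma \ref{lem_codim_semistable}, the locus $\calq^{\mathrm{tf}}_{\mathrm g}\cap\calq^{ss}$ restricted to $\calq_L$ embeds into $M_{C,\delta}(E_0,2,d)_L$. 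The complement of $\calq^{\mathrm{tf}}_{\mathrm g}$ in $\calq_L$ has codimension $\ge2$ by \cite[(8.1),(8.5)]{GS-Picard}. The unstable locus within it has codimension $\ge2$ by the same $R'$-parametrization argument, now using \cite[Corollary 1.3]{Bhosle} for fixed determinant in place of Proposition 1.2.
\item $M_{C,\delta}(E_0,2,d)_L\dashrightarrow\Quot_C(E^0,r-2,d+e)_{L\otimes\det E^0}$. This contracts no divisors by Lemma \ref{lem_similar_to_M_L}(1).
\end{enumerate}
Composing, each map and its inverse contract no divisors, so the three normal, $\Q$-factorial varieties are SQMs of each other and share the Picard group computed above.

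The main obstacle I expect is step 2, specifically the codimension-two estimate for the unstable locus in the fixed-determinant setting. This requires restricting $R'$ and the bundle $\mb U$ to the fiber over $[L]$ and checking that the dimension count of \cite[Lemma 7.11]{GS-Picard} survives, i.e.\ that $\Psi$ restricted to $\mb U_L$ still has equidimensional fibers. I would first try to verify this by observing that the determinant maps on $R'$ and on $\calq$ are compatible, and that all fibers of the determinant map on $R'$ are isomorphic via twisting, exactly as in Claim \ref{claim_lem_similar_to_M_L}.
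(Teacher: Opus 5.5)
Your proposal is correct and takes essentially the paper's route: the prime-divisor property of $f_L^{*}D_L$ feeds the diagram argument of Lemma \ref{lem_Pic_M}, and the SQM statement comes from \cite[(8.1),(8.5)]{GS-Picard} combined with the argument of Lemma \ref{lem_codim_semistable} restricted to the fiber over $[L]$. You are in fact more explicit than the paper, which leaves two points implicit: the non-circular local factoriality of $M_{C,\delta}(E_0,2,d)_L$, and the fixed-determinant codimension bound for the unstable locus, which you obtain from \cite[Corollary 1.3]{Bhosle}.
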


\section{$g=2, s-r=2$}\label{sec_appendix_s-r=2}

In this appendix, we use the notation in \S \ref{sec_SQM}.
Furthermore, we assume $g=2, s-r=2$ and $d+e$ is sufficiently large.

First, assume $d+e$ is even.
Define $D\coloneqq M_C(2,d+e)\setminus M^s_C(2,d+e)$. 
By Lemma \ref{lemma-upper bound on dim of W,s >r} (3),
we have a morphism 
\[
h:M_{C,\delta_\sharp-\varepsilon}(E_0,s,d)\to M_{C,\delta_\sharp}(E_0,s,d) =M_C(2,d+e) : [(E,\alpha)] \mapsto [E/\im \alpha]
\] for sufficiently small $\ep >0$.

\begin{lemma}\label{leq_irreducible_g=2,s-r=2}
Assume $d+e$ is even. 
Then  $h^{-1}(D)$ is irreducible.
\end{lemma}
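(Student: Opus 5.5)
We mimic the proof of Lemma \ref{lem_g=s=2,irreducible}: we parametrize $h^{-1}(D)$ by an irreducible variety built from iterated projective bundles, and then show that every point of $h^{-1}(D)$ is hit.

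\textbf{Step 1: understand the points of $h^{-1}(D)$.} For $[(E,\alpha)] \in h^{-1}(D)$, the proof of Lemma \ref{lemma-M_delta^dagger_s>r} (Claim \ref{claim_surjective_U_to_P}) shows that $\alpha$ is injective and that $F\coloneqq E/\im\alpha$ is semistable of rank $2$ and degree $d+e$. Since $[F]\in D$, $F$ is strictly semistable. Hence there is an exact sequence $0\to N_2\to F\to N_1\to 0$ with $N_i\in J^{\frac{d+e}{2}}$.

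\textbf{Step 2: build the parameter space.} Let $P_1\to J^{\frac{d+e}2}\times J^{\frac{d+e}2}$ be the projective bundle with fiber $\P(\Ext^1(N_1,N_2))$. This is a vector space of constant dimension $g-1=1$ when $N_1\neq N_2$, but it jumps on the diagonal. To avoid this problem, we instead parametrize the pair (extension class of $F$, extension class in $\Ext^1(F,E_0)$) in two stages, in the opposite order, as in Lemma \ref{lem_g=s=2,irreducible}.
\begin{itemize}
\item First choose $N_1$ and a class $\xi_1\in \Ext^1(N_1,E_0)$. Since $\Hom(N_1,E_0)=0$ by $d\gg0$, this space has constant dimension, so we get a vector bundle over $J^{\frac{d+e}2}$. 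The class $\xi_1$ gives an extension $0\to E_0\to G\to N_1\to 0$.
\item Then choose $N_2$ and a class in $\Ext^1(N_2,G)$. Again this has constant dimension, since $\Hom(N_2,G)=0$ by a slope argument for $d\gg0$.
\end{itemize}
Each such datum gives an extension $0\to G\to E\to N_2\to 0$. Composing, we get $\alpha\colon E_0\hookrightarrow G\hookrightarrow E$, with $E/\im\alpha$ an extension of $N_1$ by $N_2$ (the order of sub and quotient here is the one matching the universal-extension construction). The total space $\widetilde P$ of the open subset of these iterated vector/projective bundles is irreducible. The open subset to take is where $(E,\alpha)$ is $(\delta_\sharp-\ep)$-stable; it is nonempty because it contains the non-split points over stable $F$ in the closure, by Claim \ref{claim_lemma-fiber of contraction}. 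This gives a morphism $\widetilde P^\circ\to M_{C,\delta_\sharp-\ep}(E_0,s,d)$ whose image lies in $h^{-1}(D)$.

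\textbf{Step 3: surjectivity onto $h^{-1}(D)$ (the main obstacle).} Given $[(E,\alpha)]\in h^{-1}(D)$, take $N_1$ a destabilizing rank-one quotient of $F$ and let $G\coloneqq\ker(E\to F\to N_1)$. Then $0\to E_0\to G\to N_1\to0$ and $0\to G\to E\to N_2\to0$, so $(E,\alpha)$ lies in the image. The subtle point is that the stability locus must be open and that we do not need to exclude split classes: stability is checked on the moduli side, so we simply restrict to the open locus $\widetilde P^\circ$ of stable pairs. We also need that every point of $h^{-1}(D)$ really arises with the right extension data; this holds automatically by the construction above.

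\textbf{Conclusion.} The image of the irreducible space $\widetilde P^\circ$ is $h^{-1}(D)$. Since $\widetilde P^\circ$ is an open subset of an irreducible variety, it is irreducible, and hence $h^{-1}(D)$ is irreducible.
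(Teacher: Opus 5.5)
\textbf{Overall.} Your strategy is the paper's: parametrize $h^{-1}(D)$ by iterated extensions $E_0\subset G\subset E$, where $G/E_0$ and $E/G$ are line bundles of degree $\frac{d+e}{2}$, and then show every point is hit. Your variation of restricting to the open locus of stable pairs is a legitimate shortcut, given openness of stability. The paper instead proves, in Claim \ref{claim_irreducible_g=2,s-r=2}, that every non-split datum already gives a $(\delta_\sharp-\varepsilon)$-stable pair.

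\textbf{The gap in Step 2.} The vanishing $\Hom(N_2,G)=0$ does not follow from a slope argument, because $G$ has the quotient $N_1$, which has the same degree as $N_2$. If $N_2\cong N_1$ and $\xi_1=0$, then $G\cong E_0\oplus N_1$ and $\Hom(N_2,G)\cong\C$. Since $\chi(N_2,G)$ is constant, $\dim\Ext^1(N_2,G)$ jumps by one there. So over your first stage (the whole vector bundle with fibres $\Ext^1(N_1,E_0)$, zero section included), the second stage is not a bundle. You therefore get neither a universal extension over it nor irreducibility of $\widetilde P$ for free. You noticed exactly this kind of jump for $\Ext^1(N_1,N_2)$ on the diagonal, but it reappears here.

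\textbf{How to fix it.} Follow the paper and use only non-split $\xi_1$, i.e. $P'_1=\P(\Ext^1(N_1,E_0))$. When $N_2\cong N_1$, the connecting map $\Hom(N_2,N_1)\to\Ext^1(N_2,E_0)$ sends the identity to $\xi_1\neq 0$. Hence $\Hom(N_2,G)=0$, and the second stage is a projective bundle over $P'_1\times J^{\frac{d+e}{2}}$. Then, in Step 3, you must actually check (it is not ``automatic'') that every $[(E,\alpha)]\in h^{-1}(D)$ yields a non-split $G$. If $G$ were split, the subsheaf $N\hookrightarrow G\subset E$ would satisfy $\mu(N)=\frac{d+e}{2}=\mu_{\delta_\sharp}(E,\alpha)>\mu_{\delta_\sharp-\varepsilon}(E,\alpha)$, contradicting stability.

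\textbf{Smaller points.} First, the labels in Step 3 are swapped. The sheaf $\ker(E\to F\to N_1)$ has $G/E_0\cong N_2$, the \emph{sub} line bundle of $F$. You should set $G\coloneqq\pi^{-1}(N)$ for the destabilizing line subbundle $N\subset F$, consistently with Step 2, where $G/E_0$ is the sub of $E/E_0$.

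Second, your non-emptiness argument for $\widetilde P^\circ$ fails. Every $F$ in your family is an extension of two line bundles of equal degree, hence strictly semistable. So there are no points ``over stable $F$'', and Claim \ref{claim_lemma-fiber of contraction} is irrelevant. Non-emptiness follows either from Claim \ref{claim_irreducible_g=2,s-r=2}, or from Step 3 together with $h^{-1}(D)\neq\emptyset$. The latter holds because $h$ is proper with image containing the dense open $M^s_C(2,d+e)$, hence surjective.
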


\begin{proof}
    Let $P'_1\to J^{\frac{d+e}{2}}$ be the scheme whose fiber over $[N_1]\in J^{\frac{d+e}{2}}$ is given by $\P({\Ext^1(N_1,E_0)})$. Note that since $d\gg 0$, we have $\Hom(N_1,E_0)=0$ and hence $P'_1\to J^{\frac{d+e}{2}}$ is a projective bundle. 
    Now consider the scheme $P'_2\to P'_1\times J^{\frac{d+e}{2}}$ whose fiber over a point 
    \begin{align}\label{eq_irreducible_g=2,s-r=2}
    ([0\to E_0\to E'\to N_1\to 0],[N_2])\in P'_1\times J^{\frac{d+e}{2}}
    \end{align}
    is given by $\P(\Ext^1(N_2,E'))$. 
    We claim that $P'_2$ is again a projective bundle over $P'_1\times J^{\frac{d+e}{2}}$. 
    To show this, it is sufficient to show that $\Hom(N_2,E')=0$. Consider the exact sequence
    \[
    0\to \Hom(N_2,E_0)\to \Hom(N_2,E')\to \Hom(N_2,N_1)\to \Ext^1(N_2,E_0).
    \]
    Since $d\gg 0$, we have $\Hom(N_2,E_0)=0$. If $N_2\not \simeq N_1$ then $\Hom(N_2,N_1)=0$ and if $N_2 \simeq N_1$ then the map $\Hom(N_2,N_1)\to \Ext^1(N_2,E_0)$ is injective since the extension $0 \to E_0 \to E' \to N_1 \to 0$ is non-split. Therefore we have $\Hom(N_2,E')=0$.  

    Let $[0\to E'\to E\to N_2\to 0]\in P'_2$ be a point over \eqref{eq_irreducible_g=2,s-r=2}.
    Then we have a diagram
\begin{equation}\label{eq_B2}
        \begin{tikzcd}
                 &   & 0  \ar[d] & 0  \ar[d] &  \\
        0 \ar[r] & E_0 \ar[r] \ar[d,equal] & E' \ar[r] \ar[d] & N_1 \ar[r] \ar[d] & 0 \\
        0 \ar[r] & E_0 \ar[r,"\alpha"] & E \ar[r,"\pi"] \ar[d] & F \ar[r] \ar[d] & 0 \\
                  &          & N_2 \ar[r,equal] \ar[d] & N_2 \ar[d] & \\
    &   & 0   & 0   &  
    \end{tikzcd}   
\end{equation}

\begin{claim}\label{claim_irreducible_g=2,s-r=2}
    $(E,\alpha)$ is $(\delta_\sharp-\varepsilon)$-stable.
\end{claim}

\begin{proof}
In this case, $\delta_\sharp =\frac{rd+se}{s-r} =\frac{rd+(r+2)e}{2}$
and hence
$\mu_{\delta_\sharp}(E,\alpha) = \mu_{\delta_\sharp}(E_0,\id_{E_0}) = \mu(F) =\frac{d+e}{2}$.

Consider the exact sequence $0 \to (E_0,\id) \to (E,\alpha) \to (F,0) \to 0$.
Note that $(E_0,\id)$ is $\delta_\sharp$-stable since  $\delta_\sharp $ is sufficiently large.
If $(E,\alpha)$ is not $(\delta_\sharp-\varepsilon)$-stable,
there exists a subsheaf $G \subset E$ such that $G \cap E_0 =0$ and $\mu(G)=\mu(F)$ by Lemma \ref{lem_preliminary} (5).
Since $ G \simeq \pi(G)$, we have $\mu(\pi(G)) =\mu(G) =\mu(F)$.

Assume $\pi(G) =F$.
Then $\pi|_G : G \to F$ is an isomorphism and hence so is $(\pi|_G)^{-1}(N_1) =G \cap E' \to N_1$.
This contradicts the non-splitness of $0 \to E_0 \to E' \to N_1 \to 0$.

Thus $ \pi(G) \neq F$.
By $\mu(\pi(G)) =\mu(F)$, we have $\rank \pi(G) \leq \rank F-1=1$. 
Hence $\pi(G) $ is an invertible sheaf of degree $\frac{d+e}{2}$.

Assume $\pi(G) \subset N_1 $ as a subsheaf of $F$.
Then $\pi(G) = N_1 $  holds by $\deg \pi(G)=\deg N_1$.
Since $G \subset \pi^{-1}(N_1) =E'$,
$G \simeq \pi(G) =N_1$ implies that $0 \to E_0 \to E' \to N_1 \to 0$ is split, which is a contradiction.

Assume $\pi(G) \neq N_1 $.
Then the composite $G \simeq \pi(G) \hookrightarrow F \to N_2$ is non-zero and hence an isomorphism by $\deg G=\deg N_2$.
This contradicts the non-splitness of $0 \to E' \to E \to N_2 \to 0$.
 
Hence such $G$ does not exist and     $(E,\alpha)$ is $(\delta_\sharp-\varepsilon)$-stable.
\end{proof}

    By Claim \ref{claim_irreducible_g=2,s-r=2},
    we have a morphism $P'_2\to M_{C,\delta_\sharp-\varepsilon}(E_0,s,d)$. Clearly the image of this morphism is contained in $h^{-1}(D)$. 
    On the other hand, given any $[0 \to E_0\xrightarrow{\alpha} E\to F\to 0]\in h^{-1}(D)$, we have $[F]\in D$,  that is, $F$ is strictly semistable, so there exists an exact sequence $0\to N_1\to F\to N_2\to 0$ with $N_1,N_2\in J^{\frac{d+e}{2}}$. Hence we again get a diagram of the above form \eqref{eq_B2}.
    Since $E$ is $(\delta_\sharp-\varepsilon)$-stable and $\pi: E\to F$ is a quotient, we have 
    $\mu_{\delta_\sharp-\varepsilon}(E,\alpha)<\mu(F)=\mu(N_1)=\mu(N_2)$. Therefore,
    both the extensions $0\to E_0\to E'\to N_1\to 0$ and $0\to E' \to E\to N_2\to 0$ are non-split since otherwise there would be an injection $N_1\to E' \hookrightarrow E$ or $N_2\to E$,
    which contradicts the $(\delta_\sharp-\varepsilon)$-stability of $E$.
    This implies that the map $P'_2\to h^{-1}(D)$ is surjective and hence $h^{-1}(D)$ is irreducible. 
\end{proof}

\begin{lemma}\label{lem_appB_reduced}
    The scheme $h^{-1}(D)$ is reduced.
\end{lemma}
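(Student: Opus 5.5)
Since $h^{-1}(D)$ is irreducible by Lemma \ref{leq_irreducible_g=2,s-r=2}, it suffices to find a nonempty open subset $U\subset M_{C,\delta_\sharp-\varepsilon}(E_0,s,d)$ with $U\cap h^{-1}(D)\neq\emptyset$ such that $h|_U$ is smooth. Then $h^{-1}(D)\cap U=(h|_U)^{-1}(D)$ is reduced because $D$ is a reduced divisor. Moreover, $h^{-1}(D)$ is a Cartier divisor in the smooth variety $M_{C,\delta_\sharp-\varepsilon}(E_0,s,d)$ (Lemma \ref{lemma-M_delta^dagger_s>r}), hence Cohen--Macaulay without embedded points. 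Being generically reduced and irreducible, it is therefore reduced. This is the same strategy as Lemma \ref{lem_reduced}.

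\textbf{Choice of $U$.} I take $U$ to be the locus of $[(E,\alpha)]$ such that $F\coloneqq E/\im\alpha$ is simple. To see that $U\cap h^{-1}(D)\neq\emptyset$, I use the construction in the proof of Lemma \ref{leq_irreducible_g=2,s-r=2}. Choose $[N_1]\neq[N_2]$ in $J^{\frac{d+e}{2}}$ and nonsplit extensions so that $F$ is a nonsplit extension $0\to N_1\to F\to N_2\to 0$. Such an $F$ is simple, since $\Hom(N_2,N_1)=0=\Hom(N_1,N_2)$ and the extension does not split. This requires checking that the element of $P'_2$ can be chosen so that the induced extension $F$ of $N_2$ by $N_1$ is nonsplit. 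The class of $F$ is the image of the class of $E$ under $\Ext^1(N_2,E')\to\Ext^1(N_2,N_1)$, and this map is surjective because $\Ext^2=0$ on a curve. So a general choice works, and Claim \ref{claim_irreducible_g=2,s-r=2} then gives a point of $U\cap h^{-1}(D)$.

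\textbf{Smoothness of $h|_U$.} Let $S$ be the smooth algebraic space of simple semistable bundles of rank $2$ and degree $d+e$. Then $h|_U$ factors as $U\to S\to M_C(2,d+e)$, and $S\to M_C(2,d+e)$ is étale at such points by the remark of \cite{MR242185} already used in Lemma \ref{lem_reduced}. On $U$ the map $\alpha$ is injective (proof of Lemma \ref{lemma-upper bound on dim of W,s >r}). So by the proof of Lemma \ref{lem_smoothness_M_L}, the tangent space at $[(E,\alpha)]$ is $\Ext^1(F,E)$, where $F=E/E_0$, and the differential of $U\to S$ is the natural map $\Ext^1(F,E)\to\Ext^1(F,F)$. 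This identification is the main point to verify: the deformation of the quotient $\cale/\tilde\alpha(E_0\oplus\varepsilon E_0)$ is the image of $\eta'$, exactly as in that proof. The map is surjective because its cokernel injects into $\Ext^2(F,E_0)=0$ on a curve. Since $U$ and $S$ are smooth, surjectivity on tangent spaces gives smoothness of $U\to S$, hence of $h|_U$, which concludes the argument.
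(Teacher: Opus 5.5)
Your proposal is correct and follows essentially the same route as the paper: the same open set $U$ where $E/E_0$ is simple, the same nonemptiness argument via surjectivity of $\Ext^1(N_2,E')\to\Ext^1(N_2,N_1)$, and smoothness of $h|_U$ through the factorization $U\to S\to M_C(2,d+e)$ with tangent map $\Ext^1(F,E)\to\Ext^1(F,F)$. You also spell out a step the paper leaves implicit: $h^{-1}(D)$ is a Cartier divisor in a smooth variety, so it has no embedded points, and being irreducible and generically reduced it is therefore reduced.
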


\begin{proof}
    Let $U\subset M_{C,\delta_\sharp-\varepsilon}(E_0,s,d)$ be 
the open subset given by the locus of points $[E_0\hookrightarrow E]$ such that $E/E_0$ is simple. Note that $h^{-1}(D)\cap U\neq \emptyset$. 
Indeed, by Claim \ref{claim_irreducible_g=2,s-r=2} if $[N_1]\neq [N_2]\in J^{\frac{d+e}{2}}$ and if $0\to E_0\to E'\to N_1\to 0$ is a non-split exact sequence, then any non-split exact sequence $\eta:0\to E'\to E\to N_2\to 0$ gives rise to a $(\delta_\sharp-\varepsilon)$-stable pair $E_0 \hookrightarrow E' \hookrightarrow E$. We have an exact sequence $0 \to N_1 \to E/E_0 \to N_2\to 0$ and this extension class is precisely the image  of $\eta$ under the map ${\rm Ext}^1(N_2,E')\to {\rm Ext}^1(N_2,N_1) \simeq \C$. Since this map is surjective, there exists $\eta$ such that the induced $0 \to N_1\to E/E_0\to N_2\to 0$ is non-split, which implies that $E/E_0$ is simple.

    Since $h^{-1}(D) \cap U \neq \emptyset $,
    it suffices to show that $h|_U: U\to M_C(2,d+e)$ is smooth. Let us denote by $S\coloneqq{\rm Spl}_C^{ss}(2,d+e)$ the algebraic space parametrizing simple semistable vector bundles of rank $2$ and degree $d+e$ \cite[Theorem 1]{Narasimhan_Seshadri}, \cite[Theorem 7.4]{Altman_Kleiman}. The map $h|_U$ factors as 
    $U\to S\to M_C(2,d+e)$. 
    If $[E_0 \hookrightarrow E]\in U$, by the proof of Lemma \ref{lem_smoothness_M_L}, the map $U\to S$ at the level of tangent spaces is given by the natural map $\Ext^1(E/E_0,E)\to  \Ext^1(E/E_0,E/E_0)$ which is surjective.   Hence $h|_U:U \to M_C(2,d+e)$ is a smooth morphism as in the proof of Lemma \ref{lem_reduced}. 
\end{proof}

In the following corollary, $d+e$ could be odd. 

\begin{corollary}\label{cor_appB_Pic}
The morphism $h$ induces 
$\Pic ( M_{C,\delta_\sharp-\varepsilon} (E_0,s,d)) \simeq \Pic (M_C(2,d+e)) \oplus \Z $
and $\Pic ( M_{C,\delta_\sharp-\varepsilon} (E_0,s,d))_L \simeq \Pic (M_C(2,L \otimes \det E^0)) \oplus \Z $.
\end{corollary}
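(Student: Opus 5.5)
The plan is to follow the template of Lemma \ref{lem_Pic_M}: compare class groups across the boundary divisor $D$ and use factoriality. First take the case $d+e$ odd. Then $M_C(2,d+e)=M^s_C(2,d+e)$, and by Lemma \ref{lemma-fiber of contraction} the morphism $h$ is a projective bundle over all of $M_C(2,d+e)$. This gives $\Pic(M_{C,\delta_\sharp-\varepsilon}(E_0,s,d))\simeq \Pic(M_C(2,d+e))\oplus\Z$ at once, with the extra summand generated by a relative $\calo(1)$, and the statement for the fibers over $[L]$ follows by restriction.

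Now let $d+e$ be even. By Lemma \ref{leq_irreducible_g=2,s-r=2}, $h^{-1}(D)$ is irreducible, and by Lemma \ref{lem_appB_reduced} it is reduced. To see that $h^*D$ is a prime divisor, I would compare dimensions. The construction via $P'_2$ shows that $h^{-1}(D)$ has codimension one, since $D$ is a divisor and the fibers of $h$ over $D$ have the same dimension as the generic fiber, namely the $\Ext^1$-dimension. Alternatively, $h^*D$ is the pullback of a Cartier divisor on a normal space, so codimension one is automatic once $h^{-1}(D)\neq M$. Using the smoothness of $h|_U$ from the proof of Lemma \ref{lem_appB_reduced}, $h^*D$ has multiplicity one along $h^{-1}(D)$, so $h^*D=[h^{-1}(D)]$.

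With this in hand, I would write the same commutative diagram of class groups as in Lemma \ref{lem_Pic_M}. The top row is $0\to\Z[D]\to\Cl(M_C(2,d+e))\to\Cl(M^s_C(2,d+e))\to 0$; its left map is injective because $D$ is not principal, being a nontrivial multiple of an ample class on the fibers over $J^{d+e}$. The bottom row is the corresponding sequence for $M_{C,\delta_\sharp-\varepsilon}(E_0,s,d)$ and $h^{-1}(D)$. The right column is $0\to \Cl(M^s_C)\to\Cl(h^{-1}(M^s_C))\to\Z\to0$, which holds because $h$ is a projective bundle over $M^s_C(2,d+e)$ by Lemma \ref{lemma-M_delta^dagger_s>r}. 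A snake/diagram chase then gives $\Cl(M_{C,\delta_\sharp-\varepsilon})\simeq\Cl(M_C(2,d+e))\oplus\Z$. Both spaces are smooth: $M_{C,\delta_\sharp-\varepsilon}$ by Lemma \ref{lemma-upper bound on dim of W,s >r}(2), and $M_C(2,d+e)$ because for $g=2$ it is a $\P^3$-bundle over $J$. Hence $\Cl=\Pic$ on both, which proves the first isomorphism.

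For the fiber over $[L]$, I would repeat the argument with $h_L$, $D_L=D\cap M_C(2,L\otimes\det E^0)\simeq$ a Kummer quartic in $\P^3$, and $P'_{2,L}$. The irreducibility of $h_L^{-1}(D_L)$ follows as in Appendix A: $P'_{2,L}$ is a tower of projective bundles over $\{(N_1,N_2): N_1\otimes N_2\simeq L\otimes\det E^0\}\simeq J$, which is irreducible. Reducedness holds because $U\cap h_L^{-1}(D_L)\neq\emptyset$ and $h_L|_U$ is smooth, which one checks using the trace argument of Lemma \ref{lem_smoothness_M_L}. Smoothness of $M_{C,\delta_\sharp-\varepsilon}(E_0,s,d)_L$ is given by Lemma \ref{lem_smoothness_M_L}.

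I expect the main obstacle to be checking that $h_L|_U$ is smooth onto the fixed-determinant moduli space. The tangent map must land surjectively onto the traceless part of $\Ext^1(E/E_0,E/E_0)$, and I would obtain this by combining the surjectivity $\Ext^1(E/E_0,E)\to\Ext^1(E/E_0,E/E_0)$ with the compatibility of the trace with $\det$.
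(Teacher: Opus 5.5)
Your proposal is correct and follows the paper's route. You treat the odd case via the projective bundle structure. In the even case you show $h^*D$ (resp.\ $h_L^*D_L$) is a prime divisor, using irreducibility via $P'_2$ (resp.\ $P'_{2,L}$) and multiplicity one via smoothness on $U$, and then run the class-group diagram of Lemma \ref{lem_Pic_M}. Your added details (Cartier pullback for codimension one, smoothness giving $\Cl=\Pic$, and the trace argument for smoothness of $h_L|_U$) only make explicit what the paper leaves to ``the same argument''.
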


\begin{proof}
If $d+e$ is odd, this corollary holds since $h$ is a projective bundle.

Assume $d+e$ is even.
By Lemmas \ref{leq_irreducible_g=2,s-r=2}, \ref{lem_appB_reduced},  $h^*D$ is a prime divisor.
Furthermore, the pullback of the divisor $D_L=D\cap M_C(2,L \otimes \det E^0)$ by the restriction $h_L : M_{C,\delta_\sharp-\varepsilon} (E_0,s,d)_L \to M_C(2,L \otimes \det E^0)$ is irreducible by the same argument just before Lemma \ref{lem_appendix_picard_group_s=2,L}.
Hence we obtain this corollary as in the proof of Lemma \ref{lem_Pic_M}.
\end{proof}

\end{document}